\documentclass[10pt]{article}

\usepackage{authblk}
 \usepackage{IEEEtrantools}
\usepackage{multirow}

\usepackage{array,epsfig}
\usepackage{amsmath}
\usepackage{amsfonts}
\usepackage{amssymb}
\usepackage{amsxtra}
\usepackage{amsthm}
\usepackage{mathrsfs}
\usepackage{color}
\usepackage[dvipsnames]{xcolor}
\usepackage{tikz-cd}
\usepackage{mathabx,epsfig}
\usepackage{comment}
\usepackage{enumerate}
\usepackage{mathtools}
\usepackage{ytableau}
\usepackage{hyperref}
\usepackage{stmaryrd}
\usepackage{extarrows}

\hypersetup{
    colorlinks=true,
    linkcolor=blue,
    filecolor=magenta,      
    urlcolor=cyan,
    citecolor=teal
}
\theoremstyle{theorem}

\newtheorem{ex}{Example}

\newtheorem{theorem}{Theorem}[section]
\newtheorem{corollary}[theorem]{Corollary}
\newtheorem{lemma}[theorem]{Lemma}
\newtheorem{prop}[theorem]{Proposition}

\newtheorem{Note}[theorem]{Notation}

\theoremstyle{definition}
\newtheorem{definition}[theorem]{Definition}

\newtheorem{remark}[theorem]{Remark}
\newtheorem{notation}[theorem]{Notation}

\newcommand{\ra}{\rightarrow}

\newcommand{\bb}[1]{\mathbb{#1}}
\newcommand{\mc}[1]{\mathcal{#1}}
\newcommand{\lin}[1]{\langle #1 \rangle}
\newcommand{\Z}{\bb{Z}}
\newcommand{\Q}{\bb{Q}}
\newcommand{\R}{\bb{R}}
\newcommand{\F}{\bb{F}}
\newcommand{\N}{\bb{N}}

\newcommand{\del}{\partial}

\newcommand{\sgn}{\mathrm{sgn}}
\newcommand{\rad}{\mathrm{rad}}
\newcommand{\Link}{\mathrm{Link}}
\newcommand{\sd}{\mathrm{sd}}
\newcommand{\w}{\omega}
\newcommand{\coker}{\mathrm{coker}}
\newcommand{\im}{\mathrm{im}}
\newcommand{\Ind}{\mathrm{Ind}}

\newcommand{\llb}{\llbracket}
\newcommand{\rrb}{\rrbracket}
\newcommand{\Sp}{\mathrm{Sp}}
\newcommand{\SL}{\mathrm{SL}}
\newcommand{\GL}{\mathrm{GL}}
\newcommand{\St}{\mathrm{St}}
\newcommand{\Sh}{\mathrm{Sh}}

\newcommand{\tE}{\mathrm{E}}
\newcommand{\tH}{\mathrm{H}}

\newcommand{\tC}{\mathrm{C}}
\newcommand{\tD}{\mathrm{D}}

\title{A Projective Resolution of the Symplectic Steinberg Module}
\author[Pal]{Urshita Pal\footnote{urshita@umich.edu; University of Michigan, Department of Mathematics, Ann Arbor MI 48109, USA}}

\begin{document}
%--------------------------------------------------------------------
\maketitle
%--------------------------------------------------------------------

%%----------------------------------------------------
\begin{comment}

Dan Margalit's feedback on intro

before publishing: ensure bibliography is up-to-date

citations:
- cite own p=5 generalization
- cite Ash-Gunnells-McConnell - email Gunnells, ask about the stuff Zach emailed about

- cite Miller-Scalamandre-Sroka
- cite own Hopf algebra equivalence proof
    
\end{comment}

%%--------------------------------------------------------------

\begin{abstract}
    Borel--Serre proved that for a number ring $R$ with fraction field $K$, the symplectic group $\Sp_{2n}(R)$ is a virtual duality group of degree quadratic in $n$, and that the symplectic Steinberg module $\St^\w_{2n}(K)$ is its dualizing module. We construct a projective resolution of this symplectic Steinberg module as an $\Sp_{2n}(R)$-representation, that is similar in form to a resolution of Lee--Szczarba for the special linear group, but whose construction is more involved. When $R$ is a Euclidean number ring, we use this resolution to compute the top degree cohomology of principal level-$p$ congruence subgroups of $\Sp_{2n}(R)$, for primes $p \in R$ such that the natural map $R^\times \to (R/(p))^\times$ is surjective.
\end{abstract}

\tableofcontents

%%--------------------------------------------------------------- 
\section{Introduction}
%%--------------------------------------------------------------- 

The cohomology of arithmetic groups plays a fundamental role in algebraic K-theory and number theory. Examples of arithmetic groups include special linear and symplectic groups over a number ring and their finite-index subgroups.
An important class of finite-index subgroups are \emph{principal congruence subgroups}. These are the kernels $\Gamma_n(J)$ and $\Gamma^\w_{2n}(J)$ of the maps $\SL_n(R) \to \SL_n(R/J)$ and $\Sp_{2n}(R) \to \Sp_{2n}(R/J)$, respectively, that reduce coefficients mod $J$, where $J \subset R$ is an ideal such that $R/J$ is finite. When $J = (p)$ is a principal ideal, we will denote $\Gamma_n(J)$ and $\Gamma^\w_{2n}(J)$ simply as $\Gamma_n(p)$ and $\Gamma^\w_{2n}(p)$, respectively. In this paper we construct a resolution that enables us to compute certain cohomology groups of finite-index subgroups of $\Sp_{2n}(R)$ for $R$ a number ring. This resolution is similar in form to one due to Lee--Szczarba in \cite[Theorem 3.1]{lee1976homology}, but whose construction is more involved and requires new ideas. We also use this resolution to compute the top degree cohomology of some principal congruence subgroups $\Gamma^\w_{2n}(p) \subset \Sp_{2n}(R)$ when $R$ is Euclidean.

\paragraph{Borel-Serre duality.}
Borel--Serre \cite{borel1973corners} proved that when $R$ is a number ring and $\Gamma$ is a finite-index subgroup of $\SL_n(R)$ or $\Sp_{2n}(R)$, then $\Gamma$ is a rational duality group of some dimension $\nu$, which implies that there are natural isomorphisms of $\Q$-vector spaces
\begin{align*}
    \tH^{\nu-i}(\Gamma; \Q) \cong \tH_i(\Gamma; \mathfrak{D}\otimes \Q) \text{ for all } i
\end{align*}
for a $\Gamma$-module $\mathfrak{D}$ called the \emph{dualizing module}. This statement also holds with $\Z$-coefficients if $\Gamma$ is torsion-free.
In particular, we have
\begin{align*}
%\label{eqn:bsduality}
    \tH^\nu(\Gamma; \Q) \cong \tH_0(\Gamma; \mathfrak{D} \otimes \Q) \cong (\mathfrak{D}\otimes\Q)_{\Gamma},
\end{align*}
where the subscript $\Gamma$ indicates that we are taking $\Gamma$-coinvariants.

Suppose $R$ has fraction field $K$. For the special linear group $\SL_n(R)$, the degree $\nu$ is given by
\begin{align*}
    \nu = r \binom{n+1}{2} + cn^2 - n - r - c +1
\end{align*}
and for $\Sp_{2n}(R)$, $\nu$ is given by
\begin{align*}
    \nu = r(n^2+n) + 2cn^2 -n
\end{align*}

where
\begin{itemize}
    \item $r$ is the number of embeddings $K \hookrightarrow \R$
\item $c$ is the number of pairs of complex conjugate embeddings $K \hookrightarrow \bb{C}$ that do not factor through $\R$.
\end{itemize}

\paragraph{Steinberg modules.} In the case of $\SL_n(R)$ and $\Sp_{2n}(R)$ for $R$ a number ring, the dualizing module $\mathfrak{D}$ has a beautiful combinatorial description.
For $K$ a field, let $\bb{T}_n(K)$ be the \emph{Tits building} for $\SL_n(K)$, that is, the simplicial complex whose $k$-simplices are flags of vector subspaces of $K^n$
\begin{align*}
    0 \subsetneq V_0 \subsetneq \dots \subsetneq V_k \subsetneq K^n
\end{align*}
The Solomon--Tits theorem \cite{solomon1969steinberg, brown1989buildings} says that $\bb{T}_n(K)$ is homotopy equivalent to a wedge of spheres of dimension $(n-2)$. The \emph{Steinberg module} for $\SL_n(K)$, denoted $\St_n(K)$, is $\widetilde{\tH}_{n-2}(\bb{T}_n(K))$.
The action of $\SL_n(K)$ (and its subgroups) on $\bb{T}_n(K)$ descends to an action on $\St_n(K)$. When $V$ is a finite-dimensional vector space over $K$, we will use $\bb{T}(V)$ and $\St(V)$, respectively, to denote the Tits building and the Steinberg module associated to $V$. Thus $\bb{T}(K^n) \coloneq \bb{T}_n(K)$ and $\St(K^n) \coloneq \St_n(K)$.

When $K$ is the field of fractions of a number ring $R$, Borel--Serre \cite{borel1973corners} proved that the dualizing module for a finite-index subgroup $\Gamma \subset \SL_n(R)$ is $\St_n(K)$.

We have similar notions for symplectic groups. For $K$ a field, let $\bb{T}^\w_{2n}(K)$ be the \emph{symplectic Tits building} for $\Sp_{2n}(K)$, that is, the simplicial complex whose $k$-simplices are flags of isotropic subspaces of $K^{2n}$ (i.e. subspaces where the standard symplectic form $\w$ on $K^{2n}$ restricts to 0 -- see Definition \ref{defn:sympl})
\begin{align*}
    0 \subsetneq V_0 \subsetneq \dots \subsetneq V_k \subset K^{2n}
\end{align*}
The Solomon--Tits theorem \cite{solomon1969steinberg, brown1989buildings} says that $\bb{T}^\w_{2n}(K)$ is homotopy equivalent to a wedge of spheres of dimension $(n-1)$. The \emph{symplectic Steinberg module} for $\Sp_{2n}(K)$, denoted $\St^\w_{2n}(K)$, is $\widetilde{\tH}_{n-1}(\bb{T}^\w_{2n}(K))$. The action of $\Sp_{2n}(K)$ and its subgroups on $\bb{T}^\w_{2n}(K)$ descends to an action on $\St^\w_{2n}(K)$. When $V$ is a finite-dimensional vector space over $K$ with a non-degenerate alternating bilinear form $\w$, we will use $\bb{T}^\w(V)$ and $\St^\w(V)$, respectively, to denote the symplectic Tits building and symplectic Steinberg module associated to $V$. Thus $\bb{T}^\w(K^{2n}) \coloneq \bb{T}^\w_{2n}(K)$ and $\St^\w(K^{2n}) \coloneq \St^\w_{2n}(K)$.

When $K$ is the field of fractions of a number ring $R$, Borel--Serre \cite{borel1973corners} proved that the dualizing module for a finite-index subgroup $\Gamma \subset \Sp_{2n}(R)$ is $\St^\w_{2n}(K)$.

%%----------------------------------------------------------
\paragraph{The Sharbly resolution.}
One way to compute the twisted homology groups $\tH_{i}(\SL_n(R); \St_n(K) \otimes \Q)$ and $\tH_{i}(\Sp_{2n}(R); \St_{2n}^\w(K) \otimes \Q)$ is by constructing projective resolutions of $\St_n(K)\otimes \Q$ and $\St_{2n}^\w(K)\otimes \Q$ over $\SL_n(R)$ and $\Sp_{2n}(R)$, respectively, and computing homology after taking the coinvariants of these resolutions under the respective group actions. In practice, for computational feasibility, one needs the coinvariants of the terms of the resolution to not be too large.

In \cite[Theorem 3.1]{lee1976homology}, Lee--Szczarba constructed an $\SL_n(R)$-resolution of $\St_n(K)$, for $K$ the field of fractions of a PID $R$. 

\begin{theorem}[Lee--Szczarba, {\cite[Theorem 3.1]{lee1976homology}}]\label{prop:sharbly:intro}
    Let ${R}$ be a PID, and $K$ its field of fractions. Suppose $V$ is a vector field over $K$ of dimension $n$.
    Then we have a resolution of $\Z[\SL_n({R})]$-modules, that we will refer to as the \emph{Sharbly resolution of $\St(V)$}
    \begin{align*}
       \dots \to \Sh_k(V) \to \dots \to \Sh_1(V) \to \Sh_0(V) \to \St(V) \to 0
    \end{align*}
    where the degree $k$ term $\Sh_k(V)$ is the quotient of the free abelian group on ordered spanning sets of lines of $V$ of size $n+k$, quotiented by the relation that
    \begin{align*}
        [\lin{\vec{v}_1}, \dots, \lin{\vec{v}_{n+k}}] = (-1)^{\sgn (\sigma)}[\lin{\vec{v}_{\sigma(1)}}, \dots, \lin{\vec{v}_{\sigma({n+k})}}]
    \end{align*}
    for any permutation $\sigma$.
    Furthermore, if ${R}$ has finitely many units, then tensoring the above resolution with $\otimes_\Z \Q$ in fact yields a resolution of projective $\Q[\SL_n({R})]$-modules.
    See Proposition \ref{prop:sharbly} for a description of the differential on the resolution.
\end{theorem}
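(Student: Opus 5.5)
Here is the plan. I would realize $\Sh_\bullet(V)$ as the shifted simplicial chains of a contractible complex modulo a subcomplex whose reduced homology is concentrated in one degree. That degree is identified with $\St(V)$ via the Solomon--Tits theorem.

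\textbf{Setting up the complexes.} Let $X$ be the simplex on the vertex set of all lines of $V$. It is contractible, and its oriented $(m-1)$-simplices are exactly the alternating symbols $[\lin{\vec v_1},\dots,\lin{\vec v_m}]$. Let $Y\subset X$ be the subcomplex of sets of lines that do \emph{not} span $V$. Then $C_{n+k-1}(X,Y)$ is precisely $\Sh_k(V)$, with the differential induced by the simplicial boundary, i.e.\ the alternating sum of deletions that remain spanning. Since $X$ is contractible, the long exact sequence gives $\tH_j(X,Y)\cong\widetilde{\tH}_{j-1}(Y)$.

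\textbf{Homology of $Y$.} I would show $Y\simeq\bb{T}(V)$ by a Quillen fiber / nerve argument. Cover $Y$ by the full simplices $\Delta(W)$ on lines inside $W$, for proper nonzero subspaces $W$. Each intersection $\Delta(W_1)\cap\dots\cap\Delta(W_r)=\Delta(W_1\cap\dots\cap W_r)$ is either empty or contractible. The nerve of this cover is therefore homotopy equivalent to $Y$. The nerve is also homotopy equivalent to the poset of proper nonzero subspaces, which is $\bb{T}(V)$; alternatively, one can use the poset map sending a simplex to its span and apply Quillen's Theorem A, since the fibers are cones. By Solomon--Tits, $\widetilde{\tH}_{j}(Y)=0$ for $j\neq n-2$ and $\widetilde{\tH}_{n-2}(Y)=\St(V)$. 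Hence $\tH_j(X,Y)$ vanishes except for $j=n-1$, where it equals $\St(V)$. This gives exactness of $\cdots\to\Sh_1\to\Sh_0$, exactness at positive degrees, and $\coker(\Sh_1\to\Sh_0)=\St(V)$. The augmentation $\Sh_0\to\St(V)$ is the connecting map: a basis of lines $\lin{\vec v_1},\dots,\lin{\vec v_n}$ goes to its boundary sphere (an apartment class). Everything is $\GL(V)$-equivariant, hence $\SL_n(R)$-equivariant. Note that the PID hypothesis is not even needed for this part.

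\textbf{Projectivity.} Each $\Sh_k(V)\otimes\Q$ is a direct sum over $\SL_n(R)$-orbits of spanning tuples of lines. Each summand is induced from the setwise stabilizer of the tuple, twisted by the orientation sign character. Over $\Q$, a module $\Q[G]\otimes_{\Q[H]}\chi$ is projective when $H$ is finite, so the key step is to show these stabilizers are finite. Here the PID hypothesis enters: each line $L$ has a primitive generator in $R^n$, unique up to units. An element $g$ stabilizing the set of lines permutes them, so a finite-index subgroup fixes each line and scales each chosen primitive vector by a unit. Because the lines span $V$, $g$ is determined by these finitely many units and the permutation. With $R^\times$ finite, the stabilizer is therefore finite.

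The main obstacle I expect is the homotopy equivalence $Y\simeq\bb{T}(V)$, specifically making the nerve/Quillen argument clean, including the requirement that the intersection of proper subspaces be nonzero. I would try Quillen's poset fiber lemma applied to the span map from the face poset of $Y$ to the poset of proper nonzero subspaces first.
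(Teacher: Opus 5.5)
Your proposal is correct and follows essentially the same route as the paper. The paper defers to Lee--Szczarba, but its own ingredients match yours: $\Sh_\bullet(V)$ as the shifted relative chains of $(\mc{K}(V),\mc{L}(V))$, contractibility of $\mc{K}(V)$, $\mc{L}(V)\simeq\bb{T}(V)$ via Quillen's fiber lemma applied to the span map as in Lemma \ref{HtpyEquiv}, and Solomon--Tits. For projectivity the paper simply invokes \cite[Lemma 3.2]{church2017codimension}; your induced-module argument, together with the finiteness of setwise stabilizers via primitive generators in $R^n$, is exactly the check of that lemma's hypothesis which the paper leaves implicit.
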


Lee--Szczarba used this resolution to prove the following.

\begin{theorem}[Lee--Szczarba, {\cite[Theorems 1.2 and 1.3]{lee1976homology}}]\label{prop:lscoinvariants}

    \begin{enumerate}
        \item Let $R$ be a Euclidean domain with multiplicative norm, and $K$ its field of fractions. Then we have
        \begin{align*}
           \tH_0(\SL_n(R); \St_n(K)\otimes \Q) = (\St_n(K) \otimes \Q)_{\SL_n(R)} \cong 0
        \end{align*}

        \item For $n \geq 3$, we have an isomorphism
        \begin{align*}
    \tH_0(\Gamma_n(3); \St_n(\Q)) =(\St_n(\Q))_{\Gamma_n(3)} \cong \St_n(\F_3)
\end{align*}
    \end{enumerate}
\end{theorem}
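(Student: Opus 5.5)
The plan is to read off both statements from the right-exact tail of the Sharbly resolution (Theorem \ref{prop:sharbly:intro}). Taking $\Gamma$-coinvariants is right exact, so
\begin{align*}
    (\Sh_1(V))_\Gamma \to (\Sh_0(V))_\Gamma \to (\St(V))_\Gamma \to 0
\end{align*}
is exact for any subgroup $\Gamma \subset \SL_n(R)$. Here $\Sh_0(V)$ is spanned by symbols $[\lin{\vec v_1},\dots,\lin{\vec v_n}]$ with the $\lin{\vec v_i}$ a frame of $V$. The image of such a symbol in $\St(V)$ is the apartment class. The differential on a symbol of size $n+1$ is $\sum_i (-1)^i$ times the result of omitting the $i$-th line, where non-spanning terms are set to zero. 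So the coinvariants are presented by generators indexed by $\Gamma$-orbits of frames, modulo the images of these $(n+1)$-term relations.

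For part (1), with $\Gamma=\SL_n(R)$ and $R$ Euclidean, I would show every frame symbol is zero rationally, by induction on a complexity measure. Since $R$ is a PID, each line is spanned by a primitive vector in $R^n$. For a frame $\vec v_1,\dots,\vec v_n$ of primitive vectors, set $d = |\det(\vec v_1,\dots,\vec v_n)|$, using the multiplicative norm on the determinant.
\begin{itemize}
    \item If $d$ is a unit, the frame is an $\SL_n(R)$-translate (possibly after a sign change) of the standard frame, via a matrix permuting two columns or rescaling by a unit. That matrix acts on the symbol by $-1$, so $2[\cdot]=0$ in the coinvariants, and the symbol vanishes rationally. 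This is why $\otimes\Q$ is needed.
    \item If $d$ is not a unit, I would produce a vector $\vec w = \sum a_i \vec v_i$, with coefficients chosen by Euclidean division, such that each frame obtained by replacing $\vec v_i$ with $\vec w$ has strictly smaller determinant norm.
\end{itemize}
The Sharbly relation from $[\lin{\vec v_1},\dots,\lin{\vec v_n},\lin{\vec w}]$ then expresses the original symbol as a signed sum of smaller symbols, and induction finishes. The main obstacle is making this reduction step work uniformly: one needs a $\vec w$ in the lattice spanned by the $\vec v_i$, divided by $d$, with all the new determinants small. I would first try the classical approach of writing a lattice vector in coordinates relative to the $\vec v_i$ as rationals with denominator $d$ and reducing each coordinate mod $1$ via the Euclidean algorithm. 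If that fails, I would fall back on a Minkowski-type argument.

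For part (2), with $\Gamma = \Gamma_n(3)$ and $K=\Q$, I would compare with $\St_n(\F_3)$ via reduction mod $3$. The map sends a frame of primitive integral vectors to its reduction, and the reduction is again a frame because the determinant is $\pm1$ after the reduction of part (1). Since $\SL_n(\Z)$ surjects onto $\SL_n(\F_3)$ and $\F_3^\times=\{\pm1\}$ is the image of $\Z^\times$, the orbits of unimodular frames under $\Gamma_n(3)$ correspond to frames of $\F_3^n$. I would then run the same reduction as in part (1), but without killing unimodular frames. This shows $(\St_n(\Q))_{\Gamma_n(3)}$ is generated by the classes of unimodular frames, with relations coming from unimodular $(n+1)$-tuples. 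Those relations map exactly onto the defining relations of $\St_n(\F_3)$ in the analogous presentation over $\F_3$, namely the Sharbly presentation for a field. This gives mutually inverse maps.

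The delicate point in part (2) is the hypothesis $n\ge 3$. It is needed to show that every relation of $\St_n(\F_3)$ lifts to a unimodular relation, and that the non-unimodular relations impose nothing new. I expect this step to be the hardest; it requires a connectivity or lifting argument for relations among partial bases.
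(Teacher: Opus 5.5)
Your part (1) is sound. It is the Ash--Rudolph/Lee--Szczarba reduction, and your ``classical approach'' already works, so no Minkowski fallback is needed. Take $\vec x\in R^n$ outside $\bigoplus_i R\vec v_i$; such an $\vec x$ exists since the determinant is not a unit. Write $\vec x=\sum_i c_i\vec v_i$ with $c_i\in K$, and use the multiplicative norm to choose $q_i\in R$ with $N(c_i-q_i)<1$. Then $\vec w=\vec x-\sum_i q_i\vec v_i$ is a nonzero vector of $R^n$, and its replaced determinants are $(c_i-q_i)\det(\vec v_1,\dots,\vec v_n)$, each of strictly smaller norm. For the unimodular case, the element $\vec e_1\mapsto\vec e_2$, $\vec e_2\mapsto-\vec e_1$ of $\SL_n(R)$ acts by $-1$ on the standard symbol; this needs $n\ge 2$.

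The genuine gap is in part (2). A primitive frame need not reduce to a frame mod $3$ (e.g.\ $\vec e_1,\vec e_1+3\vec e_2$). So you define the comparison map on unimodular frames after rewriting. Its well-definedness then requires your claim that every relation among unimodular frames in $\St_n(\Q)$ follows from unimodular $(n+1)$-term Sharbly relations, i.e.\ that ``non-unimodular relations impose nothing new''. Rerunning the reduction of part (1) gives only generation, not this. Eliminating the non-unimodular generators via the (choice-dependent) reduction turns each non-unimodular Sharbly relation into some relation among unimodular frames. Showing that all of these follow from unimodular ones is a Bykovskii-type presentation theorem for $\St_n(\Q)$ (reproved by Church--Putman via high connectivity), far beyond what your argument supplies. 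As it stands this step, which you yourself leave open, is missing.

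The missing idea is that you never need that presentation. Define $\phi\colon(\St_n(\Q))_{\Gamma_n(3)}\to\St_n(\F_3)$ from the simplicial map $\bb{T}_n(\Q)\to\bb{T}_n(\F_3)$, $W\mapsto(W\cap\Z^n)\otimes\F_3$. This map is $\Gamma_n(3)$-invariant and sends a unimodular apartment to the apartment of the reduced frame, so $\phi$ is automatically well defined.

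Let $U_0\subset\Sh_0(\Q^n)$ be spanned by unimodular frames. Your orbit argument gives $(U_0)_{\Gamma_n(3)}\cong\Sh_0(\F_3^n)$. The reduction step of part (1) uses only Sharbly relations, so it shows $U_0\to\St_n(\Q)$ is onto. It then only remains to check that $\del\Sh_1(\F_3^n)$ dies in $(\St_n(\Q))_{\Gamma_n(3)}$, and this is immediate:
\begin{itemize}
\item Write a generator as $[\lin{\vec v_1},\dots,\lin{\vec v_n},\lin{\sum_i a_i\vec v_i}]$ with $a_i\in\{0,\pm1\}$.
\item Lift the $\vec v_i$ to a $\Z$-basis $\vec V_i$, and lift each $a_i$ to $0$ or $\pm1\in\Z$.
\item The relation $\del[\lin{\vec V_1},\dots,\lin{\vec V_n},\lin{\sum_i a_i\vec V_i}]=0$ in $\St_n(\Q)$ has unimodular nonzero terms. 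These reduce exactly to the terms of the $\F_3$-relation, with the same vanishing pattern.
\end{itemize}
This gives a surjection $\psi\colon\St_n(\F_3)\to(\St_n(\Q))_{\Gamma_n(3)}$ with $\phi\circ\psi=\mathrm{id}$, so both maps are isomorphisms.

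The paper only cites this theorem, but this is exactly how it proves the symplectic analogue, Theorem \ref{thm:congsubgp}. There, the composite $(\mc{V}_0)_{\Gamma^\w_{2n}(p)}\to(\St^\w_{2n}(K))_{\Gamma^\w_{2n}(p)}\to\St^\w_{2n}(\F)$ is the natural quotient map, so only relations need to be lifted.

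Finally, the surjectivity of $\Z^\times\to\F_3^\times$ is precisely what makes the relations lift, and nothing in this step uses $n\ge 3$. Your attribution of the difficulty to that hypothesis is misplaced.
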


The proof for the second statement crucially uses the fact that the map $\Z^\times \to \F_3^\times$ is surjective. Indeed, the above statement is known to be false for primes $p >3$ (See for instance, Miller--Patzt--Putman \cite[Theorem A]{miller2021top}).

%%------------------------------------------------------------
%Lee--Szczarba's proof applies more generally \flag{CITE SELF}.
%%------------------------------------------------------------

%%----------------------------------------------------------

In higher degrees of homology, the groups in Lee--Szczarba's resolution seem too large to be practical for computations (see \cite[Section 4]{lee1976homology}). However, recent work by Ash--Miller--Patzt \cite[Theorem A]{ash2024hopf} used their resolution to prove the existence of a Hopf algebra structure on the rational cohomology of $\GL_n(\Z)$. Independently, Brown--Chan--Galatius--Payne \cite[Theorem 1.7]{brown2024hopf} established a Hopf algebra structure on these groups using completely different methods.

%%-------------------------------------
%%\flag{ASH ET AL USING RESOLUTIONS}

%% CITE own Hopf algebra equiv when it's done
%%-------------------------------------

\paragraph{A Sharbly analogue for the symplectic group.} In this paper we construct a resolution of the symplectic Steinberg module that is analogous in form to Lee--Szczarba's construction from Theorem \ref{prop:sharbly:intro}.

%%----------------------------------------------------------
\begin{theorem}
    \label{thm:introsharblyres}
    Let ${R}$ be a PID with field of fractions $K$. There is a resolution of $\St^\w_{2n}(K)$ of $\Z[\Sp_{2n}(R)]$-modules, where the degree $d$ term of the resolution is given by
        %%----------------------------------------------------
    \begin{align*}
        \bigoplus \Sh_{k_1}(W_1) \otimes \dots \otimes \Sh_{k_m}(W_m),
    \end{align*} 
%%----------------------------------------------------
    where the direct sum ranges over (ordered) direct sum decompositions 
%%----------------------------------------------------
    \begin{align*}
        V = W_1 \oplus \dots \oplus W_m
    \end{align*} 
%%----------------------------------------------------
    of $V$ into perpendicular symplectic subspaces, $\Sh_{k_i}(W_i)$ is the $k_i$-th term in the Sharbly resolution for $W_i$, and 
%%----------------------------------------------------
    \begin{align*}
        d = k_1 + k_2 + \dots + k_m + n -m\\
    \end{align*}
    
    The resolution has the property that if $R$ has finitely many units, then tensoring with $\otimes_\Z \Q$ gives a resolution of projective $\Q[\Sp_{2n}(R)]$-modules. (For a description of the differential on the resolution, see Theorem \ref{thm:sharblyres}). 
\end{theorem}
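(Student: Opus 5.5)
We will build the resolution as the total complex of a double complex whose two directions are the symplectic "splitting" direction and the Lee--Szczarba Sharbly direction. Throughout, $V=K^{2n}$ with the standard form, and all lattices are taken relative to $R^{2n}$.

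\textbf{Step 1: the splitting complex.} The starting point is a Solomon--Tits type statement for the poset of perpendicular decompositions of $V$ into symplectic subspaces. We will first produce an exact complex of $\Sp_{2n}(K)$-modules
\begin{align*}
\dots \to \mathcal{D}_j \to \dots \to \mathcal{D}_0 \to \St^\w(V) \to 0,
\end{align*}
whose term $\mathcal{D}_j$ is
\begin{align*}
\bigoplus_{V=W_1\oplus\dots\oplus W_m} \St(L_1)\otimes\dots\otimes\St(L_m),
\end{align*}
with $j=n-m$. Here $L_i\subset W_i$ is a chosen Lagrangian, or better, we use the intrinsic description.

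\textbf{Step 2: $\St^\w$ of a symplectic plane.} For a rank-two symplectic $W$, all lines are isotropic, so $\St^\w(W)=\widetilde{\tH}_0(\mathbb{P}(W))=\St(W)$. It is resolved by $\Sh_\bullet(W)$, whose degree $0$ term consists of pairs of lines spanning $W$. This suggests the following dictionary:
\begin{itemize}
\item The index $n-m$ records how far a decomposition is from being into planes.
\item A generator of $\Sh_0(W_1)\otimes\dots\otimes\Sh_0(W_m)$ is a "symplectic apartment" of isotropic lines.
\end{itemize}
Concretely, I would define the augmentation $\Sh_0$-level map on the $m=n$ part by sending $[\ell_1,\ell_1']\otimes\dots\otimes[\ell_n,\ell_n']$ to the fundamental class of the apartment (the boundary of a cross-polytope) in $\bb{T}^\w(V)$. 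The differential on a summand has two parts:
\begin{itemize}
\item the Sharbly differential in each factor, with Koszul signs;
\item a "merging" differential $\Sh_{k_i}(W_i)\otimes\Sh_{k_{i+1}}(W_{i+1})\to\Sh_{k_i+k_{i+1}+1}(W_i\oplus W_{i+1})$, given by concatenating the spanning sets of lines (sizes $2\dim$ plus $k$ add correctly: $(\dim W_i+k_i)+(\dim W_{i+1}+k_{i+1})=\dim(W_i\oplus W_{i+1})+(k_i+k_{i+1}+1)-1$). This needs adjusting by a sign, and by summing over orderings, so that $d^2=0$.
\end{itemize}

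\textbf{Step 3: exactness.} I would filter the total complex by $m$ (the number of blocks). The associated spectral sequence has $E^1$ computed by the Sharbly resolutions (Theorem \ref{prop:sharbly:intro}) applied factorwise. Tensor products of resolutions of free abelian groups remain resolutions by Künneth, since Steinberg modules are free. So $E^1$ is the complex $\bigoplus \St(W_1)\otimes\dots\otimes\St(W_m)$ with the merging differential. Exactness then reduces to showing that this "decomposition complex" resolves $\St^\w(V)$. I would prove this by induction on $n$, identifying the complex with the cellular chains of a highly connected complex of perpendicular decompositions. Alternatively, one can use a Lee--Szczarba style coning argument: fix a line $\ell$, split off the summand containing $\ell$, and contract.

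\textbf{Main obstacle.} The key step is this last exactness, together with defining the merging map compatibly with the augmentation. I would first try to verify it directly via an explicit contracting homotopy on lines in "general position" relative to a fixed isotropic line, mimicking Lee--Szczarba's proof.

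\textbf{Step 4: projectivity.} Projectivity over $\Q$ follows as in Theorem \ref{prop:sharbly:intro}. Each summand is permuted by $\Sp_{2n}(R)$, and the stabilizer of a tuple of lines together with the decomposition is finite when $R^\times$ is finite. Hence each term is a sum of modules induced from finite groups, which are projective rationally.
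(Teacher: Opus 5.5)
There are two genuine gaps, and the first one already breaks the construction as written.

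\textbf{The horizontal map goes the wrong way.} Concatenating $\dim W_i+k_i$ and $\dim W_{i+1}+k_{i+1}$ lines gives $\dim(W_i\oplus W_{i+1})+k_i+k_{i+1}$ lines. So, as your own displayed identity shows, the merge lands in $\Sh_{k_i+k_{i+1}}(W_i\oplus W_{i+1})$, not in $\Sh_{k_i+k_{i+1}+1}$. Since it also lowers $m$ by one, it \emph{raises} $d=\sum k_\ell+n-m$ by one, so it cannot be a differential.

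The same problem appears on $E^1$. Concatenation $\St(W_i)\otimes\St(W_{i+1})\to\St(W_i\oplus W_{i+1})$ maps $\mathcal{D}_{n-m}\to\mathcal{D}_{n-m+1}$. A complex ending in $\mathcal{D}_0\to\St^\w(V)$, with $\mathcal{D}_0$ the $m=n$ part, instead needs maps that increase $m$.

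What is required is a \emph{splitting} map $\Sh_k(W)\to\bigoplus\Sh_{k'}(X)\otimes\Sh_{k''}(X^\perp)$ with $k'+k''=k$. It removes some lines whose complement spans a proper symplectic subspace $X$, and projects the removed lines onto $X^\perp$. These are the split-terms of Proposition \ref{differential}; when a pair is removed, this is the connecting map of Proposition \ref{Eq1Diff}. This map genuinely uses the form, and nothing in your proposal produces it. Showing that it squares to zero and anticommutes with the Koszul-signed Sharbly differential is the four-case sign check in Section \ref{difffinal}.

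Separately, your Step 1 formula with $\St(L_i)$ for Lagrangians is wrong. The terms must be $\St(W_i)$ (e.g.\ $\St(V)$ for $m=1$), as you have it in Step 3.

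\textbf{The key exactness is left open.} Even with the correct map, the entire content of the theorem is exactness of $0\to\St(V)\to\bigoplus_{m=2}\St(W_1)\otimes\St(W_2)\to\dots\to\bigoplus_{m=n}\St(W_1)\otimes\dots\otimes\St(W_n)\to\St^\w(V)\to 0$. You only gesture at a Lee--Szczarba coning argument, which the paper notes has no clear symplectic analogue.

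The missing idea is the genus filtration $\mc{I}^0(V)\subset\dots\subset\mc{I}^n(V)=\mc{K}(V)$ of the contractible complex of all lines, where $\mc{I}^0(V)\simeq\bb{T}^\w(V)$. The paper introduces intermediate complexes $\mc{I}^{g+\frac{1}{2}}(V)$ and applies Lemma \ref{Morse}, using that the relevant links are cones on a radical vector. This shows $\mc{I}^{g+1}/\mc{I}^{g+\frac{1}{2}}$ is contractible. It also gives $\tH_*(\mc{I}^{g+1},\mc{I}^g)\cong\bigoplus_{g(W)=g+1}\St(W)\otimes\St^\w(W^\perp)$, concentrated in the single degree $n+g$. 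The spectral sequence then yields the finite resolution of Proposition \ref{steinbergres}.

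The paper then inducts on genus. It resolves $\St(W)\otimes\St^\w(W^\perp)$ by $\Sh_\bullet(W)$ tensored with the symplectic Sharbly resolution of $W^\perp$, and glues these using Lemma \ref{gluestep}. That is where the ordered decompositions and the formula $d=\sum k_i+n-m$ come from. Exactness of your decomposition complex follows from Proposition \ref{steinbergres} by induction on genus, but not without it.

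If you supply both pieces, your packaging is a valid alternative to the paper's mapping-cone gluing: an explicit double complex filtered by $m$, which is indeed the shape of the paper's final omit-plus-split differential. It would also give the integral statement directly. Your Step 4 on projectivity is fine.
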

%%----------------------------------------------------------

As a corollary, we obtain a presentation for the symplectic Steinberg module. A similar presentation was constructed by Tóth \cite[Theorem 2]{toth2005steinberg}. Tóth in fact constructs a presentation of Steinberg modules over many more Chevalley groups and uses it to deduce a cyclicity result for these Steinberg modules, thus generalizing results of Ash--Rudolph \cite{ash1979modular} and Gunnells \cite{gunnells2000symplectic}.

\begin{corollary}\label{cor:Stwpresent}
    Let ${R}$ be a PID with field of fractions $K$. There is a presentation of $\St^\w_{2n}(K)$ over $\Z[\Sp_{2n}(R)]$: 
\begin{align*}
 V_{1,2} \oplus V_{1,1} \xrightarrow{\del_1} V_0 \to \St^\w(V)
\end{align*}
where $V_0, V_{1,2}, V_{1,2}$ are as follows:
\begin{itemize}
    \item $V_0$ is the direct sum of all terms of $\Sh_0(W_1)\otimes\dots\otimes\Sh_0(W_n)$, where $W_1 \oplus \dots \oplus W_n = V$ is a decomposition of $V$ into mutually perpendicular genus 1 symplectic subspaces.
    \item $V_{1,1}$ is the direct sum of terms of the form $\Sh_0(W_1)\otimes \dots \otimes\Sh_1(W_i)\otimes \dots \otimes \Sh_0(W_n)$, where $W_1 \oplus \dots \oplus W_n = V$ is a decomposition of $V$ into mutually perpendicular genus 1 symplectic subspaces.
    \item $V_{1,2}$ is the direct sum over all terms of the form $\Sh_0(W_1)\otimes\dots \otimes \Sh_0(W_{n-1})$, where $V = W_1 \oplus \dots \oplus W_{n-1}$ is a decomposition into mutually perpendicular subspaces, where all but one of the $W_i$ have genus 1 and the remaining one has genus 2.
\end{itemize}
The map $\del_1$ is as follows:
\begin{itemize}
    \item $[\lin{\vec{v}_1}, \lin{\vec{v}_{\bar{1}}}]\otimes\dots\otimes[\lin{\vec{u}_i}, \lin{\vec{v}_i}, \lin{\vec{w}_i}]\otimes\dots\otimes[\lin{\vec{v}_n}, \lin{\vec{v}_{\bar{n}}}] \in V_{1,1}$ maps to the sum
    \begin{align*}
        \left([\lin{\vec{v}_1}, \lin{\vec{v}_{\bar{1}}}]\otimes\dots\otimes[\lin{\vec{v}_i}, \lin{\vec{w}_i}]\otimes\dots\otimes[\lin{\vec{v}_n}, \lin{\vec{v}_{\bar{n}}}]\right) - \left([\lin{\vec{v}_1}, \lin{\vec{v}_{\bar{1}}}]\otimes\dots\otimes[\lin{\vec{u}_i}, \lin{\vec{w}_i}]\otimes\dots\otimes[\lin{\vec{v}_n}, \lin{\vec{v}_{\bar{n}}}]\right)\\ + \left([\lin{\vec{v}_1}, \lin{\vec{v}_{\bar{1}}}]\otimes\dots[\lin{\vec{u}_i}, \lin{\vec{v}_i}]\otimes\dots\otimes[\lin{\vec{v}_n}, \lin{\vec{v}_{\bar{n}}}]\right).
    \end{align*}
    
    \item $[\lin{\vec{v}_1}, \lin{\vec{v}_{\bar{1}}}]\dots[\lin{\vec{v}_{i,0}}, \lin{\vec{v}_{i,1}}, \lin{\vec{v}_{i,2}}, \lin{\vec{v}_{i,3}}]\dots[\lin{\vec{v}_{n-1}}, \lin{\vec{v}_{\overline{n-1}}}] \in V_{1,2}$ maps to a sum of 'split terms' as defined in Proposition \ref{differential}:
    \begin{align*}
        (-1)^\nu[\lin{\vec{v}_1}, \lin{\vec{v}_{\bar{1}}}]\dots[\lin{\vec{w}_i}, \lin{\vec{x}_i}][\lin{\vec{y}_i}, \lin{\vec{z}_i}]\dots[\lin{\vec{v}_{n-1}}, \lin{\vec{v}_{\overline{n-1}}}] 
    \end{align*}
    where $\vec{w}_i, \vec{x}_i$ are a subset of the vectors $\vec{v}_{i,0}, \vec{v}_{i,1}, \vec{v}_{i,2}, \vec{v}_{i,3}$ spanning a genus 1 subspace of $W_i$, and $\vec{y}_i$ and $\vec{z}_i$ are the projections of the other two vectors onto its perpendicular subspace. The sign $\nu$ is given by the formula 
    \begin{align*}
        \nu = j_1+j_2 + n-1-i
    \end{align*}
    where $j_1, j_2$ are the indices of the $\vec{v}_{i,t}$ that correspond to $\vec{w}_i$ and $\vec{x}_i$.
\end{itemize}

If $R$ has finitely many units, then tensoring with $\otimes_\Z \Q$ gives a presentation of projective $\Q[\Sp_{2n}(R)]$-modules.

\end{corollary}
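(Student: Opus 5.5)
This corollary is the truncation of the resolution in Theorem \ref{thm:introsharblyres} to its first two terms. The plan is to identify the degree $0$ and degree $1$ terms of that resolution with $V_0$ and $V_{1,1}\oplus V_{1,2}$. We then read off the differential $\del_1$ from the description in Theorem \ref{thm:sharblyres} (equivalently Proposition \ref{differential}). Exactness of $V_1 \to V_0 \to \St^\w(V) \to 0$ is immediate from exactness of the resolution at $\St^\w(V)$ and at degree $0$. So the content is bookkeeping.

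\emph{Step 1: degrees $0$ and $1$.} A summand $\Sh_{k_1}(W_1)\otimes\dots\otimes\Sh_{k_m}(W_m)$ sits in degree $d=\sum k_i + n - m$. Each $k_i\ge 0$, and $m\le n$ because each $W_i$ has genus at least $1$, so $d\ge 0$.
\begin{itemize}
\item Degree $0$ forces $m=n$ and all $k_i=0$. Every $W_i$ then has genus $1$, which gives $V_0$.
\item Degree $1$ allows either $m=n$ with exactly one $k_i=1$, giving $V_{1,1}$, or $m=n-1$ with all $k_i=0$. In the second case the genera sum to $n$ over $n-1$ pieces, so exactly one piece has genus $2$; this gives $V_{1,2}$.
\end{itemize}

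\emph{Step 2: the differential.} On $V_{1,1}$, $\del_1$ should be the Sharbly differential applied in the $i$-th tensor factor (Leibniz rule, with the other factors of degree $0$ contributing no sign). The Lee--Szczarba differential on $[\lin{\vec u_i},\lin{\vec v_i},\lin{\vec w_i}]$ is the alternating sum of omissions. This gives the three displayed terms with signs $+,-,+$. Any omission whose remaining pair fails to span $W_i$ is zero in $\Sh_0(W_i)$ by definition.

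On $V_{1,2}$, the differential is the splitting map of Proposition \ref{differential}. It sends the genus $2$ factor $[\lin{\vec v_{i,0}},\dots,\lin{\vec v_{i,3}}]$ to the sum over pairs $\{j_1,j_2\}$ with $\lin{\vec v_{i,j_1}},\lin{\vec v_{i,j_2}}$ spanning a genus $1$ subspace $U\subset W_i$. The remaining two vectors are projected onto $U^\perp$, and the new factors are inserted in place. The sign $(-1)^{j_1+j_2+n-1-i}$ should be read off from the sign convention in Theorem \ref{thm:sharblyres}. It combines the shuffle sign $j_1+j_2$ (up to a fixed offset) of moving the chosen pair to the front with a positional sign depending on where the $i$-th factor sits among the $n-1$ factors.

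\emph{Step 3: projectivity and the main obstacle.} The projectivity claim after $\otimes\Q$ is inherited directly from the projectivity statement in Theorem \ref{thm:introsharblyres}. The only step I expect to need care is matching the sign $\nu$ exactly to the conventions in Theorem \ref{thm:sharblyres}. I would handle it by specializing that theorem's general sign formula to $m=n-1$ with the split factor in position $i$, and checking that the result is $j_1+j_2+n-1-i$ modulo $2$. Since the corollary only asserts the form of the presentation, any consistent convention suffices.
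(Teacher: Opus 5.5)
Your proposal is correct and follows the paper's route: the corollary is just the truncation of the symplectic Sharbly resolution (Theorem \ref{thm:sharblyres}) at degree $1$, with $\del_1$ read off from the omit/split description in Proposition \ref{differential}. The sign check you defer does work out: for the genus-$2$ factor one has $p=1$, $d_i=3$, and $d_j=1$ for every other factor. Hence $\nu = 3 + (i_0+i_1-1) + (n-1-i) \equiv j_1+j_2+n-1-i$, because the paper's shuffle moves the projected pair $\{i_0,i_1\}$ (not the kept pair) to the front, and $i_0+i_1+j_1+j_2=6$.
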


We remark here that though this presentation is quite general, it comes at the cost of its groups being fairly large in many cases, and it is very hard to construct small presentations in general. In work of Brück--Patzt--Sroka \cite{bruck2023presentation}, they give a presentation for the symplectic Steinberg module $\St^\w_{2n}(\Q)$ over $\Sp_{2n}(\Z)$ where it is much easier to calculate coinvariants (\cite[Theorem B]{bruck2023presentation}). They used this presentation to show the vanishing of the rational cohomology groups $\tH^{n^2-1}(\Sp_{2n}(\Z); \Q)$ for $n \geq 2$ (\cite[Theorem A]{bruck2023presentation}). Their ideas were inspired by Church--Putman's proof in \cite{church2017codimension} of the Bykovskii presentation for $\St_n(\Q)$, where they relate the construction of such presentations to the high connectivity of suitably defined simplicial complexes. These complexes tend to get highly intricate and technical, and it is not clear whether it is feasible to carry out these constructions for rings other than $\Z$. The presentation from Corollary \ref{cor:Stwpresent} can be thought of as a weaker analogue of Brück--Patzt--Sroka's presentation in \cite[Theorem B]{bruck2023presentation}, but for symplectic Steinberg modules over arbitrary fields.

%%-----------------------------------------------------------
When $R$ is a Euclidean domain and $K$ its field of fractions, we use the presentation from Corollary \ref{cor:Stwpresent} to compute the coinvariants of $\St^\w_{2n}(K)$ by the level $p$ principal congruence subgroup of $\Sp_{2n}(R)$ for certain primes $p \in R$. Our result is as follows.

%%------------------------------------------------
\begin{theorem}
    \label{thm:congsubgp}
    Let $R$ be a Euclidean domain and $K$ its field of fractions. Let $\F = R/(p)$ for a prime $p \in R$. Suppose the map $R^\times \to \F^\times$ is surjective. Then for all $n \geq 1$, we have an isomorphism
    \begin{align*}
        (\St_{2n}^\w(K))_{\Gamma^\w_{2n}(p)} \cong \St^\w_{2n}(\F)
    \end{align*}
\end{theorem}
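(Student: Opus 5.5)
Following Lee--Szczarba's argument for $\Gamma_n(3)$, we use the presentation of Corollary \ref{cor:Stwpresent} and compare it with a presentation of $\St^\w_{2n}(\F)$ over $\Sp_{2n}(\F)$. Taking $\Gamma^\w_{2n}(p)$-coinvariants is right exact, so
\begin{align*}
(V_{1,2}\oplus V_{1,1})_{\Gamma^\w_{2n}(p)} \to (V_0)_{\Gamma^\w_{2n}(p)} \to (\St^\w_{2n}(K))_{\Gamma^\w_{2n}(p)} \to 0
\end{align*}
is exact. We then show that the coinvariants of $V_0$, $V_{1,1}$ and $V_{1,2}$ are the analogous free abelian groups built over $\F$, and that the induced map is the corresponding differential for $\F$. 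Since $\F$ is a field it is a PID with itself as fraction field, so Corollary \ref{cor:Stwpresent} applied to $\F$ presents $\St^\w_{2n}(\F)$. This would give the isomorphism.

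The first step is to identify the generators. A generator of $V_0$ is a tensor $[\lin{\vec v_1},\lin{\vec v_{\bar 1}}]\otimes\dots\otimes[\lin{\vec v_n},\lin{\vec v_{\bar n}}]$ of pairs of lines spanning perpendicular genus-$1$ subspaces. Over a PID we choose primitive vectors, and after rescaling each pair we obtain a symplectic basis up to units.

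Reduction mod $p$ gives a map from the set of generators to the corresponding set over $\F$. Its well-definedness uses that primitive vectors reduce to nonzero vectors, that lines reduce to lines, and that the $\omega$-values, being units, reduce to units.

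Next we show that the fibers of this reduction map are exactly the $\Gamma^\w_{2n}(p)$-orbits. This is the key input, and it uses the surjectivity of $R^\times\to\F^\times$ together with the Euclidean hypothesis. Given two symplectic frames of lines with the same reduction, we pick primitive representatives. Because every unit of $\F$ lifts to a unit of $R$, the representatives can be rescaled so that the two actual symplectic bases are congruent mod $p$. Some care is needed to keep $\omega(\vec v_i,\vec v_{\bar i})=1$: rescale $\vec v_i$ by a unit $u$ and $\vec v_{\bar i}$ by $u^{-1}$, and pass between unit lines and the value $\pm1$.

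The matrix carrying one basis to the other then lies in $\Sp_{2n}(R)$ and is congruent to the identity, so it lies in $\Gamma^\w_{2n}(p)$. Signs from reordering are handled by the alternating relation. For surjectivity onto $\F$-frames, note that $\Sp_{2n}(R)\to\Sp_{2n}(\F)$ is surjective because $R$ is Euclidean (strong approximation, via elementary symplectic transvections). Hence every $\F$-symplectic basis lifts, and $(V_0)_{\Gamma^\w_{2n}(p)}$ is the $\F$-version of $V_0$.

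The same argument is then needed for $V_{1,1}$ and $V_{1,2}$. These generators involve three lines in a genus-$1$ space, or four lines spanning a genus-$2$ space, with no symplectic-basis normal form. The coinvariants only need to map onto the $\F$-relations, not biject with them. It therefore suffices to show that every $\F$-relation generator lifts to an $R$-generator and that reduction commutes with $\del_1$; the latter is clear for the three-term relation.

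The main obstacle is the split terms in $V_{1,2}$. The projection of $\vec y,\vec z$ onto the perpendicular complement must commute with reduction mod $p$, and lifted lines must remain in general position, meaning the chosen pairs span genus-$1$ subspaces. To handle lifting, I would first use $\Sp_{2n}(R)\to\Sp_{2n}(\F)$ surjectivity to move the $\F$-configuration into a standard genus-$2$ block. I would then lift coordinates directly, choosing lifts so that the relevant $\omega$-pairings stay units, which is possible because units lift.

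Finally, we argue that $\St^\w_{2n}(\F)$, as the cokernel of the $\F$-presentation, agrees with the cokernel of the reduced presentation. Since the generator sets match and the relations map onto one another, the two cokernels coincide, which yields the isomorphism.
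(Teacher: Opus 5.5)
There is a genuine gap at the first step: you misidentify the generators of $V_0$ in Corollary \ref{cor:Stwpresent} over $K$.

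Those generators are arbitrary frames of lines. Each one consists of an arbitrary perpendicular decomposition $K^{2n}=W_1\oplus\dots\oplus W_n$ into genus-$1$ pieces, together with an arbitrary pair of lines spanning each $W_i$. They are not unimodular symplectic bases of $R^{2n}$. For primitive representatives, $\w(\vec v_i,\vec v_{\bar i})$ can be any nonzero element of $R$, well defined up to units and invariant under $\Sp_{2n}(R)$.

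Already for $n=1$, $R=\Z$, $p=3$ this breaks your argument:
\begin{itemize}
\item $[\lin{(1,0)},\lin{(1,2)}]$ and $[\lin{(1,0)},\lin{(1,-1)}]$ have the same reduction mod $3$. Their pairings are $2$ and $-1$, so they are not even in the same $\SL_2(\Z)$-orbit.
\item $[\lin{(1,0)},\lin{(1,3)}]$ reduces to a degenerate pair.
\end{itemize}
So your reduction map is not well defined as described, its fibers are not $\Gamma^\w_{2n}(p)$-orbits, and $(V_0)_{\Gamma^\w_{2n}(p)}$ is much larger than the $\F$-version of $V_0$. The same problem affects $V_{1,1}$ and $V_{1,2}$. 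For example, a split term from a pair with $\w(\vec v_i,\vec v_j)\in(p)\setminus\{0\}$ appears over $K$ but has no counterpart over $\F$.

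Consequently, comparing the two presentations generator by generator cannot give injectivity. You would also have to show that every non-unimodular generator is, modulo relations, a combination of unimodular ones. That is exactly where the Euclidean hypothesis must enter. In your proposal it is used only for surjectivity of $\Sp_{2n}(R)\to\Sp_{2n}(\F)$, which holds for any PID, since $\Sp_{2n}(\F)$ is generated by elementary symplectic matrices and these lift.

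The missing ingredient is Gunnells' theorem (Lemma \ref{lem:intapt}): $\St^\w_{2n}(K)$ is generated by integral apartment classes. Use it instead of the $R$-presentation on the $K$ side, and the argument goes as follows.
\begin{itemize}
\item Your unit-lifting argument is correct for the group $\mc{V}_0$ of genuinely unimodular symplectic frames. It gives $(\mc{V}_0)_{\Gamma^\w_{2n}(p)}\cong V_0$ over $\F$, which is Lemma \ref{lem:coinvgenerators}.
\item Injectivity of $(\St^\w_{2n}(K))_{\Gamma^\w_{2n}(p)}\to\St^\w_{2n}(\F)$ then reduces to lifting each $\F$-relation in $\del_1(V_{1,2}\oplus V_{1,1})$ to a relation among integral apartment classes in $\St^\w_{2n}(K)$.
\item For $V_{1,1}$ the relation lifts to \eqref{rel:byk1}, because units lift.
\end{itemize}

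For $V_{1,2}$, your plan to lift coordinates while keeping pairings units does not handle pairs whose pairing vanishes over $\F$. Their lifts may pair to nonzero multiples of $p$, which produces split terms that are not integral apartments. The paper avoids this by first subtracting elements of $\del_2(V_2)$ to reduce every element of $V_{1,2}$ to standard Sharblies (Lemma \ref{lem:stdsharblies}). The images of standard Sharblies under $\del_1$ are explicit and lift to \eqref{rel:perm} and \eqref{rel:byk2}.
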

%%------------------------------------------------

Combined with Borel-Serre duality \cite{borel1973corners}, we get the following corollary.

\begin{corollary}
    Let $R$ be a Euclidean number ring with fraction field $K$, and $p \in R$ a prime. Let $\F$ be the quotient field $R/(p)$. If the natural map $R^\times \to \F^\times$ is surjective, then we have an isomorphism
    \begin{align*}
        \tH^\nu(\Gamma^\w_{2n}(p); \Q) \cong \St^\w_{2n}(\F) \otimes \Q
    \end{align*}
    where $\nu$ is given by:
\begin{align*}
    \nu = r(n^2+n) + 2cn^2 -n
\end{align*}
where
\begin{itemize}
    \item $r$ is the number of embeddings $K \hookrightarrow \R$
\item $c$ is the number of pairs of complex conjugate embeddings $K \hookrightarrow \bb{C}$ that do not factor through $\R$.
\end{itemize}

    If $\Gamma^\w_{2n}(p)$ is torsion-free, this result holds integrally, i.e. 
    \begin{align*}
        \tH^\nu(\Gamma^\w_{2n}(p); \Z) \cong \St^\w_{2n}(\F)
    \end{align*}
\end{corollary}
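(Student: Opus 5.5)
The plan is to deduce this corollary directly from Theorem \ref{thm:congsubgp} together with Borel--Serre duality. Since the structure is formal, the work is in checking the hypotheses and coefficients carefully rather than in any new construction.

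\emph{Hypotheses of Borel--Serre.} $\Gamma^\w_{2n}(p)$ is the kernel of $\Sp_{2n}(R) \to \Sp_{2n}(R/(p))$. The target is finite because $R/(p) = \F$ is a finite field; $R$ is a number ring, so its residue fields are finite. Hence $\Gamma^\w_{2n}(p)$ has finite index in $\Sp_{2n}(R)$. By Borel--Serre \cite{borel1973corners}, it is a rational duality group of dimension
\begin{align*}
\nu = r(n^2+n) + 2cn^2 - n,
\end{align*}
with dualizing module $\St^\w_{2n}(K)$. In top degree this gives
\begin{align*}
\tH^\nu(\Gamma^\w_{2n}(p); \Q) \cong \tH_0(\Gamma^\w_{2n}(p); \St^\w_{2n}(K)\otimes\Q) \cong (\St^\w_{2n}(K)\otimes \Q)_{\Gamma^\w_{2n}(p)}.
\end{align*}

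\emph{Moving $\otimes\Q$ outside.} Coinvariants are $M \otimes_{\Z[\Gamma]} \Z$, which is right exact, and $\otimes_\Z \Q$ is exact. Therefore
\begin{align*}
(\St^\w_{2n}(K)\otimes\Q)_{\Gamma} \cong (\St^\w_{2n}(K))_{\Gamma}\otimes\Q.
\end{align*}

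\emph{Applying Theorem \ref{thm:congsubgp}.} Its hypotheses hold: $R$ is Euclidean, $p$ is prime, and $R^\times \to \F^\times$ is surjective. It identifies $(\St^\w_{2n}(K))_{\Gamma^\w_{2n}(p)}$ with $\St^\w_{2n}(\F)$. Combining with the previous two steps yields $\tH^\nu(\Gamma^\w_{2n}(p); \Q) \cong \St^\w_{2n}(\F)\otimes\Q$.

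\emph{The integral statement.} If $\Gamma^\w_{2n}(p)$ is torsion-free, Borel--Serre gives that it is an honest duality group over $\Z$ with the same dualizing module, as recalled in the introduction. The same chain of isomorphisms then runs without tensoring with $\Q$, giving $\tH^\nu(\Gamma^\w_{2n}(p); \Z) \cong (\St^\w_{2n}(K))_{\Gamma^\w_{2n}(p)} \cong \St^\w_{2n}(\F)$.

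\emph{Where the difficulty lies.} All of the substance is in Theorem \ref{thm:congsubgp}, which is assumed here. The only points needing care are that the dimension $\nu$ is the one Borel--Serre give for finite-index subgroups of $\Sp_{2n}(R)$, and that coinvariants commute with $\otimes\Q$.
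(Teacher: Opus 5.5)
Your proposal is correct and follows the paper's own route: the paper obtains this corollary by combining Theorem \ref{thm:congsubgp} with top-degree Borel--Serre duality, $\tH^\nu(\Gamma^\w_{2n}(p);\Q)\cong(\St^\w_{2n}(K)\otimes\Q)_{\Gamma^\w_{2n}(p)}$, using the integral version when $\Gamma^\w_{2n}(p)$ is torsion-free. Your extra checks, that $\Gamma^\w_{2n}(p)$ has finite index and that taking coinvariants commutes with $\otimes\Q$, are details the paper leaves implicit.
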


There are many examples of pairs of rings and primes $(R,p)$ that satisfy the hypotheses of Theorem \ref{thm:congsubgp}, including: $(\Z,3), (\Z[\iota], 1+2\iota), (\Z[\sqrt{-2}], 1+\sqrt{-2})$, $(\Z[\sqrt{2}], 3+\sqrt{2})$, $(\Z[\w], 2\w+1)$, $(\Z[\w], 3\w+1)$, and so on.
A simple way to generate such examples is to take a Euclidean domain $R$ that has multiplicative norm, and take an element $p \in R$ with minimal norm such that $p$ is not a unit. Any such $p$ will necessarily be prime, and will satisfy the conditions of Theorem \ref{thm:congsubgp}. Such a prime $p$ is called a \emph{universal side divisor} of $R$.

We expect that it is also possible to use the presentation from Corollary \ref{cor:Stwpresent} to show vanishing of the coinvariants $(\St^\w_{2n}(\Q))_{\Sp_{2n}(\Z)}$ in a similar way to Lee--Szczarba's result in \cite[Theorem 1.3]{lee1976homology}, but we have not pursued this in the present article. This result is already known, for example, by work of Gunnells \cite{gunnells2000symplectic}, with a new proof via studying group actions on simplicial complexes by Brück--Sroka \cite{bruck2025apartment}.

\paragraph{Proof outline.}
An obstruction to constructing an analogue of Lee--Szczarba's resolution from Theorem \ref{prop:sharbly:intro} is that there is no clear way to generalize their proof to the symplectic group. We overcome this obstruction by studying a genus filtration on a simplicial complex closely related to the Tits buildings. The associated spectral sequence yields a finite resolution of $\St^\w_{2n}(\F)$ involving (symplectic) Steinberg modules of vector spaces of strictly smaller dimension, which may be a result of independent interest. The combinatorial arguments used to analyze the associated spectral sequence are inspired by methods used in Section 10 of \cite{bruck2023presentation}.

\begin{prop}
\label{prop:introsteinbergres}
    Let $K$ be a field and let $V$ be a symplectic $K$-vector space of genus $n$. We have a resolution of $\Z[\Sp_{2n}(K)]$-modules
    \begin{equation*}
       0 \ra \St(V)  \longrightarrow  \bigoplus_{g(W) = n-1} \St(W) \otimes \St^{\w}(W^{\perp})  \longrightarrow  \dots  \longrightarrow  \bigoplus_{g(W) = 1} \St(W) \otimes \St^{\w}(W^{\perp})  \longrightarrow  \St^{\w}(V) \to 0 
    \end{equation*}  
    where the direct sums are taken over symplectic subspaces of $V$, and $g(W)$ denotes the genus of a symplectic subspace $W \subset V$. 
\end{prop}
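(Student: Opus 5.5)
The approach is to obtain the sequence as the $E^1$-page of the spectral sequence of a filtration of the ordinary Tits building $\bb{T}(V)$ of the $2n$-dimensional space $V$ by genus. The sequence must be exact, and the leftmost term $\St(V)$ must be its only kernel.

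\textbf{The filtration.} For a subspace $U \subset V$, let $g(U)$ be half the rank of $\w|_U$. Genus is monotone: if $U \subseteq U'$ then $g(U) \le g(U')$. Let $X_k \subseteq \bb{T}(V)$ be the full subcomplex on the proper nonzero subspaces of genus $\le k$. Then
\begin{align*}
    \bb{T}^\w(V) = X_0 \subseteq X_1 \subseteq \dots \subseteq X_n = \bb{T}(V),
\end{align*}
since genus $0$ means isotropic, and flags of isotropic subspaces are exactly $\bb{T}^\w(V)$. The relative homology groups $\tH_*(X_k, X_{k-1})$ give a spectral sequence converging to $\widetilde{\tH}_*(\bb{T}(V))$. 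By Solomon--Tits this is $\St(V)$, concentrated in degree $2n-2$. The whole sequence is $\Sp_{2n}(K)$-equivariant.

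\textbf{Identifying the relative terms.} I would prove
\begin{align*}
    \tH_*(X_k, X_{k-1}) \cong \bigoplus_{g(W)=k} \widetilde{\tH}_{*-1}\big(\bb{T}(W) * \bb{T}^\w(W^\perp)\big),
\end{align*}
with the sum over symplectic subspaces $W$ of genus $k$. By Solomon--Tits and the Künneth formula for joins, the right-hand side is $\St(W) \otimes \St^\w(W^\perp)$, concentrated in degree $(2k-2)+(n-k-1)+2 = n+k-1$. (For $k=n$ I use the convention $\St^\w(0)=\Z$ and $\bb{T}^\w(0)=\emptyset$, which gives the term $\St(V)$ in degree $2n-1$.)

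The geometric input is the link of a genus-$k$ symplectic $W$ of dimension $2k$. Every proper subspace of $W$ has genus $<k$, so the downward link is $\bb{T}(W)$, sitting inside $X_{k-1}$. The subspaces of $V$ properly containing $W$ and having genus exactly $k$ are precisely $W \oplus I$ with $I \subseteq W^\perp$ nonzero isotropic, so the upward link within genus $k$ is $\bb{T}^\w(W^\perp)$.

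To exploit this, I would pass from $X_{k-1}$ to $X_k$ in two stages:
\begin{enumerate}
    \item Add the non-symplectic genus-$k$ subspaces $U$ (those with $\rad U \neq 0$), one dimension at a time. Show that this does not change the homotopy type relative to $X_{k-1}$, so that $X_k$ is equivalent, relative to $X_{k-1}$, to $X_{k-1}$ with the genus-$k$ symplectic vertices attached.
    \item Attach the symplectic genus-$k$ vertices $W$. They are pairwise incomparable, so each is coned off independently along a link equivalent to $\bb{T}(W) * \bb{T}^\w(W^\perp)$.
\end{enumerate}

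The main obstacle is stage 1. For a non-symplectic $U$ of genus $k$, the relevant link should be contractible: it consists of genus-$<k$ subspaces of $U$ together with the appropriate larger genus-$k$ subspaces. I would first try a Quillen-fiber or discrete-Morse argument, conning this link off via the maps $U' \mapsto U' + \rad U$ or $U' \mapsto U' \cap (\rad U)^\perp$, and possibly reorganising the filtration as a poset map to "symplectic part modulo radical". I would follow the combinatorics of Section 10 of \cite{bruck2023presentation}. If this ordering fails, I would instead use a Mayer--Vietoris decomposition over the symplectic complements chosen inside each $U$.

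\textbf{Conclusion.} Granting the identification, the $E^1$-page is concentrated on the line of total degree $p+q = n+p-1$, where $p=k$ is the filtration degree. Its columns are
\begin{align*}
    \St^\w(V) \ (p=0), \qquad \bigoplus_{g(W)=k} \St(W)\otimes\St^\w(W^\perp) \ (1 \le p=k \le n-1), \qquad \St(V) \ (p=n).
\end{align*}
Because all these terms lie on a single line, every $d^r$ with $r \ge 2$ vanishes, so $E^2 = E^\infty$. The $d^1$ differential is a chain complex whose homology is $\widetilde{\tH}_*(\bb{T}(V))$, shifted. That homology is $\St(V)$ in total degree $2n-2$ and zero elsewhere.

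Hence the complex is exact except at one position. Unwinding the degree shift of $d^1$, the surviving homology is a kernel at the $p=n$ end, and it is identified with $\widetilde{\tH}_{2n-2}(\bb{T}(V)) = \St(V)$. This recovers the left end $0 \to \St(V) \to \cdots$. The right end is the natural surjection onto $\St^\w(V) = \tH_{n-1}(X_0)$, which is the $d^1$ edge map $\tH_n(X_1,X_0) \to \tH_{n-1}(X_0)$.

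Finally, equivariance of the filtration makes every map one of $\Z[\Sp_{2n}(K)]$-modules.
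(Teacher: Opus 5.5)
Your strategy is the paper's: filter by genus, identify the relative terms as $\bigoplus_{g(W)=k}\St(W)\otimes\St^\w(W^\perp)$ on the single row $q=n-1$, and read off exactness. The difference is that you run it on the subspace poset $\bb{T}(V)$ instead of the line complex $\mc{K}(V)$. It can be made to work, but as written there are two problems.

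\textbf{The top of the filtration is wrong.} A proper subspace of $V$ has genus at most $n-1$, so $X_{n-1}=X_n=\bb{T}(V)$ and $\tH_*(X_n,X_{n-1})=0$. Since $V$ is not a vertex of $\bb{T}(V)$, the convention $\St^\w(0)=\Z$ produces no column at $p=n$. With the $E^1$-page you wrote down, your conclusion does not follow. A column $\St(V)$ at $p=n$ has total degree $2n-1$, where $\widetilde{\tH}_*(\bb{T}(V))$ vanishes. Meanwhile the abutment $\St(V)$ in degree $2n-2$ would have to survive at $p=n-1$, making the row non-exact at $\bigoplus_{g(W)=n-1}$.

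The correct bookkeeping has columns $p=0,\dots,n-1$ only, with $E^2=E^\infty$. All $E^2$ terms vanish except $E^2_{n-1,n-1}=\ker\bigl(d^1\colon E^1_{n-1,n-1}\to E^1_{n-2,n-1}\bigr)\cong\widetilde{\tH}_{2n-2}(\bb{T}(V))=\St(V)$. This is exactly the claimed resolution, with $\St(V)\to\bigoplus_{g(W)=n-1}$ the edge map $\widetilde{\tH}_{2n-2}(X_{n-1})\to\tH_{2n-2}(X_{n-1},X_{n-2})$. The paper does get a genuine $p=n$ column, because its total space $\mc{K}(V)$ is contractible and its last step is $(\mc{K}(V),\mc{I}^{n-1}(V))=(\mc{K}(V),\mc{L}(V))$ with $\mc{L}(V)\simeq\bb{T}(V)$. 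You mixed the two normalizations.

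\textbf{Stage 1 is left open, but it closes with the map you proposed.} Let $P_{k-1}$ be the poset of proper nonzero subspaces of genus $<k$, and $N_k$ the poset of non-symplectic genus-$k$ ones. Apply Quillen's fiber lemma to $P_{k-1}\hookrightarrow P_{k-1}\cup N_k$. The fiber over $U\in N_k$ is the poset of nonzero $x\subseteq U$ with $g(x)<k$. Since $\rad U\perp U$, the Gram matrix of $x+\rad U$ has the same rank as that of $x$. Hence $x\le x+\rad U\ge\rad U$ is a conical contraction inside the fiber. Equivalently, add the $U$ in decreasing dimension; each link is this fiber joined with the larger ones, hence contractible.

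The order matters. The symplectic $W$ must be attached to $X_{k-1}\cup N_k$, not to $X_{k-1}$, because their upward link $\{W\oplus I\}\cong\bb{T}^\w(W^\perp)$ consists of elements of $N_k$. Attached to $X_{k-1}$ alone, the link would be just $\bb{T}(W)$. So your phrase ``equivalent to $X_{k-1}$ with the symplectic vertices attached'' must be read this way. With that, your Stage 2 and the K\"unneth formula for joins give the relative terms.

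\textbf{Comparison.} Via the span map, $\mc{I}^g(V)\simeq X_g$ for $g\le n-1$, so the two filtrations agree up to homotopy. The paper does the relative computation in the line model, in three steps:
\begin{enumerate}
\item it inserts the intermediate complex $\mc{I}^{g+\frac12}(V)$ of ``split'' simplices;
\item it shows $\mc{I}^{g+1}/\mc{I}^{g+\frac12}$ is contractible with the Morse-type Lemma~\ref{Morse}, using the statistic $n(\sigma)-n(\rad\sigma)$;
\item it computes $(\mc{I}^{g+\frac12},\mc{I}^g)$ as the double complex $\bigoplus_W\tC_\bullet(\mc{K}(W),\mc{L}(W))\otimes\tC_\bullet(\mc{I}^0(W^\perp),*)$.
\end{enumerate}
Your poset argument is shorter. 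What the line model buys is explicit cycles $\llb v_0,\dots,v_{2g+1}\rrb*\llb u_1,\dots\rrb^\w$ built from lines. The paper needs these in Proposition~\ref{Eq1Diff} to compute the differential in Sharbly form. In your model that would require translating apartment classes through the span map.
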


In work in preparation, Miller--Scalamandre--Sroka proved that the Koszul dual of an equivariant ring built out of symplectic Steinberg modules (see \cite[Theorem 1.5]{miller2025rank}) are the even Steinberg modules for the special linear group. From this perspective, the resolution constructed in Proposition \ref{prop:introsteinbergres} can be viewed as a Koszul resolution. Likely, one can use Proposition \ref{prop:introsteinbergres} to give an alternative derivation of Miller--Scalamandre--Sroka’s computation of the Koszul dual of symplectic Steinberg modules. 

%%----------
%INSERT REFERENCE
%%----------

Proposition \ref{prop:introsteinbergres} is proved in Section \ref{sec:steinbergres}, and a description of the differential on it is given in Proposition \ref{Eq1Diff}.

The above resolution then allows us to inductively assemble projective resolutions of its individual terms to get a projective resolution of the symplectic Steinberg module. The process of assembling resolutions is described in Section \ref{glueprocess}, and uses a fairly standard homological algebra argument. 

The next obstruction then is to get an explicit description of the differential on this resolution. This is a fairly involved process -- owing to the large number of indices involved in the description of the groups of the resolution and the number of maps that arise from the assembly process described in Section \ref{glueprocess} -- and takes up most of Section \ref{sec:sharblypf}. See Theorem \ref{thm:sharblyres} for a description of the differential.

%Theorem \ref{thm:introsharblyres} is proved in section \ref{sec:sharblypf}. 

Theorem \ref{thm:congsubgp} is proved in Section \ref{sec:topdegcoh}. We outline the proof as follows: Suppose $\F$ is the quotient of a Euclidean domain $R$ by a prime $p \in R$ and let $K$ be the field of fractions of $R$. There is a map $\bb{T}^\w_{2n}(K) \to \bb{T}^\w_{2n}(\F)$ that intersects an isotropic subspace of $K^{2n}$ with $R^{2n}$, and then reduces it mod $p$. Passing to homology, and observing that the map is $\Gamma^\w_{2n}(p)$-invariant (with $\Gamma^\w_{2n}(p)$ acting trivially on the codomain), we get a map
\begin{align*}
    (\St^\w_{2n}(K))_{\Gamma^\w_{2n}(p)} \to \St^\w_{2n}(\F)
\end{align*}
It is not hard to see that this map is surjective. We use the presentation of $\St^\w_{2n}(\F)$ given by our resolution to show that this map is in fact an isomorphism when all units of $\F$ can be lifted to units of $R$. This property is crucial in our proof, and in fact this map is known to not be injective when for example, $R=\Z$ and $p>3$, by recent work of Capovilla-Searle \cite[Theorem A]{capovilla2026top}.  

%%------------------------------
%%INSERT REFERENCE
%%------------------------------

Our proof requires a cyclicity result for $\St^\w_{2n}(K)$ as an $\Sp_{2n}(R)$-module when $R$ is a Euclidean domain, which was proven by Gunnells \cite[Theorem 4.11]{gunnells2000symplectic}, with a new proof using high-connectivity of certain simplicial complexes later given for the case $R = \Z$ by Brück--Sroka \cite{bruck2025apartment}.

%%--------------------------------------------------------------------
\paragraph{Acknowledgements.} I would like to thank my advisor Jenny Wilson, for the unending support, encouragment, for lending so many hours to listening to me talk about math, and for close readings of this paper. I would also like to thank Jeremy Miller and Peter Patzt for suggesting the topic of this paper, sharing key ideas used in this project, and for feedback on drafts of this paper. This project greatly benefitted from many discussions with them. I thank Matthew Scalamandre and Robin Sroka for helpful discussions, and Gahl Shemy for feedback on a draft of this paper. I am grateful for the support of the University of Michigan's Rackham one-term dissertation fellowship and the Rackham predoctoral fellowship. I am grateful for travel support from the NSF grant DMS-2142709.
%%--------------------------------------------------------------------

%%--------------------------------------------------------------------
\section{A finite resolution of (symplectic) Steinberg modules}
%%--------------------------------------------------------------------
In this section we start by giving relevant background and stating our main theorem \ref{thm:introsharblyres} in more detail (Theorem \ref{thm:sharblyres}). Then we describe how to use a genus filtration on the Tits building to obtain the finite resolution stated in Proposition \ref{prop:introsteinbergres}, which will be a key input in our proof of Theorem \ref{thm:sharblyres}.
%%--------------------------------------------------------------------
\subsection{Background}\label{sec:background}
%%--------------------------------------------------------------------
We first collect relevant foundations, set up notation, and then state our constructed resolution of the symplectic Steinberg module in Theorem \ref{thm:sharblyres}.
%%----------------------------------------------------
\begin{definition}\label{defn:sympl}
    \begin{itemize}
    \item  \textbf{Symplectic Vector Spaces.}
    For a field $K$, a $K$-vector space $V$ is \emph{symplectic} if it is equipped with a symplectic bilinear form $\w$, i.e. $\w$ is bilinear, alternating, and non-degenerate.
    A finite dimensional symplectic vector space must necessarily have even dimension.
    These conditions on $\w$ are equivalent to the condition that there is a basis $\{\vec{e}_1, \vec{e}_{\bar{1}}, \vec{e}_2, \vec{e}_{\bar{2}}, \dots, \vec{e}_n, \vec{e}_{\bar{n}}\}$ for the bilinear form $\w$ such that for $1 \leq i,j \leq n$,
    \begin{align*}
        & \w(\vec{e}_i,\vec{e}_j) = 0 = \w(\vec{e}_{\bar{i}}, \vec{e}_{\bar{j}}), & \w(\vec{e}_i, \vec{e}_{\bar{j}}) = 0 = \w(\vec{e}_{\bar{j}}, \vec{e}_i) \text{ for }i\neq j,  & \hspace{1cm}  \w(\vec{e}_i, \vec{e}_{\bar{i}}) = -\w(\vec{e}_{\bar{i}}, \vec{e}_i) = 1 
    \end{align*}
    
    \item \textbf{Symplectic Bases.}
    For a symplectic $K$-vector space $V$, we call a set $\{\vec{v}_1,\vec{v}_{\bar{1}},\dots , \vec{v}_n, \vec{v}_{\bar{n}}\} \subset V$ a \emph{symplectic basis} if there is a symplectic (i.e., form-preserving) automorphism $V \to V$ that sends $\vec{e}_i \mapsto \vec{v}_i$ and $\vec{e}_{\bar{i}} \mapsto \vec{v}_{\bar{i}}$ for all $1 \leq i \leq n$.
    We use $\Sp(V)$ to denote the group of all such symplectic automorphisms of $V$.

        \item \textbf{Radical of a Subspace.} Given a symplectic $K$-vector space $(V, \w)$ and a subspace $W \subset V$, the \emph{radical of $W$} is 
        \begin{align*}
            \rad (W) := \{ \vec{v} \in W \text{ such that } \w(\vec{v},\vec{w}) = 0 \hspace{0.25cm} \forall \vec{w} \in W \}
        \end{align*}

        \item \textbf{Genus of a Subspace.} Given a symplectic vector space $V$ of dimension $2n$, we say $V$ has \emph{genus} $n$.
        In general, for a subspace $W \subset V$, the \emph{genus of $W$} is half the rank of the matrix of the restriction of the form $\w|_W$.
         
        Later we will need to index certain sums by symplectic subspaces of $V$ (i.e. subspaces $W$ where the restriction of the form $\w|_W$ to $W$ is non-degenerate). For this reason, we shall use $g(W)$ to denote the genus of a symplectic vector subspace $W \subset V$.

        \item \textbf{Isotropic Subspaces.} For a symplectic $K$-vector space $V$, a subspace $J \subset V$ is \emph{isotropic} if the restriction of the form $\w|_J$ is identically zero.
    \end{itemize}
\end{definition}
%%----------------------------------------------------

%%----------------------------------------------------
\begin{remark}
    Let $V$ be a genus $n$ symplectic vector space over a field $K$. Suppose $W$ is a symplectic subspace of $V$ of genus $k$. Then $W^{\perp}$ is a symplectic subspace of genus $n-k$, and $W \oplus W^{\perp} = V$.
\end{remark}
%%----------------------------------------------------

We shall also sometimes need to work with symplectic spaces over PIDs, as opposed to fields. For this we make the following definition.
%%----------------------------------------------------
\begin{definition}[Symplectic Space over a PID]
    Suppose ${R}$ is a PID. An ${R}$-module $M$ is said to be \emph{symplectic} if $M \cong {R}^{2n}$ for some $n \in \N$, and $M$ is equipped with the standard symplectic form. That is, $M$ has a basis
    \begin{align*}
        \{\vec{e}_1, \vec{e}_{\bar{1}}, \vec{e}_2, \vec{e}_{\bar{2}}, \dots, \vec{e}_n, \vec{e}_{\bar{n}}\}
    \end{align*}
    and the bilinear form $\w$ is such that for $1 \leq i,j \leq n$,
    \begin{align*}
        & \w(\vec{e}_i,\vec{e}_j) = 0 = \w(\vec{e}_{\bar{i}}, \vec{e}_{\bar{j}}), & \w(\vec{e}_i, \vec{e}_{\bar{j}}) = 0 = \w(\vec{e}_{\bar{j}}, \vec{e}_i) \text{ for }i\neq j,  & \hspace{1cm}  \w(\vec{e}_i, \vec{e}_{\bar{i}}) = -\w(\vec{e}_{\bar{i}}, \vec{e}_i) = 1 
    \end{align*}
We denote the group of form-preserving automorphisms of $M$ over ${R}$ as $\Sp(M)$.
\end{definition}
%%----------------------------------------------------

    Suppose $K$ is the field of fractions of ${R}$. Note that $M \cong {R}^{2n}$ is canonically embedded in $V \coloneq M \otimes_{{R}} K \cong K^{2n}$, and the form $\w$ on $M$ is the restriction of a corresponding symplectic form on $V$. 
    %Note also that any automorphism in $\Sp_{{R}}(M)$ extends uniquely to a form-preserving automorphism of $V$. We thus make the following definition.
%%----------------------------------------------------
   % \begin{definition}[Integral Symplectic Automorphisms]
   %     Let ${R}$ be a PID with field of fractions $\F$. Suppose $M$ is a symplectic ${R}$-module of genus $n$, and let $V = \F \otimes_{{R}}M$ (thus $V$ is a symplectic $\F$-vector space).
   %     We define $\Sp_{{R}}(V)$ \flag{do you need to distinguish b/w $\Sp_R$ and $\Sp_\F$? - yes, in Prop \ref{sharblyterms}.} to be the subgroup of $\Sp_\F(V)$ consisting of symplectic automorphisms of $V$ that are the extension of a symplectic automorphism of $M$ to $V$. 
   % \end{definition}
%%----------------------------------------------------

%%----------------------------------------------------
\begin{definition}[Tits building]
    Let $V$ be a finite-dimensional vector space over a field $K$. 
    \begin{itemize}
        \item Let $\mc{T}(V)$ denote the poset of proper nonzero subspaces of $V$, ordered by inclusion. Let $\bb{T}(V)$ denote the geometric realization of $\mc{T}(V)$, viewed as a simplicial complex. This is called the \emph{Tits building} for $V$.
        \item Suppose $V$ is symplectic. Let $\mc{T}^\w(V)$ denote the poset of nonzero isotropic subspaces of $V$, ordered by inclusion. Let $\bb{T}^\w(V)$ denote the geometric realization of $\mc{T}^\w(V)$, viewed as a simplicial complex. This is called the \emph{symplectic Tits building} for $V$.
    \end{itemize}
\end{definition}
%%----------------------------------------------------

%%----------------------------------------------------
\begin{definition}[Apartment Classes]
\label{Apt}
    Let $V$ be a finite-dimensional vector space over a field $K$. 
    \begin{itemize}
        \item Let $\vec{v}_1, \vec{v}_2, \dots, \vec{v}_{n}$ be an ordered basis for $V$. The \emph{apartment} corresponding to this basis is a $(n-2)$-dimensional cycle (i.e. element of the simplicial chain group) of $\bb{T}(V)$ given by 
\begin{align*}
    \sum_{\sigma \in \Sigma_{n}} (-1)^{\sgn(\sigma)} [\lin{\vec{v}_{\sigma(1)}}, \lin{\vec{v}_{\sigma(1)}, \vec{v}_{\sigma(2)}}, \dots, \lin{\vec{v}_{\sigma(1)}, \dots, \vec{v}_{\sigma(n-1)}}]
\end{align*}
where $\Sigma_n$ denotes the Weyl group of permutations of the set $\{ 1, 2, \dots, n\}$.
The reduced homology class in $\widetilde{\tH}_{n-2}(\bb{T}(V))$ determined by this cycle is called the \emph{apartment class} corresponding to this basis.

\item Suppose $V$ is symplectic of genus $n$. Let $\vec{v}_1, \vec{v}_{\bar{1}}, \dots, \vec{v}_n, \vec{v}_{\bar{n}}$ be an ordered symplectic basis for $V$. The \emph{symplectic apartment} corresponding to this basis is an $(n-1)$-dimensional cycle of $\bb{T}^\w(V)$ given by 
\begin{align*}
    \sum_{\sigma \in \mc{B}_{n}} (-1)^{\sgn(\sigma)} [\lin{\vec{v}_{\sigma(1)}}, \lin{\vec{v}_{\sigma(1)}, \vec{v}_{\sigma(2)}}, \dots, \lin{\vec{v}_{\sigma(1)}, \dots, \vec{v}_{\sigma(n)}}]
\end{align*}
where $\mc{B}_n$ denotes the Weyl group of signed permutations of the set $\{ 1, \bar{1}, \dots, n, \bar{n}\}$.
The reduced homology class in $\widetilde{\tH}_{n-1}(\bb{T}^\w(V))$ determined by this cycle is called the \emph{symplectic apartment class} corresponding to this basis.
    \end{itemize}
\end{definition}
%%----------------------------------------------------

%%----------------------------------------------------
\begin{theorem}[Solomon--Tits, \cite{solomon1969steinberg, brown1989buildings}] 
\label{SolTit}
Suppose $V$ is a finite dimensional $K$-vector space. 
\begin{enumerate}
    \item Suppose $\dim V = n$. The Tits building $\bb{T}(V)$ is homotopy equivalent to a wedge of spheres of dimension $n-2$. The \emph{Steinberg module}, denoted $\St(V)$, is defined as the reduced homology group $\widetilde{\tH}_{n-2}(\bb{T}(V))$.
    The Steinberg module is generated by apartment classes.
    
    \item Suppose $V$ is symplectic of genus $n$ (so that $\dim V = 2n$). The symplectic Tits building $\bb{T}^\w(V)$ is homotopy equivalent to a wedge of spheres of dimension $n-1$. The \emph{Symplectic Steinberg module}, denoted $\St^\w(V)$, is defined as the reduced homology group $\widetilde{\tH}_{n-1}(\bb{T}^\w(V))$. The symplectic Steinberg module is generated by symplectic apartment classes.
\end{enumerate}
\end{theorem}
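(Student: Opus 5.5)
The Solomon--Tits theorem is classical, so I would prove it by the standard building-theoretic route, treating the linear and symplectic cases in parallel.

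\textbf{Homotopy type.} Fix a chamber $C$, a maximal flag (for $\bb{T}(V)$ a complete flag, for $\bb{T}^\w(V)$ a maximal isotropic flag). Let $\mathrm{opp}(C)$ be the set of chambers opposite to $C$: flags in general position with $C$, i.e.\ flags which together with $C$ lie in a common apartment, with the longest Weyl group element relating them.
\begin{enumerate}
\item Show that the subcomplex spanned by all chambers not opposite $C$ is contractible. The cleanest way is Quillen's poset method. For $\mc{T}(V)$, the map $U \mapsto U \cap C_i$ (or $U + C_j$), for suitable flag members, gives a chain of monotone poset maps $f \le g \ge h$ connecting the identity to a constant map; such comparable maps are homotopic.
\item Each chamber opposite $C$ is a top-dimensional simplex whose entire boundary lies in that subcomplex. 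Gluing on top cells along their boundaries gives $\bb{T}(V) \simeq \bigvee_{\mathrm{opp}(C)} S^{n-2}$, and likewise $\bb{T}^\w(V)\simeq \bigvee S^{n-1}$ in the symplectic case. The dimensions are the ranks $n-1$ and $n$ minus one.
\item As a cross-check of the dimension count in the linear case, use the joins $\mc{T}_{<U}*\mc{T}_{>U}$ and induct on $\dim V$.
\end{enumerate}

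\textbf{Generation by apartments.} By the wedge decomposition, $\widetilde{\tH}_{\mathrm{top}}$ is free with one generator per chamber $D$ opposite $C$. The generator is the class of the unique apartment $\Sigma(C,D)$ containing $C$ and $D$. This is exactly the apartment class of Definition~\ref{Apt}, for a basis adapted to both flags, which is a symplectic basis in the symplectic case.

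To see this, take the long exact sequence of the pair ($\bb{T}$, contractible subcomplex). It identifies $\widetilde{\tH}_{\mathrm{top}}(\bb{T})$ with relative chains supported on opposite chambers. The apartment $\Sigma(C,D)$ meets the opposite set only in $D$: any other chamber of $\Sigma(C,D)$ has Weyl distance from $C$ less than the longest element. So the apartment classes map to a basis, and hence they generate.

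\textbf{Main obstacle.} The main obstacle is the contractibility of the non-opposite subcomplex in the symplectic case, where the retraction maps must preserve isotropy. I would first try $J \mapsto (J \cap C_k^{\perp}) + C_k$-type maps along the flag $C$. If those fail to stay monotone and isotropic, I would fall back on the general theory of buildings (Brown's book \cite{brown1989buildings}): spherical buildings of rank $r$ are Cohen--Macaulay, and the opposite-chamber argument holds for any Weyl group, $\Sigma_n$ or $\mc{B}_n$.
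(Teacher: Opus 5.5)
The paper gives no proof of this statement; it cites Solomon and Brown. Your overall architecture is the standard one from those sources, and your second half is correct. Once the subcomplex $\Delta_0$ spanned by chambers not opposite $C$ is known to be contractible, $\widetilde{\tH}_{\mathrm{top}}(\bb{T})\cong \tC_{\mathrm{top}}(\bb{T},\Delta_0)$ is free on the chambers opposite $C$. The fundamental class of $\Sigma(C,D)$ maps to $\pm D$, so these apartment classes form a basis.

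\textbf{The gap.} Contractibility of $\Delta_0$ is the real content of the theorem, and step 1 does not prove it, in the linear case as much as in the symplectic one. $\Delta_0$ contains every vertex and every non-maximal flag and omits only the open chambers opposite $C$. So it is not the order complex of any subposet of $\mc{T}(V)$, and the principle that comparable poset maps are homotopic has nothing to act on. Moreover, $U\mapsto U\cap C_i$ and $U\mapsto U+C_j$ are not self-maps of $\mc{T}(V)$, since they can produce $0$ or $V$. A zigzag $\mathrm{id}\ge f\le \mathrm{const}$ on all of $\mc{T}(V)$ would make $\bb{T}(V)$ contractible. Your symplectic fallback, that spherical buildings are Cohen--Macaulay, is essentially the theorem itself, so there you are citing rather than proving. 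That is also all the paper does.

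\textbf{Repair (i): shelling.} Keep your decomposition and add closed chambers $D$ in order of increasing $\ell(w)$, where $w=\delta(C,D)$. By the gate property, a face $A$ of $D$ lies in a chamber strictly closer to $C$ iff $\mathrm{proj}_A C\neq D$. This holds iff $A$ lies in a panel of $D$ of some type $s$ with $\ell(ws)<\ell(w)$. Two distinct chambers of the same length meet only inside chambers already added. Hence each $D\neq C$ is glued along a nonempty union of its panels. If not all panels occur, this union is a cone on a vertex of $D$, hence contractible. All panels occur exactly when every simple reflection is a right descent of $w$, i.e.\ $w=w_0$, and then the gluing locus is all of $\partial D$. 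This proves $\Delta_0$ contractible for $\Sigma_n$ and $\mc{B}_n$ alike. It also supplies the fact, asserted without proof in your step 2, that every panel of an opposite chamber lies in $\Delta_0$.

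\textbf{Repair (ii): vertex removal.} Your maps are exactly right for Quillen's vertex-removal argument, which is a different decomposition.
\begin{itemize}
\item Linear case: for a hyperplane $H$, delete the lines $L\not\subseteq H$ from $\mc{T}(V)$. The remaining subposet is contractible via $U\ge U\cap H\le H$. Each deleted $L$ is attached along $\mc{T}(V)_{>L}\cong\mc{T}(V/L)$, so $\bb{T}(V)\simeq\bigvee_L\Sigma\,\bb{T}(V/L)$.
\item Symplectic case: fix an isotropic line $L$. The isotropic $J$ with $J\cap L^\perp\neq0$ form a contractible subposet via $J\ge J\cap L^\perp\le (J\cap L^\perp)+L\ge L$. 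The deleted vertices are the lines $M$ with $\w(M,L)\neq0$, and their links are $\mc{T}^\w(M^\perp/M)$.
\end{itemize}
This gives the wedge of spheres by induction on dimension, respectively genus. It does not give one sphere per opposite chamber, so on this route apartment generation must be proved inductively through the links rather than by your opposite-chamber argument.
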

%%----------------------------------------------------

We now state a slight variant of Lee--Szczarba's construction of a $\SL_n(R)$-resolution of the Steinberg module.
%%----------------------------------------------------
\begin{prop}[Lee--Szczarba, {\cite[Theorem 3.1]{lee1976homology}}]\label{prop:sharbly}
    Let ${R}$ be a PID, and $K$ its field of fractions. Suppose $V$ is a vector field over $K$ of dimension $n$.
    Then we have a resolution of $\Z[\SL_n({R})]$-modules, that we will refer to as the \emph{Sharbly resolution of $\St(V)$}
    \begin{align*}
       \dots \to \Sh_k(V) \to \dots \to \Sh_1(V) \to \Sh_0(V) \to \St(V) \to 0
    \end{align*}
    where:
    \begin{itemize}
        \item The degree $k$ term $\Sh_k(V)$ is the quotient of the free abelian group on ordered spanning sets of lines of $V$ of size $n+k$, quotiented by the relation that
    \begin{align*}
        [\lin{\vec{v}_1}, \dots, \lin{\vec{v}_{n+k}}] = (-1)^{\sgn (\sigma)}[\lin{\vec{v}_{\sigma(1)}}, \dots, \lin{\vec{v}_{\sigma({n+k})}}]
    \end{align*}
    for any permutation $\sigma$.
    \item For $k \geq 1$, the differential $\Sh_k(V) \to \Sh_{k-1}(V)$ maps a set of vectors $[\lin{\vec{v}_1}, \dots, \lin{\vec{v}_{n+k}}]$ to the usual sum where we omit one vector at a time -- if omitting a vector results in the span becoming strictly smaller than $V$, the corresponding term is taken to be 0 instead.
    \item The map $\Sh_0(V) \to \St(V)$ maps a set of vectors $[\lin{\vec{v}_1}, \dots, \lin{\vec{v}_{n}}]$ to the apartment class determined by the basis $\vec{v}_1, \dots, \vec{v}_n$.
    \end{itemize}
    Furthermore, if ${R}$ has finitely many units, then tensoring the above resolution with $\otimes_\Z \Q$ in fact yields a resolution of projective $\Q[\SL_n({R})]$-modules.
\end{prop}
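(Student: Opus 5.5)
We prove Proposition \ref{prop:sharbly}, the Lee--Szczarba resolution in the form stated. The plan has three parts: realize the complex $\Sh_\bullet(V)$ as a truncation of the chain complex of a contractible simplex, show the resulting augmented complex is exact by comparing with the Tits building, and then check projectivity after rationalizing.

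\textbf{Step 1: the simplex and its non-spanning subcomplex.} Let $\Delta$ be the full simplex whose vertex set is the set of lines of $V$; its simplices are all finite sets of lines. Its ordered (alternating) chain complex is acyclic. Let $\Delta' \subset \Delta$ be the subcomplex of sets of lines that do \emph{not} span $V$. Then $\Sh_k(V)$ is exactly the relative chain group $\tC_{n+k-1}(\Delta,\Delta')$, since a simplex with $n+k$ vertices has dimension $n+k-1$. The differential described in the statement, which drops one line at a time and sets terms with smaller span to zero, is the relative boundary map. Relative chains vanish in dimensions $<n-1$, because fewer than $n$ lines cannot span $V$.

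\textbf{Step 2: exactness.} The long exact sequence of the pair, together with contractibility of $\Delta$, gives
\begin{align*}
\tH_{j}(\Delta,\Delta') \cong \widetilde{\tH}_{j-1}(\Delta').
\end{align*}
So it suffices to show that $\Delta'$ has reduced homology concentrated in degree $n-2$, with $\widetilde{\tH}_{n-2}(\Delta') \cong \St(V)$. For this, use the poset map $\Delta' \to \mc{T}(V)$ sending a simplex to its span; this span is a proper subspace, and it is nonzero because simplices are nonempty. Apply Quillen's fiber lemma. For $W \in \mc{T}(V)$, the fiber is the set of simplices whose span lies in $W$, which is the full simplex on the lines of $W$ and hence contractible. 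Therefore $\Delta' \simeq \bb{T}(V)$, and Theorem \ref{SolTit} finishes this step. The relative complex is then exact in degrees $\geq n$, and in the bottom degree its cokernel is $\tH_{n-1}(\Delta,\Delta') \cong \St(V)$.

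The step I expect to be the main obstacle is identifying the augmentation concretely. One must check that a basis $\{\lin{\vec v_1},\dots,\lin{\vec v_n}\}$, viewed as a relative cycle, has boundary in $\Delta'$ given by the boundary of the $(n-1)$-simplex. Under the subdivision and span map this boundary goes to the apartment of Definition \ref{Apt}, up to a global sign convention. Concretely, its barycentric subdivision is the sum over orderings of flags of the partial spans $\lin{\vec v_{\sigma(1)},\dots,\vec v_{\sigma(i)}}$, which is precisely the apartment cycle. Equivariance under $\SL_n(R)$, indeed under $\GL(V)$, is automatic throughout.

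\textbf{Step 3: projectivity.} Every spanning set contains a basis of $V$, and $\SL_n(R)$ acts on the generators of $\Sh_k(V)$ with finitely many orbits of each type. The stabilizer of a spanning set of lines fixes a basis of lines up to permutation, so it embeds into monomial matrices with unit entries. This embedding uses that $R$ is a PID: each line meets $R^n$ in a rank-one summand, so the stabilizer preserves a finite set of rank-one summands spanning $V$, and this forces the stabilizer to be finite when $R^\times$ is finite. Hence each $\Sh_k(V)\otimes\Q$ is a direct sum of modules of the form $\Q[\SL_n(R)]\otimes_{\Q[H]}\Q_\chi$ with $H$ finite and $\chi$ the orientation character. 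Each such module is projective, because $\Q_\chi$ is projective over $\Q[H]$ by Maschke's theorem.
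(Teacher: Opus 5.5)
Your proposal is correct and follows essentially the argument the paper relies on: the paper identifies $\Sh_k(V)$ with relative chains of $(\mc{K}(V),\mc{L}(V))$, gets exactness from contractibility of $\mc{K}(V)$ and $\mc{L}(V)\simeq\bb{T}(V)$ via Quillen's fiber lemma (Lemma \ref{HtpyEquiv}), and gets rational projectivity from Church--Putman's Lemma \ref{CPproj}, whose proof is your finite-stabilizer-plus-Maschke argument. Step 3 has two slips that do not affect the conclusion: the stabilizer of a spanning set of lines only permutes that set and need not preserve any basis inside it (e.g.\ an order-3 element of $\SL_2(\Z)$ cyclically permutes $\lin{\vec{e}_1},\lin{\vec{e}_2},\lin{\vec{e}_1+\vec{e}_2}$, so it is not monomial), but finiteness still holds because the finite-index subgroup fixing every line of the set acts on each rank-one summand $\lin{\vec{v}}\cap R^n$ by a unit of $R$ and these lines span $V$; and the aside about finitely many orbits is not needed, and is false in general ($\SL_2(\Z)$ has infinitely many orbits of bases of lines).
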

%%----------------------------------------------------

Lee--Szczarba's original version dealt with ordered sets of unimodular vectors of $R^n$ as opposed to lines of $V$. But the version that uses lines can be shown via the same proof. The projectivity statement can be shown by using, for example, \cite[Lemma 3.2]{church2017codimension}.

%%----------------------------------------------------
\begin{notation}
    We will denote the tensor products $\Sh_k(V)\otimes_\Z \Q$, $\St(V) \otimes_\Z \Q$, and $\St^\w(V)\otimes_\Z \Q$ using $\Sh_k(V)_\Q$, $\St(V)_\Q$, and $\St^\w(V)_\Q$, respectively.
\end{notation}
%%----------------------------------------------------

We can now state our constructed resolution of the symplectic Steinberg module. To avoid clutter, we will write a Sharbly $[\lin{v_1}, \dots, \lin{v_k}]$ simply as $[v_1, \dots, v_k]$.
%%--------------------------------------------------------------
\begin{theorem}
    \label{thm:sharblyres}
    Let ${R}$ be a PID with field of fractions $K$. Suppose $V$ is a symplectic $K$-vector space of genus $n$. Then we have a resolution of $\St^\w(V)$ of $\Z[\Sp_{2n}(R)]$-modules such that:
%%---------------------------------------------------------------    
    \begin{enumerate}
        \item The degree-$d$ term in the resolution is
        %%----------------------------------------------------
    \begin{align*}
        \bigoplus \Sh_{k_1}(W_1) \otimes_\Z \dots \otimes_\Z \Sh_{k_m}(W_m),
    \end{align*} 
%%----------------------------------------------------
    where the direct sum ranges over (ordered) direct sum decompositions 
%%----------------------------------------------------
    \begin{align*}
        V = W_1 \oplus \dots \oplus W_m
    \end{align*} 
%%----------------------------------------------------
    of $V$ into perpendicular symplectic subspaces, $\Sh_{k_i}(W_i)$ is the $k_i$-th term in the Sharbly resolution for $W_i$, and 
%%----------------------------------------------------
    \begin{align*}
        d = k_1 + k_2 + \dots + k_m + n -m\\
    \end{align*}

    Here $d$ and $n$ are fixed, $m, k_1, \dots, k_m$ vary over $\Z_{\geq 0}$, and $1 \leq m \leq n$.
%%--------------------------------------------------------------
\item Suppose that
%%----------------------------------------------------
\begin{align*}
    [{v}^1_{0}, {v}^1_1, \dots, {v}^1_{d_1}]\otimes \dots \otimes [{v}^m_0, {v}^m_1, \dots, {v}^m_{d_m}] \in \Sh_{k_1}(W_1) \otimes_\Z \dots \otimes_\Z \Sh_{k_m}(W_m)
\end{align*}
%%----------------------------------------------------
is a simple tensor in one of the terms of the resolution. Then the image of the differential on this term is the sum of the following two types of terms:
%%--------------------------------------------------------
\begin{itemize}
    \item \emph{Omit-terms:} Suppose ${v}^j_i$ is such that the span of ${v}^j_0, \dots, \widehat{{v}^j_i}, \dots, {v}^j_{d_j}$ is the same as the span of $v^j_0, \dots, v^j_{d_j}$. Then the \emph{omit-term} corresponding to $v^j_i$ is
%%----------------------------------------------------
    \begin{align*}
        (-1)^{d_1 -1 + \dots + d_{j-1}-1}(-1)^i [v^1_{0}, v^1_1, \dots, v^1_{d_1}]\otimes \dots \otimes [v^j_0, \dots, \widehat{v^j_i}, \dots, v^j_{d_j}] \otimes \dots \otimes [v^m_0, v^m_1, \dots, v^m_{d_m}]
    \end{align*}
%%----------------------------------------------------
    \item \emph{Split-terms:} Suppose $v^j_{i_0}, v^j_{i_1}, \dots, v^j_{i_p}$ are vectors such that $v^j_0, \dots, \widehat{v^j_{i_0}}, \dots, \widehat{v^j_{i_p}}, \dots, v^j_{d_j}$ spans a proper nonzero symplectic subspace $X_j$ of $W_j$. 
    Let 
%%----------------------------------------------------
    \begin{align*}
        \lin{w^j_0}, \dots, \lin{w^j_p}
    \end{align*}
%%----------------------------------------------------
    be the projections of
%%----------------------------------------------------
    \begin{align*}
        \lin{v^j_{i_0}}, \dots, \lin{v^j_{i_p}}
    \end{align*}
%%----------------------------------------------------
    respectively onto $X_j^\perp \subset W_j$.
    Then the \emph{split-term} corresponding to these vectors is
%%----------------------------------------------------
    \begin{IEEEeqnarray*}{l}
       \hspace*{-3em} (-1)^\nu [v^1_{0}, \dots, v^1_{d_1}]\otimes \dots \otimes [v^j_0, \dots, \widehat{v^j_{i_0}}, v^j_{i_0+1}, \dots, \widehat{v^j_{i_1}}, \dots, v^j_{i_p-1}, \widehat{v^j_{i_p}}, \dots, v^j_{d_j}] \otimes [w^j_0, \dots w^j_p] \otimes \dots\otimes [v^m_0, \dots, v^m_{d_m}]
    \end{IEEEeqnarray*}
%%----------------------------------------------------
    with
%%----------------------------------------------------
    \begin{align*}
        \nu = (d_1-1 + d_2-1 + \dots + d_{j-1}-1) + pd_j + \sgn(\sigma) + (d_{j+1} + d_{j+2} + \dots + d_m)
    \end{align*}
%%----------------------------------------------------
    where $\sigma$ is the shuffle permutation that shuffles $v^j_{i_0}, \dots, v^j_{i_p}$ to the first $p+1$ positions in $[v^j_0, \dots, v^j_{d_j}]$. (Here we are using the convention that $\sgn(\sigma) = 0$ if $\sigma$ is an even permutation and $\sgn(\sigma) = 1$ if $\sigma$ is odd).
\end{itemize}
%%----------------------------------------------------
    \end{enumerate}
%%--------------------------------------------------
Furthermore, if $R$ has finitely many units, then tensoring this resolution with $\otimes_\Z \Q$ in fact yields a resolution of projective $\Q[\Sp_{2n}(R)]$-modules.
\end{theorem}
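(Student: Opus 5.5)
We build the resolution by induction on the genus $n$. The input is the finite resolution of Proposition \ref{prop:introsteinbergres}, whose proof we take as given (it comes from the genus filtration on a complex related to $\bb{T}^\w(V)$). That proposition gives an exact complex
\begin{align*}
0 \ra \St(V) \ra \bigoplus_{g(W)=n-1} \St(W)\otimes \St^\w(W^\perp) \ra \dots \ra \bigoplus_{g(W)=1}\St(W)\otimes\St^\w(W^\perp) \ra \St^\w(V)\ra 0 .
\end{align*}
The base case $n=1$ is immediate. For $n=1$, $\bb{T}^\w(V)=\bb{T}(V)$ is the discrete set of lines, so $\St^\w(V)=\St(V)$, and the Sharbly resolution $\Sh_\bullet(V)$ of Proposition \ref{prop:sharbly} is exactly the claimed complex with $m=1$, $d=k_1$.

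For the inductive step, each term of the above resolution is resolved separately. In each term $\St(W)\otimes\St^\w(W^\perp)$ we tensor (over $\Z$) the Sharbly resolution $\Sh_\bullet(W)$ with the inductively constructed resolution of $\St^\w(W^\perp)$. All modules involved are free abelian, so the Künneth theorem shows this tensor product is a resolution of $\St(W)\otimes\St^\w(W^\perp)$. We then induce it up from the stabilizer of the decomposition $W\oplus W^\perp$ in $\Sp_{2n}(R)$, which is exactly how the direct sum over $W$ arises. The term $\St(V)$ (the case $W=V$, of genus $n$) gets $\Sh_\bullet(V)$ itself.

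Next we assemble these resolutions along the finite exact complex. This is the standard Cartan--Eilenberg / double-complex (or iterated mapping cone) construction. We lift each map of the exact complex to a chain map between the chosen resolutions, together with higher homotopies so that the composites vanish. The total complex is then a resolution of $\St^\w(V)$; exactness follows from the spectral sequence of the double complex, whose rows are the given exact complex. Unwinding the inductive decomposition $W^\perp=W_2\oplus\dots\oplus W_m$, the term in total degree $d$ is $\bigoplus \Sh_{k_1}(W_1)\otimes\dots\otimes\Sh_{k_m}(W_m)$. The degree count $d=\sum k_i + n - m$ holds because each splitting-off of a summand shifts the homological degree by its position in the length-$(n-1)$ complex. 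We check this bookkeeping by induction: $d=k_1+(g(W_1)-1)+d'$ with $d'=\sum_{i\ge2}k_i+(n-g(W_1))-(m-1)$.

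For projectivity over $\Q$ when $R^\times$ is finite, each summand $\Sh_{k_1}(W_1)\otimes\dots\otimes\Sh_{k_m}(W_m)$ is a permutation module on tuples of ordered line-configurations, twisted by a sign. We apply \cite[Lemma 3.2]{church2017codimension}: it suffices that the stabilizers in $\Sp_{2n}(R)$ of such a tuple are finite. They are, because the lines span $V$ and the stabilizer permutes finitely many lines and scales generators by units. Hence the rationalized modules are projective.

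The main obstacle is the explicit differential. The lifted chain maps must be chosen concretely, and a natural choice gives a complex where no higher homotopies are needed; checking this is where the work lies. On the resolution side, the map $\St(W)\otimes\St^\w(W^\perp)\to \St(W')\otimes\St^\w(W'^\perp)$ with $W'\subset W$ should lift to the \emph{split} operation. This operation removes the vectors spanning the complementary symplectic piece $X^\perp$, projects them onto $X^\perp$, and forms $[\dots]\otimes[w_0,\dots,w_p]$. The Sharbly differential then contributes the \emph{omit-terms}. First I would use Proposition \ref{Eq1Diff} to identify the differential of the finite resolution on apartment-type generators. Then I would verify directly that omit-terms plus split-terms square to zero. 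This verification needs $\Sp$-equivariance, compatibility with the augmentation to $\St^\w(V)$, and the sign bookkeeping producing $\nu$ via the shuffle sign $\sigma$ and the Koszul signs $(d_i-1)$ and $d_i$ from passing across the other tensor factors. Once $\partial^2=0$ holds and the filtration by $m$ recovers the double complex above, the explicit complex is a resolution.
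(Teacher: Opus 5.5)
Your overall architecture matches the paper's: the finite resolution \eqref{Steinbergs}, tensoring $\Sh_\bullet(W)$ with the inductive resolution of $\St^\w(W^\perp)$, assembling along \eqref{Steinbergs}, and using explicit maps in place of projectivity in the integral case. Your degree count and finite-stabilizer projectivity argument are fine. The gap is in the step you defer, which is where the content of the theorem lies.

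\textbf{The higher homotopies cannot be avoided.} Your claim that a natural choice of lifts needs no higher homotopies is false for the differential in the statement. Index terms by $a=g(W_1)$, so that column $a$ is $F_{a,\bullet}$.
\begin{itemize}
\item Omit-terms, and split-terms on later factors, preserve $a$. By the inductive hypothesis they give the differential of $F_{a,\bullet}$ (Lemma \ref{d_0,-1}).
\item A split-term on the first factor whose remaining vectors span a genus $a-r$ subspace maps $F_{a,j}\to F_{a-r,j+r-1}$. Only $r=1$ is a horizontal lift. The terms with $r\geq 2$ are exactly higher homotopies, and they are indispensable.
\end{itemize}
To see why they are needed, split off $S$ (leaving a genus $a-1$ span $X$) and then $S'$ (leaving a genus $a-2$ span $Y\subset X$). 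This produces terms $[\dots]\otimes[\mathrm{proj}_{Y^\perp\cap X}S']\otimes[\mathrm{proj}_{X^\perp}S]\otimes\cdots$. These are in general nonzero, and they cancel only against first splitting off the genus-two piece $S\cup S'$ and then re-splitting the new factor.

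So what must be proved is the family of twisted identities $\sum_{r+s=t}d^{a-r}_{-s,s-1}\circ d^a_{-r,r-1}=0$ of Proposition \ref{statingdr}, not the commutativity of a double complex. The paper proves these in Lemma \ref{d_r,r-1}. It matches each term of $d^{a-t}_{0,-1}\circ d^a_{-t,t-1}$ with a unique term of $\sum_{s>0}d^{a-r}_{-s,s-1}\circ d^a_{-r,r-1}$ in four cases: omit or split in a later factor, omit from one of the two new factors, or re-split the new factor $[w_0,\dots,w_p]$. It then checks that the $\nu$-signs are opposite. Your proposal only announces this verification.

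\textbf{Your exactness argument uses the wrong filtration.} Filtering by $m$ leaves only omit-terms in the associated graded. Its $E^1$ page is therefore the complex $\bigoplus_m\bigoplus_{W_1\oplus\dots\oplus W_m=V}\St(W_1)\otimes\dots\otimes\St(W_m)$, with differential induced by splitting. This is not the complex of Proposition \ref{steinbergres}, and you have not shown it is exact. The filtration that recovers the assembly is by $g(W_1)$:
\begin{itemize}
\item its associated graded is the columns $F_{a,\bullet}$;
\item its $E^1$ page is \eqref{Steinbergs}, once you check via Proposition \ref{Eq1Diff} that the genus-one splits on $F_{a,0}$ lift that map.
\end{itemize}
With this filtration and the twisted identities, exactness follows as in Lemma \ref{definingdr}. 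Because the explicit maps are defined over $\Z$, the same argument gives the integral statement without any projectivity.
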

%%--------------------------------------------------------------------

%%--------------------------------------------------------------------
\subsection{Simplicial Complexes Associated to Symplectic Vector Spaces}
\label{sec:simpcpx}
%%--------------------------------------------------------------------

Here and throughout the rest of this paper, we will define simplicial complexes by specifying that their simplices are certain sets.
What we mean by this is that the $k$-simplices are such sets containing $(k + 1)$-elements, and the face relations between simplices are simply inclusions of sets.

%%----------------------------------------------------
\begin{definition}
    \begin{itemize}
    \item For a poset $\mc{P}$, let $|\mc{P}|$ denote its geometric realization. Thus $|\mc{P}|$ is the simplicial complex whose vertices correspond to elements of $\mc{P}$, and $k$-simplices correspond to length $(k+1)$ ordered chains $p_0<p_1<\dots<p_k$ of elements in $\mc{P}$.
    For an order-preserving map of posets $f: \mc{P} \to \mc{Q}$, let $|f|: |\mc{P}| \to |\mc{Q}|$ denote the corresponding map on geometric realizations.
        \item Suppose $X$ is a simplicial complex. We shall use $\mc{P}(X)$ to denote the associated poset, i.e. the elements of $\mc{P}(X)$ correspond to simplices of $X$, and the order relation comes from face relations.
        \item Suppose $f: \mc{P} \to \mc{Q}$ is an order-preserving map of posets. For an element $q \in \mc{Q}$, let the \emph{downward fiber of $q$}, denoted $f_{\leq q}$, be the subposet of $\mc{P}$ consisting of all elements $p$ such that $f(p) \leq q$.
    \end{itemize}
\end{definition}
%%----------------------------------------------------

%%----------------------------------------------------
\begin{remark}
For a simplicial complex $\bb{X}$, we have $|\mc{P}(\bb{X})|$ is homeomorphic to $\bb{X}$. Specifically, we have a simplicial isomorphism $|\mc{P}(\bb{X})| \cong \sd(\bb{X})$, where $\sd(\bb{X})$ denotes the barycentric subdivision of $\bb{X}$.
\end{remark}
%%----------------------------------------------------
%%----------------------------------------------------
\begin{definition}\label{defn:simpcpx}
Let $K$ be a field, and $V$ a symplectic $K$-vector space.
    \begin{enumerate}
        \item Let $\mc{K}(V)$ be the simplicial complex whose vertices correspond to lines in $V$, and such that every finite set of vertices forms a simplex. 
        \item Let $\mc{L}(V)$ be the subcomplex of $\mc{K}(V)$ consisting of those simplices whose vertices span a \emph{proper} subspace of $V$.
        \item Let $\mc{I}^g(V)$ be the subcomplex consisting of simplices spanning a genus $\leq g$ subspace.
    \end{enumerate}
\end{definition}
%%----------------------------------------------------

%%----------------------------------------------------
\begin{lemma}\label{HtpyEquiv} Let $V$ be a symplectic vector space over a field $K$ of genus $n$.
    \begin{enumerate}
        \item $\mc{K}(V)$ is contractible. 
        \item $\mc{L}(V) $ is homotopy equivalent to $\bb{T}(V)$. The homology group $\widetilde{\tH}_{2n-2}(\mc{L}(V))$ is generated by the classes determined by cycles of the form
        \begin{align*}
            \sum_{\sigma \in \Sigma_{2n}} (-1)^{\sgn(\sigma)}[\lin{\vec{v}_{\sigma(1)}}, \dots, \lin{\vec{v}_{\sigma(2n-1)}}]
        \end{align*}
        where $\vec{v}_1, \dots, \vec{v}_{2n}$ is an ordered basis of $V$, and $\Sigma_{2n}$ is the Weyl group of permutations of the set $\{1, 2, \dots, 2n\}$.
        \item $\mc{I}^0(V)$ is homotopy equivalent to $\bb{T}^{\w}(V)$. The homology group $\widetilde{\tH}_{n-1}(\mc{I}^0(V))$ is generated by the classes determined by cycles of the form
        \begin{align*}
            \sum_{\sigma \in \mc{B}_n} (-1)^{\sgn(\sigma)} [\lin{\vec{v}_{\sigma(1)}}, \dots, \lin{\vec{v}_{\sigma(n)}}]
        \end{align*}
        where $\vec{v}_1, \vec{v}_{\bar{1}}, \dots, \vec{v}_n, \vec{v}_{\bar{n}}$ is an ordered symplectic basis of $V$, and $\mc{B}_n$ is the Weyl group of signed permutations of the set $\{1, \bar{1}, \dots, n, \bar{n}\}$.
    \end{enumerate}
\end{lemma}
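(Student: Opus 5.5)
We prove the three parts separately. In each case we compare a "complex of sets of lines" with a poset of subspaces via the span map, and apply Quillen's fiber lemma. Throughout, we use the Remark that $|\mc{P}(\bb{X})| \cong \sd(\bb{X})$, so a simplicial complex may be replaced by its face poset.

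\textbf{Part (1).} $\mc{K}(V)$ is the full simplex on the (possibly infinite) set of lines of $V$. Any finite subcomplex lies in a finite full simplex, which is contractible, so all homotopy groups vanish. By Whitehead's theorem $\mc{K}(V)$ is contractible. Alternatively, fix a line $\ell$; then $\mc{K}(V)$ is the cone on the full subcomplex spanned by the other lines, with cone point $\ell$.

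\textbf{Part (2).} Consider the order-preserving map $f\colon \mc{P}(\mc{L}(V)) \to \mc{T}(V)$ sending a simplex to the span of its lines; this span is a proper nonzero subspace by definition of $\mc{L}(V)$. For $U \in \mc{T}(V)$, the downward fiber $f_{\leq U}$ is the face poset of the full simplex on the lines contained in $U$. This is $\mc{P}(\mc{K}(U))$, which is contractible by part (1). Quillen's fiber lemma then shows $|f|$ is a homotopy equivalence $\mc{L}(V) \simeq \bb{T}(V)$.

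For the generators, we track the apartment cycle of a basis $\vec{v}_1,\dots,\vec{v}_{2n}$ through the equivalence. The given cycle is the boundary sphere of the $(2n-1)$-simplex on $\lin{\vec{v}_1},\dots,\lin{\vec{v}_{2n}}$, i.e. the sum over its $(2n-2)$-faces. Its barycentric subdivision is the sum over flags of faces. Applying $f$ sends a flag of faces to the flag of spans, and each span of $j$ basis lines has dimension $j$. The result is exactly the apartment of Definition \ref{Apt} for the same basis, up to a global sign fixed by the chosen orientation conventions. Since apartment classes generate $\St(V)$ by Theorem \ref{SolTit}, these cycles generate $\widetilde{\tH}_{2n-2}(\mc{L}(V))$.

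\textbf{Part (3).} The argument is identical, using the map $\mc{P}(\mc{I}^0(V)) \to \mc{T}^\w(V)$ that sends a simplex to its span. A simplex of $\mc{I}^0(V)$ spans a genus-$0$ subspace, i.e. an isotropic one, and it is nonzero. For an isotropic $U$, the downward fiber is again the full simplex on the lines in $U$, because every subset of lines in $U$ spans an isotropic subspace. So the fiber is contractible, and the fiber lemma gives $\mc{I}^0(V) \simeq \bb{T}^\w(V)$.

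For the generators, take a symplectic basis. The given cycle is the sum over ordered tuples $(\lin{\vec{v}_{\sigma(1)}},\dots,\lin{\vec{v}_{\sigma(n)}})$, $\sigma \in \mc{B}_n$. Each such tuple picks one vector from each hyperbolic pair, so it spans a Lagrangian. This cycle is the boundary of the cross-polytope (octahedral sphere) on the $2n$ basis lines, and it lies in $\mc{I}^0(V)$. Its subdivision maps under the span map to the flag sum defining the symplectic apartment, again up to sign conventions. Theorem \ref{SolTit}(2) then gives generation.

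\textbf{Main obstacle.} The homotopy equivalences are routine. The step needing the most care is matching orientations and signs: the identification $\sd \cong |\mc{P}(\cdot)|$ sends a $k$-simplex to a signed sum of $(k+1)!$ flags. One must check that pushing this forward via the span map gives exactly the apartment sum with consistent signs, or at least $\pm$ it. This is harmless for the generation statement, since signs do not affect which classes generate.
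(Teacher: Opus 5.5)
Your proof is correct and matches the paper's: a cone argument for (1), and for (2) and (3) the span map from the face poset to $\mc{T}(V)$ (resp.\ $\mc{T}^\w(V)$). Its downward fibers are $\mc{P}(\mc{K}(U))$, Quillen's fiber lemma applies, and generation follows by pushing the displayed cycles forward to (symplectic) apartments. Your reading of those cycles as the subdivided boundary of the basis simplex and the cross-polytope sphere is the intended one; it agrees with the paper's later $\del\llb\cdot\rrb$ and $\llb\cdot\rrb^\w$. Read literally as sums of oriented simplices they count each facet repeatedly: in (2) the $\Sigma_{2n}$-sum is $\pm(2n-1)!$ times the boundary sphere. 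So they must be understood as sums of flags in the face poset, which is exactly what your subdivision step uses.
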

%%----------------------------------------------------
\begin{proof}
    \begin{enumerate}
        \item Pick any vertex $v$ in $\mc{K}(V)$, and note that $\sigma \cup \{v\}$ is a simplex in $\mc{K}(V)$ for any simplex $\sigma \in \mc{K}(V)$. Thus $\mc{K}(V)$ is a cone on $v$ and hence contractible.
        \item Define a map of posets $f: \mc{P}(\mc{L}(V)) \to \mc{T}(V)$ that sends a simplex $\sigma = \{\lin{\vec{v}_0}, \dots, \lin{\vec{v}_k}\}$ to the span of its vertices $\langle \vec{v}_0, \dots, \vec{v}_k\rangle$. Note that this is an order-preserving map, and that for any subspace $W \subset V$ the downward fiber $f_{\leq W}$ can be identified with $\mc{P}(\mc{K}(W))$, and is hence contractible. Hence by Quillen's Fiber Lemma (see for example, \cite[Proposition 2.5]{hatcher2017tethers}), $|f|$ is a homotopy equivalence. The assertion about $\widetilde{\tH}_{2n-2}(\mc{L}(V))$ follows by observing that the cycles in question map to apartments of $\mc{T}(V)$ under $f$, and apartment classes generate $\widetilde{\tH}_{2n-2}(\mc{T}(V))$ by Theorem \ref{SolTit}.
        \item This proof is exactly similar to the previous one. We have a map of posets $f: \mc{P}(\mc{I}^0(V)) \to \mc{T}^{\w}(V)$ that sends a simplex of $\mc{I}^0(V)$ to the span of the set of lines corresponding to its vertices. For any isotropic subspace $J$ of $V$, its downward fiber $f_{\leq J}$ is isomorphic to $\mc{P}(\mc{K}(J))$, and is hence contractible. Thus $|f|$ is a homotopy equivalence. The assertion about $\widetilde{\tH}_{n-1}(\mc{I}^0(V))$ is also similar, since the cycles in question map to symplectic apartments of $\mc{T}^{\w}(V)$ under $f$.
    \end{enumerate}
\end{proof}
%%----------------------------------------------------

%%--------------------------------------------------------------------
\subsection{A Resolution of (Symplectic) Steinberg Modules}
\label{sec:steinbergres}
%%--------------------------------------------------------------------
%%----------------------------------------------------

Now we will prove Proposition \ref{prop:introsteinbergres}, which we restate here for convenience.

\begin{prop}
\label{steinbergres}
    Let $K$ be a field, and $V$ a symplectic $K$-vector space of genus $n$. We have a resolution of $\Z[\Sp(V)]$-modules
    \begin{equation}\label{Steinbergs}
       0 \ra \St(V)  \longrightarrow  \bigoplus_{g(W) = n-1} \St(W) \otimes \St^{\w}(W^{\perp})  \longrightarrow  \dots  \longrightarrow  \bigoplus_{g(W) = 1} \St(W) \otimes \St^{\w}(W^{\perp})  \longrightarrow  \St^{\w}(V) \to 0 
    \end{equation}  
    where the direct sums are taken over symplectic subspaces of $V$.
\end{prop}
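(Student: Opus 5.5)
The plan is to run the spectral sequence of a genus filtration on the contractible complex $\mc{K}(V)$ from Definition \ref{defn:simpcpx}. A proper subspace of $V$ has genus at most $n-1$, so there is a filtration by $\Sp(V)$-invariant subcomplexes
\begin{align*}
\mc{I}^0(V) \subset \mc{I}^1(V) \subset \dots \subset \mc{I}^{n-1}(V) = \mc{L}(V) \subset \mc{I}^n(V) = \mc{K}(V).
\end{align*}
By Lemma \ref{HtpyEquiv}, $\mc{I}^0(V) \simeq \bb{T}^\w(V)$, so its only reduced homology is $\St^\w(V)$ in degree $n-1$. Since $\mc{K}(V)$ is contractible and $\mc{L}(V)\simeq \bb{T}(V)$, we get $\tH_*(\mc{K}(V),\mc{L}(V)) \cong \St(V)$, concentrated in degree $2n-1$. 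These are the two ends of the sequence (\ref{Steinbergs}).

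The filtration gives a spectral sequence with $E^1_{g,*} = \tH_*(\mc{I}^g(V),\mc{I}^{g-1}(V))$ for $1\le g\le n$, together with $E^1_{0,*}=\widetilde{\tH}_*(\mc{I}^0(V))$. It converges to $\widetilde{\tH}_*(\mc{K}(V))=0$. The key claim is that
\begin{align*}
\tH_*(\mc{I}^g(V),\mc{I}^{g-1}(V)) \cong \bigoplus_{g(W)=g} \St(W)\otimes \St^\w(W^\perp),
\end{align*}
concentrated in the single degree $(2g-2)+(n-g-1)+2 = n+g-1$. Here $2g-2$ and $n-g-1$ are the Steinberg degrees of $W$ and $W^\perp$, $+1$ comes from the join, and $+1$ from the relative degree. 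This is consistent with the endpoints: $g=0$ gives degree $n-1$ and $g=n$ gives degree $2n-1$.

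Granting the key claim, the nonzero $E^1$ terms lie on a single line of slope one. So the $E^1$ page is one chain complex $E^1_{n}\to E^1_{n-1}\to\dots\to E^1_0$ whose differentials are the connecting maps of the triples $(\mc{I}^g,\mc{I}^{g-1},\mc{I}^{g-2})$. All higher differentials vanish for degree reasons, and the abutment is zero. Hence this complex is exact, which is exactly (\ref{Steinbergs}). Equivariance is automatic because everything is natural in $\Sp(V)$.

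To prove the key claim I would decompose the relative complex according to spans. A simplex $\sigma$ of $\mc{I}^g\setminus\mc{I}^{g-1}$ spans a subspace $U$ of genus exactly $g$, and $\mc{I}^g/\mc{I}^{g-1}$ is the union over such $U$ of the pieces $\mc{K}(U)/(\mc{K}(U)\cap\mc{I}^{g-1})$. First I would use a Quillen-fiber or poset argument, mapping simplices to their spans as in the proof of Lemma \ref{HtpyEquiv}, to reduce to a statement about the poset of subspaces of genus $\le g$ relative to genus $\le g-1$.

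The subtle point is that $U$ does not determine a symplectic summand $W$: one has $U = W\oplus \rad U$ only non-canonically. I would instead organize by the pair $(W,J)$, where $W$ is a genus-$g$ symplectic subspace and $J\subset W^\perp$ is isotropic. I would try to show that, relative to genus $\le g-1$, the contribution of a fixed $W$ is the join of $(\mc{K}(W),\mc{L}(W))$ with $\mc{I}^0(W^\perp)$. Concretely, I would project lines along $W$ onto $W^\perp$ and show that the discrepancy between different complements $W$ inside a fixed $U$ is contractible modulo $\mc{I}^{g-1}$. The Künneth formula for joins then gives $\tH_{2g-1}(\mc{K}(W),\mc{L}(W))\otimes\widetilde{\tH}_{n-g-1}(\mc{I}^0(W^\perp))$ in degree $n+g-1$.

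I expect this identification of $\tH_*(\mc{I}^g,\mc{I}^{g-1})$ with the direct sum over symplectic $W$ to be the main obstacle, because of the non-uniqueness of $W$ inside a degenerate span. My first attempt would be a discrete-Morse or link-type argument in the style of Section 10 of Brück--Patzt--Sroka.
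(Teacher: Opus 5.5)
Your outer argument matches the paper's. You use the same filtration of $\mc{K}(V)$ and the same key claim, which is the paper's Lemma \ref{IgQuotient} in the same degree $n+g-1$. You also make the same observation: all nonzero $E^1$-terms sit in one row, so acyclicity of $\mc{K}(V)$ forces that row to be exact. Your remark that $\mc{I}^{n-1}(V)=\mc{L}(V)$ and your use of reduced homology in column $0$ are correct. The gap is that the key claim carries essentially all of the content of the proposition, and you do not prove it. What you give is a plan, and its one concrete mechanism (projecting lines along $W$ onto $W^\perp$ and comparing different complements inside a fixed span $U$) is not an argument. Moreover, the obstacle you single out, non-uniqueness of $W$ inside a degenerate span, is not where the difficulty lies.

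That non-uniqueness disappears once you work with simplices instead of spans. Let $\mc{I}^{g-\frac{1}{2}}(V)$ consist of $\mc{I}^{g-1}(V)$ together with the simplices whose vertices split as $\{w_1,\dots,w_p\}\cup\{j_1,\dots,j_q\}$. Here the $w$'s span a genus-$g$ symplectic subspace $W$ and the $j$'s span an isotropic subspace of $W^\perp$. For such a simplex the $j$'s are exactly the vertices lying in the radical of its span, so $W$ is canonically the span of the remaining vertices. Deleting a $w$ either preserves $W$ or lands in $\mc{I}^{g-1}(V)$. Hence the relative chains of $(\mc{I}^{g-\frac{1}{2}}(V),\mc{I}^{g-1}(V))$ split on the nose as $\bigoplus_{g(W)=g}\tC_\bullet(\mc{K}(W),\mc{L}(W))\otimes\widetilde{\tC}_\bullet(\mc{I}^0(W^\perp))$, shifted up by one. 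K\"unneth, with $\St(W)$ free, then gives your formula.

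The real work is showing that the remaining simplices contribute nothing. These are the simplices whose non-radical vertices span a degenerate subspace, for example $\{\lin{\vec{e}_1+\vec{e}_2},\lin{\vec{e}_{\bar{1}}},\lin{\vec{e}_{\bar{2}}}\}$. Equivalently, $\mc{I}^g(V)/\mc{I}^{g-\frac{1}{2}}(V)$ must be contractible. The paper proves this as follows:
\begin{itemize}
\item It filters by the number $n(\sigma)-n(\rad(\sigma))$ of vertices not in the radical of the span; this number does not increase under passing to faces.
\item At each stage it applies Lemma \ref{Morse}, with critical simplices those outside $\mc{I}^{g-\frac{1}{2}}(V)$ having no radical vertices.
\item The union of two distinct critical simplices has too many non-radical vertices to lie in the next stage.
\item The link of a critical $\sigma$ is a cone on any line of $\rad(\sigma)$. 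This radical is nonzero precisely because $\sigma\notin\mc{I}^{g-\frac{1}{2}}(V)$. Every vertex of a simplex in the link must be radical in the union, hence perpendicular to the span of $\sigma$, so a radical line of $\sigma$ can always be added.
\end{itemize}
The long exact sequence of the triple then yields the key claim. Your preliminary Quillen reduction to subspace posets is unnecessary. It also throws away exactly the vertex-level information, namely which vertices of a simplex are radical, that this argument depends on.
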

%%----------------------------------------------------
We will prove Proposition \ref{steinbergres} by studying the spectral sequence associated to the filtration $\mc{I}^0(V) \subset \mc{I}^1(V) \subset \dots \subset \mc{I}^n(V) = \mc{K}(V)$ defined in Definition \ref{defn:simpcpx}.

To do this, we need to determine the relative homology groups $\tH_\bullet(\mc{I}^{g+1}(V),\mc{I}^g(V))$. 
Our result is as follows.
%%----------------------------------------------------
\begin{lemma}
    \label{IgQuotient}
    Suppose $V$ is a genus $n$ symplectic vector space over a field $K$. For each $g \geq 0$, the homology of the pair $(\mc{I}^{g+1}(V), \mc{I}^g(V))$ is given by
    \begin{align*}
        \tH_k(\mc{I}^{g+1}(V), \mc{I}^g(V)) = \begin{cases}
            \bigoplus_{W\subset V \text{ symplectic, } g(W)=g+1} \St(W) \otimes \St^\w(W^\perp) & k=n+g \\ 0 & \text{otherwise}
        \end{cases}
    \end{align*}
\end{lemma}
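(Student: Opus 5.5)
The plan is to replace the simplicial pair by a pair of subspace posets, and then compute the relative homology by attaching subspaces one at a time, so that only symplectic $W$ of genus $g+1$ contribute.

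\textbf{Step 1: reduce to posets.} Let $\mc{Q}_{\leq h}$ be the poset of nonzero subspaces of $V$ of genus $\leq h$. As in the proof of Lemma \ref{HtpyEquiv}, the span map $f:\mc{P}(\mc{I}^{h}(V))\to \mc{Q}_{\leq h}$ is order preserving. Its downward fibers are $\mc{P}(\mc{K}(U))$, which are cones and hence contractible. Quillen's Fiber Lemma then gives homotopy equivalences $\mc{I}^h(V)\simeq|\mc{Q}_{\leq h}|$ for $h=g,g+1$. These are compatible with the inclusions, so the five lemma gives
\begin{align*}
\tH_\bullet(\mc{I}^{g+1}(V),\mc{I}^g(V))\cong \tH_\bullet(|\mc{Q}_{\leq g+1}|,|\mc{Q}_{\leq g}|).
\end{align*}
The elements of $\mc{Q}_{\leq g+1}\setminus \mc{Q}_{\leq g}$ are exactly the subspaces $U$ of genus exactly $g+1$.

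\textbf{Step 2: add the genus $g+1$ subspaces in order of decreasing dimension.} Two subspaces of the same dimension are incomparable, so all subspaces of a given dimension can be added simultaneously. At each stage the relative homology is a direct sum of local contributions
\begin{align*}
\tH_k(\mathrm{cone}(\Lambda_U),\Lambda_U)\cong\widetilde{\tH}_{k-1}(\Lambda_U).
\end{align*}
When $U$ is added, every subspace above it of genus $g+1$ is already present, and every subspace below it of genus $g+1$ is not yet present. Hence the link is the join
\begin{align*}
\Lambda_U=\big(\mc{Q}_{\leq g}\big)_{<U} * \{U'\supsetneq U : g(U')=g+1\}.
\end{align*}
Summing over all stages, the relative homology is $\bigoplus_U\widetilde{\tH}_{k-1}(\Lambda_U)$.

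\textbf{Step 3: non-symplectic $U$ contribute nothing.} Suppose $R=\rad(U)\neq 0$, and let $X\subsetneq U$ have genus $\leq g$. Since $R$ is perpendicular to all of $U$, the subspace $X+R$ has the same genus as $X$. In particular $X+R\neq U$, because $g(U)=g+1$. So we have a zigzag $X\leq X+R\geq R$ inside $(\mc{Q}_{\leq g})_{<U}$, where $R\neq 0$ is isotropic and proper. This is a conical contraction, so $(\mc{Q}_{\leq g})_{<U}$ is contractible. A join with a contractible space is contractible, so $\Lambda_U$ is contractible.

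\textbf{Step 4: compute the contribution of symplectic $U=W$.} Let $W$ be symplectic of genus $g+1$, so $\dim W=2g+2$.
\begin{itemize}
\item Every proper subspace of $W$ has dimension $\leq 2g+1$ and hence genus $\leq g$. Therefore $(\mc{Q}_{\leq g})_{<W}=\mc{T}(W)$, which by Theorem \ref{SolTit} has reduced homology $\St(W)$ concentrated in degree $2g$.
\item A subspace $U'\supsetneq W$ of genus $g+1$ can be written $U'=W\oplus J$ with $J=U'\cap W^\perp$. Comparing genus shows that $J$ must be a nonzero isotropic subspace of $W^\perp$. So the upper link is $\mc{T}^\w(W^\perp)$, whose reduced homology is $\St^\w(W^\perp)$ concentrated in degree $n-g-2$.
\end{itemize}
Both factors are wedges of spheres and the groups are free, so the join formula for reduced homology gives $\widetilde{\tH}_\bullet(\Lambda_W)=\St(W)\otimes\St^\w(W^\perp)$, concentrated in degree
\begin{align*}
2g+(n-g-2)+1=n+g-1.
\end{align*}
This contributes $\St(W)\otimes\St^\w(W^\perp)$ in relative degree $k=n+g$, and zero in all other degrees.

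\textbf{Edge cases.} When $g+1=n$ we have $W=V$ and $W^\perp=0$. The upper link is then empty, and we use the convention that the Steinberg module of the zero space is $\Z$ in degree $-1$; the formula still gives degree $k=2n-1=n+g$.

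The main obstacle is Step 2: one has to set up the stagewise relative-homology decomposition and identify the links correctly. The key verification is that, with the decreasing-dimension order, the upper part of each link consists of genus $g+1$ subspaces only, while the lower part is exactly $(\mc{Q}_{\leq g})_{<U}$.
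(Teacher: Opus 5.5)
Your proof is correct, and it takes a genuinely different route from the paper's.

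The paper stays inside the simplicial complexes, in three steps:
\begin{enumerate}
\item It inserts an intermediate complex $\mc{I}^{g+\frac{1}{2}}(V)$, whose new simplices have vertices splitting into vectors spanning a symplectic $W$ and vectors spanning an isotropic subspace of $W^\perp$.
\item It shows $\mc{I}^{g+1}(V)/\mc{I}^{g+\frac{1}{2}}(V)$ is contractible. This uses the Miller--Patzt--Wilson Morse-type lemma on a filtration by $n(\sigma)-n(\rad(\sigma))$. The key point is that a vertex in the radical of the span of $\sigma$ is a cone point for its link, which is the simplicial counterpart of your contraction $X\le X+R\ge R$.
\item It computes $\tH_\bullet(\mc{I}^{g+\frac{1}{2}}(V),\mc{I}^g(V))$ by identifying the relative chains with a shift of $\bigoplus_W \tC_\bullet(\mc{K}(W),\mc{L}(W))\otimes\tC_\bullet(\mc{I}^0(W^\perp),\{*\})$. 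It then runs a double-complex spectral sequence and finishes with the long exact sequence of the triple.
\end{enumerate}

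You instead apply Quillen's fiber lemma once to pass to subspace posets. There, adding the genus $g+1$ subspaces in decreasing dimension does two things. The non-symplectic ones disappear by a one-line poset contraction, and the symplectic ones contribute the join $\mc{T}(W)*\mc{T}^\w(W^\perp)$. So the K\"unneth formula for joins replaces the spectral sequence. This is shorter, and it even identifies the homotopy type of the quotient as a wedge of $(n+g)$-spheres.

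What the paper's route buys is explicit simplicial cycle representatives $\llb\cdots\rrb*\llb\cdots\rrb^\w$ living in $\mc{I}^{g+1}(V)$. It uses these directly in Proposition \ref{Eq1Diff} to compute the differential. In your setup you would have to transport them through the (explicit) span map.

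One expository point. In Step 2 you assert that the relative homology is the direct sum of the contributions of all stages; this is not automatic for a filtration. It holds here only because, by Step 3, every stage before the last is acyclic, so the long exact sequences of triples collapse. The last stage is dimension $2g+2$, which consists exactly of the symplectic $W$. You should say this explicitly.
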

%%----------------------------------------------------
We will prove this proposition by defining certain intermediate complexes $\mc{I}^{g+\frac{1}{2}}(V)$ such that $\mc{I}^{g}(V) \subset \mc{I}^{g+\frac{1}{2}}(V) \subset \mc{I}^{g+1}(V)$, and analyzing the homologies of the pairs $(\mc{I}^{g+1}(V), \mc{I}^{g+\frac{1}{2}}(V))$ and $(\mc{I}^{g+1}(V), \mc{I}^g(V))$.
%%----------------------------------------------------
\begin{definition}
    Suppose $V$ is a symplectic $K$-vector space of genus $n$. Let $\mc{I}^{g+\frac{1}{2}}(V)$ be the subcomplex consisting of $\mc{I}^{g}(V)$ along with simplices $\sigma = \{ \vec{w}_1, \dots, \vec{w}_p, \vec{j}_1, \dots, \vec{j}_q\}$ such that $\vec{w}_1, \dots, \vec{w}_p$ span a symplectic vector space $W \subset V$, and $\vec{j}_1, \dots, \vec{j}_q$ span an isotropic subspace $J$ of $W^\perp$.
\end{definition}
%%----------------------------------------------------
Thus we have $\mc{I}^{g}(V) \subset \mc{I}^{g+\frac{1}{2}}(V) \subset \mc{I}^{g+1}(V)$.

%%----------------------------------------------------
\begin{ex}
Let $V = \lin{\vec{e}_1, \vec{e}_{\bar{1}}, \dots, \vec{e}_n, \vec{e}_{\bar{n}}}$. The simplex $\{\lin{\vec{e}_1+\vec{e}_2}, \lin{\vec{e}_{\bar{1}}}, \lin{\vec{e}_{\bar{2}}}\}$ is in $\mc{I}^1(V)$ but not in $\mc{I}^{0.5}(V)$. But $\{\lin{\vec{e}_1}, \lin{\vec{e}_{\bar{1}}}, \lin{\vec{e}_{\bar{2}}}\}$ is in $\mc{I}^{0.5}(V)$, with $W = \langle \vec{e}_1, \vec{e}_{\bar{1}} \rangle$.
\end{ex}
%%----------------------------------------------------

First we analyse the quotient $\mc{I}^{g+1}(V)/\mc{I}^{g+\frac{1}{2}}(V)$.
%%----------------------------------------------------
\begin{lemma}\label{IgContractible}
   Let $V$ be a symplectic $K$-vector space of genus $n$. For $g \geq 0$, $\mc{I}^{g+1}(V)/\mc{I}^{g+\frac{1}{2}}(V)$ is contractible.
\end{lemma}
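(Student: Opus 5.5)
The plan is to build $\mc{I}^{g+1}(V)$ from $\mc{I}^{g+\frac12}(V)$ by attaching simplices in stages indexed by their span. I will show that each stage is a homotopy equivalence. Then the inclusion $\mc{I}^{g+\frac12}(V)\hookrightarrow \mc{I}^{g+1}(V)$ is a homotopy equivalence of simplicial complexes, hence a cofibration that is a homotopy equivalence, so the quotient $\mc{I}^{g+1}(V)/\mc{I}^{g+\frac12}(V)$ is contractible.

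\textbf{Identifying the missing simplices.} Let $\sigma\in\mc{I}^{g+1}(V)\setminus\mc{I}^{g+\frac12}(V)$. It spans a subspace $U$ of genus exactly $g+1$. Suppose $\sigma$ were "good", i.e.\ its vertices split as $\{w_i\}\cup\{j_l\}$ with $W=\lin{w_i}$ symplectic and $J=\lin{j_l}\subset W^\perp$ isotropic. Then $U=W\oplus J$, so $g(W)=g+1$ and $J=\rad(U)$. So $\sigma$ is bad exactly when no subset of its vertices spans a symplectic complement of $\rad(U)$ in $U$ with all remaining vertices lying in $\rad(U)$. In particular, if $U$ is symplectic then every simplex spanning $U$ is good (take $W=U$, $J=0$). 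Hence only subspaces $U$ of genus $g+1$ with $\rad(U)\neq 0$ contribute bad simplices.

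\textbf{Inductive attachment.} Order these subspaces $U$ by increasing dimension. At the stage for $U$, attach all simplices of $\mc{K}(U)$ spanning $U$ that are not yet present. If $U\neq U'$ have the same dimension, a simplex spanning $U$ and one spanning $U'$ can only share faces spanning proper subspaces of both. Those faces are already present by induction, or are in $\mc{I}^{g}(V)$. So all $U$ of one dimension can be attached simultaneously and independently. The new piece attached is $\mc{K}(U)$, which is contractible by Lemma \ref{HtpyEquiv}(1). It is glued along
\[A_U \;=\; \mc{L}(U)\,\cup\,\{\text{good simplices spanning }U\}.\]
Every face of $\mc{K}(U)$ spanning a proper subspace of $U$ is present at this point. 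Indeed, it has genus $\le g$, or it has genus $g+1$ with smaller dimension, or it is good. So if $A_U$ is contractible, gluing a contractible complex along a contractible subcomplex preserves homotopy type, and the induction goes through.

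\textbf{Main obstacle: contractibility of $A_U$.} This is where the real work lies; $\mc{L}(U)$ by itself is a wedge of spheres by Lemma \ref{HtpyEquiv}(2). My first attempt is a cone-type argument using a line $r\subset\rad(U)$. Adding $r$ to a good simplex keeps it good, and adding $r$ to a simplex whose span plus $r$ is still proper keeps it in $\mc{L}(U)$. The problematic simplices are those $\tau$ spanning a hyperplane $P\not\ni r$. These I would handle by a Quillen fiber lemma argument on the poset map $\mc{P}(A_U)\to\mc{Q}$, where $\mc{Q}$ sends a simplex to its span and good simplices to $U$. Alternatively, I would use a discrete-Morse/link argument in the style of Section 10 of \cite{bruck2023presentation}, pairing a simplex $\tau$ with $\tau\cup\{r\}$. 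To show the resulting links are contractible, I would write $U=S\oplus\rad(U)$ and project $P$ onto $S$. For such a $\tau$ the projection is injective on $P$, and good completions of $\tau\cup\{r\}$ form a cone. If this direct cone fails, the fallback is to show $\widetilde{\tH}_*(A_U)=0$ and $\pi_1(A_U)=1$. For this I would check that the apartment-type generators of $\widetilde{\tH}_{\dim U-2}(\mc{L}(U))$ from Lemma \ref{HtpyEquiv}(2) bound in $A_U$, by coning each one off with good simplices built from a basis adapted to $S\oplus\rad(U)$.
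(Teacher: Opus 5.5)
Your reduction is sound. Attaching the complexes $\mc{K}(U)$ dimension by dimension, along $A_U=\mc{L}(U)\cup\{\text{good simplices spanning }U\}$, is correct, and the lemma follows if every $A_U$ is contractible. But you never prove that $A_U$ is contractible, and that is the entire content of the lemma.

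\begin{itemize}
\item The cone on a line $r\subset\rad(U)$ works only when $\dim\rad(U)=1$.
\item The Quillen-fiber variant is circular: the fiber over $U$ of the map sending good simplices to $U$ is $\mc{P}(A_U)$ itself.
\item The Morse-style repair rests on a false claim. The cone fails only when $\dim\rad(U)\ge 2$, and then a hyperplane $P\not\ni r$ has $\dim P=\dim U-1>2g+2=\dim S$. So the projection of $P$ to $S$ along $\rad(U)$ has kernel $P\cap\rad(U)\neq 0$ and is not injective.
\item The homological fallback is incomplete. Showing that apartment classes of $\mc{L}(U)$ bound in $A_U$ only kills $\widetilde{\tH}_{\dim U-2}(A_U)$. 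Attaching the good simplices creates relative homology $\tH_{\dim U-1}(A_U,\mc{L}(U))\cong\bigoplus_{W\oplus\rad(U)=U}\St(W)\otimes\St(\rad(U))$, by the same computation as in Lemma \ref{IgSteinberg}. You would also need the connecting map from this group to $\St(U)$ to be injective, which is a nontrivial decomposition of $\St(U)$ that you neither state nor prove.
\end{itemize}

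The paper avoids this local problem by filtering globally by the number $n(\sigma)-n(\rad(\sigma))$ of non-radical vertices. At each step the newly added simplices with no radical vertices satisfy the hypotheses of Lemma \ref{Morse}, and each one's link is a cone on any radical line of its span. The global filtration succeeds where a local one gets stuck because the non-radical face of a bad simplex spanning $U$ may span a proper subspace $P$ with $P+\rad(U)=U$. In your scheme such faces already lie in $\mc{L}(U)\subset A_U$, which is why one cone point does not suffice.

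Within your own framework the gap can be closed by induction on $\dim\rad(U)$:
\begin{itemize}
\item Let $B\subset A_U$ consist of the simplices $\tau$ with $\tau\cup\{r\}\in A_U$; then $B$ is a cone on $r$.
\item The simplices of $A_U\setminus B$ are exactly those spanning a hyperplane $P\not\ni r$ that are not good in $P$. Here $P$ still has genus $g+1$, and $\rad(P)=P\cap\rad(U)$ has dimension one less than $\rad(U)$.
\item Hence $A_U$ is $B$ with the contractible complexes $\mc{K}(P)$ glued along the $A_P$, and the induction applies.
\end{itemize}
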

%%----------------------------------------------------

Our proof will make use of the following Lemma from Miller--Patzt--Wilson (\cite{miller2025rank}):
	
%%----------------------------------------------------
	\begin{lemma}[{Miller--Patzt--Wilson \cite[Lemma 3.2]{miller2025rank}}]
    \label{Morse}
		Suppose $X$ is a simplicial complex with a subcomplex Y, and suppose there is a set of simplices $S$ of $X$ such that the following conditions are met:
		\begin{enumerate}
			\item Suppose $\sigma$ is a simplex in $X$. Then $\sigma$ is a simplex in $Y$ if and only if no face of $\sigma$ is in $S$. 
            %(Thus $X$ is obtained from $Y$ by gluing in the simplices in $S$, and nothing else)
			\item If $\sigma$ and $\tau$ are disinct simplices in $S$, then the union of their vertices does not form a simplex in $X$.
		\end{enumerate}	
		
		Then 
		\begin{align*}
		X/Y \simeq \bigvee_{\sigma \in S} \Sigma^{\dim \sigma + 1} \Link_X(\sigma). \\
		\end{align*}
	\end{lemma}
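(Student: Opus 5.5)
The plan is to show that $X$ is obtained from $Y$ by attaching, for each $\sigma \in S$, a copy of the closed star $\sigma * \Link_X(\sigma)$ along $\partial\sigma * \Link_X(\sigma)$, with these attachments independent of one another. Then $X/Y$ splits as a wedge, and each wedge summand is identified separately.

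\textbf{Step 1: combinatorics of the simplices not in $Y$.} Let $\tau$ be a simplex of $X$ not in $Y$. By condition 1, $\tau$ has a face in $S$. This face is unique: if $\sigma \neq \sigma'$ in $S$ were both faces of $\tau$, then $\sigma \cup \sigma' \subset \tau$ would span a simplex of $X$, contradicting condition 2. The same argument shows more. If $\tau \supseteq \sigma$ with $\sigma \in S$, then no face of $\tau$ lies in $S$ other than $\sigma$.

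Consequently a face $\rho$ of a simplex $\tau \supseteq \sigma$ lies in $Y$ if and only if $\sigma \not\subseteq \rho$. Write $\tau = \sigma \sqcup \rho'$ with $\rho' \in \Link_X(\sigma)$ or $\rho' = \emptyset$. This gives the following description of the closed star, writing $\mathrm{St}(\sigma) = \sigma * \Link_X(\sigma)$:
\begin{align*}
\mathrm{St}(\sigma) \cap Y = \partial\sigma * \Link_X(\sigma).
\end{align*}
Moreover, for distinct $\sigma,\sigma' \in S$, any simplex lying in both $\mathrm{St}(\sigma)$ and $\mathrm{St}(\sigma')$ must lie in $Y$. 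Indeed, otherwise its unique $S$-face would have to equal both $\sigma$ and $\sigma'$. Finally, condition 1 gives $X = Y \cup \bigcup_{\sigma\in S}\mathrm{St}(\sigma)$.

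\textbf{Step 2: splitting the quotient.} From Step 1, the quotient CW complex decomposes as
\begin{align*}
X/Y \cong \bigvee_{\sigma\in S} \bigl(\sigma * \Link_X(\sigma)\bigr)\big/\bigl(\partial\sigma * \Link_X(\sigma)\bigr),
\end{align*}
since the images of the stars meet only at the basepoint $Y/Y$.

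\textbf{Step 3: identifying each summand.} Let $d = \dim\sigma$. Then $\sigma \cong D^d$ is contractible, so $\sigma * \Link_X(\sigma)$ is contractible. For a CW pair $(A,B)$ with $A$ contractible, $A/B \simeq A \cup CB \simeq \Sigma B$. Applying this with $A = \sigma * \Link_X(\sigma)$ and $B = \partial\sigma * \Link_X(\sigma) \cong S^{d-1} * \Link_X(\sigma) \simeq \Sigma^{d}\Link_X(\sigma)$ gives
\begin{align*}
\bigl(\sigma * \Link_X(\sigma)\bigr)\big/\bigl(\partial\sigma * \Link_X(\sigma)\bigr) \simeq \Sigma^{d+1}\Link_X(\sigma).
\end{align*}
If the link is empty, we use the convention $\Sigma^{d+1}\emptyset = S^d$, and the summand is $\sigma/\partial\sigma = S^d$, which is consistent.

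\textbf{Main obstacle.} I expect the main difficulty to be Step 1, namely verifying precisely which faces of simplices in the star lie in $Y$. In particular one must check that no face of $\sigma * \Link_X(\sigma)$ acquires a different $S$-face. This is exactly where condition 2 is used, through the "unique $S$-face" argument. The homotopy-theoretic steps are standard facts about CW pairs and joins.
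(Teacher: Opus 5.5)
Your proof is correct. Condition 2's unique-$S$-face argument gives exactly $\mathrm{St}(\sigma)\cap Y = \partial\sigma * \Link_X(\sigma)$ and shows that distinct stars meet only inside $Y$. The identification $(\sigma*\Link_X(\sigma))/(\partial\sigma*\Link_X(\sigma))\simeq\Sigma^{\dim\sigma+1}\Link_X(\sigma)$ is standard. The equivalences in Step 3 are based, since each collapses a contractible subcomplex containing the basepoint, so they assemble over the wedge.

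The paper gives no proof of this lemma; it quotes it from Miller--Patzt--Wilson. Your star-attachment argument is the standard proof of this kind of statement, so there is no paper argument to compare it against.
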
	
%%----------------------------------------------------
To prove Lemma \ref{IgContractible}, we shall construct a filtration
%%----------------------------------------------------
\begin{align*}
    \mc{I}^{g+\frac{1}{2}}(V) = F_{2g+2} \subset F_{2g+3} \subset ... \subset F_m \subset... \subset \mc{I}^{g+1}(V)
\end{align*}
%%----------------------------------------------------
such that
%%----------------------------------------------------
\begin{align*}
    \bigcup_{i \geq 2g+2} F_i = \mc{I}^{g+1}(V)
\end{align*}
%%----------------------------------------------------
and show that each successive quotient $F_{i+1}/F_i$ is contractible. 

We make some definitions as follows.
%%----------------------------------------------------
\begin{notation}
    \begin{itemize}
        \item For a simplex $\sigma$ in $\mc{K}(V)$, let $n(\sigma)$ be the number of vertices in $\sigma$. (Thus $n(\sigma)$ = $\dim \sigma +1$).
        \item Let $\rad(\sigma)$ denote the radical of the subspace spanned by the vectors of $\sigma$, and let $n(\rad(\sigma))$ be the number of vertices of $\sigma$ that are in $\rad(\sigma)$.
    \end{itemize}
\end{notation}
%%----------------------------------------------------

%%----------------------------------------------------
\begin{ex}
    If $\sigma = \{\lin{\vec{e}_1+\vec{e}_2}, \lin{\vec{e}_{\bar{1}}}, \lin{\vec{e}_{\bar{2}}}\}$, then $n(\sigma) = 3$. The radical of the subspace $\langle \vec{e}_1+\vec{e}_2, \vec{e}_{\bar{1}}, \vec{e}_{\bar{2}}\rangle$ is $\langle \vec{e}_{\bar{1}}-\vec{e}_{\bar{2}} \rangle$, and so $\rad(\sigma) = \langle \vec{e}_{\bar{1}}-\vec{e}_{\bar{2}}\rangle$. Since no vertices of $\sigma$ are in $\rad(\sigma)$, we have $n(\rad (\sigma)) = 0$.
\end{ex}
%%----------------------------------------------------
%We now define the filtration $ I^{g+\frac{1}{2}} = F_{2g+2} \subset ... \subset F_m \subset... \subset I^{g+1}$.

%%----------------------------------------------------
\begin{definition}
		For $i \geq 2g+2$, let $F_i = \mc{I}^{g+\frac{1}{2}}(V) \cup \{\sigma \in \mc{I}^{g+1}(V) | n(\sigma) - n(\rad(\sigma)) \leq i\}$. We are requiring $i \geq 2g+2$ since we need atleast $2g+2$ lines to span a genus $g+1$ symplectic space.
\end{definition}
%%----------------------------------------------------		

%%----------------------------------------------------
\begin{ex} 
    Suppose $V = \lin{\vec{e}_1, \vec{e}_{\bar{1}}, \dots, \vec{e}_n, \vec{e}_{\bar{n}}}$ and $g=0$. The simplex $\sigma = \{\lin{\vec{e}_1+\vec{e}_2}, \lin{\vec{e}_{\bar{1}}}, \lin{\vec{e}_{\bar{2}}}\}$ belongs to $F_3$.
\end{ex}
%%----------------------------------------------------
	
%%----------------------------------------------------	
\begin{remark}\label{faces}
		$F_i$ is closed under taking faces because both $\mc{I}^{g+\frac{1}{2}}(V)$ and the set $\{\sigma \in \mc{I}^{g+1}(V) | n(\sigma) - n(\rad(\sigma)) \leq i\}$ are individually closed under taking faces. Indeed, suppose $\sigma = [u_1, \dots, u_t, w_1, \dots, w_j]$, where $u_1, \dots, u_t$ are exactly the vertices in $\rad(\sigma)$, and $j = n(\sigma) - n(\rad(\sigma))$. Now suppose $\tau$ is a face of $\sigma$. Then note that $\rad(\sigma) \subset \rad(\tau)$, and so in particular the vertices in $\tau$ that are $\textit{not}$ in $\rad(\tau)$ are a subset of $w_1, \dots, w_j$, and thus, $n(\tau) - n(\rad(\tau)) \leq j = n(\sigma) - n(\rad(\sigma))$.
\end{remark}
%%----------------------------------------------------	

%%----------------------------------------------------
\begin{proof}[Proof of Lemma \ref{IgContractible}]
We shall first show that $F_{i+1}/F_i$ is contractible for each $i \geq 2g+2$.
	We will use Lemma \ref{Morse}, by setting $X = F_{i+1}$ and $Y = F_i$. Let $S = \{ \sigma \in \mc{I}^{g+1}(V) \setminus \mc{I}^{g+\frac{1}{2}}(V) | n(\sigma) = i+1, n(\rad (\sigma)) = 0\} $. Let us check that the conditions of Lemma \eqref{Morse} are satisfied:
	
	\begin{enumerate}
		\item Suppose $\sigma$ is in $X = F_{i+1} = \mc{I}^{g+\frac{1}{2}}(V) \cup \{\sigma \in \mc{I}^{g+1}(V) | n(\sigma) - n(\rad(\sigma)) \leq i+1\}$. 
        First we show that if $\sigma \in Y$, then no face of it is in $S$. 
        
        If $\sigma \in \mc{I}^{g+\frac{1}{2}}(V)$, then this implies that $\sigma \in Y$, and since all faces of $\sigma$ lie in $\mc{I}^{g+\frac{1}{2}}(V)$, no face can be in $S$. So assume that $\sigma \not\in \mc{I}^{g+\frac{1}{2}}(V)$ and that $\sigma \in Y$.
		Suppose that some face $\tau$ of $\sigma$ is in $S$. By definition, we have $n(\tau) - n(\rad(\tau)) = i+1$ and $\tau \not\in \mc{I}^{g+\frac{1}{2}}(V)$. 
		Then by Remark \eqref{faces}, we have $n(\sigma)-n(\rad(\sigma)) \geq n(\tau)-n(\rad(\tau)) \geq i+1$. Since $\sigma \in Y = F_i = \mc{I}^{g+\frac{1}{2}}(V) \cup \{\sigma \in \mc{I}^{g+1} | n(\sigma) - n(\rad(\sigma)) \leq i\}$, it follows that the only way that $\sigma$ can be in $Y$ is if $\sigma \in \mc{I}^{g+\frac{1}{2}}(V)$. But this gives us a contradiction. Thus no face of $\sigma$ can be in $S$.
		
		Now suppose that no face of $\sigma$ is in $S$. Suppose that $\sigma \not\in Y$. Then $\sigma \in X \setminus Y$ implies that we must have $n(\sigma)-n(\rad(\sigma)) = i+1$. Let $\tau$ be the face of $\sigma$ consisting of all those vertices that are not in the radical of $\sigma$. Then note that $n(\rad(\tau))=0$ and $n(\tau) = i+1$. Thus $\tau \in S$, a contradiction. Thus $\sigma$ has to be in $Y$.
		
		\item Suppose $\sigma$, $\tau$ are distinct simplices in $S$. For $\sigma \cup \tau$ to be in $X$, we would need $n(\sigma \cup \tau) - n(\rad(\sigma \cup \tau)) \leq i+1$. But note that $n(\sigma \cup \tau) > n(\tau) = i+1$, and $n(\rad(\sigma \cup \tau)) \leq n(\rad(\sigma)) + n(\rad(\tau)) = 0$, and so $n(\sigma \cup \tau) - n(\rad(\sigma \cup \tau)) > i+1$. Thus $\sigma \cup \tau$ cannot be in $S$.
		\end{enumerate}
		
		To now apply Lemma \eqref{Morse}, let us now examine the links of the simplices in $S$. Suppose $\sigma \in S$. Note that since $n(\sigma) = i+1 \geq 2g+3$, and $\sigma$ spans a genus $g+1$ subspace, the span of $\sigma$ has a nonempty radical. Let $v$ be a vertex in the radical of the span of $\sigma$. Suppose that $\tau \in X = F_{i+1}$ is such that $\sigma \cup \tau \in X$. In particular, this implies that $n(\sigma \cup \tau) - n(\rad(\sigma \cup \tau)) \leq i+1$. Since we have $n(\rad(\sigma \cup \tau)) \leq n(\rad(\tau)) + n(\rad(\sigma)) = n(\rad(\tau))$, any vertex of $\tau$ that is not a vertex of $\sigma$ must be in $\rad(\sigma \cup \tau)$, and in particular must be perpendicular to all vertices of $\sigma$. In particular, all vertices of $\tau$ are perpendicular to $v$, since $v$ is in the span of $\sigma$. This implies that $\sigma \cup \tau \cup \{ v\}$ is also in $X$. This argument shows that $\Link_X(\sigma)$ is a cone with cone point $v$. In particular, $\Link_X(\sigma)$ is contractible.
		Thus $F_{i+1}/F_i \simeq \bigvee_{\sigma \in S} \Sigma^{\dim \sigma + 1} \Link_X(\sigma)$ is contractible.
		
		 To deduce the contractibility of $\mc{I}^{g+1}(V)/\mc{I}^{g+\frac{1}{2}}(V) = \bigcup_{i} F_{i}/\mc{I}^{g+\frac{1}{2}}(V)$, note that the image of any map from a $p$-sphere to $\mc{I}^{g+1}(V)/\mc{I}^{g+\frac{1}{2}}(V)$ is compact, and so must lie in some $F_i/\mc{I}^{g+\frac{1}{2}}(V)$, and so we can nullhomotope it. Since $\mc{I}^{g+1}(V)/\mc{I}^{g+\frac{1}{2}}(V)$ is a CW-complex whose homotopy groups all vanish, it is contractible.
\end{proof}
%%----------------------------------------------------

Now we will analyse the homology of the pair $(\mc{I}^{g+\frac{1}{2}}(V),\mc{I}^{g}(V))$.
%%----------------------------------------------------
\begin{lemma}\label{IgSteinberg}
   Suppose $V$ is a genus $n$ symplectic vector space over a field $K$. For $g \geq 0$, the homology of the pair $(\mc{I}^{g+\frac{1}{2}}(V), \mc{I}^{g}(V))$ is concentrated in degree $n+g$, and the homology group in this degree is equal to
    \begin{align*}
        \bigoplus_{g(W)=g+1} \St(W) \otimes \St^{\w}(W^{\perp}) \\
       \end{align*} 
       where the direct sum is over symplectic subspaces $W \subset V$.
\end{lemma}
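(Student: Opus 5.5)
We apply Lemma \ref{Morse} with $X = \mc{I}^{g+\frac{1}{2}}(V)$, $Y = \mc{I}^g(V)$, and $S$ the set of simplices $\sigma = \{\lin{\vec{w}_1}, \dots, \lin{\vec{w}_p}\}$ whose vertices span a genus $g+1$ symplectic subspace $W$ of $V$. (Then $p \geq 2g+2$ and $\sigma \notin \mc{I}^g(V)$.)

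\textbf{Condition (1).} A simplex of $X \setminus Y$ has the form $\{\vec{w}_1,\dots,\vec{w}_p,\vec{j}_1,\dots,\vec{j}_q\}$, with the $\vec{w}$'s spanning a genus $g+1$ symplectic $W$ and the $\vec{j}$'s spanning an isotropic $J \subset W^\perp$. Its face $\{\vec{w}_1,\dots,\vec{w}_p\}$ lies in $S$. Conversely, a face lying in $S$ spans a genus $g+1$ subspace, so the whole simplex has genus $\geq g+1$ and is not in $Y$.

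\textbf{Condition (2).} Let $\sigma \neq \tau$ in $S$ span $W$ and $W'$. Suppose $\sigma\cup\tau$ lies in $X$, so it has the form above with symplectic part $U$ and isotropic part $J\subset U^\perp$. The span is then $U\oplus J$, whose radical is $J$, and $g(U)=g+1$. I claim the decomposition of vertices is forced: a vertex lies in $J$ exactly when it lies in the radical of the span. Hence the $U$-vertices are exactly the vertices not in the radical. Every vertex of $\sigma$ lies in $U \oplus J$ and spans symplectic $W$, so $W$ maps isomorphically onto the symplectic quotient $(U\oplus J)/J \cong U$, which has the same genus. Hence no vertex of $\sigma$ lies in $J$, and all vertices of $\sigma$ are $U$-vertices. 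Then $W \subseteq U$ with equal genus gives $W=U$. Similarly $W'=U$, so $\sigma$ and $\tau$ are both the set of all $U$-vertices, i.e.\ $\sigma=\tau$, a contradiction. This forcing argument is where I expect the main care to be needed: a simplex whose span is $W\oplus J$ may have vertices with nonzero $J$-components that still lie outside the radical, and these must be ruled out as possible "$U$-vertices" of the wrong kind.

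\textbf{Links.} For $\sigma\in S$ spanning $W$, a simplex $\tau$ with $\sigma\cup\tau \in X$ must, by the uniqueness above, have its vertices outside $\sigma$ lying in an isotropic subspace of $W^\perp$, and every such $\tau$ works. So $\Link_X(\sigma) \cong \mc{I}^0(W^\perp) \simeq \bb{T}^\w(W^\perp)$ by Lemma \ref{HtpyEquiv}, which is a wedge of $(n-g-2)$-spheres. Lemma \ref{Morse} then gives
\begin{align*}
X/Y \simeq \bigvee_{\sigma\in S}\Sigma^{p}\,\mc{I}^0(W^\perp).
\end{align*}

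\textbf{Summation and the Sharbly complex.} Group the wedge summands by $W$. Each summand contributes reduced homology $\St^\w(W^\perp)$ in degree $p+n-g-2$, where $p$ varies over the sizes of the spanning sets of lines of $W$. The degrees vary with $p$, so I must reassemble these pieces rather than read off homology directly. Better: avoid this by reorganizing the argument. Work with the pair $(\mc{I}^{g+\frac12}, \mc{I}^g)$ fibered over $W$. Relative to $\mc{I}^g$, the subcomplex of simplices with symplectic part spanning $W$ is the join
\begin{align*}
(\mc{K}(W), \mc{L}(W)) * \mc{I}^0(W^\perp).
\end{align*}
Distinct $W$ contribute independently, again by the uniqueness in condition (2). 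So
\begin{align*}
X/Y \simeq \bigvee_{W} \bigl(\mc{K}(W)/\mc{L}(W)\bigr) \wedge \Sigma\,\mc{I}^0(W^\perp)\simeq \bigvee_W \Sigma\,\mc{L}(W) * \mc{I}^0(W^\perp),
\end{align*}
using Lemma \ref{HtpyEquiv} (1) for the contractibility of $\mc{K}(W)$.

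\textbf{Künneth and degree count.} $\mc{L}(W)$ has homology $\St(W)$ in degree $2g$, and $\mc{I}^0(W^\perp)$ has homology $\St^\w(W^\perp)$ in degree $n-g-2$. The Künneth formula for joins (both homologies are free) gives homology $\St(W)\otimes\St^\w(W^\perp)$ concentrated in degree
\begin{align*}
1 + 2g + (n-g-2) + 1 = n+g,
\end{align*}
as claimed. The main obstacle is making the join identification rigorous, in particular the uniqueness of the symplectic part of each simplex of $X\setminus Y$, as discussed under condition (2).
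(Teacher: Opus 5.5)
Your first half, the Morse-lemma computation, is wrong. The varying degrees are a symptom of that error, not something to be ``reassembled''.

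\textbf{Condition (2) fails for your $S$.} If $\sigma\in S$ spans $W$ and $\ell$ is a line of $W$ not in $\sigma$, then $\tau=\sigma\cup\{\ell\}$ is also in $S$, and $\sigma\cup\tau=\tau\in X$. Your argument only shows that $\sigma$ and $\tau$ are both \emph{contained in} the set of $U$-vertices, not that each equals it.

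\textbf{The link is also miscomputed.} $\Link_X(\sigma)$ contains the other lines of $W$ as well. It is the join of the full simplex on the lines of $W$ not in $\sigma$ with $\mc{I}^0(W^\perp)$. Hence it is contractible whenever such a line exists, which is always the case when $K$ is infinite.

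So the claimed equivalence $X/Y\simeq\bigvee_{\sigma\in S}\Sigma^{p}\mc{I}^0(W^\perp)$ is false. If it held, $X/Y$ would have nonzero homology in every degree $p+n-g-2$ with $p\ge 2g+2$, contradicting the statement you are proving. Delete that part.

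\textbf{The reorganized argument is correct.} It is essentially the paper's proof, done with spaces instead of chains. The paper identifies the relative chain complex with $\bigoplus_{g(W)=g+1}\tC_\bullet(\mc{K}(W),\mc{L}(W))\otimes(\text{augmented chains of }\mc{I}^0(W^\perp))$, shifted up by one. It then runs the double-complex spectral sequence, which is exactly your K\"unneth step, with the same degree count $(2g+1)+(n-g-2)+1$.

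The uniqueness you flag as the main obstacle (which the paper uses only tacitly) is already settled by what you wrote. Since $J=\rad(U\oplus J)$ and $U\cap J=0$, the isotropic part is exactly the set of vertices lying in the radical of the span. So the symplectic part $U$ is determined by the simplex, and there are no ``wrong-kind'' vertices to rule out.

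What you should add is the identification $A_W\cap Y=\mc{L}(W)*\mc{I}^0(W^\perp)$, where $A_W=\mc{K}(W)*\mc{I}^0(W^\perp)\subseteq X$. Consider a simplex whose $W$-vertices span a proper subspace $W'\subsetneq W$ and whose remaining vertices span an isotropic $J\subseteq W^\perp$. It spans $W'\oplus J$, which has genus $g(W')\le g$, so it lies in $Y$. By uniqueness, $A_W\cap A_{W'}\subseteq Y$ for $W\neq W'$. Together these give $X/Y\cong\bigvee_W A_W/(A_W\cap Y)$, and the rest follows as you say. In the case $g+1=n$, read $\mc{I}^0(0)=\emptyset$, with $\St^\w(0)=\Z$ in degree $-1$.
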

%%----------------------------------------------------
%%----------------------------------------------------
\begin{proof}
    The relative homology groups $\tH_{\bullet}(\mc{I}^{g+\frac{1}{2}}(V), \mc{I}^{g}(V))$ are calculated from the chain complex whose chain groups are
%%----------------------------------------------------
    \begin{align*}
       \frac{\tC_m(\mc{I}^{g+\frac{1}{2}}(V))}{\tC_m(\mc{I}^g(V))} \cong \bigoplus_{p+q=m-1}\Z[\lin{\vec{w}_0}, \dots, \lin{\vec{w}_p}, \lin{\vec{l}_0}, \dots, \lin{\vec{l}_q}]
    \end{align*}
%%----------------------------------------------------
    where the direct sum is over sets of vectors $\{\vec{w}_0, \dots, \vec{w}_p, \vec{l}_0, \dots, \vec{l}_q\}$ such that
%%----------------------------------------------------
    \begin{align*}
       & \vec{w}_0, \dots, \vec{w}_p \text{ span a symplectic subspace $W \subset V$ with $g(W)=g+1$}\\
       & \vec{l}_0, \dots, \vec{l}_q \in W^\perp \text{ are mutually perpendicular vectors}
    \end{align*}
%%----------------------------------------------------
    When we apply the differential to a term $[\lin{\vec{w}_0}, \dots, \lin{\vec{w}_p}, \lin{\vec{l}_0}, \dots, \lin{\vec{l}_q}]$ and we omit a $\vec{w}_i$ so that the span of $\vec{w}_0, \dots, \widehat{\vec{w}}_i, \dots, \vec{w}_p$ is strictly smaller than $W$, then this lands us in $\tC_{m-1}(\mc{I}^g(V))$.
    Thus the chain complex that computes $\tH_{\bullet}(\mc{I}^{g+\frac{1}{2}}(V), \mc{I}^{g}(V))$ is quasi-isomorphic to the following sum across tensor products of chain complexes, shifted up by one degree:
%%----------------------------------------------------
    \begin{align*}
        \bigoplus_{g(W)=g+1}\tC_{\bullet}(\mc{K}(W), \mc{L}(W)) \otimes \tC_{\bullet}(\mc{I}^0(W^\perp), \{*\})
    \end{align*}
%%----------------------------------------------------
    That is, the chain group in degree $m$ can be identified with
%%----------------------------------------------------
    \begin{align*}
        \bigoplus_{g(W)=g+1, p+q=m-1}\tC_{p}(\mc{K}(W), \mc{L}(W)) \otimes \tC_{q}(\mc{I}^0(W^\perp), \{*\})
    \end{align*}
%%----------------------------------------------------
    Consider the spectral sequence associated to this double complex. By Lemma \ref{HtpyEquiv}, each $\tC_\bullet(\mc{K}(W), \mc{L}(W))$ has homology equal to $\St(W)$ in degree $2(g+1)-2+1 = 2g+1$, and 0 otherwise, so filtering the double complex in the $p$-direction gives us an $\tE^1$ page with
%%----------------------------------------------------
    \begin{align*}
        E^1_{p,q} = \begin{cases}\bigoplus_{g(W)=g+1}\St(W) \otimes \tC_q(\mc{I}^0(W^{\perp})) & \text{ if } p = 2g+1 \\ 0 &\text{ otherwise }
        \end{cases}
    \end{align*}
%%----------------------------------------------------
    Also by Lemma \ref{HtpyEquiv}, each $\tC_\bullet(\mc{I}^0(W^\perp), \{*\})$ has homology concentrated in degree $n-(g+1)-1 = n-g-2$ equal to $\St^\w(W^\perp)$, so now filtering in the $q$-direction gives us an $\tE^2$ page with
%%----------------------------------------------------
    \begin{align*}
         \tE^2_{p,q} = \begin{cases}\bigoplus_{g(W)=g+1} \St(W) \otimes \St^{\w}(W^{\perp}) & \text{ if } p=2g+1\text{ and } q=n-g-2 \\ 0 & \text{ otherwise}
         \end{cases}
    \end{align*}
%%----------------------------------------------------
     Thus the spectral sequence collapses on the $\tE^2$ page, and so the homology of this double complex is
     \begin{align*}
         \bigoplus_{g(W)=g+1} \St(W) \otimes \St^{\w}(W^{\perp})
     \end{align*}
     in degree $(2g+1)+(n-g-2) = n+g-1$. Since we are considering a shift of this complex up by 1 degree, the result follows.
\end{proof}
%%----------------------------------------------------

%%----------------------------------------------------
\begin{proof}[Proof of Lemma \ref{IgQuotient}]
    Lemma \ref{IgContractible} says that the homology of the pair $(\mc{I}^{g+1}(V), \mc{I}^{g+\frac{1}{2}}(V))$ is 0 in every degree. Thus if we consider the long exact sequence in homology associated to the triple $(\mc{I}^{g+1}(V), \mc{I}^{g+\frac{1}{2}}(V), \mc{I}^{g}(V))$, we get
%%----------------------------------------------------
    \begin{align*}
        \tH_\bullet(\mc{I}^{g+1}(V), \mc{I}^g(V)) \cong \tH_\bullet(\mc{I}^{g+\frac{1}{2}}(V), \mc{I}^g(V))
    \end{align*}
%%----------------------------------------------------
    and so the result follows immediately from Lemma \ref{IgSteinberg}.
\end{proof}
%%----------------------------------------------------
%%----------------------------------------------------
    \begin{proof}[Proof of Proposition \ref{steinbergres}]
    Take the spectral sequence associated to the filtration $\mc{I}^0(V) \subset \mc{I}^1(V) \subset \dots \subset \mc{I}^n(V) = \mc{K}(V)$.
    The $\tE^1$ page of the sequence is given by
%%----------------------------------------------------
    \begin{align*}
        \tE^1_{p,q} =\begin{cases} \tH_{p+q}(\mc{I}^p(V), \mc{I}^{p-1}(V)) & p\geq1 \\ \tH_q(\mc{I}^0(V)) & p = 0 \end{cases}
    \end{align*}
%%----------------------------------------------------
    Lemma \ref{IgQuotient} says that for $p \geq 1$
%%----------------------------------------------------
    \begin{align*}
        \tH_{p+q}(\mc{I}^p(V),\mc{I}^{p-1}(V)) = \begin{cases}
            \bigoplus_{W\subset V \text{ symplectic, }g(W)=p} \St(W) \otimes \St^{\w}(W^{\perp}) & p+q = n+p-1 \\ 0 & \text{otherwise}
        \end{cases}
    \end{align*}
%%----------------------------------------------------
    Thus for $p \geq 1$, the only non-zero groups among $\tE^1_{p,q}$ are all in row $q=n-1$, given by 
%%----------------------------------------------------
    \begin{align*}
        \tE^1_{p, n-1} = \bigoplus_{W \subset V \text{symplectic, }g(W)=p} \St(W) \otimes \St^{\w}(W^{\perp})
    \end{align*}
%%----------------------------------------------------
    In the $p=0$ column the only nonzero entries are
%%----------------------------------------------------
    \begin{align*}
        \tE^1_{0,0} =& \tH_0(\mc{I}^0(V)) \cong \Z \\
        \tE^1_{0,n-1} =& \tH_{n-1}(\mc{I}^0(V)) \cong \St^\w(V)
    \end{align*}
%%----------------------------------------------------
    where these equalities follow from Lemma \ref{HtpyEquiv}.
The differential on the $\tE^1$-page goes horizontally from $\tE^1_{p, q} \to \tE^1_{p-1, q}$. Since the $\tE^1$ page only has nonzero entries in the row $q=n-1$ and at the entry $p=q=0$, we conclude that the spectral sequence has to collapse on page $\tE^2$. Since $\mc{K}(V)$ is contractible, the only nonzero entry on the $\tE^\infty$ page is $\tE^\infty_{0,0} \cong \Z$. In particular, the $q=n-1$ row on the $\tE^2$ page consists of all zeroes, and so it follows that the $q=n-1$ row on the $\tE^1$ page is exact. This proves our proposition.
\end{proof}
%%----------------------------------------------------

%%--------------------------------------------------------------------
\section{The Symplectic Sharbly Resolution}
\label{sec:sharblypf}
%%--------------------------------------------------------------------

%\flag{Incorporate language of `mapping cones'}

In this section we will prove Theorem \ref{thm:sharblyres}. It will be easier to state and prove a rational version of Theorem \ref{thm:sharblyres} where we tensor everything with $\otimes_\Z \Q$ and assume that $R$ has finitely many units, but the argument turns out to be essentially the same for the general case. For this reason, we will describe the proof in detail for the rational version, and then briefly describe how to handle the integral case in Section \ref{sec:integralres}.

%%--------------------------------------------------------------------
    \subsection{Piecing projective resolutions together}\label{glueprocess}
%%--------------------------------------------------------------------
    In this subsection we deduce the groups we get in our symplectic Sharbly resolution:
%%----------------------------------------------------
    \begin{prop}
    \label{sharblyterms}
    Let ${R}$ be a PID with finitely many units and $K$ its field of fractions. Suppose $M \cong {R}^{2n}$ is a symplectic ${R}$-module and let $V = M \otimes_{{R}} K$. There is a resolution of $\St^\w(V)_\Q$ of projective $\Q[\Sp(M)]$-modules, which we shall refer to as the \emph{symplectic Sharbly resolution}, whose degree-$d$ term is given by 
%%----------------------------------------------------
    \begin{align*}
        \bigoplus \Sh_{k_1}(W_1)_\Q \otimes_\Q \dots \otimes_\Q \Sh_{k_m}(W_m)_\Q,
    \end{align*} 
%%----------------------------------------------------
    where the direct sum ranges over (ordered) direct sum decompositions 
%%----------------------------------------------------
    \begin{align*}
        V = W_1 \oplus \dots \oplus W_m
    \end{align*} 
%%----------------------------------------------------
    of $V$ into perpendicular symplectic subspaces, $\Sh_{k_i}(W_i)$ is the $k_i$-th term in the Sharbly resolution for $W_i$ (as defined in Proposition \ref{prop:sharbly}), and 
%%----------------------------------------------------
    \begin{align*}
        d = k_1 + k_2 + \dots + k_m + n -m
    \end{align*}

    with $d$ and $n$ fixed, $1 \leq m \leq n$, and $k_1, \dots k_m$ varying in $\Z_{\geq0}$
%%----------------------------------------------------
\end{prop}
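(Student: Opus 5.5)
We prove Proposition \ref{sharblyterms} by induction on the genus $n$, building the resolution out of the finite resolution of Proposition \ref{steinbergres}, tensored with $\Q$. For $n=1$, Proposition \ref{steinbergres} reads $0 \to \St(V) \to \St^\w(V) \to 0$: the only nonzero term of the middle range has $g(W)=0$, which is excluded. So $\St^\w(V)=\St(V)$ when $\dim V=2$, and the Sharbly resolution of Proposition \ref{prop:sharbly} already has the required form, with $m=1$ and $d=k_1$.

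For general $n$, the input is the exact sequence
\begin{align*}
0 \to C_n \to C_{n-1} \to \dots \to C_1 \to \St^\w(V)_\Q \to 0,
\end{align*}
where $C_g = \bigoplus_{g(W)=g} \St(W)_\Q \otimes \St^\w(W^\perp)_\Q$. The term $C_n$ is $\St(V)_\Q$, i.e. the case $W=V$ with $\St^\w(0)=\Q$.

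\textbf{Step 1: resolve each $C_g$.} For each symplectic $W$ of genus $g$, tensor the Sharbly resolution $\Sh_\bullet(W)_\Q$ with the inductively constructed symplectic Sharbly resolution $P_\bullet(W^\perp)$ of $\St^\w(W^\perp)_\Q$. By Künneth over $\Q$ this is a resolution of $\St(W)_\Q\otimes\St^\w(W^\perp)_\Q$ over $\Q[\Sp(M_W)\times \Sp(M_{W^\perp})]$, where $M_W = W\cap M$ is a symplectic $R$-module because $R$ is a PID. Then sum over the $\Sp(M)$-orbit of $W$, i.e. induce up to $\Sp(M)$.

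For projectivity, the terms are direct sums of permutation modules $\Q[\Sp(M)/H]$ with $H$ the stabilizer of a decomposition $W_1\oplus\dots\oplus W_m$ together with spanning line configurations in each $W_i$. Such $H$ is finite, since $R$ has finitely many units and a spanning set of lines determines an element up to scalars on each line. Hence these modules are projective over $\Q$, as in \cite[Lemma 3.2]{church2017codimension}.

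\textbf{Step 2: assemble by a Cartan--Eilenberg / mapping-cone argument.} The standard construction runs as follows. Lift each map $C_g \to C_{g-1}$ to a chain map between the chosen projective resolutions, using the comparison theorem. Then correct by higher homotopies so the total object becomes a twisted complex. Its totalization resolves $\St^\w(V)_\Q$, which one checks by the spectral sequence of the filtration by $g$: the $E^1$ page is the exact row above.

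The degree-$d$ term is the sum over $g$, $k$ and the degree $d'$ of $P(W^\perp)$ with
\begin{align*}
d = (g-1)+k+d'.
\end{align*}
Inductively $d' = k_2+\dots+k_m+(n-g)-(m-1)$, which gives $d = k+k_2+\dots+k_m+n-m$ with $W_1=W$. Ordered decompositions arise exactly once, since $W_1$ is the chosen $W$ and the rest comes from the decomposition of $W^\perp$.

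\textbf{Main obstacle.} Existence of the lifts and homotopies is routine homological algebra. The real difficulty is organizing it equivariantly so that the lifts are $\Sp(M)$-equivariant, and keeping the index bookkeeping consistent so that the terms come out exactly as stated. For the equivariance, I would define the lifts on orbit representatives and use projectivity of the source.

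An explicit description of the differential is not needed for this proposition; that is the content of Theorem \ref{thm:sharblyres}. Here only existence and the shape of the terms are claimed.
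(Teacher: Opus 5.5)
Your argument is essentially the paper's. It uses the same induction on $n$ and the same base case, the same resolutions $\Sh_\bullet(W)_\Q\otimes P_\bullet(W^\perp)$ of the terms of \eqref{Steinbergs}, and the same degree count.

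Your one-shot twisted complex is equivalent to the paper's repeated use of Lemma \ref{gluestep}. The paper's column-by-column version has one mild advantage: each step is a single comparison-theorem lift from the already assembled projective complex $F'_{a,\bullet}$ into the acyclic complex $F_{a-1,\bullet}$. So it never needs a separate check that the higher obstructions are chain maps; Lemma \ref{definingdr} then identifies the components of these lifts with your higher homotopies.

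Your projectivity argument, via finiteness of the $\Sp(M)$-stabilizer of a whole configuration (ordered decomposition plus spanning lines), is more direct than the paper's route through Lemma \ref{lem:prodproj} and induction from $\Sp(W_1)\times\dots\times\Sp(W_m)$. It also avoids pretending that the stabilizer of a decomposition is a product of integral symplectic groups. One small correction there: because of the alternating relation in $\Sh_k$, the terms are induced from one-dimensional sign representations of these finite stabilizers, not permutation modules. They are still projective by Maschke's theorem.

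One claim is false, however: $W\cap M$ need not be a symplectic $R$-module in the paper's (unimodular) sense.
\begin{itemize}
\item Take $R=\Z$, $n=2$ and $W=\langle \vec{e}_1+\vec{e}_2,\, \vec{e}_{\bar{1}}+\vec{e}_{\bar{2}}\rangle$. Then $W\cap M$ has basis $\vec{e}_1+\vec{e}_2,\ \vec{e}_{\bar{1}}+\vec{e}_{\bar{2}}$, whose $\w$-pairing is $2$. Also $(W\cap M)\oplus(W^\perp\cap M)$ has index $4$ in $M$.
\item For the same $W$ inside $K^6$, the lattice $W^\perp\cap M$ has elementary divisors $1,2$, so no rescaling of the form makes it unimodular either.
\end{itemize}
Hence the inductive hypothesis, as stated for unimodular $M$, does not give you the $\mathrm{Stab}_{\Sp(M)}(W)$-equivariant resolution $P_\bullet(W^\perp)$ that you induce up.

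The repair is to prove the proposition for an arbitrary full-rank $R$-lattice $N\subset V$, with $\Sp(N)$ the group of form-preserving automorphisms of $V$ preserving $N$. Then $\mathrm{Stab}_{\Sp(N)}(W)$ embeds in $\Sp(W\cap N)\times\Sp(W^\perp\cap N)$, so this class is closed under your inductive step. Unimodularity is never used: your finite-stabilizer argument works for any lattice, because an element fixing a line $\ell$ multiplies a generator of $\ell\cap N$ by a unit.

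The paper's proof is no more careful at this point; it speaks of projectivity over $\Q[\Sp(W)]$ without naming a lattice. So this is a fixable imprecision, not a flaw in the strategy.
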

%%----------------------------------------------------
We proceed by induction on $n$. Note that we always have $1 \leq m \leq n$. So for $n=1$, the only possibility is that $m=1$, and in this case we recover the usual Sharbly resolution for $V \cong K^2$ over $\SL_2R = \Sp_{2 \cdot 1}R$.
For the induction step, note that we have now established the following resolution:
%%----------------------------------------------------
    \begin{align*}
         0 \ra \St(V)_\Q \longrightarrow  \bigoplus_{g(W) = n-1} \St(W)_\Q \otimes \St^{\w}(W^{\perp})_\Q  \longrightarrow  \dots  \longrightarrow  \bigoplus_{g(W) = 1} \St(W)_\Q \otimes \St^{\w}(W^{\perp})_\Q  \longrightarrow  \St^{\w}(V)_\Q \to 0 
    \end{align*}
%%----------------------------------------------------
We will use this to obtain a projective $\Q[\Sp(M)]$-resolution of $\St^{\w}(V)_\Q$ by taking projective resolutions of each intermediate term $\bigoplus \St(W)_\Q \otimes \St^{\w}(W^{\perp})_\Q$, and `piecing them together'.
To do this we will need the following lemma.

%%----------------------------------------------------
\begin{lemma}
\label{gluestep}
    Let $\mc{R}$ be a ring. Suppose we have a resolution of left $\mc{R}$-modules
%%----------------------------------------------------
    \begin{align*}
        \dots \xrightarrow{\del_3} F_2 \xrightarrow{\del_2} F_1 \xrightarrow{\del_1} F_0 \xrightarrow{\del_0} M_g \xrightarrow{d_g} M_{g-1} \xrightarrow{d_{g-1}} \dots \xrightarrow{d_2} M_1 \xrightarrow{d_1} M \ra 0 
    \end{align*}
%%----------------------------------------------------
 where each $F_i$ is projective, and suppose that 
%%----------------------------------------------------
    \begin{align*}
        \dots \xrightarrow{\del_3'} F_2' \xrightarrow{\del_2'} F_1' \xrightarrow{\del_1'} F_0' \xrightarrow{\del_0'} M_g \ra 0 
    \end{align*}
%%----------------------------------------------------
    is a resolution (not necessarily projective) of $M_g$.

    Then the following is also a resolution of left $\mc{R}$ modules
%%----------------------------------------------------
    \begin{align*}
     \dots \ra F_1 \oplus F_2' \xrightarrow{\del_1'' \oplus \del_2'} F_0 \oplus F_1' \xrightarrow{\del_0'' \oplus \del_1'} F_0' \xrightarrow{d_g \circ \del_0'} M_{g-1} \ra \dots \ra M_1 \ra M \ra 0 \\
    \end{align*} 

    Here the $\del_i''$ are a choice of (anti-commuting) chain map from the projective resolution

    \begin{align*}
        \dots \xrightarrow{\del_3} F_2 \xrightarrow{\del_2} F_1 \xrightarrow{\del_1} F_0 \xrightarrow{\del_0} \ker(M_g \to M_{g-1}) \to 0
    \end{align*}

    to the resolution

    \begin{align*}
        \dots \xrightarrow{\del_3'} F_2' \xrightarrow{\del_2'} F_1' \xrightarrow{\del_1'} F_0' \xrightarrow{\del_0'} M_g \ra 0 
    \end{align*}
%%----------------------------------------------------
\end{lemma}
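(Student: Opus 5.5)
This is a mapping-cone statement, and I would prove it directly by constructing the chain map and checking exactness degree by degree. Set $K_g = \ker(d_g\colon M_g \to M_{g-1})$. By hypothesis the first sequence is exact, so $\del_0$ surjects onto $K_g$ and $F_\bullet \to K_g$ is a projective resolution of $K_g$. Let $\iota\colon K_g \hookrightarrow M_g$ be the inclusion. The second sequence $F'_\bullet \to M_g$ is a resolution, i.e. exact, hence acyclic over $M_g$. By the comparison theorem (projective source, acyclic target) there is a chain map $f_i\colon F_i \to F'_i$ lifting $\iota$, so that $\del_0' f_0 = \del_0$ and $\del_i' f_i = f_{i-1}\del_i$. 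Setting $\del_i'' = (-1)^{i} f_i$, or a similar sign twist, gives the anti-commuting version. Here $\del_i''\colon F_i \to F_i'$, and in the displayed complex the differential on $F_i \oplus F'_{i+1}$ is $(x,y) \mapsto (\del_i x,\ \del''_i x + \del'_{i+1} y)$, with $F_{-1}=0$.

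The first step is to check that this is a complex. The cone differential squares to zero precisely because $\del'\del''\pm\del''\del=0$. At the junction, $F_0\oplus F_1' \to F_0'$ is $(x,y)\mapsto \del''_0x+\del'_1y$. Composing with $d_g\del_0'$ gives $d_g\iota\del_0 x + d_g\del_0'\del_1' y = 0$, since $\del_0 x\in K_g$. Further down the sequence nothing changes.

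Next comes exactness. At $M_{g-1}$, the image of $d_g\del_0'$ equals the image of $d_g$ because $\del_0'$ is surjective, so exactness there is inherited. At $F_0'$, suppose $d_g\del_0'(z)=0$. Then $\del_0'z\in K_g$, so $\del_0' z = \del_0 x = \del_0' f_0 x$ for some $x$. Hence $z - f_0x \in \ker\del_0' = \operatorname{im}\del_1'$, and $z$ lies in the image. At the higher spots, i.e. at $F_i\oplus F'_{i+1}$ with $i\ge0$, I would use the short exact sequence of complexes $0\to F'_{\bullet+1}\to \mathrm{Cone}\to F_\bullet\to 0$. Its long exact sequence has connecting map induced by $f$. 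The augmented complexes $F_\bullet\to K_g$ and $F'_\bullet \to M_g$ have homology only at degree $0$, namely $K_g$ and $M_g$, and the connecting map is $\iota$, which is injective. So the cone is exact in all positive degrees.

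The spot $F_0\oplus F_1'$ needs a direct argument. Suppose $\del''_0x+\del'_1y=0$. Then $0=\del_0'(\del_0''x)=\pm\del_0 x$, so $x=\del_1 a$. Hence $f_0x = f_0\del_1 a = \del_1' f_1 a$, and $y \mp f_1 a$ lies in $\ker \del_1' = \operatorname{im}\del_2'$. This produces a preimage in $F_1\oplus F_2'$.

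The only delicate point is the sign bookkeeping, namely choosing $\del''$ so that the cone differential squares to zero. Everything else is standard diagram chasing.
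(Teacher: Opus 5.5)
Your proposal is correct and follows essentially the paper's route. The paper also lifts $\mc{K} \hookrightarrow M_g$ to an anticommuting chain map using projectivity of the $F_i$. It then treats the displayed complex as the total complex of the resulting two-column double complex, computing its homology by taking vertical and then horizontal homology; this is your mapping-cone long exact sequence (with connecting map $\iota$) in different language.

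Your explicit chase at $F_0'$ is fine, and so is the one at $F_0 \oplus F_1'$, although the latter is already covered by the long exact sequence once you use injectivity of $\iota$. You are also right that the differential on $F_i \oplus F'_{i+1}$ must include the component $\del_i$, which the paper's shorthand $\del_i'' \oplus \del_{i+1}'$ leaves out.
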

%%----------------------------------------------------

%%----------------------------------------------------
\begin{proof}
    Let $\mc{K} = \ker(M_g \xrightarrow{d_g} M_{g-1})$. We have the following diagram:

%%----------------------------------------------------
\begin{center}
\begin{tikzcd}
 & \vdots \arrow{d}{\del_2} & \vdots \arrow{d}{\del_2'} \\
        & F_1 \arrow{d}{\del_1} & F_1' \arrow{d}{\del_1'} \\
        & F_0 \arrow{d}{\del_0} &  F_0' \arrow{d}{\del_0'} \\
     0 \arrow[r] & \mc{K} \arrow[d] \arrow[r] & M_g \arrow[d] \\
        & 0 & 0 \\
\end{tikzcd}
\end{center}
%%----------------------------------------------------
    where $\mc{K} \ra M_g$ is the inclusion map. Both the columns and the bottom row are exact.
    Since the $F_i$ are projective, we can extend $\mc{K} \to M_g$ to a chain map:
%%----------------------------------------------------
\begin{center}
\begin{tikzcd}
 & \vdots \arrow{d}{\del_2} & \vdots \arrow{d}{\del_2'} \\
        & F_1 \arrow{d}{\del_1} \arrow{r}{\del_1''} & F_1' \arrow{d}{\del_1'} \\
        & F_0 \arrow{d}{\del_0} \arrow{r}{\del_0''} &  F_0' \arrow{d}{\del_0'} \\
     0 \arrow[r] & \mc{K} \arrow[r] & M_g \\
\end{tikzcd}
\end{center}
%%----------------------------------------------------

We can choose the $\del_i''$ so that the chain maps are \emph{anti-commuting}, i.e. $\del_{i}' \circ \del_i'' + \del_{i-1}'' \circ \del_i = 0$. The two columns consisting of the $F_i$ and $F_i'$ form a double complex.
If we first take homology in the vertical direction, we get 0 in each row, except in the bottom row, where we get $\mc{K} \ra M_g$. If we now take homology in the horizontal direction, we get $\coker(\mc{K} \ra M_g) \cong \im(M_g \xrightarrow{d_g} M_{g-1})$. 
Thus the total complex of the above double complex gives a resolution of $\coker(\mc{K} \ra M_g) \cong \im(M_g \xrightarrow{d_g} M_{g-1})$, i.e.
%%----------------------------------------------------
\begin{align*}
    \dots \ra F_1 \oplus F_2' \xrightarrow{\del_1'' \oplus \del_2'} F_0 \oplus F_1' \xrightarrow{\del_0'' \oplus \del_1'} F_0' \xrightarrow{d_g \circ \del_0'} \im(M_g \ra M_{g-1}) \ra 0 
\end{align*}
%%----------------------------------------------------
is exact.
It follows that we have a resolution
%%----------------------------------------------------
     \begin{align*}
     \dots \ra F_1 \oplus F_2' \xrightarrow{\del_1'' \oplus \del_2'} F_0 \oplus F_1' \xrightarrow{\del_0'' \oplus \del_1'} F_0' \xrightarrow{d_g \circ \del_0'} M_{g-1} \ra \dots \ra M_1 \ra M \ra 0 
    \end{align*} 
%%----------------------------------------------------
    as desired.
\end{proof}
%%----------------------------------------------------

We will apply Lemma \ref{gluestep} when $\mc{R} = \Q [\Sp(M)]$, and where the $F_i'$ (and thus also the $F_i \oplus F_{i+1}')$  are also projective. Thus this Lemma gives us a way of replacing $M_g$, which is not necessarily projective, with projective modules.

We need a way to recognise projective $\Q[\Sp(M)]$-modules. For this, we will use the following lemma from Church-Putman \cite{church2017codimension}. This is the part where we will require ${R}$ to have finitely many units.

%%----------------------------------------------------
\begin{lemma}[{Church-Putman\cite[Lemma 3.2]{church2017codimension}}]
 \label{CPproj} Let $G$ be a group acting simplicially on a simplicial complex $X$, and let $Y \subset X$ be a subcomplex preserved by the action of $G$. For some $n \geq 0$, assume that the setwise stabilizer group $G_{\sigma}$ is finite for every $n$-simplex $\sigma$ of $X$ that is not contained in $Y$. Then the $G$-module $\tC_n(X,Y; \Q)$ of relative simplicial chains is projective over $\Q [G]$.
\end{lemma}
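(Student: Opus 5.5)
The plan is to decompose the relative chain module orbit by orbit, identify each piece as an induced module from a finite stabilizer, and then use that rational representations of finite groups are projective (Maschke). Throughout, $\tC_n(X,Y;\Q)$ means oriented simplicial chains. As a $\Q$-vector space, $\tC_n(X,Y;\Q) = \tC_n(X;\Q)/\tC_n(Y;\Q)$ has a basis indexed by the $n$-simplices of $X$ not lying in $Y$, with one chosen orientation for each. Reversing the orientation of a simplex multiplies the basis element by $-1$.

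\textbf{Step 1: orbit decomposition.} Since $Y$ is $G$-invariant, $G$ permutes the set $\Sigma$ of $n$-simplices of $X$ not in $Y$. Choose orbit representatives $\{\sigma_\alpha\}$ of $\Sigma/G$. Then
\begin{align*}
    \tC_n(X,Y;\Q) \cong \bigoplus_\alpha M_\alpha,
\end{align*}
where $M_\alpha$ is the $\Q$-span of the simplices in the orbit $G\sigma_\alpha$. Each $M_\alpha$ is a $\Q[G]$-submodule. Since a direct sum of projective modules is projective, it suffices to show that each $M_\alpha$ is projective.

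\textbf{Step 2: identify $M_\alpha$ as an induced module.} Let $H = G_{\sigma_\alpha}$ be the setwise stabilizer, which is finite by hypothesis. An element $h \in H$ permutes the vertices of $\sigma_\alpha$, so $h\cdot\sigma_\alpha = \chi(h)\,\sigma_\alpha$ for the sign character $\chi : H \to \{\pm 1\}$ of that permutation. Let $\Q_\chi$ denote the one-dimensional $\Q[H]$-module on which $H$ acts by $\chi$. The map
\begin{align*}
    \Q[G]\otimes_{\Q[H]}\Q_\chi \to M_\alpha, \qquad g\otimes 1 \mapsto g\cdot\sigma_\alpha,
\end{align*}
is well defined and $G$-equivariant. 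It is a bijection because the cosets $gH$ correspond bijectively to the simplices in the orbit, each up to sign. Hence $M_\alpha \cong \Ind_H^G \Q_\chi$.

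\textbf{Step 3: projectivity.} Since $H$ is finite and $\Q$ has characteristic zero, the element
\begin{align*}
    e = \frac{1}{|H|}\sum_{h\in H}\chi(h)\,h \in \Q[H]
\end{align*}
is an idempotent, and $\Q_\chi \cong \Q[H]e$. Thus $\Q_\chi$ is a direct summand of the free module $\Q[H] = \Q[H]e \oplus \Q[H](1-e)$, so it is projective over $\Q[H]$. Induction commutes with direct sums and sends $\Q[H]$ to $\Q[G]$. Therefore $\Ind_H^G\Q_\chi \cong \Q[G]e$ is a direct summand of $\Q[G]$, hence projective.

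\textbf{Main point of care.} The only step needing care is Step 2. An element of $H$ may act on $\sigma_\alpha$ by an odd permutation of its vertices, so that it reverses the orientation. Tracking this with the sign character $\chi$, rather than assuming the trivial action, handles it. The divisibility by $|H|$ in Step 3 is exactly where finiteness of the stabilizers and rational coefficients are used.
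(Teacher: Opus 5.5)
Your argument is correct. You decompose $\tC_n(X,Y;\Q)$ over $G$-orbits of $n$-simplices outside $Y$, identify each summand with $\Ind_{G_\sigma}^G \Q_\chi$ for the orientation character $\chi$, and use the idempotent $e$ (Maschke) to get projectivity over the finite group ring. The paper does not reprove this lemma; it cites Church--Putman, whose proof is this same orbit-decomposition and induced-module argument.
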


We will also need the following lemma, whose proof is standard.

\begin{lemma}
    \label{lem:prodproj}
If $P_1$ and $P_2$ are projective modules over $\Q [G_1]$ and $\Q [G_2]$, respectively, then $P_1 \otimes_\Q P_2$ is projective over $\Q [G_1] \otimes_\Q \Q [G_2] \cong \Q[G_1 \times G_2]$.
\end{lemma}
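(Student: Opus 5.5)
The lemma to prove is Lemma \ref{lem:prodproj}: if $P_1$ is projective over $\Q[G_1]$ and $P_2$ is projective over $\Q[G_2]$, then $P_1\otimes_\Q P_2$ is projective over $\Q[G_1\times G_2]$. The plan is to reduce to free modules and use that the tensor product commutes with direct sums.

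First I record the ring isomorphism $\Q[G_1]\otimes_\Q \Q[G_2]\cong \Q[G_1\times G_2]$. It is given on basis elements by $g_1\otimes g_2\mapsto (g_1,g_2)$. This is a bijection of $\Q$-bases and is multiplicative, so it is an isomorphism of $\Q$-algebras. Under it, $P_1\otimes_\Q P_2$ becomes a $\Q[G_1\times G_2]$-module via the diagonal action $(g_1,g_2)\cdot(p_1\otimes p_2)=g_1p_1\otimes g_2p_2$.

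Next I use that a module is projective exactly when it is a direct summand of a free module. Choose $\Q[G_1]$-modules $Q_1$ and free modules $F_1\cong\bigoplus_{I}\Q[G_1]$ with $P_1\oplus Q_1\cong F_1$. Similarly choose $P_2\oplus Q_2\cong F_2\cong\bigoplus_{J}\Q[G_2]$. The tensor product over $\Q$ distributes over direct sums, and these identifications are equivariant for the diagonal action. This gives
\begin{align*}
F_1\otimes_\Q F_2\cong \bigoplus_{I\times J}\Q[G_1]\otimes_\Q\Q[G_2]\cong \bigoplus_{I\times J}\Q[G_1\times G_2].
\end{align*}
So $F_1\otimes_\Q F_2$ is a free $\Q[G_1\times G_2]$-module. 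It also decomposes as
\begin{align*}
(P_1\otimes P_2)\oplus(P_1\otimes Q_2)\oplus(Q_1\otimes P_2)\oplus(Q_1\otimes Q_2)
\end{align*}
as $\Q[G_1\times G_2]$-modules. Therefore $P_1\otimes_\Q P_2$ is a direct summand of a free module, hence projective.

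No step here is a real obstacle. The only care needed is to check that the distributivity isomorphisms respect the $G_1\times G_2$-action. That holds because each map is defined summand-by-summand using the identity on each factor, so it intertwines the diagonal action.
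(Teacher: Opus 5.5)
Your proof is correct. The paper gives no argument for this lemma, only calling it standard, and your direct-summand-of-free argument is that standard proof: $\otimes_\Q$ commutes with direct sums, and $\Q[G_1]\otimes_\Q\Q[G_2]$ is the regular $\Q[G_1\times G_2]$-module under the diagonal action.
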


%%----------------------------------------------------

We are now ready to prove Proposition \ref{sharblyterms}.

%%----------------------------------------------------
\begin{proof}[Proof of Proposition \ref{sharblyterms}]
If we have a resolution 
%%----------------------------------------------------
\begin{align*}
    0 \to M_g \to M_{g-1} \ra \dots M_1 \to M \to 0
\end{align*}
%%----------------------------------------------------
 and a projective resolution $F_{i, \bullet} \to M_i$ for each $1 \leq i \leq n$, then repeatedly applying Lemma \ref{gluestep} gives us a way to get a projective resolution of $M$.
 Note that in doing so, the $d$-th term in the resulting resolution will be $\bigoplus_{a+b-1=d, 0 \leq a \leq g} F_{a,b}$. (The shift of 1 is due to the fact that we chose to index the resolution $\dots \to M_1 \to M \to 0$ starting at 1, and not 0.)
We shall use this to apply Lemma \ref{gluestep} to our resolution \eqref{Steinbergs}, tensored with $\Q$:

%%----------------------------------------------------
\begin{align*}
    0 \ra \St(V)_\Q \longrightarrow  \bigoplus_{g(W) = n-1} \St(W)_\Q \otimes_\Q \St^\w(W^{\perp})_\Q \longrightarrow  \dots  \longrightarrow \bigoplus_{g(W) = 1} \St(W)_\Q \otimes_\Q \St^\w(W^{\perp})_\Q \longrightarrow \St^\w(V)_\Q \to 0 
\end{align*}
%%----------------------------------------------------

To do this, we will need a projective resolution for each of the terms $\bigoplus_{g(W)=g} \St(W)_\Q \otimes \St^{\w}(W^{\perp})_\Q$.

We can inductively assume Proposition \ref{sharblyterms} for symplectic spaces with genus $\leq n-1$. Thus, for a fixed $W$ of genus $g \geq 1$, we have the normal Sharbly resolution $\Sh_{k}(W)_\Q$ for $\St(W)_\Q$, and for $\St^{\w}(W^{\perp})_\Q$, we have a resolution whose degree-$d$ term is given by
%%----------------------------------------------------
\begin{align*}
\bigoplus \Sh_{k_2}(W_2)_\Q \otimes_\Q \dots \otimes_\Q \Sh_{k_m}(W_m)_\Q
\end{align*} 
%%----------------------------------------------------
where
%%----------------------------------------------------
\begin{align*}
   & W_2 \oplus \dots \oplus W_m = W^\perp \\
    & k_2+\dots+k_m + (n-g) - (m-1) = d 
\end{align*}
%%----------------------------------------------------

By tensoring these resolutions for $\St(W)_\Q$ and $\St^{\w}(W^{\perp})_Q$ we get a resolution for $\St(W)_\Q \otimes_\Q \St^{\w}(W^{\perp})_\Q$. The degree $d$ term of this resolution will be 
%%----------------------------------------------------
\begin{align*}
    \bigoplus \Sh_{k_1}(W)_\Q \otimes_\Q \Sh_{k_2}(W_2)_\Q \otimes_\Q \dots \otimes_\Q \Sh_{k_m}(W_m)_\Q
\end{align*}
%%----------------------------------------------------
where 
%%----------------------------------------------------
\begin{align*}
    & W_2 \oplus \dots \oplus W_m = W^\perp \\
    & d= (k_2+\dots+k_m + (n-g) - (m-1)) + k_1 \\
    & = k_1 + k_2 + \dots + k_m + n-m - (g-1) 
\end{align*}
%%----------------------------------------------------

Taking the direct sum of these resolutions as $W$ ranges over all genus $g$ symplectic subspaces of $V$ gives us a resolution of $\bigoplus_{g(W)=g} \St(W)_\Q \otimes_\Q \St^{\w}(W^{\perp})_\Q$.

To apply Lemma \ref{gluestep}, we need to check that this resolution is projective over $\Q[\Sp(M)]$, which we defer to the end of the proof. Assuming projectivity, let us see what resolution this yields for $\St^{\w}(V)_\Q$.

Let $F_{a,\bullet}$ be the resolution obtained for $\bigoplus_{g(W)=a} \St(W)_\Q \otimes_\Q \St^{\w}(W^{\perp})_\Q$. 
Thus, note that 
%%----------------------------------------------------
\begin{align*}
    F_{a,b} = \bigoplus \Sh_{k_1}(W)_\Q \otimes_\Q \Sh_{k_2}(W_2)_\Q \otimes_\Q \dots \otimes_\Q \Sh_{k_m}(W_m)_\Q
\end{align*}
%%----------------------------------------------------
where the direct sum ranges over:
%%----------------------------------------------------
\begin{align*}
   & g(W)=a \\
    & W_2 \oplus \dots \oplus W_m = W^\perp \text{ is a decomposition into symplectic subspaces} \\
    & k_1 + k_2 + \dots + k_m + n-m - (a-1) = b  \implies k_1 + k_2 + \dots + k_m + n-m = a+b-1
\end{align*}
%%----------------------------------------------------

Thus applying Lemma \ref{gluestep} gives us a resolution whose degree $d$ terms are:
%%----------------------------------------------------
\begin{align*}
    \bigoplus \Sh_{k_1}(W_1)_\Q \otimes_\Q \dots \otimes_\Q \Sh_{k_m}(W_m)_\Q
\end{align*}
%%----------------------------------------------------
where 
%%----------------------------------------------------
\begin{align*}
    k_1 + \dots + k_m + n-m = d 
\end{align*}
%%----------------------------------------------------
as desired.

We are left now to check projectivity over $\Q[\Sp(M)]$. Lemma \ref{CPproj} implies that $\Sh_k(W)_\Q$ is projective over $\Q[\Sp(W)]$ for any symplectic $W \subset V$: we can take $X = \mc{K}(W)$ and $Y = \mc{L}(W)$ (See \ref{defn:simpcpx} for the definitions of $\mc{K}(W)$ and $\mc{L}(W)$), and note that $\Sh_k(W)_\Q \cong \tC_{\dim W +k}(\mc{K}(W), \mc{L}(W); \Q)$.
Lemma \ref{lem:prodproj} implies that $\Sh_{k_1}(W_1)_\Q \otimes_\Q \dots \otimes_\Q \Sh_{k_m}(W_m)_\Q$ is projective over $\Q[\Sp(W_1) \times \dots \times \Sp(W_m)]$. Note that for fixed $k_1, \dots, k_m$, we have:

\begin{align*}
    \bigoplus_{\dim W_i' = \dim W_i} \Sh_{k_1}(W_1')_\Q \otimes_\Q \dots \otimes_\Q \Sh_{k_m}(W_m')_\Q \cong \Ind_{\Sp(W_1) \times \dots \times \Sp(W_m)}^{\Sp(V)} \Sh_{k_1}(W_1)_\Q \otimes_\Q \dots \otimes_\Q \Sh_{k_m}(W_m)
\end{align*}

where the direct sum ranges over all symplectic decompositions $V = W_1' \oplus \dots \oplus W_m'$ with $\dim W_i' = \dim W_i$ for all $i$.
Thus 
\begin{align*}
    \bigoplus_{\dim W_i' = \dim W_i} \Sh_{k_1}(W_1')_\Q \otimes_\Q \dots \otimes_\Q \Sh_{k_m}(W_m')_\Q
\end{align*}

is a direct summand of 

\begin{align*}
    \Ind_{\Sp(W_1) \times \dots \times \Sp(W_m)}^{\Sp(V)} F,
\end{align*}

where $F$ is a free module over $\Q[\Sp(W_1) \times \dots \times \Sp(W_m)]$. Thus the above induced representation is a free module over $\Q[\Sp(V)]$. Since $\bigoplus_{\dim W_i' = \dim W_i} \Sh_{k_1}(W_1')_\Q \otimes_\Q \dots \otimes_\Q \Sh_{k_m}(W_m')_\Q$ is a direct summand of this free $\Q[\Sp(V)]$-module, it is hence projective over $\Q[\Sp(V)]$. The module we want to be projective for this proof is a direct sum of projective modules of the above form, and is thus also projective.
\end{proof}
%%----------------------------------------------------

%%--------------------------------------------------------------------
%\section{The Differential}
\subsection{The Differential on Eq \eqref{Steinbergs}}
\label{diffeq1} 
%%--------------------------------------------------------------------
We will find the differential on Eq \eqref{Steinbergs} by evaluating it on chain representatives of homology classes in $\tH_{n+g}(\mc{I}^{g+1}, \mc{I}^g) \cong \bigoplus_{g(W)=g+1}\St(W) \otimes \St^\w(W^\perp)$. So we first set some notation for dealing with chains in a chain complex.

%%----------------------------------------------------
\begin{Note}\label{notn:simchains}
\begin{itemize} Let $V$ be a symplectic $K$-vector space of genus $n$.
    \item     For a linearly independent ordered set of vectors $\vec{v}_0 , \dots, \vec{v}_{2g-1}$ spanning a symplectic subspace $W \subset V$, let $\llbracket \lin{\vec{v}_0}, \dots, \lin{\vec{v}_{2g-1}} \rrbracket$ denote the cycle of $\tC_\bullet(\mc{K}(V), \mc{L}(V))$ corresponding to the (oriented) simplex with $\langle \vec{v}_i \rangle$ as its vertices.

    \item When $\vec{w}_1, \vec{w}_{\bar{1}}, \dots, \vec{w}_g, \vec{w}_{\bar{g}}$ form an ordered symplectic basis of a symplectic subspace $W \subset V$, let 
    \newline
    $\llbracket \lin{\vec{w}_1}, \lin{\vec{w}_{\bar{1}}}, \dots, \lin{\vec{w}_g}, \lin{\vec{w}_{\bar{g}}} \rrb ^\w$ denote the cycle of $\mc{I}^0(V)$, defined inductively as follows:
    When $g=1$, we let $\llbracket \lin{\vec{w}_1}, \lin{\vec{w}_{\bar{1}}} \rrbracket ^\w$ denote the chain $[\lin{\vec{w}_1}] - [\lin{\vec{w}_{\bar{1}}}]$. 
    In general, for a partial symplectic basis $v_1, w_1, \dots, v_g, w_g$, inductively define
%%----------------------------------------------------
    \begin{IEEEeqnarray*}{l}
        \hspace*{-2em} \llbracket \lin{\vec{w}_1}, \lin{\vec{w}_{\bar{1}}}, \dots, \lin{\vec{w}_g}, \lin{\vec{w}_{\bar{g}}} \rrbracket ^\w \coloneq (\llbracket \lin{\vec{w}_1}, \lin{\vec{w}_{\bar{1}}} \rrbracket ^\w) * (\llbracket \lin{\vec{w}_2}, \lin{\vec{w}_{\bar{2}}}, \dots, \lin{\vec{w}_g}, \lin{\vec{w}_{\bar{g}}} \rrbracket ^\w)\\
       \quad = [\lin{\vec{w}_1}]*(\llbracket \lin{\vec{w}_2}, \lin{\vec{w}_{\bar{2}}}, \dots, \lin{\vec{w}_g}, \lin{\vec{w}_{\bar{g}}} \rrbracket ^\w) - [\lin{\vec{w}_{\bar{1}}}]*(\llbracket \lin{\vec{w}_2}, \lin{\vec{w}_{\bar{2}}}, \dots, \lin{\vec{w}_g}, \lin{\vec{w}_{\bar{g}}} \rrbracket ^\w) 
    \end{IEEEeqnarray*}
%%----------------------------------------------------
        \item Suppose $(\tC_\bullet, \del_{\bullet})$ is a chain complex with a subcomplex $\tD_\bullet$, and that $c_1, c_2 \in \tC_k$ are chains of degree $k$. We will say that $c_1 \simeq c_2$ if there exists a chain $c \in \tC_{k+1}$ and $d \in \tD_k$ such that $c_1-c_2 = \del c + d$.
        \item Suppose $(\tC_\bullet(X), \del_\bullet)$ is the chain complex of simplicial chains in a simplicial complex $X$. Suppose that
%%----------------------------------------------------
        \begin{align*}
            c_1 = \sum n_i\sigma_i \text{ , } c_2 = \sum m_j \tau_j
        \end{align*} 
%%----------------------------------------------------
are simplicial chains of degrees $k$ and $l$ respectively, such that
%%----------------------------------------------------
        \begin{align*}
            \sigma_i * \tau_j \in X
        \end{align*} 
%%----------------------------------------------------
for every $i,j$.
        Then the join $c_1*c_2$ is defined to be the degree $(k+l+1)$-chain
%%----------------------------------------------------
        \begin{align*}
            \sum_{i,j} n_im_j (\sigma_i*\tau_j)
        \end{align*}
%%----------------------------------------------------
        
    \end{itemize}
\end{Note}
%%----------------------------------------------------

%%----------------------------------------------------
\begin{remark}
\label{ChainReps}
    \begin{itemize}
        \item Note that Lemma \ref{HtpyEquiv} says that the images of the cycles $\llbracket \lin{\vec{v}_0}, \dots, \lin{\vec{v}_{2n-1}} \rrbracket$ of $\tC_\bullet(\mc{K}(V), \mc{L}(V))$ under the homology equivalence map $\tC_{\bullet}(\mc{K}(V), \mc{L}(V)) \to \widetilde{\tC}_{\bullet -1}(\mc{L}(V))$ generate the homology group $\widetilde{\tH}_{2n-2}(\mc{L}(V))$. Thus $\tH_{2n-1}(\mc{K}(V), \mc{L}(V)) \cong \widetilde{\tH}_{2n-2}(\bb{T}(V))$ is generated by the homology classes determined by the cycles $\llbracket \lin{\vec{v}_0}, \dots, \lin{\vec{v}_{2n-1}} \rrbracket$.
        \item Similarly, Lemma \ref{HtpyEquiv} says that the homology group $\widetilde{\tH}_{n-1}(\mc{I}^0(V)) \cong \widetilde{\tH}_{n-1}(\bb{T}^\w(V))$ is generated by the homology classes determined by the cycles $\llbracket \lin{\vec{w}_1}, \lin{\vec{w}_{\bar{1}}}, \dots, \lin{\vec{w}_n}, \lin{\vec{w}_{\bar{n}}} \rrbracket ^\w$, where $\vec{w}_1, \vec{w}_{\bar{1}}, \dots, \vec{w}_n, \vec{w}_{\bar{n}}$ form a symplectic basis of $V$.
    \end{itemize}
\end{remark}
%%----------------------------------------------------

We now state our main proposition:
%%----------------------------------------------------
\begin{prop}
    \label{Eq1Diff}
    Let $V$ be a symplectic vector space of genus $n$ over a field $K$.
    Then on the level of chain representatives, the differential on Eq \eqref{Steinbergs} is given as follows:
    
    Suppose 
    \begin{align*}
        \vec{v}_0, \dots, \vec{v}_{2g+1}
    \end{align*} form a basis of a symplectic genus $g+1$ subspace $W \subset V$, and suppose 
    \begin{align*}
       \vec{u}_1, \vec{u}_{\bar{1}}, \dots, \vec{u}_{n-g-1}, \vec{u}_{\overline{n-g-1}} 
    \end{align*} is a symplectic basis of $W^\perp$.
    For each pair $i < j$, say that $i \sim j$ if $\vec{v}_0, \dots, \widehat{\vec{v}}_i, \vec{v}_{i+1}, \dots, \vec{v}_{j-1}, \widehat{\vec{v}}_j, \dots, \vec{v}_{2g+1}$ span a symplectic subspace $W_{ij} \subset W$ of genus $g$.
    Then the differential on \eqref{Steinbergs} maps the homology class of
    \begin{align*}
        \llb \lin{\vec{v}_0}, \dots, \lin{\vec{v}_{2g+1}} \rrb * \llb  \lin{\vec{u}_1}, \lin{\vec{u}_{\bar{1}}}, \dots, \lin{\vec{u}_{n-g-1}}, \lin{\vec{u}_{\overline{n-g-1}}} \rrb ^\w
    \end{align*}
    to
    \begin{align*}
     \sum_{i \sim j} (-1)^{\sgn(\sigma)} \llb \lin{\vec{v}_0}, \dots, \lin{\widehat{\vec{v}}_i}, \lin{\vec{v}_{i+1}}, \dots, \lin{\vec{v}_{j-1}}, \lin{\widehat{\vec{v}}_j}, \dots, \lin{\vec{v}_{2g+1}} \rrb * \llb \lin{\vec{w}_i}, \lin{\vec{w}_j}, \lin{\vec{u}_1}, \lin{\vec{u}_{\bar{1}}}, \dots, \lin{\vec{u}_{n-g-1}}, \lin{\vec{u}_{\overline{n-g-1}}} \rrbracket ^\w   
    \end{align*}
    where 
    \begin{align*}
        \lin{\vec{w}_i}, \lin{\vec{w}_j}
    \end{align*} are the projections of $\lin{\vec{v}_i}, \lin{\vec{v}_j}$ respectively onto $W_{ij}^{\perp} \subset W$, and $\sigma$ is the shuffle permutation taking $\lin{\vec{v}_i}$ and $\lin{\vec{v}_j}$ to positions 0 and 1 in $[\lin{\vec{v}_0}, \dots, \lin{\vec{v}_{2g+1}}]$, i.e. $(-1)^{\sgn(\sigma)} = (-1)^{i+j-1}$.
\end{prop}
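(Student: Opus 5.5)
The differential in \eqref{Steinbergs} from the $g(W)=g+1$ term to the $g(W)=g$ term is the $d^1$ differential of the spectral sequence of the genus filtration. Concretely, it is the connecting map of the triple $(\mc{I}^{g+1}(V),\mc{I}^{g}(V),\mc{I}^{g-1}(V))$,
\begin{align*}
\tH_{n+g}(\mc{I}^{g+1},\mc{I}^g)\xrightarrow{\ \del\ }\tH_{n+g-1}(\mc{I}^{g},\mc{I}^{g-1}).
\end{align*}
On chain level it is computed by taking a relative cycle, applying the simplicial boundary, and reading off the result modulo $\tC_\bullet(\mc{I}^{g-1}(V))$. So the plan is as follows.

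\textbf{Step 1: the cycle and its boundary.} Take the relative cycle $c=\llb \lin{\vec v_0},\dots,\lin{\vec v_{2g+1}}\rrb * \llb \lin{\vec u_1},\dots,\lin{\vec u_{\overline{n-g-1}}}\rrb^\w$. It lies in $\mc{I}^{g+\frac12}(V)$, and under Lemma \ref{IgSteinberg} and Remark \ref{ChainReps} it represents the generator $\llb\cdots\rrb\otimes\llb\cdots\rrb^\w$ of $\St(W)\otimes\St^\w(W^\perp)$. By the Leibniz rule for joins,
\begin{align*}
\del c=\del\llb \vec v\rrb * \llb \vec u\rrb^\w \pm \llb \vec v\rrb*\del\llb \vec u\rrb^\w .
\end{align*}
The second term vanishes because $\llb \vec u\rrb^\w$ is a cycle in $\mc{I}^0(W^\perp)$; the augmentation piece is irrelevant since $n-g-1\ge 1$ in the relevant range. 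In the first term, each face omitting one $\vec v_i$ spans a hyperplane of $W$, which has genus exactly $g$, with a one-dimensional radical. Hence each face lies in $\mc{I}^g(V)$, but generally not in $\mc{I}^{g-\frac12}(V)$ or in the image of the standard representatives.

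\textbf{Step 2: rewrite each face via a chain homotopy.} The main work is to identify the class of $\del c$ in $\tH_{n+g-1}(\mc{I}^g,\mc{I}^{g-1})\cong\tH_{n+g-1}(\mc{I}^{g-\frac12},\mc{I}^{g-1})$, using Lemma \ref{IgContractible} and Lemma \ref{IgQuotient}, as a sum of standard representatives. I would use the notation $\simeq$ relative to $\mc{I}^{g-1}$.

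Fix the face $\tau_i=[\vec v_0,\dots,\widehat{\vec v_i},\dots,\vec v_{2g+1}]$ and let $r_i$ span $\rad\lin{\tau_i}$. Expand $\tau_i$ by the cone construction from the proof of Lemma \ref{IgContractible}: cone off toward the radical vertex $\lin{r_i}$. This gives $\tau_i \simeq \sum_j \pm[\vec v_0,\dots,\widehat{\vec v_i},\dots,\widehat{\vec v_j},\dots,\lin{r_i}]$, modulo chains in $\mc{I}^{g-1}$. The terms where omitting $\vec v_j$ drops the genus below $g$ die, which leaves exactly the pairs $i\sim j$.

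For $i\sim j$, the remaining $2g$ vectors span $W_{ij}$ of genus $g$. The two lines $\lin{r_i}$ and $\lin{r_j}$ lie in $W_{ij}^\perp\cap W$, and up to scaling they are the projections $\lin{\vec w_j}$ and $\lin{\vec w_i}$: for instance $r_i\in\lin{\tau_i}\cap W_{ij}^\perp$, which is the projection line of $\vec v_j$. Pairing the $(i,j)$ and $(j,i)$ contributions produces $\llb W_{ij}\text{-basis}\rrb * ([\lin{\vec w_i}]-[\lin{\vec w_j}])$. Joined with $\llb\vec u\rrb^\w$, this is, by the inductive definition of $\llb\cdot\rrb^\w$, exactly $\llb\cdots\rrb*\llb \lin{\vec w_i},\lin{\vec w_j},\vec u\rrb^\w$. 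The rearrangement needed uses the graded-commutativity of joins together with the ordering of vertices.

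\textbf{Step 3: signs.} Finally I would track signs. The face sign $(-1)^i$, the coning sign $(-1)^{j}$ or $(-1)^{j-1}$ (depending on whether $j<i$ or $j>i$), and the reordering that puts $\vec w_i,\vec w_j$ first combine to $(-1)^{i+j-1}$, the sign of the shuffle $\sigma$.

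I expect Step 2 to be the main obstacle, specifically making the coning homotopy precise inside $\mc{I}^g$. One must check that adding $\lin{r_i}$ to the faces keeps them in $\mc{I}^{g}$; it does, since $r_i$ lies in the radical, so the genus does not increase. One must also check that every discarded term genuinely lies in $\mc{I}^{g-1}$ or is a boundary. A further subtlety is that $\lin{r_i}$ depends on $i$, so the two conings for $i$ and $j$ must be matched up consistently. I would first verify the whole computation in the case $g=0$, $n=1$ to fix conventions, then do the general case.
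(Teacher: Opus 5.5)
Your Steps 1 and 2 follow the paper's argument. Coning the face $\tau_i$ to its radical line is exactly the move in Lemma \ref{Eq1diffchains} of replacing $\vec v_i$ by the radical vector $\vec a_i$; that simplex is $\pm\,\tau_i*[\lin{\vec a_i}]$. The non-$\sim$ terms die in $\mc{I}^{g-1}(V)$ for the same radical reason as in the paper. The identifications $\lin{r_i}=\lin{\vec w_j}$, $\lin{r_j}=\lin{\vec w_i}$ and the final join with $\llb \vec u\rrb^\w$ also match.

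\textbf{The gap is Step 3: the signs do not combine to $(-1)^{i+j-1}$.}

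Write $\rho_{ij}$ ($i<j$) for the ordered simplex with $\vec v_i,\vec v_j$ omitted. Since $\del(\tau_i*[\lin{r_i}])=\del\tau_i*[\lin{r_i}]-\tau_i$, work modulo boundaries and chains in $\mc{I}^{g-1}(V)$. Then the face $(-1)^i\tau_i$ contributes $(-1)^{i}(-1)^{j-1}\rho_{ij}*[\lin{\vec w_j}]$. The face $(-1)^j\tau_j$ contributes $(-1)^{j}(-1)^{i}\rho_{ij}*[\lin{\vec w_i}]$. The total is $(-1)^{i+j}\rho_{ij}*\llb\lin{\vec w_i},\lin{\vec w_j}\rrb^\w$.

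No reordering absorbs the discrepancy. The simplex $\rho_{ij}$ has $2g$ vertices, so moving a vertex past it costs no sign. And $\llb\lin{\vec w_i},\lin{\vec w_j}\rrb^\w*\llb\vec u\rrb^\w$ is by definition $\llb\lin{\vec w_i},\lin{\vec w_j},\vec u\rrb^\w$.

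The sanity check you propose already shows this. For $g=0$, $n=1$ one has $\del[\lin{\vec v_0},\lin{\vec v_1}]=[\lin{\vec v_1}]-[\lin{\vec v_0}]=-\llb\lin{\vec v_0},\lin{\vec v_1}\rrb^\w$. The statement instead predicts $+\llb\lin{\vec v_0},\lin{\vec v_1}\rrb^\w$.

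So the connecting map of the triple is the \emph{negative} of the stated formula. The missing ingredient is the observation the paper uses to finish: negating maps in an exact sequence changes no kernels or images. Hence the formula with the shuffle sign $(-1)^{\sgn(\sigma)}$, which the paper chooses for consistency with Section \ref{difffinal}, is still a legitimate choice of differential making \eqref{Steinbergs} exact. Without that step, your Step 3 asserts an identity that is false.
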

%%----------------------------------------------------
The proof will require the following main lemma.

%%----------------------------------------------------
\begin{comment}
\begin{lemma}
    \label{chainjoinequiv}
    Suppose we have a chain complex $(\tC_\bullet, \del_{\bullet})$ and a subcomplex $(\tD_\bullet, \del_{\bullet})$. Let $d$ be a chain in $\tD_{\bullet}$ with $\del d = 0$. Then, for chains $c_1, c_2$ in $\tC_{\bullet}$, if $\del c_1 \simeq c_2$, then $\del(c_1 * d) \simeq c_2 * d$.
\end{lemma}
%%----------------------------------------------------
%%----------------------------------------------------
\begin{proof}
Since $\del c_1 \simeq c_2$, we have
%%----------------------------------------------------
\begin{align*}
    \del c_1 = c_2 + \del c
\end{align*}
%%----------------------------------------------------
for some chain $c$.
Also note that for any chain $c$, we have
%%----------------------------------------------------
\begin{align*}
    \del(c*c_3) = & (\del c) * c_3 + c*(\del c_3) \\
    = & \del c * c_3 + 0
\end{align*}
%%----------------------------------------------------
and thus $(\del c)*c_3$ is a boundary.
    Now we have
%%----------------------------------------------------
    \begin{align*}
        \del(c_1*c_3) = & \del(c_1)*c_3 + c_1 *(\del c_3)\\
        = &  \del(c_1)*c_3 + 0 \\
        \simeq & (c_2+\del c)*c_3 \\
        =& c_2*c_3 + (\del c)*c_3
    \end{align*}
%%----------------------------------------------------
    and thus $\del(c_1*c_3) \simeq c_2*c_3$, as desired.
\end{proof}
\end{comment}
%%----------------------------------------------------

%%----------------------------------------------------
\begin{lemma}\label{Eq1diffchains}
Let $V$ be a symplectic $K$-vector space of genus $n$. For $g \geq 0$, suppose $W \subset V$ is a genus $g+1$ symplectic subspace, and let $\vec{v}_0, \dots, \vec{v}_{2g+1}$ be a spanning set of vectors for $W$.
    For each pair $i < j$, say that $i \sim j$ if $\vec{v}_0, \dots, \widehat{\vec{v}}_i, \vec{v}_{i+1}, \dots, \vec{v}_{j-1}, \widehat{\vec{v}}_j, \dots, \vec{v}_{2g+1}$ span a symplectic subspace $W_{ij} \subset W$ of genus $g$.
    Then, with respect to the chain complex $\tC_{\bullet}(\mc{I}^g(V))$ and its subcomplex $\tC_{\bullet}(\mc{I}^{g-1}(V))$, we have the following equivalence of chains in the sense of Notation \ref{notn:simchains}:
%%----------------------------------------------------
    \begin{align*}
        \del \llbracket \lin{\vec{v}_0}, \lin{\vec{v}_1}, \dots, \lin{\vec{v}_{2g+1}} \rrbracket \simeq \sum_{i \sim j} (-1)^{i+j} \llb \lin{\vec{v}_0}, \dots, \lin{\widehat{\vec{v}}_i}, \lin{\vec{v}_{i+1}}, \dots, \lin{\vec{v}_{j-1}}, \lin{\widehat{\vec{v}}_j}, \dots, \lin{\vec{v}_{2g+1}} \rrb * \llb \lin{\vec{w}_i}, \lin{\vec{w}_j} \rrb^\w 
     \end{align*}
%%----------------------------------------------------
    Here $\lin{\vec{w}_i}, \lin{\vec{w}_j}$ are the projections of $\lin{\vec{v}_i}, \lin{\vec{v}_j}$ respectively onto $W_{ij}^{\perp} \subset W$.
\end{lemma}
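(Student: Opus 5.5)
We start from the boundary of the simplex $\sigma=[\lin{\vec{v}_0},\dots,\lin{\vec{v}_{2g+1}}]$:
\begin{align*}
\del\llb \lin{\vec{v}_0},\dots,\lin{\vec{v}_{2g+1}}\rrb=\sum_i(-1)^i[\lin{\vec{v}_0},\dots,\widehat{\lin{\vec{v}_i}},\dots,\lin{\vec{v}_{2g+1}}].
\end{align*}
Each face has $2g+1$ vectors, so its span has genus $\le g$ and lies in $\mc{I}^g(V)$. The goal is to deform this chain inside $\tC_\bullet(\mc{I}^g(V))$, modulo boundaries and modulo $\tC_\bullet(\mc{I}^{g-1}(V))$, into the stated sum of joins.

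The first step is to fix a face $\tau_i$ obtained by omitting $\vec{v}_i$. Its span $U_i$ is a hyperplane in $W$, so it has genus exactly $g$, with a one-dimensional radical $\rad(U_i)=\lin{\vec{r}_i}$. For each $j\neq i$, the span of the vectors other than $\vec{v}_i,\vec{v}_j$ is a codimension-one subspace of $U_i$. It has genus $g$, and so is the symplectic subspace $W_{ij}$, precisely when $\vec{r}_i\notin$ that span; equivalently, when $\vec{v}_j$ has a nonzero coefficient along $\vec{r}_i$. In that case $W_{ij}^\perp\cap U_i=\rad(U_i)$, so the projection $\vec{w}_j$ of $\vec{v}_j$ onto $W_{ij}^\perp$ spans $\lin{\vec{r}_i}$.

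Next we cone off. The simplex $\tau_i\cup\{\lin{\vec{r}_i}\}$ still spans $U_i$, so it lies in $\mc{I}^g(V)$. Hence
\begin{align*}
\tau_i\simeq \lin{\vec{r}_i}*\del\tau_i\pm\del(\lin{\vec{r}_i}*\tau_i),
\end{align*}
and the last term is a boundary. In $\lin{\vec{r}_i}*\del\tau_i$, a term omitting $\vec{v}_j$ together with $\vec{r}_i$ spans a subspace of genus $g-1$ unless $j$ satisfies the condition above, in which case it spans $W_{ij}\oplus\lin{\vec{r}_i}$. So modulo $\mc{I}^{g-1}(V)$ only the terms with $i\sim j$ survive, each of the form $\pm[\lin{\vec{r}_i}]*\llb\dots\widehat{\vec{v}_i}\dots\widehat{\vec{v}_j}\dots\rrb$.

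The third step pairs these terms. The term coming from $(i,j)$ via $\tau_i$ contributes $[\lin{\vec{w}_j}]$, since $\lin{\vec{r}_i}=\lin{\vec{w}_j}$. The term coming from $(j,i)$ via $\tau_j$ contributes $[\lin{\vec{w}_i}]$. Both share the same face on $W_{ij}$, so together they combine into $\pm\llb\cdots\rrb*\llb\lin{\vec{w}_i},\lin{\vec{w}_j}\rrb^\w=\pm\llb\cdots\rrb*([\lin{\vec{w}_i}]-[\lin{\vec{w}_j}])$. Here $\lin{\vec{w}_i},\lin{\vec{w}_j}$ span the genus-one space $W_{ij}^\perp$. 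We also need $i\sim j$ to be symmetric; it is, because it is defined by a condition on the pair. There are also faces $\tau_i$ that are already in $\mc{I}^{g-1}(V)$, namely those where omitting $\vec{v}_i$ drops the genus further; these can only occur when the $\vec{v}$'s are not a basis, and they vanish modulo the subcomplex. If the $\vec{v}$'s do not form a basis, the statement requires only a spanning set, and the same argument applies with $U_i$ replaced by the span of the face.

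The main obstacle is the sign bookkeeping. We must check that the two cone terms for $(i,j)$ and $(j,i)$ appear with opposite signs. The sign of the $(i,j)$ term is $(-1)^i$ times the sign from deleting $\vec{v}_j$ in $\tau_i$, which is $(-1)^{j-1}$ when $i<j$, times the cone/join convention sign. The $(j,i)$ term carries $(-1)^j(-1)^i$. These differ by exactly $-1$, which yields the difference $[\lin{\vec{w}_i}]-[\lin{\vec{w}_j}]$ with overall sign $(-1)^{i+j}$, up to one global convention sign that we fix by checking the case $g=0$ directly.
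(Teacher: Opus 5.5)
Your proposal is correct and is essentially the paper's proof. Your cone $\lin{\vec{r}_i}*\tau_i$ is, up to the sign $(-1)^i$, the paper's simplex $[\lin{\vec{v}_0},\dots,\lin{\vec{a}_i},\dots,\lin{\vec{v}_{2g+1}}]$, obtained by substituting the radical vector for $\vec{v}_i$; your criterion $\vec{r}_i\notin\lin{\vec{v}_k : k\neq i,j}$ is the paper's ``good pair'' condition; and cancelling the $(i,j)$ and $(j,i)$ terms, with signs $(-1)^{i+j-1}$ and $(-1)^{i+j}$, is exactly how the paper concludes.

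The remaining loose ends are cosmetic. The ``not a basis'' case is vacuous, since $2g+2$ vectors spanning the $(2g+2)$-dimensional $W$ form a basis. No global sign remains to be fixed either: for $g\geq 1$ the cone identity $\tau=\lin{\vec{r}}*\del\tau+\del(\lin{\vec{r}}*\tau)$ carries no extra sign, and moving $\lin{\vec{r}_i}$ past the $2g$ vertices of the face introduces no sign.
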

%%----------------------------------------------------
%%----------------------------------------------------
\begin{proof}
    We have 
%%----------------------------------------------------
    \begin{align*}
       \del \llbracket \lin{\vec{v}_0}, \dots, \lin{\vec{v}_{2g+1}} \rrbracket = \sum_{0 \leq i \leq 2g+1} (-1)^i [\lin{\vec{v}_0}, \dots, \lin{\widehat{\vec{v}}_i}, \dots, \lin{\vec{v}_{2g+1}}]
    \end{align*}
%%---------------------------------------------------- 
     The span of the vectors $\vec{v}_0, \dots, \hat{\vec{v}}_i, \dots, \vec{v}_{2g+1}$ is a codimension 1 subspace of $\lin{\vec{v}_0, \dots, \vec{v}_{2g+1}}$ and must have one-dimensional radical. Let $\vec{a}_i$ be a vector spanning this radical. The vector $\vec{a}_i$ may or may not be present among the $\vec{v}_j$, for $j \neq i$.

Consider the simplex $[\lin{\vec{v}_0}, \lin{\vec{v}_1}, \dots, \lin{\vec{a}_i}, \lin{\vec{v}_{i+1}}, \dots, \lin{\vec{v}_{2g+1}}]$, where we replace $\vec{v}_i$ with $\vec{a}_i$. The span of these vectors has genus $g$, and so lies in $\mc{I}^{g}$. In particular, the boundary of this simplex forms a boundary in $\tC_\bullet(\mc{I}^{g})$.

From this we have the following equivalence:
%%----------------------------------------------------
\begin{align*}
    (-1)^i[\lin{\vec{v}_0}, \dots, \lin{\widehat{\vec{v}}_i},\dots, \lin{\vec{v}_{2g+1}}] = &(-1)^i[\lin{\vec{v}_0}, \dots, \lin{\widehat{\vec{a}}_i},\dots, \lin{\vec{v}_{2g+1}}] \\\simeq & \sum_{j \neq i} (-1)^{j-1}[\lin{\vec{v}_0}, \dots, \lin{\widehat{\vec{v}}_j}, \dots,  \lin{\vec{a}_i}, \dots, \lin{\vec{v}_{2g+1}}] 
\end{align*}
%%----------------------------------------------------

Now, $\vec{a}_i$ lies in the span of $\vec{v}_0, \dots, \widehat{\vec{v}}_i, \dots, \vec{v}_{2g-1}$, and so is a linear combination of them. Take the minimal set of $\vec{v}_j$'s so that $\vec{a}_i$ is a linear combination of them - in other words, take those $\vec{v}_j$'s that have non-zero coefficient when we write down $\vec{a}_i$ as a linear combination of $\vec{v}_0, \dots, \widehat{\vec{v}}_i, \dots, \vec{v}_{2g-1}$. For $\vec{v}_j$ that lie in this minimal set, call the pair $(i,j)$ \emph{good}.

If $(i,j)$ is \emph{not} good, note that the vertices of $[\lin{\vec{v}_0}, \dots, \lin{\widehat{\vec{v}}_j}, \dots, \lin{\vec{a}_i}, \dots, \lin{\vec{v}_{2g+1}}]$ has a span of dimension $2g$, with a non-zero radical (since $\vec{a}_i$ is in the radical), and thus must have genus $\leq g-1$, and thus this chain is in $\tC_{\bullet}(\mc{I}^{g-1}(V))$.

On the other hand, for $(i, j)$ good, the span of $[\lin{\vec{v}_0}, \dots, \lin{\widehat{\vec{v}}_j}, \dots, \lin{\vec{a}_i}, \dots, \lin{\vec{v}_{2g+1}}]$ has genus $g$, and can be rewritten as 
%%----------------------------------------------------
\begin{align*}
    (-1)^{\sgn(\sigma)}[\lin{\vec{v}_0}, \dots, \lin{\widehat{\vec{v}}_j}, \dots, \lin{\vec{v}_{2g+1}}] * [\lin{\vec{a}_i}], 
\end{align*} 
%%----------------------------------------------------
where $\sigma$ is the permutation that takes $\vec{a}_i$ to the right end (Thus $(-1)^\sigma = (-1)^{2g-i+1}$ if $i>j$, and $=(-1)^{2g-i}$ otherwise). In doing so, we have rewritten the simplex as a join of an apartment representative for a genus $g$ symplectic space, and a vector perpendicular to it.

Note that the above shows that $[\lin{\vec{v}_0}, \dots, \lin{\widehat{\vec{v}}_j}, \dots, \lin{\widehat{\vec{v}}_i}, \dots, \lin{\vec{v}_{2g+1}}]$ spans a genus $g$ subspace if and only if $(i, j)$ is good. This implies $(i,j)$ being good is equivalent to saying that $i \sim j$, and also that $\sim$ is symmetric: i.e. $i \sim j \implies j \sim i$.

For each such pair $i, j$ with $i \sim j$, (and suppose WLOG that $i<j$), collecting the terms, we have 
%%----------------------------------------------------
\begin{align*}
& (-1)^{j-1}(-1)^{2g-i}[\lin{\vec{v}_0}, \dots, \lin{\widehat{\vec{v}}_i}, \lin{\vec{v}_{i+1}}, \dots, \lin{\vec{v}_{j-1}}, \lin{\widehat{\vec{v}}_j}, \dots, \lin{\vec{v}_{2g+1}}]*[\lin{\vec{a}_i}] \\ &+ (-1)^{i-1}(-1)^{2g-j+1}[\lin{\vec{v}_0}, \dots, \lin{\widehat{\vec{v}}_i}, \lin{\vec{v}_{i+1}}, \dots, \lin{\vec{v}_{j-1}}, \lin{\widehat{\vec{v}}_j}, \dots, \lin{\vec{v}_{2g+1}}]*[\lin{\vec{a}_j}] \\
& = (-1)^{i+j}[\lin{\vec{v}_0}, \dots, \lin{\widehat{\vec{v}}_i}, \lin{\vec{v}_{i+1}}, \dots, \lin{\vec{v}_{j-1}}, \lin{\widehat{\vec{v}}_j}, \dots, \lin{\vec{v}_{2g+1}}]*([\lin{\vec{a}_j}]-[\lin{\vec{a}_i}]) 
\end{align*}
%%----------------------------------------------------

Let $W_{ij}$ be the span of $\vec{v}_0, \dots, \widehat{\vec{v}}_i, \vec{v}_{i+1}, \dots, {\vec{v}_{j-1}}, \widehat{\vec{v}}_j, \dots, \vec{v}_{2g+1}$. We have seen that $W_{ij}$ is symplectic of genus $g$, and that $\vec{a}_i, \vec{a}_j \in W_{ij}^{\perp}$. Moreover, the lines $\langle \vec{a}_j \rangle, \langle \vec{a}_i \rangle$ are actually the projections of $\langle \vec{v}_i \rangle, \langle \vec{v}_j \rangle$ respectively, onto $W_{ij}^{\perp}$ -- indeed, note that $\vec{a}_i$ is in the span of $\vec{v}_0, \vec{v}_1, \dots, \widehat{\vec{v}}_i, \dots, \vec{v}_{2g+1}$. By assumption, $\vec{v}_j$ has nonzero coefficient when we write $\vec{a}_i$ as a linear combination of $\vec{v}_0, \vec{v}_1, \dots, \widehat{\vec{v}}_i, \dots, \vec{v}_{2g+1}$. Thus, we can write
%%----------------------------------------------------
\begin{align*}
    \vec{v}_j = c \vec{a}_i + \text{ a linear combination of }\{\vec{v}_0, \dots, \widehat{\vec{v}}_i, \vec{v}_{i+1}, \dots, {\vec{v}_{j-1}}, \widehat{\vec{v}}_j, \dots, \vec{v}_{2g+1}\}
\end{align*}
%%----------------------------------------------------
Thus, if we let $\vec{w}_i, \vec{w}_j$ denote the projections of $\vec{v}_i, \vec{v}_j$, respectively, onto $W_{ij}^\perp$, then we have $\langle \vec{w}_i \rangle = \langle \vec{a}_j \rangle$ and $\langle \vec{w}_j \rangle = \langle \vec{a}_i \rangle$. 
Thus we have 
%%----------------------------------------------------
\begin{align*}
    [\lin{\vec{a}_j}]-[\lin{\vec{a}_i}] = \llbracket \lin{\vec{w}_i}, \lin{\vec{w}_j} \rrbracket ^\w 
\end{align*}
%%----------------------------------------------------

And so we have the following equivalence on chains with respect to the chain complex $\tC_{\bullet}(\mc{I}^g(V))$ and its subcomplex $\tC_{\bullet}(\mc{I}^{g-1}(V))$:
%%----------------------------------------------------
\begin{IEEEeqnarray*}{l}
    \hspace*{-2em} \del \llbracket \lin{\vec{v}_0}, \lin{\vec{v}_1}, \dots, \lin{\vec{v}_{2g+1}} \rrbracket  = \sum_{0 \leq i \leq 2g+1} (-1)^i [\lin{\vec{v}_0}, \dots, \lin{\widehat{\vec{v}}_i}, \dots, \lin{\vec{v}_{2g+1}}] \\
     \quad = \sum_{0 \leq i \leq 2g+1} (-1)^i [\lin{\vec{v}_0}, \dots, \lin{\widehat{\vec{a}}_i}, \dots, \lin{\vec{v}_{2g+1}}] \\
     \quad \simeq \sum_{0 \leq i \leq 2g+1} \sum_{j \neq i} (-1)^{j-1}[\lin{\vec{v}_0}, \dots, \lin{\widehat{\vec{v}}_j}, \dots,  \lin{\vec{a}_i}, \dots, \lin{\vec{v}_{2g+1}}] \\
     \quad \simeq \sum_{i \sim j} = (-1)^{i+j}[\lin{\vec{v}_0}, \dots, \lin{\widehat{\vec{v}}_i}, \lin{\vec{v}_{i+1}}, \dots, \lin{\vec{v}_{j-1}}, \lin{\widehat{\vec{v}}_j}, \dots, \lin{\vec{v}_{2g+1}}]*([\lin{\vec{a}_j}]-[\lin{\vec{a}_i}]) \\
    \quad \simeq \sum_{i \sim j} (-1)^{i+j} \llb \lin{\vec{v}_0}, \dots, \lin{\widehat{\vec{v}}_i}, \lin{\vec{v}_{i+1}}, \dots, \lin{\vec{v}_{j-1}}, \lin{\widehat{\vec{v}}_j}, \dots, \lin{\vec{v}_{2g+1}} \rrb * \llb \lin{\vec{w}_i}, \lin{\vec{w}_j} \rrb^w 
\end{IEEEeqnarray*}
%%----------------------------------------------------
as desired.
\end{proof}
%%----------------------------------------------------

%%----------------------------------------------------
%%---       Maybe insert some examples here
%%----------------------------------------------------

%%----------------------------------------------------
\begin{proof}[Proof of Proposition \ref{Eq1Diff}]
    From Lemma \ref{IgQuotient}, we have 
%%----------------------------------------------------
    \begin{align*}
        \tH_{n+g}(\mc{I}^{g+1}(V), \mc{I}^{g}(V)) \cong \bigoplus_{g(W) = g+1} \St(W) \otimes \St^\w(W^\perp)
    \end{align*} 
%%----------------------------------------------------
By Remark \ref{ChainReps}, chain representatives for generators of this homology are
%%---------------------------------------------------- 
    \begin{align*}
        \llbracket \lin{\vec{v}_0}, \lin{\vec{v}_1}, \dots, \lin{\vec{v}_{2g+1}} \rrbracket * \llbracket \lin{\vec{u}_1}, \lin{\vec{u}_{\bar{1}}}, \dots, \lin{\vec{u}_{n-g-1}}, \lin{\vec{u}_{\overline{n-g-1}}} \rrbracket ^\w,
    \end{align*} 
%%----------------------------------------------------
where 
%%----------------------------------------------------
    \begin{align*}
        \vec{v}_0, \vec{v}_1, \dots, \vec{v}_{2g+1}
    \end{align*} 
%%----------------------------------------------------
is a (not necessarily symplectic) basis of a genus $g+1$ symplectic subspace $W$, and 
%%----------------------------------------------------
    \begin{align*}
        \vec{u}_1, \vec{u}_{\bar{1}}, \dots, \vec{u}_{n-g-1}, \vec{u}_{\overline{n-g-1}}
    \end{align*}
%%----------------------------------------------------
a symplectic basis for $W^\perp$. Thus to figure out the differential in Eq \eqref{Steinbergs}, it is enough to evaluate the differential on these chain representatives.
    From the proof of Proposition \ref{steinbergres} we see that the way the differential 
%%----------------------------------------------------
    \begin{align*}
        \tH_{n+g}(\mc{I}^{g+1}(V), \mc{I}^g(V)) \to \tH_{n+g-1}(\mc{I}^g(V), \mc{I}^{g-1}(V))
    \end{align*}
%%----------------------------------------------------
    on the $\tE^1$-page works on a homology class $h\in \tH_{n+g}(\mc{I}^{g+1}(V), \mc{I}^g(V))$ is as follows: we take a chain $c \in \tC_{n+g}(\mc{I}^{g+1}(V))$ such that when $c$ is viewed in $\tC_{\bullet}(\mc{I}^{g+1}(V), \mc{I}^g(V))$, that $c$ is a chain representative for the homology class $h$. Note in particular that $\del c \in \tC_{n+g-1}(\mc{I}^{g}(V))$. Then we write a chain in $\tC_{n+g-1}(\mc{I}^g(V))$ that is a representative of a homology class in $\tH_{n+g-1}(\mc{I}^g(V), \mc{I}^{g-1}(V))$ and is equivalent to $\del c$, i.e. differs from $\del c$ by a chain in $\tC_{n+g-1}(\mc{I}^{g-1}(V))$.

    Now Lemma \ref{Eq1diffchains} tells us that with respect to $\tC_\bullet(\mc{I}^{g}(V))$ and its subcomplex $\tC_{\bullet}(\mc{I}^{g-1}(V))$, we have the following equivalence of chains:
%%----------------------------------------------------
    \begin{align*}
        \del \llbracket \lin{\vec{v}_0}, \lin{\vec{v}_1}, \dots, \lin{\vec{v}_{2g+1}} \rrbracket \simeq \sum_{i \sim j} (-1)^{i+j} \llb \lin{\vec{v}_0}, \dots, \lin{\widehat{\vec{v}}_i}, \lin{\vec{v}_{i+1}}, \dots, \lin{\vec{v}_{j-1}}, \lin{\widehat{\vec{v}}_j}, \dots, \lin{\vec{v}_{2g+1}} \rrb * \llb \lin{\vec{w}_i}, \lin{\vec{w}_j} \rrb^\w 
     \end{align*}
%%----------------------------------------------------
    where $\lin{\vec{w}_i}, \lin{\vec{w}_j}$ are the projections of $\lin{\vec{v}_i}, \lin{\vec{v}_j}$ respectively onto $W_{ij}^{\perp} \subset W$.

%%----------------------------------------------------

    Using this, and the fact that $c * \llbracket \lin{\vec{u}_1}, \lin{\vec{u}_{\bar{1}}}, \dots, \lin{\vec{u}_{n-g-1}}, \lin{\vec{u}_{\overline{n-g-1}}} \rrbracket ^\w$ is a chain in $\tC_{\bullet}(\mc{I}^{g-1}(V))$ for any chain $c \in \tC_{\bullet}(\mc{I}^{g-1}(V))$, we get that:
%%----------------------------------------------------
    \begin{align*}
      &  \del(\llbracket \lin{\vec{v}_0}, \lin{\vec{v}_1}, \dots, \lin{\vec{v}_{2g+1}} \rrbracket * \llbracket \lin{\vec{u}_1}, \lin{\vec{u}_{\bar{1}}}, \dots, \lin{\vec{u}_{n-g-1}}, \lin{\vec{u}_{\overline{n-g-1}}} \rrbracket ^\w) \\
      = & \del \left( \llbracket \lin{\vec{v}_0}, \lin{\vec{v}_1}, \dots, \lin{\vec{v}_{2g+1}} \rrbracket \right) * \llbracket \lin{\vec{u}_1}, \lin{\vec{u}_{\bar{1}}}, \dots, \lin{\vec{u}_{n-g-1}}, \lin{\vec{u}_{\overline{n-g-1}}} \rrbracket ^\w \\
       \simeq & \sum_{i \sim j} (-1)^{i+j} \llb \lin{\vec{v}_0}, \dots, \lin{\widehat{\vec{v}}_i}, \lin{\vec{v}_{i+1}}, \dots, \lin{\vec{v}_{j-1}}, \lin{\widehat{\vec{v}}_j}, \dots, \lin{\vec{v}_{2g+1}} \rrb * \llb \lin{\vec{w}_i}, \lin{\vec{w}_j} \rrb^\w * \llbracket \lin{\vec{u}_1}, \lin{\vec{u}_{\bar{1}}}, \dots, \lin{\vec{u}_{n-g-1}}, \lin{\vec{u}_{\overline{n-g-1}}} \rrbracket ^\w \\
           \simeq &\sum_{i \sim j} (-1)^{i+j} \llb \lin{\vec{v}_0}, \dots, \lin{\widehat{\vec{v}}_i}, \lin{\vec{v}_{i+1}}, \dots, \lin{\vec{v}_{j-1}}, \lin{\widehat{\vec{v}}_j}, \dots, \lin{\vec{v}_{2g+1}} \rrb * \llb \lin{\vec{w}_i}, \lin{\vec{w}_j}, \lin{\vec{u}_1}, \lin{\vec{u}_{\bar{1}}}, \dots, \lin{\vec{u}_{n-g-1}}, \lin{\vec{u}_{\overline{n-g-1}}} \rrbracket ^\w
    \end{align*}
%%----------------------------------------------------
    This almost completes the proof -- the only problem is that for a pair of indices $i<j$, the sign of the shuffle taking $\lin{\vec{v}_i}$ and $\lin{\vec{v}_j}$ to positions 0 and 1 is $(-1)^{i+j-1}$, not $(-1)^{i+j}$.
    But note that this is not really an issue, because if we have an exact sequence of $\mc{R}$-modules
%%----------------------------------------------------
    \begin{align*}
        \dots \xrightarrow{\del_{k+1}} M_k \xrightarrow{\del_k} M_{k-1} \xrightarrow{\del_{k-1}} \dots 
    \end{align*}
%%----------------------------------------------------
    then replacing the $\del_k$ with $-\del_k$ does not change exactness.
\end{proof}
%%----------------------------------------------------

%%----------------------------------------------------
\begin{remark}
    We chose to state the differential in Proposition \ref{Eq1Diff} using the sign $(-1)^{i+j-1}$ instead of $(-1)^{i+j}$ to make results more consistent to state in Section \ref{difffinal}.
\end{remark}
%%----------------------------------------------------

%%--------------------------------------------------------------
\subsection{Preliminaries on the differential}
\label{diffprelim}
%%--------------------------------------------------------------
In this section, we deduce some properties of the differential on the symplectic Sharbly resolution that will help us obtain a full description of it in Section \ref{difffinal}.

We remark that we will be following the anticommutativity sign convention for chain maps between chain complexes, i.e. when we say that $g_{\bullet} : (\tC^1_{\bullet}, \del^1_{\bullet}) \to (\tC_{\bullet}^2, \del^2_{\bullet})$ is a chain map, we mean that the map
%%----------------------------------------------------
\begin{align*}
 \del^2_k \circ g_k + g_{k-1} \circ \del^1_k: \tC^1_k \to \tC^2_{k-1}
\end{align*}
%%----------------------------------------------------
is 0 for all $k \geq 1$.
In this convention, the differential on the tensor product of these chain complexes is simply the sum $\del^1_{\bullet}+\del^2_{\bullet}$, without having to take signs into consideration. 

For $1 \leq a \leq n-1$, we will use $F_{a, \bullet}$ to denote the tensor product of the (symplectic) Sharbly resolutions for $\bigoplus_{g(W)=a} \St(W)_\Q \otimes \St^{\w}(W^{\perp})_\Q$. Thus we have the following diagram where the bottom row and all the columns are exact:

%%----------------------------------------------------
  \begin{center}
\begin{tikzcd}
& \vdots \arrow[d] & \vdots \arrow[d] & & \vdots \arrow[d] & \\
& F_{n,1} \arrow[d] & F_{n-1,1} \arrow[d] &  & F_{1,1} \arrow[d] & \\
 & F_{n,0} \arrow[d] & F_{n-1,0} \arrow[d] &  & F_{1,0} \arrow[d] & \\
 0 \arrow[r] & \St(V)_\Q \arrow[r] & \bigoplus\limits_{g(W) = n-1} \St(W)_\Q \otimes \St^{\w}(W^{\perp})_\Q \arrow[r] & \dots \arrow[r] & \bigoplus\limits_{g(W) = 1} \St(W)_\Q \otimes \St^{\w}(W^{\perp})_\Q \arrow[r] & \St^{\w}(V)_\Q \to 0 
\end{tikzcd}
\end{center}
%%----------------------------------------------------
Let $d^i_{0,-1} : F_{i,j} \to F_{i, j-1}$ denote the differential on $F_{i, \bullet}$. We have the following key proposition.

%%----------------------------------------------------
\begin{prop}\label{statingdr}
  Let ${R}$ be a PID with finitely many units and $K$ its field of fractions. Suppose $M \cong {R}^{2n}$ is a symplectic ${R}$-module and let $V = M \otimes_R K$. For $1 \leq a \leq n-1$, let $F_{a, \bullet}$ denote the resolution for $\bigoplus_{g(W)=a} \St(W)_\Q \otimes \St^{\w}(W^{\perp})_\Q$, where the sum is taken over symplectic subspaces of $V$, and let $d^i_{0,-1} : F_{i,j} \to F_{i, j-1}$ denote the differential on $F_{i, \bullet}$. Then, the degree $d$ term in the symplectic Sharbly resolution for $\St^\w(V)_\Q$ is $\bigoplus_{i+j=d+1} F_{i,j}$. For every $i$, and for $0 < r \leq i-1$, there exist maps
%%----------------------------------------------------
    \begin{align*}
        d^i_{-r,r-1}: F_{i,j} \to F_{i-r, j+r-1}
    \end{align*}
%%----------------------------------------------------
    such that

    \begin{itemize}
        \item For every $i$, $d^i_{-1, 0}: F_{i, 0} \to F_{i-1, 0}$ is a lift of the map $\bigoplus_{g(W)=i} \St(W)_\Q \otimes \St^\w(W^\perp)_\Q \to \bigoplus_{g(W)=i-1} \St(W)_\Q \otimes \St^\w(W^\perp)_\Q$ in Eq \eqref{Steinbergs}.

    \item We have
%%----------------------------------------------------
    \begin{equation}
    \label{diffcondition}
        \sum_{r+s = t}d^{i-r}_{-s, s-1} \circ d^i_{-r, r-1} = 0
    \end{equation}
%%----------------------------------------------------
    for every fixed $t$.
    \end{itemize}
    Given a choice of such maps, the sum of these maps gives a choice of differential for the synplectic Sharbly resolution of $\St^\w(V)_\Q$.
\end{prop}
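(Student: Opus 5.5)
We will construct the maps $d^i_{-r,r-1}$ by induction on $r$, using only that each $F_{i,\bullet}$ is a projective resolution and that the bottom row (Eq \eqref{Steinbergs} tensored with $\Q$) is exact. This is the standard construction of a twisted complex (or iterated mapping cone) out of a resolution of resolutions. It is equivalent to iterating Lemma \ref{gluestep}, but we organise it so that all the correction maps are visible at once.

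\textbf{Case $r=1$.} Write $\epsilon_i: F_{i,0}\to M_i$ for the augmentations, where $M_i=\bigoplus_{g(W)=i}\St(W)_\Q\otimes\St^\w(W^\perp)_\Q$, and let $\delta_i: M_i\to M_{i-1}$ be the maps of Eq \eqref{Steinbergs}. Since $F_{i,0}$ is projective and $\epsilon_{i-1}$ is surjective, $\delta_i\circ\epsilon_i$ lifts to a map $d^i_{-1,0}:F_{i,0}\to F_{i-1,0}$. This lift induces a map of complexes $F_{i,\bullet}\to F_{i-1,\bullet}$ covering $\delta_i$, and the comparison theorem provides it. We take it anticommuting with the vertical differentials, which is exactly condition \eqref{diffcondition} for $t=1$. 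For $t=0$, condition \eqref{diffcondition} is just $d^i_{0,-1}\circ d^i_{0,-1}=0$.

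\textbf{Inductive step.} Suppose all maps with $r<t$ are constructed and \eqref{diffcondition} holds for all $t'<t$. We want $d^i_{-t,t-1}$ satisfying
\begin{align*}
d^{i-t}_{0,-1}\circ d^i_{-t,t-1}+d^{i-1}_{-t,t-1}\circ d^i_{0,-1} = -\sum_{r+s=t,\ 0<r,s<t} d^{i-r}_{-s,s-1}\circ d^i_{-r,r-1} =: \Phi^i_t .
\end{align*}
We define $d^i_{-t,t-1}$ on $F_{i,j}$ by induction on $j$. Two facts make this possible.
\begin{enumerate}
\item \emph{$\Phi^i_t$ is a cycle.} Write $\Phi'=\Phi^i_t-d^{i-1}_{-t,t-1}\circ d^i_{0,-1}$, where the second term is already defined by the inner induction on $j$. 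Using the relations \eqref{diffcondition} for smaller $t$, a formal manipulation shows that $d^{i-t}_{0,-1}\circ\Phi'=0$ for $j\ge 1$. This is the standard identity $D^2=0$ in lower filtration, namely that the twisting cochain satisfies the Maurer--Cartan equation up to order $t$.
\item \emph{The target is exact in that degree.} The target degree is $j+t-1\ge 1$ when $t\ge 2$, and the column $F_{i-t,\bullet}$ is exact in positive degrees.
\end{enumerate}
Projectivity of $F_{i,j}$ then lets us lift. At $j=0$ there is one extra check: the composite $\epsilon_{i-t+1}\circ(\text{stuff})$, landing in degree $t-1\ge 1$ after augmentation, must vanish. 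This reduces to $\delta_{i-1}\circ\delta_i=0$ on the bottom row when $t=2$, and is automatic for $t>2$ because the target degree is positive. Also $r$ is bounded by $i-1$, since columns only run down to index $1$.

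\textbf{Assembly and exactness.} With $D=\sum d^i_{-r,r-1}$ on $\bigoplus_{i+j=d+1}F_{i,j}$, the relations \eqref{diffcondition} give exactly $D^2=0$. Exactness of the total complex augmented to $\St^\w(V)_\Q$ follows from the spectral sequence of the filtration by $i$. Its $E^1$ page is the homology of the columns, namely the row $M_n\to\dots\to M_1$. Its $E^2$ page is then concentrated at the cokernel $\St^\w(V)_\Q$, since the bottom row is exact. The filtration is bounded ($1\le i\le n$), so the spectral sequence converges. Alternatively, one can check that this is precisely the complex produced by iterating Lemma \ref{gluestep}.

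\textbf{Main obstacle.} The hardest step is the sign bookkeeping in the cycle verification in item 1 above, under the anticommuting convention. We would handle it by packaging $\bigoplus_i F_{i,\bullet}$ as a filtered module with $D=d_0+d_1+\cdots$ and expanding $D^2$ by filtration degree, rather than tracking indices by hand.
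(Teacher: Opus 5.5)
Your argument is correct, but it is organised differently from the paper's.

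\textbf{The paper's route.} The paper (Lemma \ref{definingdr}) inducts on the \emph{target column} $a$, from $n$ down to $1$. Write $M_i=\bigoplus_{g(W)=i}\St(W)_\Q\otimes\St^\w(W^\perp)_\Q$, in your notation. The partially assembled complex $F'_{a,\bullet}=\bigoplus_{i\ge a}F_{i,\bullet+a-i}$, with $\partial'_a=\sum d^i_{-r,r-1}$, is already an honest complex of projectives resolving the image of $M_a\to M_{a-1}$. One application of the comparison theorem then gives a single chain map $\delta_a\colon F'_{a,\bullet}\to F_{a-1,\bullet}$. Its components are all the maps $d^i_{-(i-a+1),i-a}$ landing in column $a-1$. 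The relation \eqref{diffcondition} for these maps drops out of the identity $d^{a-1}_{0,-1}\circ\delta_a+\delta_a\circ\partial'_a=0$, and exactness comes from Lemma \ref{gluestep}.

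\textbf{Your route.} You instead induct on the diagonal length $t$ (plus an inner induction on $j$) and build all $d^i_{-t,t-1}$ at once by obstruction theory. At that stage $D_0+\dots+D_{t-1}$ is only a differential up to order $t-1$. So you genuinely need the Maurer--Cartan identity $D_0\Phi_t=\Phi_t D_0$. It does hold: expand $D_0D_{t-r}$ and $D_0D_r$ using the relations of order $<t$, and the triple products $D_xD_yD_z$ with $x,y,z>0$ cancel in pairs. In the paper this check is hidden inside the comparison theorem, because the source $F'_{a,\bullet}$ is already a complex.

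\textbf{What each buys.} The paper's route avoids the Maurer--Cartan computation. Your route is symmetric in the columns, and your column-filtration spectral sequence proves directly that \emph{every} family of maps satisfying the two bulleted conditions gives a resolution. That is exactly the last sentence of the proposition, and it is only implicit in the paper's argument.

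\textbf{Index slips to fix.}
\begin{itemize}
\item The term $d^{i-1}_{-t,t-1}\circ d^i_{0,-1}$ should read $d^{i}_{-t,t-1}\circ d^i_{0,-1}$.
\item The obstruction $\Phi'$ lands in $F_{i-t,\,j+t-2}$. So you need exactness of column $i-t$ in degree $j+t-2$, not $j+t-1$. When that degree is $0$, which happens only for $j=0$ and $t=2$, you need the augmentation $\epsilon_{i-t}$, not $\epsilon_{i-t+1}$.
\item For $j=0$ and $t>2$, the vanishing $d^{i-t}_{0,-1}\circ\Phi^i_t=0$ is not ``automatic because the degree is positive.'' It is the same cycle identity, which is trivially true there since $d^i_{0,-1}=0$ on $F_{i,0}$.
\end{itemize}
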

%%----------------------------------------------------

The purpose of this result is that in Section \ref{difffinal}, we will explicitly construct maps $d^i_{-r, r-1}$ that satisfy the conditions stated in Proposition \ref{statingdr}, and so the assertion of Proposition \ref{statingdr} will allow us to build the differential of the symplectic Sharbly resolution using these $d^i_{-r, r-1}$. The following diagram demonstrates what the maps $d^i_{-r,r-1}$ look like when $V$ has genus 3:

%%----------------------------------------------------
\begin{tikzpicture}[
    node distance=2.8cm and 2.8cm,
    %every node/.style={font=\large},
    purplearrow/.style={->,very thick,draw=magenta},
    orangearrow/.style={->,very thick,draw=orange},
    greenarrow/.style={->,very thick,draw=teal}
]

% Nodes
\node (F32) at (0,4) {$F_{3,2}$};
\node (F22) at (3.5,4) {$F_{2,2}$};
\node (F12) at (8,4) {$F_{1,2}$};

\node (F31) at (0,2) {$F_{3,1}$};
\node (F21) at (3.5,2) {$F_{2,1}$};
\node (F11) at (8,2) {$F_{1,1}$};

\node (F30) at (0,0) {$F_{3,0}$};
\node (F20) at (3.5,0) {$F_{2,0}$};
\node (F10) at (8,0) {$F_{1,0}$};

%% Top dots
\node at (0,5.1) {$\vdots$};
\node at (3.5,5.1) {$\vdots$};
\node at (8,5.1) {$\vdots$};

%% Vertical arrows
\foreach \A/\B/\lab in {
F32/F31/{d^{3}_{0,-1}},
F31/F30/{d^{3}_{0,-1}},
%F22/F21/{d^{2}_{0,-1}},
%F21/F20/{d^{2}_{0,-1}},
F12/F11/{d^{1}_{0,-1}},
F11/F10/{d^{1}_{0,-1}}
}{
    \draw[purplearrow] (\A) -- node[right,magenta] {$\lab$} (\B);
}

\draw[purplearrow] (F22) -- (F21);
\draw[purplearrow] (F21) -- (F20);

%% Incoming arrows from dots
\draw[purplearrow] (0,4.8) -- (F32);
\draw[purplearrow] (3.5,4.8) -- (F22);
\draw[purplearrow] (8,4.8) -- (F12);

%% Horizontal arrows
\draw[greenarrow]
(F32) -- node[above,teal] {$d^{3}_{-1,0}$} (F22);

\draw[greenarrow]
(F22) -- node[above,teal] {$d^{2}_{-1,0}$} (F12);

\draw[greenarrow]
(F31) -- node[above,teal] {$d^{3}_{-1,0}$} (F21);

\draw[greenarrow]
(F21) -- node[above,teal] {$d^{2}_{-1,0}$} (F11);

\draw[greenarrow]
(F30) -- node[above,teal] {$d^{3}_{-1,0}$} (F20);

\draw[greenarrow]
(F20) -- node[above,teal] {$d^{2}_{-1,0}$} (F10);

%% Diagonal arrows
\draw[orangearrow]
(F31) -- node[above,sloped,orange] {$d^{3}_{-2,1}$} (F12);

\draw[orangearrow]
(F30) -- node[above,sloped,orange] {$d^{3}_{-2,1}$} (F11);

%% Bottom complex
\node (A0) at (-2,-2)
{$0$};

\node (A1) at (0,-2)
{$\operatorname{St}(V)$};

%\node at (0,-2.75) {$g(W)=2$};

\node (A2) at (3.5,-2)
{$\bigoplus\limits_{g(W)=2}
\operatorname{St}(W)\otimes
\operatorname{St}^{\omega}(W^{\perp})$};

\node (A3) at (8,-2)
{$\bigoplus\limits_{g(W)=1}
\operatorname{St}(W)\otimes
\operatorname{St}^{\omega}(W^{\perp})$};

%\node at (8,-2.75) {$g(W)=2$};

\node (A4) at (12,-2)
{$\operatorname{St}^{\omega}(V)
\longrightarrow 0$};

\draw[->, thick] (A0) -- (A1);
\draw[->, thick] (A1) -- (A2);
\draw[->, thick] (A2) -- (A3);
\draw[->, thick] (A3) -- (A4);

%% Down arrows from bottom row

\draw[purplearrow] (F30) -- (A1);
\draw[purplearrow] (F20) -- (A2);
\draw[purplearrow] (F10) -- (A3);

\end{tikzpicture}
%%----------------------------------------------------

\vspace{0.5cm}

We will prove Proposition \ref{statingdr} inductively. Recall from Lemma \ref{gluestep} that the way we build our resolution is by using the $F_{a, \bullet}$ to successively get rid of the terms $\bigoplus_{g(W) = a} \St(W)_\Q \otimes \St^{\w}(W^{\perp})_\Q$. Suppose the resolution we get after getting rid of all the terms where $g(W) > a$ is:
%%----------------------------------------------------
\begin{align}\label{F'defn}
    \dots \to F'_{a,1} \to F'_{a,0} \to \bigoplus_{g(W) = a} \St(W)_\Q \otimes \St^{\w}(W^{\perp})_\Q \to \dots \to \bigoplus_{g(W) = 1} \St(W)_\Q \otimes \St^{\w}(W^{\perp})_\Q \to \St^\w(V)_\Q \to 0,
\end{align}
%%----------------------------------------------------
so that we have the following diagram:

%%----------------------------------------------------
 \begin{center}
\begin{tikzcd}
 \vdots \arrow[d] & \vdots \arrow[d] &  & \\
  F'_{a,1} \arrow[d] & F_{a-1,1} \arrow[d] &   & \\
  F'_{a,0} \arrow[d] & F_{a-1,0} \arrow[d] &  & \\
  \bigoplus\limits_{g(W) = a} \St(W)_\Q \otimes \St^{\w}(W^{\perp})_\Q \arrow[r] &\bigoplus\limits_{g(W) = a-1} \St(W)_\Q \otimes \St^{\w}(W^{\perp})_\Q \arrow[r] & \dots  \arrow[r] & \St^{\w}(V)_\Q \to 0 
\end{tikzcd}
\end{center}
%%----------------------------------------------------

By the results of Section \ref{glueprocess}, we have $F'_{a,k} = \bigoplus_{a \leq i \leq \min(n, a+k)} F_{i, a+k-i}$. 

We will prove Proposition \ref{statingdr} by proving an analogue of it for the differential $\del'_a: F'_{a, \bullet} \to F'_{a, \bullet-1}$. Note that the differential on the symplectic Sharbly resolution is precisely the same as $\del'_1: F'_{1, \bullet} \to F'_{1, \bullet-1}$. So Proposition \ref{statingdr} is in fact a direct consequence of the following lemma.

%%----------------------------------------------------
\begin{lemma}
\label{definingdr}
   Let ${R}$ be a PID with finitely many units and $K$ its field of fractions. Suppose $M \cong {R}^{2n}$ is a symplectic ${R}$-module and let $V = M \otimes_R K$. For $1 \leq a \leq n$, let $F_{a, \bullet}$ denote the resolution for $\bigoplus_{g(W)=a} \St(W)_\Q \otimes \St^{\w}(W^{\perp})_\Q$, where the sum is taken over symplectic subspaces of $V$, and let $d^i_{0,-1} : F_{i,j} \to F_{i, j-1}$ denote the differential on $F_{i, \bullet}$. Define $F'_{a,j}$ as in Equation \eqref{F'defn} above and let $\del'_a$ denote the differential $F'_{a, j} \to F'_{a,j-1}$. Note that $F'_{a,k} = \bigoplus_{a \leq i \leq \min(n, a+k)} F_{i, a+k-i}$. For $i > a$, there exist maps of the form
%%----------------------------------------------------
    \begin{align*}
        d^i_{-r,r-1}: F_{i,j} \to F_{i-r, j+r-1}
    \end{align*} 
%%----------------------------------------------------
    where 
%%----------------------------------------------------
    \begin{align*}
        0 < r \leq i-a
    \end{align*}
%%----------------------------------------------------
    such that
    \begin{itemize}
        \item For every $i>a$, $d^i_{-1, 0}: F_{i, 0} \to F_{i-1, 0}$ is a lift of the map $\bigoplus_{g(W)=i} \St(W)_\Q \otimes \St^\w(W^\perp)_\Q \to \bigoplus_{g(W)=i-1} \St(W)_\Q \otimes \St^\w(W^\perp)_\Q$ in Eq \eqref{Steinbergs}.
    
%%----------------------------------------------------
    \item We have 
    \begin{align*}
        \sum_{r+s = t}d^{i-r}_{-s, s-1} \circ d^i_{-r, r-1} = 0
    \end{align*}
%%----------------------------------------------------
    for every fixed $t$.
    \end{itemize}
    Given such maps, a choice of differential for $\del'_a$ is as follows: on a summand $F_{i,j}$ of $F'_{a,k}$ with $i \geq a$, the differential $\del'_a$ is the sum of these maps $\sum_{0 \leq r \leq i-a} d^i_{-r,r-1}$.
\end{lemma}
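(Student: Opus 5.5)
We argue by downward induction on $a$, starting at $a=n$ and ending at $a=1$. At each step we unwind Lemma \ref{gluestep} to see what the differential on the spliced resolution looks like.

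\textbf{Base case $a=n$.} Here $F'_{n,\bullet}=F_{n,\bullet}$ is the Sharbly resolution of $\St(V)_\Q$. There is no $i>a$, so no maps $d^i_{-r,r-1}$ need to be constructed. The differential is $\del'_n=d^n_{0,-1}$, which agrees with the claimed formula, reading the $r=0$ term as $d^i_{0,-1}$.

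\textbf{Inductive step.} Assume the statement for $a$, so that $\del'_a=\sum_{0\le r\le i-a} d^i_{-r,r-1}$ on each summand $F_{i,j}$ of $F'_{a,\bullet}$. Apply Lemma \ref{gluestep} with $M_g=\bigoplus_{g(W)=a}\St(W)_\Q\otimes\St^\w(W^\perp)_\Q$, with $F_\bullet=F'_{a,\bullet}$, and with $F'_\bullet=F_{a-1,\bullet}$. The lemma produces $F'_{a-1,k}=F'_{a,k-1}\oplus F_{a-1,k}$. Its differential is $\del'_a$ on the first summand plus the anticommuting chain map $\del'':F'_{a,\bullet}\to F_{a-1,\bullet+1-1}$, together with $d^{a-1}_{0,-1}$ on $F_{a-1,\bullet}$.

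The task is to build $\del''$ explicitly. Restricted to a summand $F_{i,j}\subset F'_{a,k}$, call it $d^i_{-(i-a+1),\,i-a}:F_{i,j}\to F_{a-1,j+i-a}$. This index count is consistent: $r=i-(a-1)$ and $j+r-1=j+i-a$. Once this is done, $\del'_{a-1}$ is the full sum over $0\le r\le i-a+1$, which is the asserted formula.

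\textbf{Verifying the required properties.}
\begin{itemize}
\item The lifting condition concerns $d^a_{-1,0}:F_{a,0}\to F_{a-1,0}$. This is $\del''_0$ restricted to $F_{a,0}$. By construction in Lemma \ref{gluestep}, $\del'_0\circ\del''_0$ equals the inclusion of $\ker$ composed with $F'_{a,0}\to M_g$. On $F_{a,0}$ that composite is the augmentation to $M_a$, so it lifts the map in \eqref{Steinbergs}, as required. The summands $F_{i,0}$ with $i>a$ sit in degree $\ge1$ of $F'_{a}$ and do not matter for this condition.
\item The condition $\sum_{r+s=t} d\circ d=0$ splits by target. When the target column is $\ge a$, it holds by the inductive hypothesis. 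When the target is $F_{a-1,\bullet}$, it is exactly the anticommutation $d^{a-1}_{0,-1}\circ\del''+\del''\circ\del'_a=0$, decomposed over source summands $F_{i,j}$ and target column $a-1$. The sign convention was chosen so that no extra signs appear, and the terms with $s=i-r-(a-1)$ are precisely those landing in column $a-1$.
\end{itemize}

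\textbf{Main obstacle.} The difficulty is existence of $\del''$ at level $0$ and compatibility of augmentations. Lemma \ref{gluestep} needs $F'_{a,\bullet}$ to be projective and to resolve $\ker(M_a\to M_{a-1})$, which holds by exactness of the spliced sequence. It also needs $F_{a-1,\bullet}$ to be exact, which holds as a tensor product of resolutions over $\Q$ (Künneth).

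The subtle point is that the augmentation of $F'_{a,\bullet}$ onto the kernel must be the composite $F_{a,0}\to F_{a+1,\dots}$ described above. In particular, the summands $F_{i,j}$ with $i>a$ in degree $0$ must not occur: indeed $F'_{a,0}=F_{a,0}$, since $i\le a+0$. So the augmentation is $d^{a}$'s, and the lift exists by projectivity.

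Once $\del''$ is assembled, checking that the homological bookkeeping matches the indices in the statement is routine.
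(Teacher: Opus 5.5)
Your argument is the paper's proof: downward induction from $a=n$, with the new maps $d^i_{-(i-a+1),\,i-a}$ taken as the components of the anticommuting chain map $F'_{a,\bullet}\to F_{a-1,\bullet}$ from Lemma \ref{gluestep}. The $t=i-a+1$ identity comes from that map's anticommutation relation, and the smaller values of $t$ come from the inductive hypothesis.

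There is one bookkeeping slip. Write $M_b=\bigoplus_{g(W)=b}\St(W)_\Q\otimes\St^\w(W^\perp)_\Q$, with $M_0=\St^\w(V)_\Q$.

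\begin{itemize}
\item In Lemma \ref{gluestep} the module playing the role of $M_g$ is $M_{a-1}$, not $M_a$. It is $M_{a-1}$ that $F_{a-1,\bullet}$ resolves.
\item Correspondingly, $F'_{a,\bullet}$ resolves $\ker(M_{a-1}\to M_{a-2})=\im(M_a\to M_{a-1})$, via the augmentation $F_{a,0}\to M_a\to M_{a-1}$. It cannot resolve $\ker(M_a\to M_{a-1})$, because $F_{a,0}\to M_a$ is surjective.
\item With this reading, the defining property of the chain map says that the augmentation of $F_{a-1,\bullet}$ composed with $d^a_{-1,0}$ equals $F_{a,0}\to M_a\to M_{a-1}$. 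That is exactly the required lifting condition.
\item Your sentence calling this composite ``the augmentation to $M_a$'' is therefore a type mismatch, and your ``Main obstacle'' paragraph should be corrected the same way.
\end{itemize}
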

%%----------------------------------------------------
\begin{proof}
    Set $a = n-b$ and induct on $b$, with the base case being $b=0$.
    When $b=0$, we have 
%%----------------------------------------------------
    \begin{align*}
        F'_{n, \bullet} = F_{n,\bullet}
    \end{align*}
%%----------------------------------------------------
 and 
 %%----------------------------------------------------
    \begin{align*}
        \del'_n = d^n_{0,-1},
    \end{align*} 
%%----------------------------------------------------
    and the claim is trivially true.
    So now assume the result is true for some $a = n-b \leq n$. We shall prove the assertion for $a-1 = n-(b+1)$.

    Note that 
%%----------------------------------------------------
    \begin{align*}
        F'_{a-1,k} = F_{a-1,k} \oplus F'_{a,k-1} \text{ for } k \geq 1
    \end{align*}
%%----------------------------------------------------
    and 
%%----------------------------------------------------
    \begin{align*}
        F'_{a-1,0} = F_{a-1,0}
    \end{align*}
%%----------------------------------------------------

Now, as in the proof of Lemma \ref{gluestep}, there exists a chain map $\delta_{a} : F'_{a, k} \to F_{a-1, k}$ lifting the map $\bigoplus_{g(W)=a} \St(W)_\Q \otimes \St^\w(W^\perp)_\Q \to \bigoplus_{g(W)=a-1} \St(W)_\Q \otimes \St^\w(W^\perp)_\Q$, such that
%%----------------------------------------------------
\begin{equation}
\label{delsumzero}
    d^{a-1}_{0,-1} \circ \delta_a + \delta_{a} \circ \del'_a = 0
\end{equation}
%%----------------------------------------------------

Recall that 
%%----------------------------------------------------
\begin{align*}
    F'_{a,k} = \bigoplus_{a \leq i \leq \min(n, a+k)} F_{i, a+k-i}
\end{align*}
%%----------------------------------------------------
Thus the existence of $\delta_a : F'_{a,k} \to F_{a-1,k}$ implies that for all $a \leq i \leq n$ we have maps

%%----------------------------------------------------
\begin{align*}
    d^i_{-(i-a+1), i-a} : F_{i, j} \to F_{a-1, j+i-a} 
\end{align*}
%%----------------------------------------------------
such that
%%----------------------------------------------------
\begin{align*}
    \delta_a|_{F'_{a,k}} = \sum_{a \leq i \leq \min(n, a+k)} d^i_{-(i-a+1), i-a}
\end{align*}
%%----------------------------------------------------
%
The fact that $\delta_a: F'_{a, \bullet} \to F_{a-1, \bullet}$ is a chain lift of the map from $\bigoplus_{g(W)=a} \St(W)_\Q \otimes \St^\w(W^\perp)_\Q$ to  $\bigoplus_{g(W)=a-1} \St(W)_\Q \otimes \St^\w(W^\perp)_\Q$ implies that the map
\begin{align*}
    d^a_{-1,0} : F_{a,0} \to F_{a-1,0}
\end{align*}
is a lift of the map $\bigoplus_{g(W)=a} \St(W)_\Q \otimes \St^\w(W^\perp)_\Q \to \bigoplus_{g(W)=a-1} \St(W)_\Q \otimes \St^\w(W^\perp)_\Q$.

%%----------------------------------------------------
Now Equation \eqref{delsumzero} implies that
%%----------------------------------------------------
\begin{align*}
    0 = d^{a-1}_{0,-1} \circ d^i_{-(i-a+1), i-a} + \left( \sum_{r+s = i-a+1, s>0} d^{i-r}_{-s,s-1} \circ d^i_{-r,r-1} \right) : F_{i,j} \to F_{a-1, j+i-a-1}  \hspace{0.5cm} (i \geq a)
\end{align*}
%%----------------------------------------------------
This is equivalent to saying that
%%----------------------------------------------------
\begin{equation}
\label{delsumzeromain}
     \sum_{r+s = i-a+1} d^{i-r}_{-s,s-1} \circ d^i_{-r,r-1} = 0 \hspace{0.5cm} (i \geq a)
\end{equation}
%%----------------------------------------------------

The differential $\del'_{a-1}$ is given by:
%%----------------------------------------------------
\begin{align*}
    \del'_{a-1}|_{F_{a-1,k}} & =  d^{a-1}_{0,-1} : F_{a-1,k} \to F_{a-1, k-1} \\
     \del'_{a-1}|_{F_{a,k}}  & = \del'_a + \delta_a : F_{a,k} \to F_{a,k-1} \oplus F_{a-1,k} \\
\end{align*}
%%----------------------------------------------------

It follows that on a term $F_{i,j}$ with $i \geq a-1$, the differential $\del'_{a-1}$ is the sum of the maps
%%----------------------------------------------------
\begin{align*}
    \sum_{0 \leq r \leq i-a+1} d^i_{-r,r-1}
\end{align*}
%%----------------------------------------------------

It remains to show that
%%----------------------------------------------------
\begin{align*}
      \sum_{r+s = t}d^{i-r}_{-s, s-1} \circ d^i_{-r, r-1} = 0
\end{align*}
%%----------------------------------------------------
    for every fixed $t$, and $i \geq a-1$.

    When $i \geq a$ and $t \leq i-a$, this holds by the inductive hypothesis on $F'_{a,\bullet}$.
    When $i = a-1$, this is just the assertion that $d^{a-1}_{0,-1} \circ d^{a-1}_{0,-1} = 0$, which is true since $d^{a-1}_{0,-1}$ is the differential on $F_{a-1, \bullet}$.
    And finally, for $i \geq a$ and $t = i-a+1$, this is precisely Equation \eqref{delsumzeromain}.
\end{proof}
%%----------------------------------------------------

%%--------------------------------------------------------------
\subsection{The Differential}
\label{difffinal}
%%--------------------------------------------------------------
We are now ready to state the differential on our resolution.

Keeping track of signs will be important in proving our result about the differential, so we first recall the convention for the differential on the tensor product of two chain complexes.
Suppose that $(\tC^1_\bullet, \del_{\bullet}^1)$ and $(\tC^2_\bullet, \del_{\bullet}^2)$ are two projective resolutions of $M_1$, $M_2$ respectively over a ring $\mc{R}$. Then recall that the tensor product of these resolutions gives us a projective $\mc{R}$-resolution of $M_1 \otimes M_2$, with the differential given by:
%%----------------------------------------------------
\begin{align}\label{tensordiff}
    \del^1_p + (-1)^p\del^2_q: \tC_{p,q} \to \tC_{p-1,q} \oplus \tC_{p,q-1}
\end{align}
%%----------------------------------------------------

We shall in particular make use of the case when $M_1, M_2$ are equal to $\St(W)$ for some symplectic vector space $W$, with the $(\tC^i_\bullet, \del_{\bullet}^i)$ being the Sharbly resolution.
Note that the degree of a Sharbly term $[\lin{\vec{v}_0}, \dots, \lin{\vec{v}_{l}}]$ in the resolution for $\St(W)$ is 
%%----------------------------------------------------
\begin{align*}
    (l+1)-\dim(W)
\end{align*}
%%----------------------------------------------------
Since $W$ is symplectic, $\dim(W)$ is even, and so the degree has the same parity as $l+1$. Thus we don't need to know the dimension of $W$ to plug into \eqref{tensordiff}.

Though we have to take signs into account when tensoring two Sharbly resolutions, we will still use the anticommutativity sign convention for chain maps between resolutions, as mentioned in the beginning of Section \ref{diffprelim}. It will be important to keep track of which convention applies in which case as we proceed through our proof.

Now, recall from Proposition \ref{sharblyterms} that the degree $d$ term in the symplectic Sharbly resolution is a direct sum over terms of the form
%%----------------------------------------------------
\begin{align*}
    \Sh_{k_1}(W_1)_\Q \otimes \dots \otimes \Sh_{k_m}(W_m)_\Q
\end{align*}
%%----------------------------------------------------
where 
%%----------------------------------------------------
\begin{align*}
    W_1 \oplus \dots \oplus W_m = V
\end{align*}
%%----------------------------------------------------
is a decomposition of $V$ into mutually perpendicular symplectic subspaces, and
%%----------------------------------------------------
\begin{align*}
    d = k_1 + \dots + k_m + n-m
\end{align*}
%%----------------------------------------------------

Our differential will reduce the degree either by reducing one of the $k_i$'s by 1, or by increasing $m$ by 1.
For brevity, in this section we shall denote a vector using lowercase letters (for eg, $v$ instead of $\vec{v}$), and write simply $[{{v}_0}, \dots, {{v}_{l}}]$ instead of $[\lin{\vec{v}_0}, \dots, \lin{\vec{v}_{l}}]$ to represent a Sharbly term. We will also use the convention that for a permutation $\sigma$, $\sgn(\sigma) = 0$ if $\sigma$ is an even permutation and $\sgn(\sigma) = 1$ if $\sigma$ is odd (as opposed to $\sgn(\sigma) = \pm 1$) This is to ensure that $(-1)^{\sgn(\sigma)} = 1$ if and only if $\sigma$ is an even permutation.
%%----------------------------------------------------
\begin{prop} 
\label{differential}
Let ${R}$ be a PID with finitely many units and $K$ its field of fractions. Suppose $M \cong {R}^{2n}$ is a symplectic ${R}$-module and let $V = M \otimes_R K$. Suppose that
%%----------------------------------------------------
\begin{align*}
    [{v}^1_{0}, {v}^1_1, \dots, {v}^1_{d_1}]\otimes \dots \otimes [{v}^m_0, {v}^m_1, \dots, {v}^m_{d_m}] \in \Sh_{k_1}(W_1)_\Q \otimes \dots \otimes \Sh_{k_m}(W_m)_\Q
\end{align*}
%%----------------------------------------------------
is a simple tensor in the symplectic Sharbly resolution for $\St^\w(V)_\Q$. Then the image of the differential on this term is the sum of the following two types of terms:
\begin{itemize}
    \item \emph{Omit-terms:} Suppose ${v}^j_i$ is such that the span of ${v}^j_0, \dots, \widehat{{v}^j_i}, \dots, {v}^j_{d_j}$ is the same as the span of $v^j_0, \dots, v^j_{d_j}$. Then the \emph{omit-term} corresponding to $v^j_i$ is
%%----------------------------------------------------
    \begin{align*}
        (-1)^{d_1 -1 + \dots + d_{j-1}-1}(-1)^i [v^1_{0}, v^1_1, \dots, v^1_{d_1}]\otimes \dots \otimes [v^j_0, \dots, \widehat{v^j_i}, \dots, v^j_{d_j}] \otimes \dots \otimes [v^m_0, v^m_1, \dots, v^m_{d_m}]
    \end{align*}
%%----------------------------------------------------

    \item \emph{Split-terms:} Suppose $v^j_{i_0}, v^j_{i_1}, \dots, v^j_{i_p}$ are vectors such that $v^j_0, \dots, \widehat{v^j_{i_0}}, v^j_{i_0+1,} \dots, v^j_{i_p-1,} \widehat{v^j_{i_p}}, \dots, v^j_{d_j}$ spans a proper nonzero symplectic subspace $X_j$ of $W_j$. 
    Let 
%%----------------------------------------------------
    \begin{align*}
        \lin{w^j_0}, \dots, \lin{w^j_p}
    \end{align*}
%%----------------------------------------------------
    be the projections of
%%----------------------------------------------------
    \begin{align*}
        \lin{v^j_{i_0}}, \dots, \lin{v^j_{i_p}}
    \end{align*}
%%----------------------------------------------------
    respectively onto $X_j^\perp \subset W_j$.
    Then the \emph{split-term} corresponding to these vectors is
%%----------------------------------------------------
    \begin{align*}
        (-1)^\nu [v^1_{0}, \dots, v^1_{d_1}]\otimes \dots \otimes [v^j_0, \dots, \widehat{v^j_{i_0}}, v^j_{i_0+1,} \dots, v^j_{i_p-1,} \widehat{v^j_{i_p}}, \dots, v^j_{d_j}] \otimes [w^j_0, \dots w^j_p] \otimes \dots\otimes [v^m_0, \dots, v^m_{d_m}]
    \end{align*}
%%----------------------------------------------------
    with
%%----------------------------------------------------
    \begin{align}\label{nudefn}
        \nu = (d_1-1 + d_2-1 + \dots + d_{j-1}-1) + pd_j + \sgn(\sigma) + (d_{j+1} + d_{j+2} + \dots + d_m)
    \end{align}
%%----------------------------------------------------
    where $\sigma$ is the shuffle permutation that shuffles $v^j_{i_0}, \dots, v^j_{i_p}$ to the first $p+1$ positions in $[v^j_0, \dots, v^j_{d_j}]$.
\end{itemize}
\end{prop}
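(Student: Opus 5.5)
The plan is to use Proposition \ref{statingdr}: it suffices to write down explicit maps $d^i_{-r,r-1}\colon F_{i,j}\to F_{i-r,j+r-1}$ such that $d^i_{-1,0}$ lifts the differential of Eq \eqref{Steinbergs} and the relations \eqref{diffcondition} hold. The sum of these maps is then a valid differential.

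I would take $d^i_{0,-1}$ to be the tensor-product differential of the Sharbly resolutions (convention \eqref{tensordiff}). By induction on genus, this differential includes the differential of the symplectic Sharbly resolution of $\St^\w(W^\perp)$, which by the inductive hypothesis already has the asserted form. Its omit-terms are exactly the omit-terms in the statement; the sign $(-1)^{d_1-1+\dots+d_{j-1}-1}$ is the Koszul sign from passing the degrees of the earlier factors. For $r\ge 1$, I would define $d^i_{-r,r-1}$ on $[v^1_0,\dots,v^1_{d_1}]\otimes(\text{rest})$, with $W_1=W$ of genus $i$, as the sum of split-terms of the first factor. In such a term we remove $p+1=r+\dim W - \dim X_1 - \dots$ vectors so that the remaining vectors span a symplectic $X_1$ of genus $i-r$, and we replace them with their projections onto $X_1^\perp$. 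The new factor $[w_0,\dots,w_p]$ is placed in front of the $\St^\w$ part, carrying the sign $\nu$ of \eqref{nudefn}. Split-terms of the later factors come from the inductive description of the differential on $F_{i,\bullet}$ for the $\St^\w(W^\perp)$ part.

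The first check is that $d^i_{-1,0}$ lifts Eq \eqref{Steinbergs}. On $F_{i,0}$ the first factor is a basis $v_0,\dots,v_{2i-1}$, and the only split-terms are those with $p=1$ and $i\sim j$. The augmentation sends $[w_i,w_j]\otimes[\text{apartment of }W^\perp]$ to $\llb w_i,w_j,u_1,\dots\rrb^\w$, so this is exactly Proposition \ref{Eq1Diff}, including the sign $(-1)^{i+j-1}$. This is also why the remark chose that sign.

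The second check is the identity \eqref{diffcondition}, $\sum_{r+s=t}d^{i-r}_{-s,s-1}\circ d^i_{-r,r-1}=0$. I would verify it combinatorially by pairing the terms of the composite. An omit followed by a split cancels against the split followed by the corresponding omit, either in the kept part or in the projected part; the latter works because projection onto $X^\perp$ preserves spans inside $X^\perp$. Two successive splits $X\supset X'$ pair with the single split to $X'$ followed by a split of the projected block along $X\cap X'^\perp$. Here I would use that projecting first to $X^\perp$ and then further agrees with projecting once, since $W=X'\oplus(X\cap X'^{\perp})\oplus X^\perp$. Terms where a split leaves a degenerate span are zero in $\Sh$, consistent with the Sharbly convention.

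The main obstacle is the sign bookkeeping. We have to show that paired terms come with opposite signs, given the shuffle signs $\sgn(\sigma)$, the $pd_j$ term, the Koszul signs from \eqref{tensordiff}, and the anticommuting chain-map convention. I would first settle the case of two factors and $t=2$, then reduce the general case to it by noting that the tail $d_{j+1}+\dots+d_m$ and the prefix $\sum(d_k-1)$ enter every term of a pair in the same way.
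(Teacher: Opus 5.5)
Your plan matches the paper's proof. Lemma \ref{d_0,-1} obtains $d^a_{0,-1}$ from the tensor-product differential plus induction, and Lemma \ref{d_r,r-1} takes $d^a_{-r,r-1}$ to be the first-factor split-terms and checks \eqref{diffcondition} through exactly your pairings: an omit or split on a later factor done after versus before the first-factor split, an omit in the kept or projected block, and two nested splits against one split followed by a split of the projected block. The sign bookkeeping you defer makes up the bulk of that proof and is done there directly for arbitrary $p,q,d_1$ (the signs never involve $t$); in the later-factor cases the opposite signs come precisely from the first-factor split seeing $d_j$ in its tail in one order and $d_j-1$ in the other, while the nested-split case reduces to the evenness of $2pd_1+2d_1-2p+q(q+1)$.
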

%%----------------------------------------------------

In the case when the genus $n=1$, we are forced to have $m=1$, and we can only have omit-terms, in which case we recover the usual Sharbly resolution for $\SL_2$.

We shall prove the general case by inducting on $n$. We will break the proof up into two lemmas.

%%----------------------------------------------------
\begin{lemma}
\label{d_0,-1}
   Let ${R}$ be a PID with finitely many units and $K$ its field of fractions. Suppose $M \cong {R}^{2n}$ is a symplectic ${R}$-module and let $V = M \otimes K$. Suppose Proposition \ref{differential} holds for all genus $<n$ symplectic spaces.
   Recall from Section \ref{diffprelim} that $F_{a, \bullet}$ denotes the tensor products of the (symplectic) Sharbly resolution of $\bigoplus_{g(W)=a} \St(W)_\Q \otimes \St^\w(W^\perp)_\Q$, and $d^a_{0,-1}$ denotes the differential on it. 
   Then the differential $d^a_{0,-1}$ on a term $[v^1_0, \dots, v^1_{d_1}]\otimes \dots \otimes [v^m_0, \dots, v^m_{d_m}]$ is the sum of all the omit-terms on it, plus all the split-terms where $j>1$, i.e. the splitting is \emph{not} taking place on $[v^1_0, \dots, v^1_{d_1}]$.
\end{lemma}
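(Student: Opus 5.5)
The plan is to unwind the definition of $F_{a,\bullet}$ and read off its differential from the tensor-product sign convention \eqref{tensordiff}. By construction in the proof of Proposition \ref{sharblyterms}, $F_{a,\bullet}$ is the direct sum, over symplectic $W_1 = W$ with $g(W)=a$, of the tensor product of two resolutions. The first is the ordinary Sharbly resolution $\Sh_\bullet(W_1)_\Q$ of $\St(W_1)_\Q$. The second is the symplectic Sharbly resolution of $\St^\w(W_1^\perp)_\Q$, which exists by induction since $g(W_1^\perp)=n-a<n$. A simple tensor $[v^1_0,\dots,v^1_{d_1}]\otimes x$, with $x=[v^2_0,\dots]\otimes\dots\otimes[v^m_0,\dots]$, therefore has differential
\[
d^a_{0,-1}\bigl([v^1_0,\dots,v^1_{d_1}]\otimes x\bigr)=\del^{\Sh}[v^1_0,\dots,v^1_{d_1}]\otimes x+(-1)^{p}[v^1_0,\dots,v^1_{d_1}]\otimes \del^{\w}x,
\]
where $p=d_1+1-\dim W_1\equiv d_1+1 \pmod 2$ is the Sharbly degree, using the parity remark preceding Proposition \ref{differential}.

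First, I treat the $W_1$ factor. By Proposition \ref{prop:sharbly}, $\del^{\Sh}$ is the alternating sum of omissions, where an omission is set to zero if the span drops. These are exactly the omit-terms with $j=1$ and sign $(-1)^i$, which matches Proposition \ref{differential}, since the prefactor $(-1)^{d_1-1+\dots+d_{j-1}-1}$ is empty when $j=1$. No split-terms on the first factor arise, as claimed.

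Second, I treat the $W_1^\perp$ factor. By the inductive hypothesis, Proposition \ref{differential} for genus $n-a$, $\del^{\w}x$ is the sum of omit-terms and split-terms on the factors $2,\dots,m$. Their signs are computed relative to $x$, with the $\nu$-formula \eqref{nudefn} and the omit prefix taken over indices $2,\dots,j-1$. Prepending the first factor multiplies each of these by $(-1)^{d_1+1}=(-1)^{d_1-1}$, which is exactly the missing summand $d_1-1$ in both the omit-sign prefix and the first group of $\nu$. The tail $(d_{j+1}+\dots+d_m)$ and the terms $pd_j+\sgn(\sigma)$ depend only on the factor being split and the factors after it, so they are unchanged. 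The projections onto $X_j^\perp\subset W_j$ are likewise intrinsic to $W_j\subset W_1^\perp$, and proper symplectic subspaces of $W_j$ are the same whether viewed inside $W_1^\perp$ or inside $V$. Hence these terms are precisely the omit-terms and split-terms with $j>1$ for the full tensor.

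The main obstacle is sign bookkeeping: checking that the sign $(-1)^{d_1+1}$ from \eqref{tensordiff} uniformly shifts both the omit-sign exponent and $\nu$ by $d_1-1$ modulo 2, and that the ordering of factors in $F_{a,\bullet}$ (with $W_1$ first) agrees with the ordering in Proposition \ref{differential}. I would verify this directly by writing out the two exponents, noting that $d_1+1\equiv d_1-1$.
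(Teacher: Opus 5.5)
Your proposal is correct and follows essentially the same route as the paper. You write $d^a_{0,-1}$ as the Koszul-signed tensor differential $\del_1+(-1)^{d_1-1}\del_2$, read off the $j=1$ omit-terms from the ordinary Sharbly differential, and obtain the $j>1$ omit- and split-terms from the inductive description of the symplectic Sharbly differential on $W^\perp$. Your explicit check that the factor $(-1)^{d_1+1}=(-1)^{d_1-1}$ supplies exactly the missing $d_1-1$ summand in both the omit-sign prefix and in $\nu$ is the sign bookkeeping the paper leaves implicit.
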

%%----------------------------------------------------
\begin{proof}
    Recall that $F_{a, \bullet}$ is obtained by tensoring the (symplectic) Sharbly resolutions for $\St(W)_\Q$ and $\St^\w(W^\perp)_\Q$ and adding them across all $W$ with $g(W)=a$. Let $\del_1$ denote the differential on the Sharbly resolution for $\St(W)_\Q$, and $\del_2$ the differential on the symplectic Sharbly resolution of $\St^\w(W^\perp)_\Q$. Then the differential on a term 
%%----------------------------------------------------
    \begin{align*}
        [v^1_0, \dots, v^1_{d_1}]\otimes \dots \otimes [v^m_0, \dots, v^m_{d_m}]
    \end{align*}
%%----------------------------------------------------
    is
%%----------------------------------------------------
    \begin{align*}
        \del_1 + (-1)^{d_1-1}\del_2
    \end{align*}
%%----------------------------------------------------
Now note that applying $\del_1$ gives us the omit-terms where the omitted vector is in $[v^1_0, \dots, v^1_{d_1}]$, and applying $(-1)^{d_1-1}\del_2$ gives us all the omit-terms and the split-terms where the omitted or split vectors are in one of the $[v^j_0, \dots, v^j_{d_j}]$ for $j>1$.
\end{proof}
%%----------------------------------------------------

We now describe the maps $d^a_{-r,r-1}$ described in Lemma \ref{definingdr}, for $r>0$.

%%----------------------------------------------------
\begin{lemma}
\label{d_r,r-1}
Let ${R}$ be a PID with finitely many units and $K$ its field of fractions. Suppose $M \cong {R}^{2n}$ is a symplectic ${R}$-module and let $V = M \otimes_R K$. Recall from Section \ref{diffprelim} that $F_{a, \bullet}$ denotes the resolution of $\bigoplus_{g(W)=a} \St(W)_\Q \otimes \St^\w(W^\perp)_\Q$, and $d^a_{0,-1}$ denotes the differential on it. Suppose $[v^1_0, \dots, v^1_{d_1}]\otimes \dots \otimes [v^m_0, \dots, v^m_{d_m}] \in \Sh_{k_1}(W_1)_\Q \otimes \dots \otimes \Sh_{k_m}(W_m)_\Q$ is a term in some $F_{a, \bullet}$. Then for $r>0$, the map
%%----------------------------------------------------
\begin{align*}
    d^a_{-r,r-1} : F_{a,\bullet} \to F_{a-r, \bullet +r-1}
\end{align*}
%%----------------------------------------------------
whose existence is given by Lemma \ref{definingdr}, maps $[v^1_0, \dots, v^1_{d_1}]\otimes \dots \otimes [v^m_0, \dots, v^m_{d_m}]$ to the sum of all the split-terms
%%----------------------------------------------------
\begin{align*}
    (-1)^\nu [v^1_0, \dots, \widehat{v^1_{i_0}}, v^1_{i_0+1,} \dots, v^1_{i_p-1}, \widehat{v^1_{i_p}}, \dots, v^1_{d_1}] \otimes [w^1_0, \dots w^1_p] \otimes [v^2_0, \dots, v^2_{d_2}] \otimes \dots \otimes [v^m_0, \dots, v^m_{d_m}]
\end{align*}
%%----------------------------------------------------
where $\nu$ is given by Equation \eqref{nudefn}, and
%%----------------------------------------------------
\begin{align*}
    v^1_0, \dots, \widehat{v^1_{i_0}}, v^1_{i_0+1}, \dots, v^1_{i_p-1}, \widehat{v^1_{i_p}}, \dots, v^1_{d_1}
\end{align*}
%%----------------------------------------------------
span a genus $a-r$ symplectic subspace of the genus $a$ symplectic space $W_1$.
\end{lemma}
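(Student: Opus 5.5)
We argue by induction on $a$, working downward from the top genus, in the same way the maps were produced in the proof of Lemma \ref{definingdr}. There we saw that the maps $d^i_{-r,r-1}$ with target column $a-1$ are precisely the components of a chain map $\delta_a : F'_{a,\bullet}\to F_{a-1,\bullet}$. This chain map has two defining properties: it lifts the map in Eq \eqref{Steinbergs}, and it satisfies Equation \eqref{delsumzeromain}. Since any two chain lifts are chain homotopic, we are free to \emph{choose} $\delta_a$. The proof therefore amounts to exhibiting one explicit choice and checking those two properties. Concretely, we define $d^i_{-(i-a+1),i-a}$ to be the sum of the split-terms on the first tensor factor in which the remaining vectors of $W_1$ span a genus $a-1$ subspace. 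We then verify (1) that $d^a_{-1,0}$ lifts the differential of Eq \eqref{Steinbergs}, and (2) that Equation \eqref{delsumzeromain} holds.

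\textbf{Step 1 (the lift).} We compare with Proposition \ref{Eq1Diff}. A generator of $F_{a,0}$ is $[v_0,\dots,v_{2a-1}]\otimes(\text{genus-one Sharblys of }W^\perp)$. Under the augmentation this maps to the class $\llb v_0,\dots,v_{2a-1}\rrb * \llb u\rrb^\w$. The only genus $a-1$ splittings of $[v_0,\dots,v_{2a-1}]$ are those that remove a pair $i\sim j$, so in this case $p=1$ and $\sgn(\sigma)=i+j-1$. The split-term then becomes $[\dots\widehat{v_i}\dots\widehat{v_j}\dots]\otimes[w_i,w_j]\otimes\cdots$. Its image in $\St(W_{ij})\otimes\St^\w(W_{ij}^\perp)$ is exactly the summand appearing in Proposition \ref{Eq1Diff}, up to checking that the sign $\nu$ from \eqref{nudefn} with $j=1$ agrees with $(-1)^{i+j-1}$ modulo the tensor-sign conventions. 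One further point to check: the symplectic augmentation sends $[w_i,w_j]\otimes\dots$ to $\llb w_i,w_j,u\rrb^\w$, and this uses the inductive description of the augmentation for genus $<n$.

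\textbf{Step 2 (the chain-map identity).} This is the main obstacle. Equation \eqref{delsumzeromain} expands as
\begin{align*}
d^{a-1}_{0,-1}\circ d^i_{-(i-a+1),i-a} + \sum_{r+s=i-a+1,\ s>0,\ r\ge 0} d^{i-r}_{-s,s-1}\circ d^i_{-r,r-1}=0 .
\end{align*}
By Lemma \ref{d_0,-1} and the inductive hypothesis, every map here is an explicit sum of omit- and split-terms, so the identity becomes a signed cancellation of composite terms. We organise these composites by the \emph{final} decomposition of the first factor. Such a composite is either an omit followed by a split, a split followed by an omit, or two successive splits.

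There are three groups of composites to handle.
\begin{itemize}
\item \emph{Omit and split commuting on disjoint vectors.} An omit-term and a split-term acting on different vectors commute up to sign. These pairs cancel, as in the standard proof that $\partial^2=0$ for the Sharbly complex.
\item \emph{Two successive splits.} A split of $W_1$ into $X\oplus X^\perp$ followed by a split of $X$ into $Y\oplus Y^\perp$ must be matched against the single split $W_1\to Y\oplus Y^\perp$ followed by a split of the new factor $[\text{projections}]$ inside $Y^\perp$. This works because projecting onto $X^\perp$ and then onto $(Y^\perp\cap X)$ is the same as projecting onto $Y^\perp$ and splitting there. The linear-algebra identity needed is that $Y^\perp = X^\perp\oplus (X\cap Y^\perp)$ orthogonally, so iterated projections agree with the corresponding block projections.
\item \emph{Omits whose removal does not shrink the span.} When an omitted vector becomes a spanning-reducing vector after a split, it must cancel against a split with one fewer vector. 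We would track these using the condition ``span unchanged.''
\end{itemize}

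The sign bookkeeping uses the shuffle signs $\sgn(\sigma)$, the factor $pd_j$, and the tail term $d_{j+1}+\dots+d_m$ in \eqref{nudefn}. We would first verify these signs in the smallest case, genus $2$ with $i=a+1$, and then reduce the general case to shuffle-sign identities of the form $\sgn(\sigma\tau)=\sgn(\sigma)+\sgn(\tau)$ for composed shuffles.

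Once (1) and (2) hold, the explicit maps form a valid choice of $\delta_a$. Uniqueness up to homotopy is not needed, because Proposition \ref{statingdr} accepts any maps satisfying its conditions.
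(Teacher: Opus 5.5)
Your outline follows the paper's proof. You take the explicit split-term maps as the choice of $\delta_a$, read the lift condition off Proposition~\ref{Eq1Diff}, and prove $\sum_{r+s=t}d^{a-r}_{-s,s-1}\circ d^a_{-r,r-1}=0$ by pairing composites. Your three bullets correspond to the paper's Cases~1 and~3(a), Case~4, and Case~3(b). As written, though, this does not yet prove the lemma. The statement is essentially a term-and-sign identity, and neither the term matching nor the signs are finished.

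\textbf{The case list is incomplete.} After the first factor has been split, $d^{a-t}_{0,-1}$ can also \emph{split} an untouched factor $[v^j_0,\dots,v^j_{d_j}]$ with $j\ge 2$. That composite is neither an omit/split pair nor a double split of the first factor. It must be cancelled against splitting factor $j$ first (via $d^a_{0,-1}$) and the first factor afterwards; this is the paper's Case~2.

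\textbf{The third bullet needs to be made precise.} Suppose $d^a_{0,-1}$ omits $v^1_i$ and the first factor is then split, with kept vectors spanning $X$. If $v^1_i\in X$, the partner is the same split followed by omitting $v^1_i$. If $v^1_i\notin X$, the partner is the split that also removes $v^1_i$, followed by omitting its projection onto $X^\perp$. This dichotomy is what makes the pairing a bijection.

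\textbf{The signs are never checked.} You do not verify that $\nu$ from \eqref{nudefn} makes even one pair cancel, and the double-split case needs more than multiplicativity of shuffle signs. After using $(-1)^{\sigma''}=(-1)^{\sigma}(-1)^{\sigma'}$ for the composed shuffle, you must still show that $pd_1+(d_1-p)+qp$ and $qd_1+(p-q-1)(d_1-q-1)+q$ have opposite parity.
\begin{itemize}
\item The first expression comes from splitting all $p+1$ vectors and then splitting the length-$(p+1)$ factor. The prefix $d_1-(p+1)-1$ there comes from the tensor sign in Lemma~\ref{d_0,-1}.
\item The second comes from two successive splits. The intermediate first factor has length $d_1-q$, and the new factor $[x_0,\dots,x_q]$ enters the tail.
\end{itemize}
The paper carries this out, and it reduces to the evenness of $q(q+1)$. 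Cases~1--3 need similar corrections to the shuffle sign, namely $(-1)^{p-q}$ and $i_j-j$.

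Your proposed test case also cannot catch errors here. A composite of two nontrivial splits needs $t\ge 2$, hence a source column of genus at least $3$. In genus $2$ only the $t=1$ identity occurs, so no double split is ever tested.
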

%%----------------------------------------------------
\begin{proof}
    We need to check that on setting $r=1$ that the given description of the map $d^a_{-1,0}: F_{a,0} \to F_{a-1,0}$ extends the differential described on Equation \eqref{Steinbergs} in Proposition \ref{Eq1Diff}, and that Equation \eqref{diffcondition} is satisfied. It is straightforward to verify the former, so let us turn to showing that Equation \eqref{diffcondition} holds, i.e. that
%%----------------------------------------------------
    \begin{align*}
        \sum_{r+s = t}d^{a-r}_{-s, s-1} \circ d^a_{-r, r-1} = 0
    \end{align*}
%%----------------------------------------------------
    for every fixed $t$.
    When $r=s=t=0$, this equation becomes
%%----------------------------------------------------
    \begin{align*}
        d^a_{0,-1} \circ d^a_{0,-1} = 0
    \end{align*}
%%----------------------------------------------------
    which holds since Lemma \ref{d_0,-1} says that $d^a_{0,-1}$ is a differential. So it remains to consider the case when $t > 0$.

    We shall show that all the terms appearing in 
%%----------------------------------------------------
    \begin{align*}
        d^{a-t}_{0,-1} \circ d^a_{-t,t-1}
    \end{align*}
%%----------------------------------------------------
    cancel out with those appearing in 
%%----------------------------------------------------
    \begin{align*}
        \sum_{r+s = t, s>0}d^{a-r}_{-s, s-1} \circ d^a_{-r, r-1}
    \end{align*}
%%----------------------------------------------------

    On applying $d^a_{-t,t-1}$ to $[v^1_0, \dots, v^1_{d_1}]\otimes \dots \otimes [v^m_0, \dots, v^m_{d_m}]$, we get the sum of all split-terms of the form
%%----------------------------------------------------
    \begin{align*}
    (-1)^\nu [v^1_0, \dots, \widehat{v^1_{i_0}}, v^1_{i_0+1}, \dots, v^1_{i_p-1}, \widehat{v^1_{i_p}}, \dots, v^1_{d_1}] \otimes [w^1_0, \dots w^1_p] \otimes [v^2_0, \dots, v^2_{d_2}] \otimes \dots \otimes [v^m_0, \dots, v^m_{d_m}]
\end{align*}
%%----------------------------------------------------
where 
%%----------------------------------------------------
\begin{align*}
    v^1_0, \dots, \widehat{v^1_{i_0}}, v^1_{i_0+1}, \dots, v^1_{i_p-1}, \widehat{v^1_{i_p}}, \dots, v^1_{d_1}
\end{align*}
%%----------------------------------------------------
span a genus $a-r$ symplectic subspace of the genus $a$ symplectic space $W_1$.

If we then apply $d^{a-t}_{0,-1}$ to one of these terms, we either end up omitting a vector from one of 
%%----------------------------------------------------
\begin{align*}
    [v^1_0, \dots, \widehat{v^1_{i_0}}, v^1_{i_0+1}, \dots, v^1_{i_p-1}, \widehat{v^1_{i_p}}, \dots, v^1_{d_1}] ,  [w^1_0, \dots w^1_p] , [v^2_0, \dots, v^2_{d_2}] , \dots , [v^m_0, \dots, v^m_{d_m}]
\end{align*}
%%----------------------------------------------------
or end up splitting one of
%%----------------------------------------------------
\begin{align*}
    [w^1_0, \dots w^1_p] , [v^2_0, \dots, v^2_{d_2}] , \dots , [v^m_0, \dots, v^m_{d_m}]
\end{align*}
%%----------------------------------------------------

We thus break the proof up into four cases:

\begin{enumerate}
    \item Case 1: We omit a vector from one of
%%----------------------------------------------------
    \begin{align*}
        [v^2_0, \dots, v^2_{d_2}] , \dots , [v^m_0, \dots, v^m_{d_m}]
    \end{align*}
%%----------------------------------------------------

Suppose the omitted vector is $v^j_i$.
Then the sign accompanying
%%----------------------------------------------------
\begin{IEEEeqnarray*}{l}
    \hspace*{-3em} [v^1_0, \dots, \widehat{v^1_{i_0}}, v^1_{i_0+1}, \dots, \widehat{v^1_{i_p}}, \dots, v^1_{d_1}] \otimes [w^1_0, \dots w^1_p] \otimes [v^2_0, \dots, v^2_{d_2}] \otimes \dots \otimes [v^j_0, \dots, \widehat{v^j_i}, \dots, v^j_{d_j}] \otimes \dots \otimes [v^m_0, \dots, v^m_{d_m}]
\end{IEEEeqnarray*}
%%----------------------------------------------------
is $(-1)^\tau$ with
%%----------------------------------------------------
\begin{align*}
    \tau & = \nu + ((d_1-(p+1)-1) + (p-1) + (d_2-1) + \dots + (d_{j-1}-1)) + i \\
    & = pd_1 + \sgn(\sigma) + (d_2 + \dots + d_m) + ((d_1-(p+1)-1) + (p-1) + (d_2-1) + \dots + (d_{j-1}-1)) + i 
\end{align*}
%%----------------------------------------------------

Thus modulo 2,
%%----------------------------------------------------
\begin{equation}
    \label{case1sign1}
    \tau = pd_1 + \sgn(\sigma) + (d_2 + \dots + d_m) + ((d_1-1) + (d_2-1) + \dots + (d_{j-1}-1)) + i
\end{equation}
%%----------------------------------------------------

Note that this term appears exactly once when we apply
%%----------------------------------------------------
\begin{align*}
      \sum_{r+s = t, s>0}d^{a-r}_{-s, s-1} \circ d^a_{-r, r-1},
\end{align*}
%%----------------------------------------------------
namely in 

%%----------------------------------------------------
\begin{align*}
    d^a_{-t,t-1} \circ d^a_{0,-1}
\end{align*}
%%----------------------------------------------------
when we first omit $v^j_i$ by applying $d^a_{0,-1}$, and then split up $[v^1_0, \dots, v^1_{d_1}]$ by applying $d^a_{-t,t-1}$.

The accompanying sign this time is $(-1)^\rho$ with
%%----------------------------------------------------
\begin{align}
    \label{case1sign2}
    \rho = ((d_1-1)+\dots+(d_{j-1}-1)) + i + pd_1 + \sgn(\sigma) + (d_2 + \dots + d_{j-1} + (d_j-1) + d_{j+1} + \dots + d_m)
\end{align}
%%----------------------------------------------------

On comparing \eqref{case1sign1} and \eqref{case1sign2}, we see that they give us opposite signs (in \eqref{case1sign1} we have $d_j$ in the sum, whereas in \eqref{case1sign2} we have $d_j-1$).

    \item Case 2: We split one of
%%----------------------------------------------------
\begin{align*}
        [v^2_0, \dots, v^2_{d_2}] , \dots , [v^m_0, \dots, v^m_{d_m}]
    \end{align*}
%%----------------------------------------------------

Suppose we split $[v^j_0, \dots, v^j_{d_j}]$ into
%%----------------------------------------------------
\begin{align*}
    [v^j_0, \dots, \widehat{v^j_{k_0}}, v^j_{k_0+1} \dots, v^j_{k_q-1}, \widehat{v^j_{k_q}}, \dots, v^j_{d_j}] \otimes [w^j_0, \dots, w^j_q]
\end{align*}
%%----------------------------------------------------
and let $\sigma'$ be the sign of the shuffle that permutes $v^j_{k_0}, \dots, v^j_{k_q}$ to the first $q+1$ positions.
Then the accompanying sign is $(-1)^\tau$ with
%%----------------------------------------------------
\begin{IEEEeqnarray*}{l}
    \hspace*{-2em} \tau = \nu + ((d_1-(p+1)-1) + (p-1) + (d_2-1) + \dots + (d_{j-1}-1)) + qd_j + \sgn(\sigma') + (d_{j+1} + \dots + d_m) \\
     = pd_1 + \sgn(\sigma) + (d_2 + \dots + d_m) + ((d_1-(p+1)-1) + (p-1) \\
    \quad \quad + (d_2-1) + \dots + (d_{j-1}-1)) + qd_j + \sgn(\sigma') + (d_{j+1} + \dots + d_m)
\end{IEEEeqnarray*}
%%----------------------------------------------------
Thus modulo 2,
%%----------------------------------------------------
\begin{IEEEeqnarray}{l}
    \label{case2sign1}
    \hspace*{-2em} \tau = pd_1 + \sgn(\sigma) + (d_2 + \dots + d_m) + ((d_1-1) + (d_2-1) + \dots + (d_{j-1}-1)) + qd_j \nonumber \\
    \quad \quad + \sgn(\sigma') + (d_{j+1} + \dots + d_m)
\end{IEEEeqnarray}
%%----------------------------------------------------

Note that this term appears exactly once when we apply
%%----------------------------------------------------
\begin{align*}
      \sum_{r+s = t, s>0}d^{a-r}_{-s, s-1} \circ d^a_{-r, r-1},
\end{align*}
%%----------------------------------------------------
namely in 
%%----------------------------------------------------
\begin{align*}
    d^a_{-t,t-1} \circ d^a_{0,-1}
\end{align*}
%%----------------------------------------------------
when we first split $[v^j_0, \dots, v^j_{d_j}]$ by applying $d^a_{0,-1}$ and then split $[v^1_0, \dots, v^1_{d_1}]$ by applying $d^a_{-t,t-1}$.

The accompanying sign is $(-1)^\rho$ with
%%----------------------------------------------------
\begin{align*}
    \rho & = ((d_1-1) + \dots + (d_{j-1}-1)) + qd_j + \sgn(\sigma') + (d_{j+1} + \dots + d_m) + pd_1 + \sgn(\sigma) \\
    & + (d_2 + \dots + d_{j-1} + (d_j-(q+1)) + q + d_{j+1} + \dots d_m) \\
    & = ((d_1-1) + \dots + (d_{j-1}-1)) + qd_j + \sgn(\sigma') + (d_{j+1} + \dots + d_m) + pd_1 + \sgn(\sigma) \\
    & + (d_2 + \dots + d_{j-1} + (d_j-1) + d_{j+1} + \dots d_m)
\end{align*}
%%----------------------------------------------------

Thus
%%----------------------------------------------------
\begin{IEEEeqnarray}{l}
\label{case2sign2}
    \hspace*{-2em} \rho = ((d_1-1) + \dots + (d_{j-1}-1)) + qd_j + \sgn(\sigma') + (d_{j+1} + \dots + d_m) + pd_1 + \sgn(\sigma) \nonumber \\
    \quad + (d_2 + \dots  + (d_j-1)  + \dots d_m)
\end{IEEEeqnarray}
   
%%----------------------------------------------------

Comparing \eqref{case2sign1} and \eqref{case2sign2}, we again see that they give us opposite signs.

\item Case 3: We omit a vector from one of
%%----------------------------------------------------
\begin{align*}
     [v^1_0, \dots, \widehat{v^1_{i_0}}, v^1_{i_0+1}, \dots, v^1_{i_p-1}, \widehat{v^1_{i_p}}, \dots, v^1_{d_1}] ,  [w^1_0, \dots w^1_p]
\end{align*}
%%----------------------------------------------------

This case further splits into two:

\begin{itemize}
    \item Suppose we omit the vector $v^1_i$ from $[v^1_0, \dots, \widehat{v^1_{i_0}}, v^1_{i_0+1}, \dots, v^1_{i_p-1}, \widehat{v^1_{i_p}}, \dots, v^1_{d_1}]$.
    Let $q$ be the largest index such that $i_q < i$.

    Then the sign accompanying our term is $(-1)^\tau$ with
%%----------------------------------------------------
    \begin{align*}
        \tau & = \nu + (i-q-1)
    \end{align*}
%%----------------------------------------------------
    Thus
%%----------------------------------------------------
    \begin{align}
        \label{case3case1sign1}
        \tau & = pd_1 + \sgn(\sigma) + (d_2 + \dots + d_m) + i - q -1
    \end{align}
%%----------------------------------------------------

    On the other hand, this term appears exactly once when we apply
%%----------------------------------------------------
    \begin{align*}
      \sum_{r+s = t, s>0}d^{a-r}_{-s, s-1} \circ d^a_{-r, r-1},
\end{align*}
%%----------------------------------------------------
namely, when we first omit $v_i$ by applying $d^a_{0,-1}$, and then split the $v^1_{i_0}, \dots, v^1_{i_p}$ away.
Omitting $v_i$ first changes the sign of the required shuffle by $(-1)^{p-q}$

Thus the accompanying sign now is $(-1)^\rho$ with
%%----------------------------------------------------
\begin{align*}
    \rho = i + p(d_1-1) + \sgn(\sigma) + p-q + (d_2 + \dots + d_m)
\end{align*}
%%----------------------------------------------------
Thus
%%----------------------------------------------------
\begin{align}
    \label{case3case1sign2}
    \rho = i + pd_1 + \sgn(\sigma) - q + (d_2 + \dots + d_m)
\end{align}
%%----------------------------------------------------
Comparing \eqref{case3case1sign1} and \eqref{case3case1sign2}, we see that they give us opposite signs.

\item Suppose now that we omit the vector $w^1_j$ from $[w^1_0, \dots, w^1_p]$.

The accompanying sign is $(-1)^\tau$ with 
%%----------------------------------------------------
\begin{align*}
    \tau = \nu + (d_1-(p+1)-1) + j
\end{align*}
%%----------------------------------------------------

Thus modulo 2,
%%----------------------------------------------------
\begin{align}
    \label{case3case2sign1}
    \tau = pd_1 + \sgn(\sigma) + (d_2 + \dots + d_m) + d_1 - p + j
\end{align}
%%----------------------------------------------------

On the other hand, this term appears exactly once when we apply
%%----------------------------------------------------
    \begin{align*}
      \sum_{r+s = t, s>0}d^{a-r}_{-s, s-1} \circ d^a_{-r, r-1},
\end{align*}
%%----------------------------------------------------
namely, when we first omit $v_{i_j}$ by applying $d^a_{0,-1}$, and then split the $v^1_{i_0}, \dots \widehat{v^1_{i_j}},\dots, v^1_{i_p}$ away.
Omitting $v_{i_j}$ first changes the sign of the required shuffle by $i_j-j$.

Thus the accompanying sign now is $(-1)^\rho$ with
%%----------------------------------------------------
\begin{align*}
    \rho & = i_j + (p-1)(d_1-1) + \sgn(\sigma) + i_j-j + (d_2 + \dots + d_m) \\
    & = 2i_j + pd_1 -d_1 - p +1 + \sgn(\sigma) -j + (d_2 + \dots + d_m)
\end{align*}
%%----------------------------------------------------

Thus modulo 2, we have
%%----------------------------------------------------
\begin{align}
    \label{case3case2sign2}
    \rho = pd_1 + \sgn(\sigma) + (d_2 + \dots + d_m) - d_1 - p - j + 1
\end{align}
%%----------------------------------------------------
On comparing \eqref{case3case2sign1} and \eqref{case3case2sign2}, we see that they give us opposite signs. 
\end{itemize}

\item Case 4: We split up $[w^1_0, \dots w^1_p]$.

Suppose we split $[w^1_0, \dots, w^1_p]$ into
%%----------------------------------------------------
\begin{align*}
    [w^1_0, \dots, \widehat{w^1_{j_0}}, w^1_{j_0+1}, \dots, w^1_{j_q-1}, \widehat{w^1_{j_q}}, \dots, w^1_p] \otimes [x^1_0, \dots, x^1_q]
\end{align*}
%%----------------------------------------------------

Let us now look at the accompanying sign.

Let $\sigma$ be the sign of the permutation taking $v^1_{i_0}, \dots, v^1_{i_p}$ to the first $p+1$ positions in $[v^1_0, \dots, v^1_{d_1}]$, let $\sigma'$ be the shuffle taking $w^1_{j_0}, \dots, w^1_{j_q}$ to the first $q+1$ positions of $[w^1_0, \dots, w^1_p]$, and let $\sigma''$ be the shuffle taking $v^1_{i_{j_0}}, \dots, v^1_{i_{j_q}}$ to the first $q+1$ positions of $[v^1_0, \dots, v^1_{d_1}]$ and $v^1_0, \dots, \widehat{v^1_{i_0}}, \dots, \widehat{v^1_{i_p}}, \dots, v^1_{d_1}$ to the next $p-q$ positions.
Then note that
%%----------------------------------------------------
\begin{align*}
    (-1)^{\sigma''} = (-1)^{\sigma'} (-1)^\sigma
\end{align*}
%%----------------------------------------------------

The sign then accompanying the term where we first split $[v^1_0, \dots, v^1_{d_1}]$ and then split $[w^1_0, \dots, w^1_p]$ is $(-1)^\tau$ with:
%%----------------------------------------------------
\begin{align*}
    \tau & = ( pd_1 + \sgn(\sigma) + (d_2 + \dots + d_m)) + ((d_1-(p+1)-1) +qp + \sgn(\sigma') + (d_2 + \dots + d_m))
\end{align*}
%%----------------------------------------------------
Modulo 2, this is equivalent to
%%----------------------------------------------------
\begin{align}
    \label{case4sign1}
    pd_1 + \sgn(\sigma) + (d_1-p) + qp + \sgn(\sigma')
\end{align}
%%----------------------------------------------------
This term appears exactly once in 
%%----------------------------------------------------
\begin{align*}
      \sum_{r+s = t, s>0}d^{a-r}_{-s, s-1} \circ d^a_{-r, r-1},
\end{align*}
%%----------------------------------------------------
namely when we first split the $v^1_{i_{j_0}}, \dots, v^1_{i_{j_q}}$ from $[v^1, \dots, v^1_{d_1}]$, and then split the remaining $v^1_{i_k}$ from
$[v^1_0, \dots, \widehat{v^1_{i_{j_0}}}, \dots, \widehat{v^1_{i_{j_q}}}, \dots, v^1_{d_1}]$.

The accompanying sign in this case is $(-1)^\rho$ with
%%----------------------------------------------------
\begin{align}
    \rho & = \sgn(\sigma'') + qd_1 + (d_2 + \dots + d_m) + (p-q-1)(d_1-q-1) + (q + d_2 + \dots + d_m)
\end{align}
%%----------------------------------------------------
which modulo 2 is equivalent to
%%----------------------------------------------------
\begin{align}
\label{case4sign2}
    \sgn(\sigma'') + qd_1 + (p-q-1)(d_1-q-1) + q
\end{align}
%%----------------------------------------------------
On comparing \eqref{case4sign1} and \eqref{case4sign2}, and using the fact that $(-1)^{\sigma''} = (-1)^\sigma(-1)^{\sigma'}$, we see that we need
%%----------------------------------------------------
\begin{align*}
    pd_1 + d_1-p + qp
\end{align*}
%%----------------------------------------------------
and 
%%----------------------------------------------------
\begin{align*}
    qd_1 + (p-q-1)(d_1-q-1) + q
\end{align*}
%%----------------------------------------------------
to be of opposite parities.
Equivalently, we want 
%%----------------------------------------------------
\begin{align}
\label{case4eqn}
    (pd_1 + d_1-p + qp) + (qd_1 + (p-q-1)(d_1-q-1) + q) + 1
\end{align}
%%----------------------------------------------------
to be even.

We have
%%----------------------------------------------------
\begin{align*}
    (p-q-1)(d_1-q-1) & = (p-q)d_1 - (p-q)q - (p-q) + d_1 - q - 1
\end{align*}
%%----------------------------------------------------

Thus Equation \eqref{case4eqn} becomes
%%----------------------------------------------------
\begin{align*}
   & (pd_1 + d_1 - p + qp) + (qd_1 + ((p-q)d_1 - (p-q)q - (p-q) + d_1 - q - 1) + q) + 1\\
   = &  (pd_1 + qd_1 + (p-q)d_1) + 2d_1 - p + qp - (p-q)q - (p-q)\\
   = & 2pd_1 + 2d_1 - p + qp - pq + q^2 -p + q \\
   = & 2pd_1 + 2d_1 -2p + q(q+1)
\end{align*}
%%----------------------------------------------------
which is indeed always even.

\end{enumerate}

Furthermore, on running the proof, we can see that there are no additional terms that appear upon applying
%%----------------------------------------------------
\begin{align*}
    \sum_{r+s = t, s>0}d^{a-r}_{-s, s-1} \circ d^a_{-r, r-1}
\end{align*}
%%----------------------------------------------------
besides the ones that showed up in the preceding casework.

This completes the proof that 
%%----------------------------------------------------
\begin{align*}
    \sum_{r+s = t}d^{a-r}_{-s, s-1} \circ d^a_{-r, r-1} = 0
\end{align*}
%%----------------------------------------------------

\end{proof}
%%----------------------------------------------------

%%----------------------------------------------------
\subsection{Proof of Theorem \ref{thm:sharblyres}}
\label{sec:integralres}
We now outline how to prove Theorem \ref{thm:sharblyres}. The arguments in this section so far prove a version of the theorem when we tensor everything with $\otimes_\Z \Q$, and when $R$ has finitely many units. But note that Equation \eqref{Steinbergs} holds without any of these assumptions. The reason we made these assumptions was that Lemma \ref{CPproj} only tells us how to recognise when modules over a rational group ring $\Q G$ are projective, and we needed projectivity to know the existence of the maps described in Proposition \ref{statingdr}, as implied by Lemma \ref{gluestep}.

However, if we knew the existence of maps that satisfied the conditions of Proposition \ref{statingdr} through some other means, then we would be able to use the argument in the proof of Lemma \ref{gluestep} to piece resolutions together. In that case, we would be able to avoid the assumption of projectivity.

Now, the maps defined in Proposition \ref{differential} can be defined on the integral versions of the groups in question, and the same proof checks that they satisfy the required conditions of Proposition \ref{statingdr}. Thus we can run all the same arguments of this section, to prove Theorem \ref{thm:sharblyres}.

%%-----------------------------------------------------

%%--------------------------------------------------------------

\section{The top degree cohomology of some principal congruence subgroups}
\label{sec:topdegcoh}

In this section we shall prove Theorem \ref{thm:congsubgp}, that combined with Borel-Serre duality \cite{borel1973corners} computes the top degree cohomology of certain congruence subgroups of symplectic groups over number rings that are Euclidean domains. 

%%-----------------------------------------------------------------
\begin{comment}
\begin{prop}
    \label{prop:topdegcong}
    Let ${R}$ be a Euclidean domain and $K$ its field of fractions. Let $\F$ be the quotient of ${R}$ by some prime $p \in {R}$. Suppose that all units of $\F$ can be lifted to units of ${R}$. Then, for all $n \geq 1$, we have an isomorphism
    \begin{align*}
        (\St^\w_{2n}(K))_{\Gamma^\w_{2n}(p)} \cong \St^\w_{2n}(\F)
    \end{align*}
\end{prop}
\end{comment}
%%--------------------------------------------------------------

The proof will use a subcomplex of the symplectic Sharbly resolution (Theorem \ref{thm:sharblyres}) that in particular gives a presentation of $\St^\w(\F^{2n})$ for a field $\F$, which we summarize in the following lemma.
To avoid clutter, we will drop $\otimes$ signs while writing out simple tensors in the groups in the resolution.

%%----------------------------------------------------
\begin{lemma}
\label{lem:Stwpresent}
Let $\F$ be a field, and let $V$ be a genus $n$ symplectic space over $\F$. Then we have the following complex of $\Z[\Sp(V)]$-modules 
\begin{align*}
V_2 \xrightarrow{\del_2} V_{1,2} \oplus V_{1,1} \xrightarrow{\del_1} V_0 \to \St^\w(V)
\end{align*}
where $V_0, V_{1,2}, V_{1,2}, V_2$ are as follows:
\begin{itemize}
    \item $V_0$ is the direct sum of all terms of $\Sh_0(W_1)\otimes\dots\otimes\Sh_0(W_n)$, where $W_1 \oplus \dots \oplus W_n = V$ is a decomposition of $V$ into mutually perpendicular genus 1 symplectic subspaces.
    \item $V_{1,1}$ is the direct sum of terms of the form $\Sh_0(W_1)\otimes \dots \otimes\Sh_1(W_i)\otimes \dots \otimes \Sh_0(W_n)$, where $W_1 \oplus \dots \oplus W_n = V$ is a decomposition of $V$ into mutually perpendicular genus 1 symplectic subspaces.
    \item $V_{1,2}$ is the direct sum over all terms of the form $\Sh_0(W_1)\otimes\dots \otimes \Sh_0(W_{n-1})$, where $V = W_1 \oplus \dots \oplus W_{n-1}$ is a decomposition into mutually perpendicular subspaces, where all but one of the $W_i$ have genus 1 and the remaining one has genus 2.
    \item $V_2$ is the direct sum over all terms of the form $\Sh_0(W_1)\otimes \dots \otimes\Sh_1(W_i)\otimes \dots \otimes \Sh_0(W_{n-1})$, where $V = W_1 \oplus \dots \oplus W_{n-1}$ is a decomposition into mutually perpendicular subspaces, $W_i$ has genus 2 and all the other $W_j$ have genus 1.
\end{itemize}
The maps $\del_1, \del_2$ are as follows:
\begin{itemize}
    \item $[\lin{\vec{v}_1}, \lin{\vec{v}_{\bar{1}}}]\dots[\lin{\vec{u}_i}, \lin{\vec{v}_i}, \lin{\vec{w}_i}]\dots[\lin{\vec{v}_n}, \lin{\vec{v}_{\bar{n}}}] \in V_{1,1}$ maps to the sum
    \begin{align*}
        \left([\lin{\vec{v}_1}, \lin{\vec{v}_{\bar{1}}}]\dots[\lin{\vec{v}_i}, \lin{\vec{w}_i}]\dots[\lin{\vec{v}_n}, \lin{\vec{v}_{\bar{n}}}]\right) - \left([\lin{\vec{v}_1}, \lin{\vec{v}_{\bar{1}}}]\dots[\lin{\vec{u}_i}, \lin{\vec{w}_i}]\dots[\lin{\vec{v}_n}, \lin{\vec{v}_{\bar{n}}}]\right)\\ + \left([\lin{\vec{v}_1}, \lin{\vec{v}_{\bar{1}}}]\dots[\lin{\vec{u}_i}, \lin{\vec{v}_i}, ]\dots[\lin{\vec{v}_n}, \lin{\vec{v}_{\bar{n}}}]\right).
    \end{align*}
    
    \item $[\lin{\vec{v}_1}, \lin{\vec{v}_{\bar{1}}}]\dots[\lin{\vec{v}_{i,0}}, \lin{\vec{v}_{i,1}}, \lin{\vec{v}_{i,2}}, \lin{\vec{v}_{i,3}}]\dots[\lin{\vec{v}_{n-1}}, \lin{\vec{v}_{\overline{n-1}}}] \in V_{1,2}$ maps to a sum of 'split terms' as defined in Proposition \ref{differential}:
    \begin{align*}
        (-1)^\nu[\lin{\vec{v}_1}, \lin{\vec{v}_{\bar{1}}}]\dots[\lin{\vec{w}_i}, \lin{\vec{x}_i}][\lin{\vec{y}_i}, \lin{\vec{z}_i}]\dots[\lin{\vec{v}_{n-1}}, \lin{\vec{v}_{\overline{n-1}}}] 
    \end{align*}
    where $\vec{w}_i, \vec{x}_i$ are a subset of the vectors $\vec{v}_{i,0}, \vec{v}_{i,1}, \vec{v}_{i,2}, \vec{v}_{i,3}$ spanning a genus 1 subspace of $W_i$, and $\vec{y}_i$ and $\vec{z}_i$ are the projections of the other two vectors onto its perpendicular subspace. The sign $\nu$ is given by the formula 
    \begin{align*}
        \nu = j_1+j_2 + n-1-i
    \end{align*}
    where $j_1, j_2$ are the indices of the $\vec{v}_{i,t}$ that correspond to $\vec{w}_i$ and $\vec{x}_i$.

    \item Elements of $V_2$ map to a sum of `split' and `omit' terms, as described in Proposition \ref{differential}. (For our purposes in this section, we will not need the description of $\del_2$ on $V_2$).
\end{itemize}
This complex is exact at $V_0$, i.e.
\begin{align*}
    V_{1,2} \oplus V_{1,1} \xrightarrow{\del_1} V_0 \to \St^\w(V)
\end{align*}
is a presentation of $\St^\w(V)$ over $\Sp(V)$.
\end{lemma}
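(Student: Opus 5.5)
The plan is to read this off Theorem \ref{thm:sharblyres}, taking $R=K=\F$; a field is a PID. The point is to identify the low-degree part of the symplectic Sharbly resolution and to check that the modules named in the statement are exactly its terms in degrees $0$, $1$ and $2$.

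\textbf{Step 1: identify the terms.} By Theorem \ref{thm:sharblyres}, the degree-$d$ term is the sum of $\Sh_{k_1}(W_1)\otimes\dots\otimes\Sh_{k_m}(W_m)$ over ordered perpendicular decompositions, with $d=\sum k_i+n-m$. Each $W_i$ has genus at least $1$, so $m\le n$, and every $k_i\ge 0$.
\begin{itemize}
\item For $d=0$ we are forced to take $m=n$ and all $k_i=0$, with every $W_i$ of genus $1$. This gives $V_0$.
\item For $d=1$ there are two options. Either $m=n$ with exactly one $k_i=1$, which gives $V_{1,1}$. Or $m=n-1$ with all $k_i=0$; then the genera sum to $n$, so exactly one $W_i$ has genus $2$ and the rest have genus $1$, which gives $V_{1,2}$.
\item For $d=2$ there are three options: $m=n$ with $\sum k_i=2$; $m=n-1$ with one $k_i=1$; and $m=n-2$ with all $k_i=0$.
\end{itemize}

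\textbf{Step 2: form the complex and get exactness.} The listed $V_2$ is only one of the three $d=2$ summands. Since the statement only asserts exactness at $V_0$, I would take the complex $V_2\to V_{1,2}\oplus V_{1,1}\to V_0\to\St^\w(V)$. Here $V_2$ is viewed as a submodule of the degree-$2$ term, and $\del_2$ is the restriction of the resolution's differential, so $\del_1\circ\del_2=0$. The tail $V_{1,2}\oplus V_{1,1}\to V_0\to\St^\w(V)\to 0$ is the full degree $1\to 0$ part of the resolution. Exactness at $V_0$ and surjectivity onto $\St^\w(V)$ therefore follow directly from Theorem \ref{thm:sharblyres}, which is stated integrally over $\Z[\Sp(V)]$. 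This gives the presentation.

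\textbf{Step 3: specialise the differential.} I would take the formulas for $\del_1$ from Proposition \ref{differential} (equivalently, part 2 of Theorem \ref{thm:sharblyres}).
\begin{itemize}
\item On $V_{1,1}$ the only possible terms are omit-terms in the length-$3$ factor $[u_i,v_i,w_i]$. The other factors are bases of length $2$, so omitting from them shrinks the span, and no split is possible in a genus-$1$ block. Each omit sign is $(-1)^{\sum_{l<i}(d_l-1)}(-1)^{t}$. Every earlier factor has $d_l=1$, so the prefix contributes nothing, and we get the alternating sum $+,-,+$ as stated.
\item On $V_{1,2}$ the only possible terms are split-terms with $p=1$ in the genus-$2$ factor, which has $d_i=3$. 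Omitting is impossible because that factor is a basis. Plugging into \eqref{nudefn} gives $\nu=0+3+\sgn(\sigma)+(n-1-i)$. With the convention that $\sigma$ shuffles the two removed vectors, at positions $j_1<j_2$, to the front, we have $\sgn(\sigma)\equiv j_1+j_2-1$. Hence $\nu\equiv j_1+j_2+n-1-i \pmod 2$, which matches the stated sign.
\end{itemize}

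\textbf{Main obstacle.} I expect the main difficulty to be bookkeeping rather than mathematics. One point is which pair of vectors counts as "removed" and which two are kept as $\vec w_i,\vec x_i$; this must be matched against the convention in Proposition \ref{differential}. The other is making sure the integral version in Section \ref{sec:integralres} really is exact at $V_0$ over an arbitrary field $\F$. That holds because Theorem \ref{thm:sharblyres} needs only a PID and does not use the hypothesis of finitely many units except for projectivity.
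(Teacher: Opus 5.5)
Your proposal is correct and follows the same route as the paper. The paper gives no separate proof of this lemma; it treats it as the degree $\le 2$ part of Theorem~\ref{thm:sharblyres} with $R=K=\F$, exactly as you do.

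Your sign computation is right, and the ambiguity you flag is harmless. The statement's $j_1,j_2$ index the kept vectors, while your $\sigma$ is built from the removed ones. These two pairs are complementary in $\{0,1,2,3\}$, so their index sums have the same parity.

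One small imprecision: the listed $V_2$ is not a whole degree-$2$ summand. It is only the part of the $m=n-1$ summand where $\Sh_1$ sits on the genus-$2$ block. It is still a submodule of the degree-$2$ term, so $\del_1\circ\del_2=0$ holds as you say.
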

%%----------------------------------------------------

%%------------------------------------------
\begin{definition}
    \label{defn:stdsharblies}
    Let $\F$ be a field, and $V$ a genus $n$ symplectic space over $\F$.
    We say an element of $V_{1,2}$ is a \emph{standard Sharbly} if it can be written as
    \begin{align*}
        [\lin{\vec{v}_1}, \lin{\vec{v}_{\bar{1}}}]\dots[\lin{\vec{v}_i}, \lin{\vec{v}_{\bar{i}}}, \lin{\vec{v}_{i+1}+a\vec{v}_i}, \lin{\vec{v}_{\overline{i+1}}}]\dots[\lin{\vec{v}_{n-1}}, \lin{\vec{v}_{\overline{n-1}}}]
    \end{align*}

    where $\vec{v}_i, \vec{v}_{\bar{i}}, \vec{v}_{i+1}, \vec{v}_{\overline{i+1}}$ is a symplectic basis for a genus 2 symplectic subspace of $V$, and $a \in \F$.
\end{definition}
%%------------------------------------------

We now have the following key lemma.
%%------------------------------------------
\begin{lemma}
    \label{lem:stdsharblies}
    Let $\F$ be a field, and $V$ a genus $n$ symplectic space over $\F$.
    Suppose $\sigma \in V_{1,2}$.
    %\begin{align*}
     %   \sigma = [\lin{\vec{w}_0}, \lin{\vec{w}_1}, \lin{\vec{w}_2}, \lin{\vec{w}_3}][\lin{\vec{v}_2}, \lin{\vec{v}_{\bar{2}}}]\dots[\lin{\vec{v}_{n-1}}, \lin{\vec{v}_{\overline{n-1}}}]
    %\end{align*}
   % is an element of $V_{1,2}$ of the form $\Sh_0(W_1) \otimes \dots \otimes \Sh_0(W_{n-1})$ , as defined in Lemma \ref{lem:Stwpresent}.
    Then, there exists $\tau \in V_2$ such that $\sigma - \del_2\tau \in V_{1,2} \oplus V_{1,1}$ is a sum of terms in $V_{1,1}$ and standard Sharblies in $V_{1,2}$.
\end{lemma}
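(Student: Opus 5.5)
The plan is to reduce the statement to a single genus-$2$ factor, and then to a generation statement about the ordinary Sharbly resolution of a $4$-dimensional space. Write $\sigma$ as a simple tensor $[\lin{\vec{v}_1},\lin{\vec{v}_{\bar 1}}]\cdots[\lin{\vec{x}_0},\lin{\vec{x}_1},\lin{\vec{x}_2},\lin{\vec{x}_3}]\cdots[\lin{\vec{v}_{n-1}},\lin{\vec{v}_{\overline{n-1}}}]$, where the middle factor lies in $\Sh_0(W_i)$ with $g(W_i)=2$. The genus-$1$ factors will never be touched. So it suffices to work with elements $\tau \in V_2$ of the form (fixed genus-$1$ factors) $\otimes\, [\lin{\vec{y}_0},\dots,\lin{\vec{y}_4}] \otimes$ (fixed genus-$1$ factors), with $[\lin{\vec{y}_0},\dots,\lin{\vec{y}_4}]\in \Sh_1(W_i)$.

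\textbf{Computing $\del_2$ on such $\tau$.} By Proposition \ref{differential}, $\del_2\tau$ is a sum of omit-terms and split-terms. The omit-terms delete one $\vec{y}_k$ from the genus-$2$ factor and land in $V_{1,2}$, with sign $\pm(-1)^k$. There are no omit-terms in the genus-$1$ factors, since those are in $\Sh_0$. The split-terms split the $5$-vector Sharbly into a genus-$1$ factor and a complementary genus-$1$ factor. Either $3$ vectors remain and $2$ are projected, or $2$ remain and $3$ are projected. In both cases exactly one factor lies in $\Sh_1$ and all others in $\Sh_0$, so these terms lie in $V_{1,1}$. Hence, modulo $V_{1,1}$, subtracting $\del_2\tau$ amounts to imposing the ordinary Sharbly relation
\[
\sum_k (-1)^k [\lin{\vec{y}_0},\dots,\widehat{\lin{\vec{y}_k}},\dots,\lin{\vec{y}_4}] \equiv 0,
\]
where terms with non-spanning sets are dropped, inside the genus-$2$ slot. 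The lemma therefore reduces to the following claim. For a $4$-dimensional symplectic $W$, every basis Sharbly $[\lin{\vec{x}_0},\dots,\lin{\vec{x}_3}]\in\Sh_0(W)$ is congruent, modulo $\del(\Sh_1(W))$, to a $\Z$-combination of bases of the form $[\lin{\vec{e}_1},\lin{\vec{f}_1},\lin{\vec{e}_2+a\vec{e}_1},\lin{\vec{f}_2}]$ with $\vec{e}_1,\vec{f}_1,\vec{e}_2,\vec{f}_2$ symplectic.

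\textbf{The replacement move.} The basic tool is this. Given a basis $\vec{x}_0,\dots,\vec{x}_3$ and an extra vector $\vec{z}$, apply the relation to $[\lin{\vec{z}},\lin{\vec{x}_0},\dots,\lin{\vec{x}_3}]$. It expresses $[\lin{\vec{x}_0},\dots,\lin{\vec{x}_3}]$ as $\pm$ the sum of the Sharblies obtained by replacing one $\vec{x}_k$ by $\vec{z}$. Only those $k$ with nonzero $\vec{x}_k$-coefficient in $\vec{z}$ contribute. The reduction proceeds in three steps, each increasing the number of ``symplectic-basis-like'' relations among the vectors.
\begin{enumerate}
\item Since $\w|_W$ is nondegenerate, some pair has $\w(\vec{x}_0,\vec{x}_1)\neq 0$ after reordering, which costs only a sign. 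Rescale so this pairing is $1$.
\item Take $\vec{z}=\vec{x}_2-\w(\vec{x}_2,\vec{x}_1)\vec{x}_0+\w(\vec{x}_2,\vec{x}_0)\vec{x}_1$, the projection of $\vec{x}_2$ onto $\lin{\vec{x}_0,\vec{x}_1}^\perp$. Replacing $\vec{x}_2$ gives a better frame. The other new frames, where $\vec{x}_0$ or $\vec{x}_1$ is replaced by $\vec{z}$, each contain a vector perpendicular to the remaining hyperbolic partner. These are handled by an induction on the number of pairs in the frame that are neither perpendicular nor part of a hyperbolic pair.
\item Do the same with $\vec{x}_3$, and finally normalise the $\lin{\vec{x}_0,\vec{x}_1}^\perp$ part. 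The aim is a frame $\vec{e}_1,\vec{f}_1,\vec{e}_2,\vec{f}_2$ in which at most one ``off-diagonal'' coefficient survives. That coefficient is the $a$ in $\vec{e}_2+a\vec{e}_1$, which cannot in general be removed because its class in the coinvariants is not trivially expressible.
\end{enumerate}

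\textbf{Main obstacle.} I expect the hardest step to be organising this induction so that it terminates. The replacement move creates several new frames at once, and I must show each is strictly simpler with respect to a well-chosen complexity. Candidates are the number of pairs $(k,l)$ with $\w(\vec{x}_k,\vec{x}_l)\neq0$, together with a lexicographic tie-breaker recording how far the frame is from being of the form (hyperbolic pair) $\oplus$ (hyperbolic pair). If this direct approach gets stuck, I would instead first prove the claim for frames already containing a hyperbolic pair $\vec{e}_1,\vec{f}_1$ with the other two vectors in $\lin{\vec{e}_1}\oplus\lin{\vec{e}_1,\vec{f}_1}^\perp$, where a two-dimensional computation suffices. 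I would then show that an arbitrary frame reduces to that case using one or two replacement moves with $\vec{z}$ chosen as a projection.
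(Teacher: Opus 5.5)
Your reduction to the genus-$2$ slot is correct, and it is exactly what the paper uses. For $\tau$ with a single $\Sh_1$ factor on the genus-$2$ piece, the split-terms of $\del_2\tau$ lie in $V_{1,1}$, and the omit-terms impose the ordinary Sharbly relation, which is your ``replacement move''. The gap is that the lemma's whole content is showing that some sequence of such moves terminates in standard Sharblies, and you leave precisely this open: step~2 appeals to an unspecified induction, and step~3 is not carried out.

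Worse, the complexity you suggest (number of non-zero pairings) does not decrease under your move. Take a symplectic basis $\vec e_1,\vec f_1,\vec e_2,\vec f_2$ and the frame
\[
\vec x_0=\vec e_1,\quad \vec x_1=\vec f_1,\quad \vec x_2=\vec e_1+\vec e_2,\quad \vec x_3=\vec e_1-\vec f_1+\vec f_2 .
\]
It has four non-zero pairings. Your $\vec z$ is $\vec e_2$, and the replaced frame $[\lin{\vec e_1},\lin{\vec f_1},\lin{\vec e_2},\lin{\vec x_3}]$ again has four. So, as written, the argument does not prove the lemma.

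The paper avoids any induction by an explicit case analysis. It normalises to $[\lin{\vec w_1},\lin{\vec w_{\bar1}},\lin{\vec w_2+a\vec w_1+b\vec w_{\bar1}},\lin{\vec w_{\bar2}+c\vec w_1+d\vec w_{\bar1}}]$. It then inserts $\vec w_2+a\vec w_1$, then $\vec w_2$, then $c\vec w_1+d\vec w_{\bar1}$. Each time it uses an explicit symplectic change of basis to see that the two new frames have more of $a,b,c,d$ equal to zero.

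Your route can also be completed without induction, once you notice one more fact. You observed that $\vec z\perp\vec x_1$ in the frame $[\vec z,\vec x_1,\vec x_2,\vec x_3]$. In fact also $\vec z\perp\vec x_2$, and this frame appears only when $\w(\vec x_2,\vec x_1)\neq0$. So it is a non-degenerate pair $(\vec x_1,\vec x_2)$ together with a vector orthogonal to it. The same holds for $[\vec x_0,\vec z,\vec x_2,\vec x_3]$ with the pair $(\vec x_0,\vec x_2)$, and for $[\vec x_0,\vec x_1,\vec z,\vec x_3]$ with the pair $(\vec x_0,\vec x_1)$.

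Now take any frame $[\vec e,\vec f,\vec u,\vec y]$ with $\w(\vec e,\vec f)\neq0$ and $\vec u\perp\vec e,\vec f$. Let $\vec{y}'$ be the projection of $\vec y$ onto $\lin{\vec e,\vec f}^\perp$ and apply one move with $\vec{y}'$. This gives:
\begin{itemize}
\item $[\vec e,\vec f,\vec u,\vec{y}']$, a symplectic basis up to scalars;
\item possibly $[\vec{y}',\vec f,\vec u,\vec y]$, whose only non-zero pairings are $(\vec{y}',\vec u)$, $(\vec u,\vec y)$, $(\vec y,\vec f)$, a path;
\item possibly $[\vec e,\vec{y}',\vec u,\vec y]$, whose only non-zero pairings are $(\vec{y}',\vec u)$, $(\vec u,\vec y)$, $(\vec y,\vec e)$, again a path.
\end{itemize}
Any frame whose non-zero pairings form a path $\vec p_0,\vec p_1,\vec p_2,\vec p_3$ is, up to sign, a standard Sharbly. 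Take $\vec e_1=\vec p_0$ and $\vec f_1\in\lin{\vec p_1}$ with $\w(\vec e_1,\vec f_1)=1$. Take $\vec e_2$ to be the projection of $\vec p_2$ onto $\lin{\vec p_0,\vec p_1}^\perp$, and $\vec f_2\in\lin{\vec p_3}$ suitably scaled. Since $\vec p_2\perp\vec p_0$, we get $\vec p_2=\vec e_2+a\vec e_1$.

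Hence two rounds of full projections suffice. This is arguably cleaner than the paper's partial projections, which keep each move down to two new frames but require re-normalising bases.
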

\begin{proof}
    It is enough to prove the assertion when $\sigma$ is of the form 
    \begin{align*}
        [\lin{\vec{v}_1}, \lin{\vec{v}_{\bar{1}}}]\dots[\lin{\vec{v}_{i,0}}, \lin{\vec{v}_{i,1}}, \lin{\vec{v}_{i,2}}, \lin{\vec{v}_{i,3}}]\dots[\lin{\vec{v}_{n-1}}, \lin{\vec{v}_{\overline{n-1}}}]
    \end{align*}
    There must be a pair among the $\vec{v}_{i,t}$ that pair nontrivially. Set $\vec{w}_1, \vec{w}_{\bar{1}}$ to be scalar multiples of these, so that $\w(\vec{w}_1, \vec{w}_{\bar{1}}) = 1$. We can complete this to a symplectic basis $\vec{w}_1, \vec{w}_{\bar{1}}, \vec{w}_2, \vec{w}_{\bar{2}}$ for the span of the $\vec{v}_{i,t}$, and write the remaining two vectors as (scalar multiples of) $\vec{w}_2 + a\vec{w}_1 + b\vec{w}_{\bar{1}}$ and $\vec{w}_{\bar{2}} + c\vec{w}_1 + d\vec{w}_{\bar{1}}$ for some $a,b,c,d \in \F$. 
    Thus, upto sign, we have
    \begin{align*}
        \sigma = [\lin{\vec{v}_1}, \lin{\vec{v}_{\bar{1}}}]\dots[\lin{\vec{w}_1}, \lin{\vec{w}_{\bar{1}}}, \lin{\vec{w}_2 + a\vec{w}_1 + b\vec{w}_{\bar{1}}}, \lin{\vec{w}_{\bar{2}} + c\vec{w}_1 + d\vec{w}_{\bar{1}}}]\dots[\lin{\vec{v}_{n-1}}, \lin{\vec{v}_{\overline{n-1}}}]
    \end{align*}
We shall successively show that atleast three of the elements $a,b,c,d \in \F$ can be assumed to be 0. Note that this will reduce us to a standard Sharbly.

%%--------------------------------------------------------------
\begin{enumerate}
    \item \textbf{Case 1: $a,b,c,d \in \F^\times$:}
    Consider the element of $V_2$ given by
%%----------------------------------------------------    
    \begin{align*}
       \tau = [\lin{\vec{v}_1}, \lin{\vec{v}_{\bar{1}}}]\dots[\lin{\vec{w}_1}, \lin{\vec{w}_{\bar{1}}},\lin{\vec{w}_2 + a\vec{w}_1}, \lin{\vec{w}_2 + a\vec{w}_1 + b\vec{w}_{\bar{1}}}, \lin{\vec{w}_{\bar{2}} + c\vec{w}_1 + d\vec{w}_{\bar{1}}}]\dots[\lin{\vec{v}_{n-1}}, \lin{\vec{v}_{\overline{n-1}}}]
    \end{align*}
%%----------------------------------------------------
    
    Then $\del_2\tau$ is equal to an element of $V_{1,1}$, plus the following sum in $V_{1,2}$:
    %%----------------------------------------------------
    \begin{IEEEeqnarray}{l}
        \hspace*{-2em}[\lin{\vec{v}_1}, \lin{\vec{v}_{\bar{1}}}]\dots[\lin{\vec{w}_1}, \lin{\vec{w}_{\bar{1}}}, \lin{\vec{w}_2 + a\vec{w}_1 + b\vec{w}_{\bar{1}}}, \lin{\vec{w}_{\bar{2}} + c\vec{w}_1 + d\vec{w}_{\bar{1}}}]\dots[\lin{\vec{v}_{n-1}}, \lin{\vec{v}_{\overline{n-1}}}] \nonumber \\ 
        \hspace*{-1em}\text{} - [\lin{\vec{v}_1}, \lin{\vec{v}_{\bar{1}}}]\dots[\lin{\vec{w}_1}, \lin{\vec{w}_{\bar{1}}},\lin{\vec{w}_2 + a\vec{w}_1}, \lin{\vec{w}_{\bar{2}} + c\vec{w}_1 + d\vec{w}_{\bar{1}}}]\dots[\lin{\vec{v}_{n-1}}, \lin{\vec{v}_{\overline{n-1}}}] 
        \label{eq:case1term2}
        \\
         \hspace*{-1em}\text{} - [\lin{\vec{v}_1}, \lin{\vec{v}_{\bar{1}}}]\dots[\lin{\vec{w}_1}, \lin{\vec{w}_2 + a\vec{w}_1}, \lin{\vec{w}_2 + a\vec{w}_1 + b\vec{w}_{\bar{1}}}, \lin{\vec{w}_{\bar{2}} + c\vec{w}_1 + d\vec{w}_{\bar{1}}}]\dots[\lin{\vec{v}_{n-1}}, \lin{\vec{v}_{\overline{n-1}}}]
         \label{eq:case1term3}
    \end{IEEEeqnarray}
    %%----------------------------------------------------
    
   Thus note that it is enough to prove the assertion for the latter two terms \eqref{eq:case1term2} and \eqref{eq:case1term3} in the above sum. We shall show that both these terms fit into Case 2. Clearly term \eqref{eq:case1term2} already fits into Case 2. For \eqref{eq:case1term3}, consider the Sharbly $\sigma' = [\lin{\vec{w}_1}, \lin{\vec{w}_2 + a\vec{w}_1}, \lin{\vec{w}_2 + a\vec{w}_1 + b\vec{w}_{\bar{1}}}, \lin{\vec{w}_{\bar{2}} + c\vec{w}_1 + d\vec{w}_{\bar{1}}}]$.

   %%----------------------------------------------------
   Set
   \begin{align*}
       \vec{u}_1 = \vec{w}_1,\hspace{0.5cm} \vec{u}_{\bar{1}} = \vec{w}_{\bar{1}}+ab^{-1}\vec{v}_1 + b^{-1}\vec{w}_2 \\
       \vec{u}_2 = \vec{w}_2, \hspace{0.5cm} \vec{u}_{\bar{2}} = \vec{w}_{\bar{2}}+b^{-1}\vec{w}_1-ab^{-1}\vec{w}_2
   \end{align*}
   %%----------------------------------------------------
   Then note that $\vec{u}_1, \vec{u}_{\bar{1}}, \vec{u}_2, \vec{u}_{\bar{2}}$ are a symplectic basis for the span of $\sigma'$, and that we can write $\sigma'$ as
   %%----------------------------------------------------
   \begin{align*}
       \sigma' & = [\lin{\vec{u}_1}, \lin{\vec{u}_2+a\vec{u}_1}, \lin{\vec{u}_{\bar{1}}}, \lin{\vec{u}_{\bar{2}}+d\vec{u}_{\bar{1}}+(c-b^{-1}-ab^{-1}d)\vec{u}_1}] \\
       & = - [\lin{\vec{u}_1}, \lin{\vec{u}_{\bar{1}}},\lin{\vec{u}_2+a\vec{u}_1}, \lin{\vec{u}_{\bar{2}}+d\vec{u}_{\bar{1}}+ (c-b^{-1}-ab^{-1}d)\vec{u}_1}]
   \end{align*}
   %%----------------------------------------------------
   This calculation shows us that the term \eqref{eq:case1term3} also falls within Case 2.
   
%%----------------------------------------------------
%%----------------------------------------------------
   \item \textbf{Case 2: At least one of $a,b$ and one of $c,d$ are non-zero:}
   We can assume WLOG that $a$ and $c$ are nonzero. We can also assume that $b = 0$. (Otherwise, if both $b, d$ were non-zero, then we'd be back in Case 1).
   Thus we want to prove the assertion of our lemma for Sharblies of the form
   %%----------------------------------------------------
   \begin{align*}
       [\lin{\vec{v}_1}, \lin{\vec{v}_{\bar{1}}}]\dots[\lin{\vec{w}_1}, \lin{\vec{w}_{\bar{1}}},\lin{\vec{w}_2 + a\vec{w}_1}, \lin{\vec{w}_{\bar{2}} + c\vec{w}_1 + d\vec{w}_{\bar{1}}}]\dots[\lin{\vec{v}_{n-1}}, \lin{\vec{v}_{\overline{n-1}}}]
   \end{align*}
   %%----------------------------------------------------
   Consider $\tau \in V_2$ given by
   %%----------------------------------------------------
   \begin{align*}
       \tau = [\lin{\vec{v}_1}, \lin{\vec{v}_{\bar{1}}}]\dots[\lin{\vec{w}_1}, \lin{\vec{w}_{\bar{1}}},\lin{\vec{w}_2},\lin{\vec{w}_2 + a\vec{w}_1}, \lin{\vec{w}_{\bar{2}} + c\vec{w}_1 + d\vec{w}_{\bar{1}}}]\dots[\lin{\vec{v}_{n-1}}, \lin{\vec{v}_{\overline{n-1}}}]
   \end{align*}
   %%----------------------------------------------------
    Then $\del_2\tau$ is equal to an element of $V_{1,1}$, plus the following sum in $V_{1,2}$:
    %%----------------------------------------------------
    \begin{IEEEeqnarray}{l}
        \hspace*{-2em}[\lin{\vec{v}_1}, \lin{\vec{v}_{\bar{1}}}]\dots[\lin{\vec{w}_{\bar{1}}},\lin{\vec{w}_2},\lin{\vec{w}_2 + a\vec{w}_1}, \lin{\vec{w}_{\bar{2}} + c\vec{w}_1 + d\vec{w}_{\bar{1}}}]\dots[\lin{\vec{v}_{n-1}}, \lin{\vec{v}_{\overline{n-1}}}]
        \label{eq:case2term1}\\
        \text{} + [\lin{\vec{v}_1}, \lin{\vec{v}_{\bar{1}}}]\dots[\lin{\vec{w}_1}, \lin{\vec{w}_{\bar{1}}},\lin{\vec{w}_2 + a\vec{w}_1}, \lin{\vec{w}_{\bar{2}} + c\vec{w}_1 + d\vec{w}_{\bar{1}}}]\dots[\lin{\vec{v}_{n-1}}, \lin{\vec{v}_{\overline{n-1}}}] 
        \label{eq:case2term2}\\
        \text{} - [\lin{\vec{v}_1}, \lin{\vec{v}_{\bar{1}}}]\dots[\lin{\vec{w}_1}, \lin{\vec{w}_{\bar{1}}},\lin{\vec{w}_2}, \lin{\vec{w}_{\bar{2}} + c\vec{w}_1 + d\vec{w}_{\bar{1}}}]\dots[\lin{\vec{v}_{n-1}}, \lin{\vec{v}_{\overline{n-1}}}]
        \label{eq:case2term3}
    \end{IEEEeqnarray}
    %%----------------------------------------------------

    %%----------------------------------------------------
    In the above sum, the second term \eqref{eq:case2term2} is the one we are considering. It is enough to reduce the other two terms to Case 3. 
    The term \eqref{eq:case2term3} already falls into Case 3, so let us consider \eqref{eq:case2term1}.
    %%----------------------------------------------------
    
    Let $\sigma'$ be given by
    %%----------------------------------------------------
    \begin{align*}
        \sigma' = [\lin{\vec{w}_{\bar{1}}},\lin{\vec{w}_2},\lin{\vec{w}_2 + a\vec{w}_1}, \lin{\vec{w}_{\bar{2}} + c\vec{w}_1 + d\vec{w}_{\bar{1}}}]
    \end{align*}
    %%----------------------------------------------------
    Set
    %%----------------------------------------------------
    \begin{IEEEeqnarray*}{l}
        \vec{u}_1 = \vec{w}_1+a^{-1}\vec{w}_2, \hspace{0.5cm} \vec{u}_{\bar{1}} = \vec{w}_{\bar{1}} \\
        \vec{u}_2 = \vec{w}_2, \hspace{0.5cm} \vec{u}_{\bar{2}}= \vec{w}_{\bar{2}} -a^{-1}\vec{w}_{\bar{1}}-a^{-1}c\vec{w}_2
    \end{IEEEeqnarray*}
    %%----------------------------------------------------
    Then we have
    %%----------------------------------------------------
    \begin{align*}
        \sigma' = [\lin{\vec{u}_{\bar{1}}}, \lin{\vec{u}_2}, \lin{\vec{u_1}}, \lin{\vec{u}_{\bar{2}}+c\vec{u}_1+(d-a^{-1})\vec{u}_{\bar{1}}}]
    \end{align*}
    %%----------------------------------------------------
    which tells us \eqref{eq:case2term1} also falls in Case 3.
%%----------------------------------------------------
%%----------------------------------------------------
    \item \textbf{Case 3: One of the pairs $(a,b)$ or $(c,d)$ is equal to $(0,0)$:}
    Assume WLOG that $(a,b) = (0,0)$. Thus we are considering
    %%----------------------------------------------------
    \begin{align*}
        [\lin{\vec{v}_1}, \lin{\vec{v}_{\bar{1}}}]\dots[\lin{\vec{w}_1}, \lin{\vec{w}_{\bar{1}}},\lin{\vec{w}_2}, \lin{\vec{w}_{\bar{2}} + c\vec{w}_1 + d\vec{w}_{\bar{1}}}]\dots[\lin{\vec{v}_{n-1}}, \lin{\vec{v}_{\overline{n-1}}}]
    \end{align*}
    %%----------------------------------------------------
    If one of $c,d$ is also 0, then this is already a standard Sharbly.
    If not, then consider $\tau \in V_2$ given by
    %%----------------------------------------------------
    \begin{align*}
        \tau = [\lin{\vec{v}_1}, \lin{\vec{v}_{\bar{1}}}]\dots[\lin{\vec{w}_1}, \lin{\vec{w}_{\bar{1}}}, \lin{c\vec{w}_1+d\vec{w}_{\bar{1}}},\lin{\vec{w}_2}, \lin{\vec{w}_{\bar{2}} + c\vec{w}_1 + d\vec{w}_{\bar{1}}}]\dots[\lin{\vec{v}_{n-1}}, \lin{\vec{v}_{\overline{n-1}}}]
    \end{align*}
    %%----------------------------------------------------
    Then $\del_2\tau$ is equal to an element of $V_{1,1}$ plus the following sum in $V_{1,2}$:
    %%----------------------------------------------------
    \begin{IEEEeqnarray*}{l}
        \hspace*{-1em}[\lin{\vec{v}_1}, \lin{\vec{v}_{\bar{1}}}]\dots[\lin{\vec{w}_1}, \lin{\vec{w}_{\bar{1}}}, \lin{\vec{w}_{\bar{2}} + c\vec{w}_1 + d\vec{w}_{\bar{1}}}]\dots[\lin{\vec{v}_{n-1}}, \lin{\vec{v}_{\overline{n-1}}}] \\
        \text{} - [\lin{\vec{v}_1}, \lin{\vec{v}_{\bar{1}}}]\dots[\lin{\vec{w}_1}, \lin{c\vec{w}_1+d\vec{w}_{\bar{1}}},\lin{\vec{w}_2}, \lin{\vec{w}_{\bar{2}} + c\vec{w}_1 + d\vec{w}_{\bar{1}}}]\dots[\lin{\vec{v}_{n-1}}, \lin{\vec{v}_{\overline{n-1}}}] \\
        \text{} + [\lin{\vec{v}_1}, \lin{\vec{v}_{\bar{1}}}]\dots[\lin{\vec{w}_{\bar{1}}}, \lin{c\vec{w}_1+d\vec{w}_{\bar{1}}},\lin{\vec{w}_2}, \lin{\vec{w}_{\bar{2}} + c\vec{w}_1 + d\vec{w}_{\bar{1}}}]\dots[\lin{\vec{v}_{n-1}}, \lin{\vec{v}_{\overline{n-1}}}]
    \end{IEEEeqnarray*}
    %%----------------------------------------------------
    This is equal to 
    %%----------------------------------------------------
    \begin{IEEEeqnarray*}{l}
        \hspace*{-1em}[\lin{\vec{v}_1}, \lin{\vec{v}_{\bar{1}}}]\dots[\lin{\vec{w}_1}, \lin{\vec{w}_{\bar{1}}}, \lin{\vec{w}_{\bar{2}} + c\vec{w}_1 + d\vec{w}_{\bar{1}}}]\dots[\lin{\vec{v}_{n-1}}, \lin{\vec{v}_{\overline{n-1}}}] \\
        \text{} - [\lin{\vec{v}_1}, \lin{\vec{v}_{\bar{1}}}]\dots[\lin{\vec{w}_1}, \lin{cd^{-1}\vec{w}_1+\vec{w}_{\bar{1}}},\lin{\vec{w}_2}, \lin{\vec{w}_{\bar{2}} + d(cd^{-1}\vec{w}_1 + \vec{w}_{\bar{1}})}]\dots[\lin{\vec{v}_{n-1}}, \lin{\vec{v}_{\overline{n-1}}}] \\
        \text{} + [\lin{\vec{v}_1}, \lin{\vec{v}_{\bar{1}}}]\dots[\lin{\vec{w}_{\bar{1}}}, \lin{\vec{w}_1+c^{-1}d\vec{w}_{\bar{1}}},\lin{\vec{w}_2}, \lin{\vec{w}_{\bar{2}} + c(\vec{w}_1 + c^{-1}d\vec{w}_{\bar{1}})}]\dots[\lin{\vec{v}_{n-1}}, \lin{\vec{v}_{\overline{n-1}}}]
    \end{IEEEeqnarray*}
    %%----------------------------------------------------
    and one sees that all three of these terms are standard Sharblies.
\end{enumerate}
%%-------------------------------------------------------
\end{proof}
%%----------------------------------------------

Next we will need a notion of the symplectic Tits building and the Steinberg module over rings other than fields.

%%----------------------------------------------------
\begin{definition}[Tits building]
    Let ${R}$ be a commutative ring and let $V \cong {R}^{2n}$ be equipped with the standard symplectic form. Define $\mc{T}^\w(V)$ to be the poset of isotropic summands of $V$, ordered by inclusion. Also, let $\bb{T}^\w(V)$ denote the geometric realization of $\mc{T}(V)$, viewed as a simplicial complex. For $n \geq 1$, we will write $\mc{T}^\w_{2n}({R}) = \mc{T}^\w({R}^{2n})$ and $\bb{T}^\w_{2n}({R}) = \bb{T}^\w({R}^{2n})$.
\end{definition}
%%--------------------------------------------------

%%--------------------------------------------------
\begin{lemma}
    Suppose ${R}$ is a PID, and $K$ its field of fractions. For $n \geq 1$, we have $\mc{T}^\w_{2n}(R) \cong \mc{T}^\w_{2n}(K)$.
\end{lemma}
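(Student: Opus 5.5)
We will construct mutually inverse order-preserving maps between the two posets. Define
\[
\Phi\colon \mc{T}^\w_{2n}(R) \to \mc{T}^\w_{2n}(K), \qquad J \mapsto J \otimes_R K,
\]
viewed as the $K$-span of $J$ inside $K^{2n}$. Define
\[
\Psi\colon \mc{T}^\w_{2n}(K) \to \mc{T}^\w_{2n}(R), \qquad U \mapsto U \cap R^{2n}.
\]
Both maps are evidently order-preserving, so it suffices to check that they are well defined and inverse to each other.

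\textbf{$\Phi$ is well defined.} Let $J$ be a nonzero isotropic summand of $R^{2n}$. The form on $K^{2n}$ is the $K$-bilinear extension of $\w$. Since $\w|_J = 0$, bilinearity gives $\w|_{KJ} = 0$, so $KJ$ is isotropic. It is nonzero because $J \neq 0$ is torsion-free.

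\textbf{$\Psi$ is well defined.} Let $U$ be a nonzero isotropic subspace of $K^{2n}$. The submodule $U \cap R^{2n}$ is isotropic because it lies in $U$. It is nonzero: clearing denominators of any nonzero $u \in U$ gives a nonzero vector of $U$ in $R^{2n}$. This clearing of denominators also shows $K(U \cap R^{2n}) = U$, which is the identity $\Phi\Psi = \mathrm{id}$.

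\textbf{$U \cap R^{2n}$ is a summand.} This is the key step. The quotient $R^{2n}/(U\cap R^{2n})$ embeds into $K^{2n}/U$, which is torsion-free. So the quotient is a finitely generated torsion-free module over a PID, hence free. The short exact sequence
\[
0 \to U\cap R^{2n} \to R^{2n} \to R^{2n}/(U\cap R^{2n}) \to 0
\]
therefore splits, so $U \cap R^{2n}$ is a summand.

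\textbf{$\Psi\Phi = \mathrm{id}$.} Let $J$ be a summand, and write $R^{2n} = J \oplus C$. Then $K^{2n} = KJ \oplus KC$. Take $x \in KJ \cap R^{2n}$ and write $x = j + c$ with $j \in J$, $c \in C$. The component of $x$ in $KC$ is zero, so $c = 0$ and $x = j \in J$. This gives $KJ \cap R^{2n} \subseteq J$, and the reverse inclusion is clear.

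The only real content is the summand step, which is where the PID hypothesis enters: we need torsion-free to imply free for the splitting. The remaining steps are routine linear algebra.
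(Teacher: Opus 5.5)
Your proof is correct and takes the same route as the paper. The paper simply invokes the bijection $U \mapsto U \cap R^{2n}$, $J \mapsto J \otimes_R K$ between subspaces of $K^{2n}$ and direct summands of $R^{2n}$, and notes that it preserves isotropy; you supply the details it leaves implicit, notably the torsion-free-implies-free argument showing that $U \cap R^{2n}$ is a summand.
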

%%----------------------------------------------------
\begin{proof}
    This follows from the fact that there is a bijection between subspaces of $K^{2n}$ and direct summands of ${R}^{2n}$ taking a subspace $V \subset K^{2n}$ to $V \cap R^{2n}$ and a direct summand $W \subset R^{2n}$ to $W \otimes_R K$, and $V$ is isotropic if and only if $W$ is.
\end{proof}
%%-----------------------------------------------

%In light of this, we can define the symplectic Steinberg module over a Euclidean domain.
%%-----------------------------------------------
%\begin{definition}
 %   For $\mc{R}$ a PID, let $\St^\w_{2n}(\mc{R})$ denote the group $\widetilde{\tH}_{n-1}(\bb{T}^\w_{2n}(\mc{R}))$.
%\end{definition}
%%-----------------------------------------------

For our proof we will need to know a cyclicity result for $\St^\w_{2n}(K)$ when $K$ is the field of fractions of a Euclidean domain, which is due to Gunnells \cite{gunnells2000symplectic}. We will also need to know certain relations that hold among these apartments classes, which can easily be verified to hold using the description of apartments given in Definition \ref{Apt}. We summarize this in the following lemma. 
%%-----------------------------------------------
\begin{lemma}[{Gunnells\cite[Theorem 4.11]{gunnells2000symplectic}}]
    \label{lem:intapt}
    Let ${R}$ be a Euclidean domain and $K$ its field of fractions.
    Then $\St^\w_{2n}(K)$ is generated by \emph{integral apartment classes}, i.e.\space apartment classes of the form
    \begin{align*}
        [\lin{\vec{v}_1}, \lin{\vec{v}_{\bar{1}}}, \dots, \lin{\vec{v}_n}, \lin{\vec{v}_{\bar{n}}}]
    \end{align*}
    so that $\vec{v}_1, \vec{v}_{\bar{1}}, \dots, \vec{v}_n, \vec{v}_{\bar{n}}$ form a symplectic basis of ${R}^{2n}$.
    Furthermore, the following relations hold among these apartment classes:
%%----------------------------------------------------
    \begin{align}\label{rel:perm}
        [\lin{v_1}, \lin{v_{\bar{1}}}, \dots, \lin{\vec{v}_n}, \lin{\vec{v}_{\bar{n}}}] = (-1)^{\sgn (\sigma)}[\lin{v_{\sigma(1)}}, \lin{v_{\sigma(\bar{1})}}, \dots, \lin{\vec{v}_{\sigma(n)}}, \lin{\vec{v}_{\sigma(\bar{n})}}]
    \end{align}
    %%----------------------------------------------------
    for any signed permutation $\sigma$.
    %%----------------------------------------------------
    \begin{IEEEeqnarray}{l}
        \hspace*{-2em}[\lin{\vec{v}_1}, \lin{\vec{v}_{\bar{1}}}, \dots, \lin{\vec{v}_n}, \lin{\vec{v}_{\bar{n}}}] \nonumber \\ 
         \label{rel:byk1}
        = [\lin{\vec{v}_1}, \lin{a\vec{v}_1+b\vec{v}_{\bar{1}}}, \dots, \lin{\vec{v}_n}, \lin{\vec{v}_{\bar{n}}}] + [\lin{a\vec{v}_1+b\vec{v}_{\bar{1}}}, \lin{\vec{v}_{\bar{1}}}, \dots, \lin{\vec{v}_n}, \lin{\vec{v}_{\bar{n}}}]
    \end{IEEEeqnarray}
    for units $a,b \in {R}$.
    %%----------------------------------------------------
    %%----------------------------------------------------
    \begin{IEEEeqnarray}{l}
        \hspace*{-2em}[\lin{\vec{v}_1}, \lin{\vec{v}_{\bar{1}}}, \dots, \lin{\vec{v}_n}, \lin{\vec{v}_{\bar{n}}}] 
        = [\lin{\vec{v}_1}, \lin{\vec{v}_{\bar{1}}-a\vec{v}_{\bar{2}}}, \lin{\vec{v}_2+a\vec{v}_1}, \lin{\vec{v}_{\bar{2}}}, \dots, \lin{\vec{v}_n}, \lin{\vec{v}_{\bar{n}}}] 
        \nonumber
        \\
        \hspace{2cm} \text{} + [\lin{\vec{v}_{\bar{1}}-a\vec{v}_{\bar{2}}}, \lin{\vec{v}_2}, \lin{\vec{v}_2+a\vec{v}_1}, \lin{\vec{v}_{\bar{1}}}, \dots, \lin{\vec{v}_n}, \lin{\vec{v}_{\bar{n}}}]
        \label{rel:byk2}
    \end{IEEEeqnarray}
    for a unit $a \in {R}$.
    %%----------------------------------------------------
\end{lemma}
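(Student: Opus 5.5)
The lemma has two parts. The first is Gunnells' cyclicity/generation statement, which I will cite directly from \cite[Theorem 4.11]{gunnells2000symplectic}. The second is the three families of relations \eqref{rel:perm}, \eqref{rel:byk1} and \eqref{rel:byk2}. These I will verify by hand at the level of cycles. Throughout, I work with the model $\mc{I}^0(V)$ of Lemma \ref{HtpyEquiv}, in which the apartment class of an ordered symplectic basis is represented by the join cycle
\begin{align*}
\llb \lin{\vec{v}_1}, \lin{\vec{v}_{\bar{1}}}, \dots, \lin{\vec{v}_n}, \lin{\vec{v}_{\bar{n}}} \rrb^\w = (\lin{\vec{v}_1}-\lin{\vec{v}_{\bar{1}}}) * \dots * (\lin{\vec{v}_n}-\lin{\vec{v}_{\bar{n}}}).
\end{align*}

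\textbf{Relation \eqref{rel:perm}.} I check it on generators of $\mc{B}_n$.
\begin{itemize}
\item Swapping $i$ and $\bar{i}$ negates the $i$-th join factor, so the class changes sign.
\item Transposing two pairs reorders two $0$-chains in a join. This gives a sign that I will match with the convention for $\sgn$ on $\mc{B}_n$ used in Definition \ref{Apt}.
\end{itemize}
Since this is a sign-bookkeeping check, it should be routine.

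\textbf{Relation \eqref{rel:byk1}.} Only the first join factor is involved. Write $\vec{x}=a\vec{v}_1+b\vec{v}_{\bar{1}}$; since $a,b$ are units, $\lin{\vec{x}}$ is a line distinct from both $\lin{\vec{v}_1}$ and $\lin{\vec{v}_{\bar{1}}}$. Up to rescaling, $\{\vec{v}_1,\vec{x}\}$ and $\{\vec{x},\vec{v}_{\bar{1}}\}$ are again symplectic pairs spanning the same plane, and rescaling vectors does not change the lines. The relation then reduces to the $0$-chain identity
\begin{align*}
(\lin{\vec{v}_1}-\lin{\vec{x}})+(\lin{\vec{x}}-\lin{\vec{v}_{\bar{1}}}) = \lin{\vec{v}_1}-\lin{\vec{v}_{\bar{1}}}.
\end{align*}
Joining with the remaining factors, which are all perpendicular to this plane, gives \eqref{rel:byk1}.

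\textbf{Relation \eqref{rel:byk2}.} Here the first two factors are involved, so the relation lives in the genus-$2$ building $\bb{T}^\w(\lin{\vec{v}_1,\vec{v}_{\bar{1}},\vec{v}_2,\vec{v}_{\bar{2}}})$. The plan has three steps.
\begin{enumerate}
\item Check that each new ordered tuple is a symplectic basis up to sign and scaling. For example, $\w(\vec{v}_1,\vec{v}_{\bar{1}}-a\vec{v}_{\bar{2}})=1$ and $\w(\vec{v}_2+a\vec{v}_1,\vec{v}_{\bar{2}})=1$, with the cross pairings vanishing. This is the same kind of verification used in Lemma \ref{lem:stdsharblies}.
\item Write out the three $1$-cycles in the genus-$2$ building as sums of edges (isotropic line $\subset$ isotropic plane). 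Each is an octagon with $8$ edges, and the relation is an identity in the simplicial $1$-chains. Because of the unit $a$, several lines and planes coincide across the three octagons, for example $\lin{\vec{v}_1}$, $\lin{\vec{v}_{\bar{2}}}$, $\lin{\vec{v}_1,\vec{v}_{2}}$ and $\lin{\vec{v}_1,\vec{v}_{\bar{2}}}$. I expect the shared edges to cancel exactly, leaving an equality of chains.
\item If the cancellation turns out not to be exact, fall back to homology: show that the difference is a cycle in a contractible subcomplex, namely the star of the shared vertex $\lin{\vec{v}_1}$, hence null-homologous. Then join with the factors for $i\ge 3$, exactly as in the proof of Proposition \ref{Eq1Diff}.
\end{enumerate}

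The main obstacle is step (b) for \eqref{rel:byk2}: correctly identifying the coinciding edges and tracking their orientations and signs. I would do this edge by edge with the signed-permutation formula of Definition \ref{Apt}, rather than with the join model.
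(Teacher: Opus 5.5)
Your proposal follows the paper, which likewise cites Gunnells for generation and only asserts that \eqref{rel:perm}, \eqref{rel:byk1} and \eqref{rel:byk2} are direct checks from Definition~\ref{Apt}. Your checks go through: the sign on $\mc{B}_n$ has to be the Coxeter sign, so transposing two pairs gives $-1$, as your join computation predicts. In the octagon computation for \eqref{rel:byk2} the cancellation is exact: the eight edge-terms at the vertices $\lin{\vec{v}_{\bar{1}}-a\vec{v}_{\bar{2}}}$ and $\lin{\vec{v}_2+a\vec{v}_1}$ cancel in pairs, and the remaining eight are precisely the original octagon.

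The one point to adjust is the passage to genus $n$, because joining is done in $\mc{I}^0$ (as in Proposition~\ref{Eq1Diff}), not in the Tits building. In $\mc{I}^0$ the left-hand join cycle minus the two right-hand ones equals
$\partial\bigl([\lin{\vec{v}_1},\lin{\vec{v}_2},\lin{\vec{v}_2+a\vec{v}_1}]+[\lin{\vec{v}_{\bar{1}}},\lin{\vec{v}_{\bar{2}}},\lin{\vec{v}_{\bar{1}}-a\vec{v}_{\bar{2}}}]\bigr)$.
These are triangles in the Lagrangians $\lin{\vec{v}_1,\vec{v}_2}$ and $\lin{\vec{v}_{\bar{1}},\vec{v}_{\bar{2}}}$, so the second one is not in the star of $\lin{\vec{v}_1}$, contrary to your fallback. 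Joining this boundary with the cycle of the factors $i\ge 3$ gives the relation in $\St^\w_{2n}(K)$.
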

%%---------------------------------------

%%---------------------------------------
\begin{definition}\label{def:integralgen}
    Let ${R}$ be a PID and $K$ its field of fractions. For a fixed $n \geq 1$, define $\mc{V}_0$ to be the quotient of the free abelian group on lines in $K^{2n}$ $[\lin{\vec{v}_1}, \lin{\vec{v}_{\bar{1}}}, \dots, \lin{\vec{v}_n}, \lin{\vec{v}_{\bar{n}}}]$ that are generated by symplectic bases of ${R}^{2n}$, modulo the relation $[\lin{\vec{v}_1}, \lin{\vec{v}_{\bar{1}}}, \dots,\lin{\vec{v}_i}, \lin{\vec{v}_{\bar{i}}}, \dots, \lin{\vec{v}_n}, \lin{\vec{v}_{\bar{n}}}] = -[\lin{\vec{v}_1}, \lin{\vec{v}_{\bar{1}}}, \dots, \lin{\vec{v}_{\bar{i}}}, \lin{\vec{v}_i}, \dots, \lin{\vec{v}_n}, \lin{\vec{v}_{\bar{n}}}]$ for all $1 \leq i \leq n$.
\end{definition}
%%---------------------------------------

The previous discussion then says that when ${R}$ is a Euclidean domain, then $\mc{V}_0$ surjects onto $\St^\w_{2n}({R}) \cong \St^\w_{2n}(K)$.
Note that when ${R}$ is a field, then $\mc{V}_0$ is the same as the generating set $V_0$ from Lemma \ref{lem:Stwpresent}.
%%---------------------------------------

%%---------------------------------------
\begin{lemma}
    \label{lem:coinvgenerators}
    Let ${R}$ be a PID, and let $\F$ be a field that is the quotient of ${R}$ by some prime $p \in {R}$. For a fixed $n \geq 1$, let $\mc{V}_0$ be the group defined in Definition \ref{def:integralgen} and let $V_0$ be the generating set for $\St^\w_{2n}(\F)$ defined in Lemma \ref{lem:Stwpresent}.
    Suppose all the units of $\F$ can be lifted to units of ${R}$. Then, we have an isomorphism
    \begin{align*}
        (\mc{V}_0)_{\Gamma^\w_{2n}(p)} \cong V_0
    \end{align*}
    induced by the natural mod $p$ reduction map.
\end{lemma}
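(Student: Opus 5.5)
The plan is to construct the reduction map, show it is surjective, and then show that it is injective on coinvariants by analysing the orbits of $\Gamma^\w_{2n}(p)$ on the generators of $\mc{V}_0$.

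\textbf{The map.} Reduction mod $p$ sends an integral symplectic basis $\vec{v}_1, \vec{v}_{\bar{1}}, \dots, \vec{v}_n, \vec{v}_{\bar{n}}$ of $R^{2n}$ to a symplectic basis $\bar{v}_1, \bar{v}_{\bar{1}}, \dots$ of $\F^{2n}$. A generator of $\mc{V}_0$ is determined by the lines, and a line $\lin{\vec{v}}$ with $\vec{v}$ primitive determines $\vec{v}$ up to a unit of $R$. Its reduction $\lin{\bar{v}}$ is therefore well defined. The defining relation of $\mc{V}_0$ (swapping $v_i, v_{\bar{i}}$ gives a sign) maps to the same relation in $V_0 = \Sh_0(W_1)\otimes\dots\otimes\Sh_0(W_n)$, since $[\bar v_i,\bar v_{\bar i}]=-[\bar v_{\bar i},\bar v_i]$ in $\Sh_0$. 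Note also that $V_0$, as a sum over ordered decompositions, records the order of the genus-one blocks, exactly as $\mc{V}_0$ does.

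This gives a homomorphism $\mc{V}_0 \to V_0$. Since $\Gamma^\w_{2n}(p)$ acts trivially mod $p$, it factors through the coinvariants $(\mc{V}_0)_{\Gamma^\w_{2n}(p)}$.

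\textbf{Surjectivity.} I would use that $\Sp_{2n}(R) \to \Sp_{2n}(\F)$ is surjective. Here $R$ is a PID and the symplectic group is generated by elementary symplectic transvections, which lift; this is standard strong approximation for $\Sp$ over a Dedekind ring, or, in the Euclidean case, follows from generation by elementary matrices. Given a generator $[\lin{\bar u_1},\lin{\bar u_{\bar 1}},\dots]$ of $V_0$, the vectors $\bar u_i$ are determined only up to $\F^\times$ scalars. I first rescale within each pair so that they form a symplectic basis; this uses rescaling one vector of each pair, which is harmless. The resulting basis is $\bar g\cdot e$ for some $\bar g\in\Sp_{2n}(\F)$, and I lift $\bar g$ to $g\in\Sp_{2n}(R)$. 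Then $g e$ is an integral symplectic basis reducing to the given one.

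\textbf{Injectivity.} This is the main step. I would show that two integral symplectic bases $B$ and $B'$ whose reductions define the same generator of $V_0$ define generators of $\mc{V}_0$ in the same $\Gamma^\w_{2n}(p)$-orbit. The generator in $V_0$ is then a single element, not a $\pm$ combination, because a nonzero basis sharbly is $\pm$ a fixed one only via the swap relation, which also holds in $\mc{V}_0$.

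Equal generators mean that, after possibly swapping within pairs (handled by the relation), $\lin{\bar v'_i} = \lin{\bar v_i}$ for every index $i$ and its bar. So $\bar v'_i = \lambda_i \bar v_i$ with $\lambda_i\in\F^\times$. The symplectic pairing forces $\lambda_{\bar i} = \lambda_i^{-1}$. By hypothesis each $\lambda_i$ lifts to a unit $u_i\in R^\times$.

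I replace $B'$ by the basis with $\vec v'_i \mapsto u_i^{-1}\vec v'_i$ and $\vec v'_{\bar i}\mapsto u_i\vec v'_{\bar i}$. This is still an integral symplectic basis and defines the same lines, hence the same element of $\mc{V}_0$. Now $B$ and $B'$ are symplectic bases with $B' \equiv B \pmod p$, so the unique $g\in\Sp_{2n}(R)$ with $gB=B'$ reduces to the identity. Thus $g\in\Gamma^\w_{2n}(p)$ and the two generators lie in the same orbit.

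It follows that the generators of $(\mc{V}_0)_{\Gamma^\w_{2n}(p)}$ map bijectively onto a basis of $V_0$ up to the common sign relation. Both groups are free abelian on these generators modulo the same swap signs, so the map is an isomorphism.

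The main obstacle is this unit-lifting step: it is exactly where surjectivity of $R^\times\to\F^\times$ is needed. A secondary point is that scalars in a pair can be adjusted independently only through the constraint $\lambda_{\bar i}=\lambda_i^{-1}$, which holds automatically. I would also double-check that lines in $\mc{V}_0$ lose no sign information compared with vectors.
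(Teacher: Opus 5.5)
Your proposal is correct and follows essentially the same route as the paper. Surjectivity comes from lifting symplectic bases via the surjection $\Sp_{2n}(R)\to\Sp_{2n}(\F)$. Injectivity comes from lifting the unit rescalings to $R^\times$, so that the two integral bases become congruent mod $p$ and hence differ by an element of $\Gamma^\w_{2n}(p)$.

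Your choice to lift only $\lambda_i$ and to use $u_i^{-1}$ on the barred vector is slightly more careful than the paper. The paper lifts $u_i$ and $u_{\bar i}$ independently, and so tacitly needs $U_iU_{\bar i}=1$ for the rescaled basis to be symplectic.
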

%%----------------------------------------------------
\begin{proof}
    Surjectivity follows from the classical fact that the mod $p$ reduction map $\Sp_{2n}({R}) \to \Sp_{2n}(\F)$ is surjective: 
    %(See for example, \cite[Lemma 3.18]{miller2024uniform}) 
    if $[\lin{\vec{v}_1}, \lin{\vec{v}_{\bar{1}}}, \dots, \lin{\vec{v}_n}, \lin{\vec{v}_{\bar{n}}}]$ is a basis element of $V_0$, so that $\vec{v}_1, \vec{v}_{\bar{1}}, \dots, \vec{v}_n, \vec{v}_{\bar{n}}$ form a symplectic basis for $\F^{2n}$, then we can lift these to a symplectic basis of vectors in ${R}^{2n}$.
    For injectivity, suppose that 
    %%----------------------------------------------------
    \begin{align*}
        \vec{V}_1, \vec{V}_{\bar{1}}, \dots, \vec{V}_n, \vec{V}_{\bar{n}} \text{ and } \vec{W}_1, \vec{W}_{\bar{1}}, \dots, \vec{W}_n, \vec{W}_{\bar{n}}
    \end{align*}
    %%----------------------------------------------------
    are two symplectic bases of ${R}^{2n}$ such that the corresponding bases elements of $\mc{V}_0$ map to the same element 
    %%----------------------------------------------------
    \begin{align*}
        [\lin{\vec{v}_1}, \lin{\vec{v}_{\bar{1}}}, \dots, \lin{\vec{v}_n}, \lin{\vec{v}_{\bar{n}}}]
    \end{align*}
    %%----------------------------------------------------
    of $V_0$, where $\vec{v}_1, \vec{v}_{\bar{1}}, \dots, \vec{v}_n, \vec{v}_{\bar{n}}$ form a symplectic basis for $\F^{2n}$. We can assume that the basis of $\F^{2n}$ is such that for every $1 \leq i \leq n$, there exist units $u_i, u_{\bar{i}} \in \F$ so that $\vec{V}_i, \vec{V}_{\bar{i}}$ map to $\vec{v}_i, \vec{v}_{\bar{i}}$ respectively under the mod $p$ reduction map, and $\vec{W}_i, \vec{W}_{\bar{i}}$ map to $u_i\vec{v}_i, u_{\bar{i}}\vec{v}_{\bar{i}}$. 
    By assumption, we can lift these units to units ${U}_1, {U}_{\bar{1}}, \dots, {U}_n, {U}_{\bar{n}}$ of ${R}$. The above implies that the symplectic matrices in $\Sp_{2n}({R})$ whose columns are $\left({U}_1\vec{V}_1, {U}_{\bar{1}}\vec{V}_{\bar{1}}, \dots, {U}_n\vec{V}_n, {U}_{\bar{n}}\vec{V}_{\bar{n}}\right)$ and $\left(\vec{W}_1, \vec{W}_{\bar{1}}, \dots, \vec{W}_n, \vec{W}_{\bar{n}}\right)$, map to the same element of $\Sp_{2n}(\F)$. Since $\Gamma^\w_{2n}(p) = \ker(\Sp_{2n}R \to \Sp_{2n}\F)$, this means that there exists $A \in \Gamma^\w_{2n}(p)$ such that
    %%----------------------------------------------------
    \begin{align*}
        A \left(\vec{W}_1, \vec{W}_{\bar{1}}, \dots, \vec{W}_n, \vec{W}_{\bar{n}}\right) = \left({U}_1\vec{V}_1, {U}_{\bar{1}}\vec{V}_{\bar{1}}, \dots, {U}_n\vec{V}_n, {U}_{\bar{n}}\vec{V}_{\bar{n}}\right)
    \end{align*}
    %%----------------------------------------------------
In particular this implies that $[\lin{\vec{V}_1}, \lin{\vec{V}_{\bar{1}}}, \dots, \lin{\vec{V}_n}, \lin{\vec{V}_{\bar{n}}}]$ and $[\lin{\vec{W}_1}, \lin{\vec{W}_{\bar{1}}}, \dots, \lin{\vec{W}_n}, \lin{\vec{W}_{\bar{n}}}]$ are in the same coset of $(\mc{V}_0)_{\Gamma^\w_{2n}(p)}$. This proves injectivity.
\end{proof}
%%------------------------------------------

%%------------------------------------------
\begin{proof}[Proof of Theorem \ref{thm:congsubgp}]
    Since ${R}$ is a Euclidean domain, the map $\mc{V}_0 \to \St^\w_{2n}(K)$ sending an element
    \begin{align*}
        [\lin{\vec{v}_1}, \lin{\vec{v}_{\bar{1}}}, \dots, \lin{\vec{v}_n}, \lin{\vec{v}_{\bar{n}}}]
    \end{align*} 
    to the corresponding apartment class of $\St^\w_{2n}(K)$ is a surjection. Thus $(\mc{V}_0)_{\Gamma^\w_{2n}(p)}$ surjects onto $(\St^\w_{2n}(K))_{\Gamma_n(p)}$. Now, Lemma \ref{lem:coinvgenerators} says that $(\mc{V}_0)_{\Gamma^\w_{2n}(p)} \cong V_0$. Consider the map 
    %%----------------------------------------------------
    \begin{align*}
        (\St^\w_{2n}(K))_{\Gamma^\w_{2n}(p)} \to \St^\w_{2n}(\F)
    \end{align*}
    %%----------------------------------------------------
    induced by the mod $p$ reduction map $\bb{T}^\w_{2n}({R}) \cong \bb{T}^\w_{2n}(K) \to \bb{T}^\w_{2n}(\F)$. It is not hard to check that the composition
    %%----------------------------------------------------
    \begin{align*}
        (\mc{V}_0)_{\Gamma^\w_{2n}(p)} \to (\St^\w_{2n}(K))_{\Gamma^\w_{2n}(p)} \to \St^\w_{2n}(\F) \cong \coker(V_{1,2}\oplus V_{1,1} \to V_0)
    \end{align*}
    %%----------------------------------------------------
    is equal to the natural quotient map
    %%----------------------------------------------------
    \begin{align*}
        (\mc{V}_0)_{\Gamma^\w_{2n}(p)} \cong V_0 \to \coker(V_{1,2}\oplus V_{1,1} \to V_0)
    \end{align*}
    %%----------------------------------------------------
    From this we easily get surjectivity of the map $(\St^\w_{2n}(K))_{\Gamma^\w_{2n}(p)} \to \St^\w_{2n}(\F)$. This also tells us that to prove injectivity, it is enough to show that all elements in the image of the map $V_{1,2}\oplus V_{1,1} \to V_0$ lift to elements of $ (\mc{V}_0)_{\Gamma^\w_{2n}(p)}$ corresponding to relations in $\St^\w_{2n}(K)$.
    By Lemma \ref{lem:stdsharblies}, we can write any element $\sigma \in V_{1,2} \oplus V_{1,1}$ as a sum of the form
    %%----------------------------------------------------
    \begin{align*}
        \tau_{1,2} + \tau_{1,1} + \del_2\tau
    \end{align*}
    %%----------------------------------------------------
    where $\tau \in V_2$, $\tau_{1,1} \in V_{1,1}$, and $\tau_{1,2} \in V_{1,2}$ is a sum of standard Sharblies.
    Thus we have
    %%----------------------------------------------------
    \begin{align*}
        \del_1\sigma & = \del_1\tau_{1,2} + \del_1\tau_{1,1} + \del_1\del_2\tau \\ & = \del_1\tau_{1,2} + \del_1\tau_{1,1}
    \end{align*}
    %%----------------------------------------------------
    So it's enough to show that the image in $V_0$ of elements of $V_{1,1}$ and standard Sharblies in $V_{1,2}$ lift to relations of $\St^\w_{2n}(K)$.
    First let us look at standard Sharblies. The image of $\del_1$ on a standard Sharbly of the form
    %%----------------------------------------------------
    \begin{align*}
        [\lin{\vec{v}_1}, \lin{\vec{v}_{\bar{1}}}]\dots[\lin{\vec{v}_i}, \lin{\vec{v}_{\bar{i}}}, \lin{\vec{v}_{i+1}}, \lin{\vec{v}_{\overline{i+1}}}]\dots[\lin{\vec{v}_{n-1}}, \lin{\vec{v}_{\overline{n-1}}}]
    \end{align*}
    %%----------------------------------------------------
    is equal to (up to sign)
    %%----------------------------------------------------
    \begin{IEEEeqnarray}{l}
        [\lin{\vec{v}_1}, \lin{\vec{v}_{\bar{1}}}]\dots[\lin{\vec{v}_i},\lin{\vec{v}_{\bar{i}}}] [\lin{\vec{v}_{i+1}}, \lin{\vec{v}_{\overline{i+1}}}]\dots[\lin{\vec{v}_{n-1}}, \lin{\vec{v}_{\overline{n-1}}}] 
        \nonumber \\
        \hspace{1cm} \text{} -[\lin{\vec{v}_1}, \lin{\vec{v}_{\bar{1}}}]\dots[\lin{\vec{v}_{i+1}},\lin{\vec{v}_{\overline{i+1}}}] [\lin{\vec{v}_{i}}, \lin{\vec{v}_{\overline{i}}}]\dots[\lin{\vec{v}_{n-1}}, \lin{\vec{v}_{\overline{n-1}}}]
        \label{eqn:perm}
    \end{IEEEeqnarray}
    %%----------------------------------------------------
    The image of $\del_1$ on a standard Sharbly of the form
    %%----------------------------------------------------
    \begin{align*}
        [\lin{\vec{v}_1}, \lin{\vec{v}_{\bar{1}}}]\dots[\lin{\vec{v}_i}, \lin{\vec{v}_{\bar{i}}}, \lin{\vec{v}_{i+1}+a\vec{v}_i}, \lin{\vec{v}_{\overline{i+1}}}]\dots[\lin{\vec{v}_{n-1}}, \lin{\vec{v}_{\overline{n-1}}}]
    \end{align*}
    %%----------------------------------------------------
    is equal to (up to sign)
    %%----------------------------------------------------
    \begin{IEEEeqnarray}{l}
        \hspace*{-2em}[\lin{\vec{v}_1}, \lin{\vec{v}_{\bar{1}}}]\dots[\lin{\vec{v}_i}, \lin{\vec{v}_{\bar{i}}}][\lin{\vec{v}_{i+1}}, \lin{\vec{v}_{\overline{i+1}}}]\dots[\lin{\vec{v}_{n-1}}, \lin{\vec{v}_{\overline{n-1}}}] 
        \nonumber
        \\
        \text{} - [\lin{\vec{v}_1}, \lin{\vec{v}_{\bar{1}}}]\dots[\lin{\vec{v}_{i+1}+a\vec{v}_i}, \lin{\vec{v}_{\overline{i+1}}}][\lin{\vec{v}_i}, \lin{\vec{v}_{\bar{i}}-a\vec{v}_{\overline{i+1}}}] \dots[\lin{\vec{v}_{n-1}}, \lin{\vec{v}_{\overline{n-1}}}] 
        \nonumber
        \\
        \text{} - [\lin{\vec{v}_1}, \lin{\vec{v}_{\bar{1}}}]\dots[\lin{\vec{v}_{\bar{i}}}, \lin{\vec{v}_{{i+1}}+a\vec{v}_i}][\lin{\vec{v}_{i+1}},\lin{\vec{v}_{\bar{i}}-a\vec{v}_{\overline{i+1}}}] \dots[\lin{\vec{v}_{n-1}}, \lin{\vec{v}_{\overline{n-1}}}]
        \label{eqn:byk2}
    \end{IEEEeqnarray}
%%----------------------------------------------------
    Finally, suppose we have an element of $V_{1,1}$
    %%----------------------------------------------------
    \begin{align*}
        [\lin{\vec{v}_1}, \lin{\vec{v}_{\bar{1}}}]\dots[\lin{\vec{u}_i}, \lin{\vec{v}_i}, \lin{\vec{w}_i}]\dots[\lin{\vec{v}_n}, \lin{\vec{v}_{\bar{n}}}]
    \end{align*}
    %%----------------------------------------------------
    If this element is non-zero, then the vectors $\vec{v}_i, \vec{w}_i$ must be linearly independent and thus have nontrivial pairing. We then have $\vec{u}_i = a\vec{v}_i + b\vec{w}_i$ for some $a,b \in \F$. Thus the image of $\del_1$ on this element is
%%----------------------------------------------------
    \begin{IEEEeqnarray}{l}
        \left([\lin{\vec{v}_1}, \lin{\vec{v}_{\bar{1}}}]\dots[\lin{\vec{v}_i}, \lin{\vec{w}_i}]\dots[\lin{\vec{v}_n}, \lin{\vec{v}_{\bar{n}}}]\right) - \left([\lin{\vec{v}_1}, \lin{\vec{v}_{\bar{1}}}]\dots[\lin{a\vec{v}_i+b\vec{w}_i}, \lin{\vec{w}_i}]\dots[\lin{\vec{v}_n}, \lin{\vec{v}_{\bar{n}}}]\right)
        \nonumber
        \\ 
        \hspace{2cm} \text{} + \left([\lin{\vec{v}_1}, \lin{\vec{v}_{\bar{1}}}]\dots[\lin{a\vec{v}_i+b\vec{w}_i}, \lin{\vec{v}_i}, ]\dots[\lin{\vec{v}_n}, \lin{\vec{v}_{\bar{n}}}]\right)
        \label{eqn:byk1}
    \end{IEEEeqnarray}
%%----------------------------------------------------
    Now using the fact that any symplectic basis of $\F^{2n}$ lifts to a symplectic basis for ${R}^{2n}$, and that all units of $\F$ lift to units of ${R}$, we see that the three sums \eqref{eqn:perm}, \eqref{eqn:byk2}, and \eqref{eqn:byk1} we got above all lift to the relations \eqref{rel:perm}, \eqref{rel:byk2}, and \eqref{rel:byk1}, respectively, described in Lemma \ref{lem:intapt}. This completes the proof.
\end{proof}
%%------------------------------------------

\bibliographystyle{amsalpha}
\bibliography{Bibliography}
\quad \\ 

\end{document}